\begin{document}
	\newcommand{\Nb}{\mathbf{N}}
	\newcommand{\B}{\mathbf{B}}
	\newcommand{\U}{\mathbf{U}}
	\newcommand{\T}{\mathbf{T}}
	\newcommand{\G}{\mathbf{G}}
	\newcommand{\Para}{\mathbf{P}}
	\newcommand{\Levi}{\mathbf{L}}
	\newcommand{\Y}{\mathbf{Y}}
	
	\newcommand{\Gtilde}{\mathbf{\tilde{G}}}
	\newcommand{\Ttilde}{\mathbf{\tilde{T}}}
	\newcommand{\Btilde}{\mathbf{\tilde{B}}}
	
	\newcommand{\opp}{\operatorname{opp}}
	\newcommand{\N}{\operatorname{N}}
	\newcommand{\Z}{\operatorname{Z}}
	\newcommand{\Gal}{\operatorname{Gal}}
	\newcommand{\Kerel}{\operatorname{ker}}
	\newcommand{\Irr}{\operatorname{Irr}}
	\newcommand{\D}{\operatorname{D}}
	\newcommand{\I}{\operatorname{I}}
	\newcommand{\GL}{\operatorname{GL}}
	\newcommand{\SL}{\operatorname{SL}}
	\newcommand{\W}{\operatorname{W}}
	\newcommand{\R}{\operatorname{R}}
	\newcommand{\C}{\operatorname{C}}
	\newcommand{\Ind}{\operatorname{Ind}}
	\newcommand{\Res}{\operatorname{Res}}
	\newcommand{\Hom}{\operatorname{Hom}}
	\newcommand{\End}{\operatorname{End}}
	\newcommand{\Ho}{\operatorname{Ho}}
	\newcommand{\Br}{\operatorname{Br}}
	\newcommand{\br}{\operatorname{br}}
	\newcommand{\ad}{\operatorname{ad}}

	\theoremstyle{remark}

\theoremstyle{definition}
\newtheorem{definition}{Definition}[section]
\newtheorem{construction}[definition]{Construction}
\newtheorem{remark}[definition]{Remark}
\newtheorem{example}[definition]{Example}
\newtheorem{notation}[definition]{Notation}
\newtheorem{question}[definition]{Question}

\theoremstyle{plain}
\newtheorem{theorem}[definition]{Theorem}
\newtheorem{lemma}[definition]{Lemma}
\newtheorem{proposition}[definition]{Proposition}
\newtheorem{corollary}[definition]{Corollary}
\newtheorem{conjecture}[definition]{Conjecture}
\newtheorem{assumption}[definition]{Assumption}
\newtheorem{main theorem}[definition]{Main Theorem}
\newtheorem{hypothesis}[definition]{Hypothesis}
	
	\newtheorem*{theo*}{Theorem}
	\newtheorem*{conj*}{Conjecture}
	\newtheorem*{cor*}{Corollary}
	\newtheorem*{propo*}{Proposition}
	
	\newtheorem{theo}{Theorem}
	\newtheorem{conj}[theo]{Conjecture}
	\newtheorem{cor}[theo]{Corollary}
	\newtheorem{propo}[theo]{Proposition}
	
	\renewcommand{\thetheo}{\Alph{theo}}
	\renewcommand{\theconj}{\Alph{conj}}
	\renewcommand{\thecor}{\Alph{cor}}

	\title{Quasi-isolated blocks and the Alperin-McKay conjecture}
	
	\date{\today}
	\author{Lucas Ruhstorfer}
	\address{Fachbereich Mathematik, TU Kaiserslautern, 67653 Kaiserslautern, Germany}
	\email{ruhstorfer@mathematik.uni-kl.de}
	\keywords{Alperin-McKay conjecture, groups of Lie type}

	\subjclass[2010]{20C33}
	
	\begin{abstract}
		The Alperin--McKay conjecture is a longstanding open conjecture in the representation theory of finite groups.
		Späth showed that the Alperin--McKay conjecture holds if the so-called inductive Alperin--McKay (iAM) condition holds for all finite simple groups. In a previous paper, the author has proved that it is enough to verify the inductive condition for quasi-isolated blocks of groups of Lie type. In this paper we show that the verification of the iAM-condition can be further reduced in many cases to isolated blocks. As a consequence of this we obtain a proof of the Alperin--McKay conjecture for $2$-blocks of finite groups with abelian defect.
	\end{abstract}

\maketitle

\section*{Introduction}

\subsection*{Alperin--McKay conjecture}

In the representation theory of finite groups some of the most important conjectures predict a very strong relationship between the representations of a finite group $G$ and certain representations of its $\ell$-local subgroups, where $\ell$ is a prime dividing the order of $G$. One of these conjectures is the Alperin--McKay conjecture.
For an $\ell$-block $b$ of $G$ we denote by $\Irr_0(G,b)$ the set of height zero characters of $b$. Then the Alperin--McKay predicts the following:

	\begin{conj*}[Alperin--McKay]
	Let $b$ be an $\ell$-block of $G$ with defect group $D$ and $B$ its Brauer correspondent in $\mathrm{N}_G(D)$. Then $$|\Irr_0(G,b)|= |\Irr_0(\mathrm{N}_G(D),B)|.$$
\end{conj*}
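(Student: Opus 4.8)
The plan is not to attack the numerical equality directly but to deduce it from a statement about finite simple groups via the reduction theorem of Sp\"ath: the Alperin--McKay conjecture holds for all finite groups provided the inductive Alperin--McKay (iAM) condition holds for every finite simple group. Granting this, the proof becomes an induction on $|G|$ in which, at each step, one feeds the iAM condition for the simple groups involved in $G$ into Clifford theory---passing through central extensions, automorphism groups and projective characters---to assemble an $\mathrm{Aut}(G)$-equivariant, cohomology-compatible bijection between $\Irr_0(G,b)$ and $\Irr_0(\mathrm{N}_G(D),B)$. So the entire burden is shifted onto verifying the iAM condition for the simple groups.

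I would then run through the classification of finite simple groups. For cyclic groups, alternating groups and sporadic groups the iAM condition is already known, so the real content is the groups of Lie type $G=\G^F$. Here I would invoke the author's earlier reduction, recalled in the introduction: the iAM condition for $G$---both at the defining characteristic and at non-defining primes---reduces to its verification for the quasi-isolated $\ell$-blocks of $G$, that is, those whose Jordan correspondent lies over a quasi-isolated semisimple element of the dual group $\G^{*F}$. On top of this I would use the further reduction established in the present paper: in many cases the quasi-isolated blocks can themselves be reduced to isolated blocks, where a Bonnaf\'e--Rouquier Morita equivalence relates the block to one in a proper twisted Levi subgroup, so that one is reduced to verifying iAM for isolated blocks together with a short list of residual quasi-isolated blocks not covered by the reduction.

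For an isolated $\ell$-block $b$ one has good structural control: Jordan decomposition of characters and the theory of $d$-cuspidal pairs identify $\Irr_0(G,b)$ with height-zero characters attached to the relative Weyl group of a $d$-cuspidal pair, and the same tools compute the Brauer correspondent $B$ inside $\mathrm{N}_G(D)$; the task is then to match these sets by a bijection equivariant for the stabiliser of $b$ in $\mathrm{Aut}(G)$ and preserving the relevant class in $H^2$. The residual quasi-isolated blocks---arising, for instance, at the bad primes $2$ and $3$ for the exceptional types, or when the hypotheses of the quasi-isolated-to-isolated reduction fail---I would handle by direct analysis using explicit data on centralizers of quasi-isolated elements.

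The hard part is exactly this last layer. The iAM condition is strictly stronger than the equality $|\Irr_0(G,b)|=|\Irr_0(\mathrm{N}_G(D),B)|$: it demands full $\mathrm{Aut}(G)$-equivariance of the bijection and compatibility with a cohomological obstruction, and pushing this through uniformly for every isolated block---above all for types $E_7$ and $E_8$ and at the primes $2$ and $3$, where centralizers of semisimple elements are most delicate---is the genuine obstacle, and is why the conjecture remains open in general even after the reductions assembled here; the contribution of this paper is precisely to shrink the list of blocks for which this delicate verification still has to be carried out.
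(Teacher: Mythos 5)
The statement you are addressing is presented in the paper as a \emph{conjecture}, not as a theorem, and the paper contains no proof of it in that generality---nor could it, since the Alperin--McKay conjecture is open. What the paper actually proves are partial results: a reduction of the inductive condition from quasi-isolated to isolated blocks in several families (Theorems~\ref{equiv}, \ref{typeA}, \ref{maintheoremA}, \ref{maintheoremBC}), and, as a consequence, the Alperin--McKay conjecture for $2$-blocks with abelian (more precisely, almost abelian) defect group (Theorem~\ref{Alperin McKay}, via Proposition~\ref{modifiedSpaeth}). Your write-up is therefore not a proof of the stated conjecture, and you say so yourself in the final paragraph; it is an accurate high-level description of the reduction strategy---Sp\"ath's reduction to the iAM condition for simple groups, the classification, the further reduction from arbitrary to quasi-isolated and then to isolated blocks, with the residual hard cases at bad primes and in exceptional types---which is indeed the strategy organizing this paper and the literature around it. A referee would insist on reclassifying what you have submitted: it is a correct survey of the state of the art and of the reduction machinery, and it correctly identifies where the genuine difficulty lies (full $\mathrm{Aut}(G)$-equivariance plus the $H^2$-compatibility clause, especially for isolated blocks in exceptional types at $\ell=2,3$), but it leaves that difficulty entirely untouched. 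In particular, nothing in your outline closes the gap for the isolated blocks of exceptional groups, nor does it recover even the special case the paper does establish (abelian defect at $\ell=2$); for that one would need the explicit arguments of Sections~\ref{sec 5}--\ref{sec 11} and the case analysis built on the classification of abelian-defect $2$-blocks from \cite{twoblocks}, none of which appears in your sketch.
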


Späth \cite[Theorem C]{IAM} showed that the Alperin--McKay conjecture holds if the so-called inductive Alperin--McKay condition holds for all finite groups. In a previous article, the author has reduced the verification of the inductive Alperin--McKay condition to so-called quasi-isolated blocks of groups of Lie type \cite{Jordan2}. The overall aim of this article is to further reduce the verification of this condition to isolated blocks of groups of Lie type. Using this, we are then able to verify the inductive Alperin--McKay condition for many important classes of blocks.

\subsection*{Equivariant Bonnafé--Dat--Rouquier equivalence}
One of our main ingredients towards such a reduction is the recent result by Bonnafé--Dat--Rouquier \cite{Dat}.
 They have constructed a Morita equivalence which can be seen as a modular analogue of Lusztig's Jordan decomposition for characters. Let $\G$ be a simple, simply connected algebraic group  with Frobenius endomorphism $F: \G \to \G$ defining an $\mathbb{F}_q$-structure where $q$ is a power of a prime $p$. Fix a prime $\ell$ different from $p$ and let $(\mathcal{O},K,k)$ be an $\ell$-modular system as in \ref{rep theory} below. In the following $\Lambda$ denotes either the discrete valuation ring $\mathcal{O}$ or its residue field $k$. Let $(\G^\ast,F^\ast)$ be a group in duality with $(\G,F)$ and $s \in (\G^\ast)^{F^\ast}$ a semisimple element of $\ell'$-order. Let $e_s^{\G^F} \in \mathrm{Z}(\Lambda \G^F)$ be the central idempotent associated to $s$ as in \cite[Theorem 9.12]{MarcBook}. We assume that $\Levi^\ast$ is the minimal Levi subgroup of $\G^\ast$ containing $\mathrm{C}_{\G^\ast}^\circ(s)$. Let $N$ be the common stabilizer in $\G^F$ of the idempotent $e_s^{\Levi^F}$ and $\Levi$ and suppose that $N/\Levi^F$ is cyclic. According to the main result of \cite{Dat} there exists a Morita equivalence between $\Lambda N e_s^{\Levi^F}$ and $\Lambda \G^F e_s^{\G^F}$. Using the methods developed in \cite{Jordan} we extend their result to incorporate automorphisms of $\G^F$.

\begin{theo}[see Theorem \ref{equiv}]\label{thm1}
	 Let $\G$ be a simple, simply connected algebraic group of type $B_n$, $C_n$ or $E_7$ such that either $n > 2$ or $q$ is odd. Let $\iota: \G \to \Gtilde$ be a regular embedding. Then there exists a Frobenius morphism $F_0: \G \to \G$ stabilizing $\Levi$ such that the image of $\Gtilde^F \rtimes \langle F_0 \rangle$ in the outer automorphism group of $\G^F$ is $\mathrm{Out}(\G^F)_{e_s^{\G^F}}$. There exists a Morita equivalence between $\Lambda N e_s^{\Levi^F}$ and $\Lambda \G e_s^{\G^F}$ which lifts to a Morita equivalence between $\Lambda \mathrm{N}_{\tilde{\G}^F \langle F_0 \rangle }(\Levi, e_s^{\Levi^F})  e_s^{\Levi^F}$ and $\Lambda \tilde{\G}^F \langle F_0 \rangle e_s^{\G^F}$.
\end{theo}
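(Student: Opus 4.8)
The plan is to follow the strategy of \cite{Jordan}: realise the Bonnafé--Dat--Rouquier Morita equivalence by a bimodule coming from the $\ell$-adic cohomology of a Deligne--Lusztig variety, and then exploit that this variety carries a natural action of a group much larger than $\G^F$. Fix an $F$-stable parabolic $\Para$ of $\G$ with $F$-stable Levi complement $\Levi$ and unipotent radical $\U$, and let $\Y_\U$ be the associated Deligne--Lusztig variety, regarded inside $\Gtilde/\U$. Then $\tilde{\G}^F$ acts on $\Y_\U$ on the left, the Levi $\tilde\Levi=\Levi\,\mathrm{Z}(\Gtilde)$ of $\Gtilde$ acts on the right, and for a suitable Frobenius $F_0$ stabilising $\Para$, $\Levi$ and $\U$ the cyclic group $\langle F_0\rangle$ acts as well. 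The first task is to pin down $F_0$. Since $\G$ is of type $B_n$, $C_n$ or $E_7$ with $n>2$ or $q$ odd there are no exceptional isogenies, so $\mathrm{Out}(\G^F)$ has the standard structure: a direct product of $\mathrm{Outdiag}(\G^F)$ --- which has order at most $2$ and is entirely realised by $\tilde{\G}^F$ --- with a cyclic group of field automorphisms. One takes $F_0$ to be the smallest power of a field automorphism fixing the $(\G^\ast)^{F^\ast}$-class of $s$, hence fixing $e_s^{\G^F}$; after conjugating by an element of $\G^F$ one may assume in addition that $F_0$ stabilises $\Para$ and $\Levi$, the point being that the dual of $F_0$ moves $\C_{\G^\ast}^\circ(s)$ to a conjugate and hence moves the minimal Levi $\Levi^\ast$ to a conjugate. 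This gives the claimed description of the image of $\tilde{\G}^F\rtimes\langle F_0\rangle$ in $\mathrm{Out}(\G^F)$.

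Next I would pass to cohomology. By \cite{Dat}, together with the disjointness and degree-concentration results for the cohomology of $\Y_\U$, the bimodule realising the Morita equivalence between $\Lambda N e_s^{\Levi^F}$ and $\Lambda\G^F e_s^{\G^F}$ is obtained by cutting $e_s^{\G^F}R\Gamma_c(\Y_\U,\Lambda)e_s^{\Levi^F}$ by a suitable idempotent; call it $M$, with the action of $N$ extending the $\Levi^F$-action (using that $N$ normalises $\Para$ and $\Levi$ and fixes $e_s^{\Levi^F}$). By functoriality of $\ell$-adic cohomology the whole group $\tilde{\G}^F\rtimes\langle F_0\rangle$ acts on $R\Gamma_c(\Y_\U,\Lambda)$ compatibly with the $\G^F$-action, and likewise $\mathrm{N}_{\tilde{\G}^F\langle F_0\rangle}(\Levi, e_s^{\Levi^F})$ acts on the right. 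One checks, as in \cite{Jordan}, that this stabiliser is generated by $N$, $\tilde\Levi^F$ and $F_0$, so that no genuinely new action needs to be constructed: cutting by $e_s^{\G^F}$ and $e_s^{\Levi^F}$ now produces a bimodule $\tilde M$ over $\Lambda\,\tilde{\G}^F\langle F_0\rangle\, e_s^{\G^F}$ and $\Lambda\,\mathrm{N}_{\tilde{\G}^F\langle F_0\rangle}(\Levi, e_s^{\Levi^F})\, e_s^{\Levi^F}$ whose restriction to the pair $(\Lambda\G^F e_s^{\G^F},\ \Lambda N e_s^{\Levi^F})$ is $M$.

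It remains to verify that $\tilde M$ still induces a Morita equivalence. Here I would invoke the Clifford theory for Morita equivalences used in \cite{Jordan}: if a bimodule over two ``large'' group algebras restricts to a Morita bimodule over a pair of normal subalgebras, and the two large groups act compatibly on the equivalence with identified quotients, then the bimodule is itself a Morita bimodule. Concretely one computes $\tilde M^\vee\otimes_{\Lambda\,\tilde{\G}^F\langle F_0\rangle}\tilde M$ and the symmetric product, reducing via the known $\G^F$-statement for $M$ to a statement about the cyclic quotients $\tilde{\G}^F\langle F_0\rangle/\G^F$ and $\mathrm{N}_{\tilde{\G}^F\langle F_0\rangle}(\Levi, e_s^{\Levi^F})/N$. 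The potential $H^2$-obstruction to such an extension vanishes automatically because $\tilde M$ is realised geometrically --- the larger group acts on $\Y_\U$ genuinely, not merely projectively. One carries this out uniformly for $\Lambda\in\{\mathcal O,k\}$, which is legitimate since the complex $R\Gamma_c(\Y_\U,\mathcal O)$ and the relevant disjointness statements are available over both rings.

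The main obstacle, I expect, is the interface between geometry and group theory in the choice of $F_0$ and $\Para$: one needs a single $F_0$ that simultaneously acts on $\Y_\U$ (so stabilises $\Para$, $\Levi$, $\U$), has the prescribed image $\mathrm{Out}(\G^F)_{e_s^{\G^F}}$ in $\mathrm{Out}(\G^F)$, and is compatible with the right $N\tilde\Levi^F$-action on $M$ (in particular stabilises $e_s^{\Levi^F}$). Arranging all of this at once for every relevant semisimple element $s$ is exactly where the hypotheses on the type of $\G$ and on $q$ are used --- they control both $\mathrm{Out}(\G^F)_{e_s^{\G^F}}$ and the structure of $N/\Levi^F$ --- and it is the step that does not reduce to a formal manipulation; once $F_0$ and $\Para$ are fixed, the rest is bookkeeping with the cohomology of $\Y_\U$ together with the Clifford-theoretic descent.
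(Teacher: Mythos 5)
Your proposal identifies the right high-level ingredients --- the field automorphism $\phi$ generating $\mathrm{Out}(\G^F)_{e_s^{\G^F}}$ together with $\Gtilde^F$, the Deligne--Lusztig variety $\Y_\U$ as the source of the Morita bimodule, and Marcus--style Clifford descent of the equivalence --- but it has a genuine gap at the technical heart of the argument, and a secondary one in the construction of $F_0$.

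The central gap is the claim that ``by functoriality of $\ell$-adic cohomology the whole group $\tilde{\G}^F\rtimes\langle F_0\rangle$ acts on $R\Gamma_c(\Y_\U,\Lambda)$.'' The morphism $F_0$ is a Frobenius endomorphism, hence a bijective but purely inseparable endomorphism of the variety, not an algebraic automorphism. It therefore does not act on the Rickard complex $G\Gamma_c(\Y_\U,\Lambda)$ of $\ell$-permutation bimodules in the way that elements of $\tilde\G^F$ or honest graph automorphisms do; making $\Delta\langle F_0\rangle$ act on $H_c^{\dim}(\Y_\U,\Lambda)e_s^{\Levi^F}$ is precisely the non-formal step. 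The paper does this via the descent-of-scalars machinery of Section~\ref{sec 2} (Lemma~\ref{mainlemma}): one passes to $\underline\G=\G^r$ with Frobenius $\tau F_0$, exchanges the $\tau F_0$-stable unipotent radical $\underline\U$ for the $\tau$-stable one $\underline\U'=\U^r$ (using the independence-of-parabolic isomorphism), and then the \emph{genuine algebraic automorphism} $\tau$ acts on $\Y_{\underline\U'}^{\underline\G}$ and hence on its cohomology, corresponding under $\mathrm{pr}$ to the $F_0$-action on $\G^F$. Your concluding remark that ``once $F_0$ and $\Para$ are fixed, the rest is bookkeeping'' is exactly backwards: the construction of the $F_0$-action on the bimodule is the essential new input, and it is the reason \cite{Jordan} exists. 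Your appeal to ``$\tilde M$ is realised geometrically --- the larger group acts on $\Y_\U$ genuinely'' is also not right: $\Nb^F$ does not act on $\Y_\U$ at all (the $\Nb^F$-action on $M'$ is a Clifford-theoretic extension via \cite[Lemma~10.2.13]{Rouquier3}), and for the reason above $F_0$ does not act ``genuinely'' on the $\ell$-permutation complex.

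The secondary gap concerns the construction of $F_0$ itself. You write ``after conjugating by an element of $\G^F$ one may assume in addition that $F_0$ stabilises $\Para$ and $\Levi$,'' but conjugating $\phi$ by $y\in\G^F$ produces $y\phi$, and $(y\phi)^r = N_{F/\phi}(y)\,F$ is no longer equal to $F$ unless the norm $N_{F/\phi}(y)$ is trivial. The paper instead takes $z\phi$ stabilizing $(\Levi,e_s^{\Levi^F})$, sets $z_0:=N_{F/\phi}(z)\in\N_{\G^\phi}(\Levi,e_s^{\Levi^F})$, and then uses Lang's theorem inside $\Levi$ together with the norm map (Lemma~\ref{norm}) to correct to a genuine Frobenius $F_0$ with $F_0^r=F$ stabilizing $\Levi$ and $e_s^{\Levi^F}$. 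Moreover the argument splits into two cases according to whether $z_0\in\Levi^F$: in the second case one does not obtain such an $F_0$ at all, and one argues instead that $\N_{G\langle\phi\rangle}(\Levi,e_s^{\Levi^F})/\Levi^F$ is cyclic and applies \cite[Lemma~10.2.13]{Rouquier3} and \cite[Theorem~3.4]{Marcus} directly. Your proposal acknowledges that arranging $F_0$, $\Para$, $\Levi$ compatibly is ``the step that does not reduce to a formal manipulation,'' but does not supply the Lang/norm-map argument that actually carries it out, nor the case distinction.
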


\subsection*{Quasi-isolated blocks of type $A$}
In the next part of our paper we consider the case of groups of type $A$. According to the main result of \cite{Jordan2} it is prove the inductive Alperin--McKay condition it suffices to consider strictly quasi-isolated block, i.e. blocks of $\Lambda \G^F e_s^{\G^F}$ such that $\C^\circ_{\G^\ast}(s) \C_{(\G^\ast)^{F^\ast}}(s)$ is not contained in a proper Levi subgroup of $\G^\ast$. Therefore we will from now on assume that $\G$ is of type $A$ and $s$ is a strictly quasi-isolated element. In this case the automorphism group of $\G^F$ is more complicated and the quotient group $N/\Levi^F$ can become arbitrary large with the rank of $\G$ increasing. Thus, a direct approach along the lines of Theorem \ref{thm1} does not seem possible.

Using the explicit description of quasi-isolated elements in groups of type $A$ by Bonnafé \cite{Bonnafe} we instead first construct a specific $F$-stable Levi subgroup $\Levi'$ of $\G$ containing $\Levi$. This Levi subgroup has the additional property that $N'/\Levi'$ is cyclic of prime order. Again, we denote by $N'$ be the common stabilizer of $\Levi'$ and $e_s^{\Levi'^F}$ in $\G^F$. The main result of \cite{Dat} is still applicable in this slightly more general situation and we obtain a Morita equivalence between $\Lambda N' e_s^{\Levi'^F}$ and $\Lambda \G^F e_s^{\G^F}$. This enables us to construct a certain abelian subgroup $\mathcal{A}$ of $\mathrm{Aut}(\Gtilde^F)$ (see Definition \ref{defA}) such that $\Gtilde^F \rtimes \mathcal{A}$ generates the stabilizer of $e_s^{\G^F}$ in $\mathrm{Out}(\G^F)$. This enables us the following result which can be seen as a version of Theorem \ref{thm1} for groups of type $A$.
%
%

\begin{theo}[see Corollary \ref{typeA}]\label{thm2}
	Assume that $\G$ is of type A and let $s \in (\G^\ast)^{F^\ast}$ be a strictly quasi-isolated element of $\ell'$-order. If $\ell \nmid |\mathcal{A}|$ then there exists a Morita equivalence between $\Lambda N' e_s^{\Levi'^F}$ and $\Lambda \G^F e_s^{\G^F}$ which lifts to a Morita equivalence between $\Lambda \mathrm{N}_{\G^F \mathcal{A}}(\Levi',e_s^{\Levi'^F}) e_s^{\Levi'^F}$ and $\Lambda \tilde{\G}^F \mathcal{A} e_s^{\G^F}$.
\end{theo}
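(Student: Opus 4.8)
The plan is to start from the Bonnaf\'e--Dat--Rouquier Morita equivalence between $\Lambda N' e_s^{\Levi'^F}$ and $\Lambda\G^F e_s^{\G^F}$ --- which is available here because the main result of \cite{Dat} only requires $N'/\Levi'^F$ to be cyclic --- and to upgrade the bimodule inducing it into a bimodule over the two automorphism-enlarged algebras appearing in the statement. The coprimality hypothesis $\ell\nmid|\mathcal{A}|$ will be used exactly once, in the final gluing step, and I expect the geometric bookkeeping in that step to be the main obstacle.

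First I would recall from the construction preceding Definition \ref{defA} the $F$-stable Levi subgroup $\Levi\subseteq\Levi'\subseteq\G$ with $\C^\circ_{\G^\ast}(s)\subseteq\Levi^\ast$ and with $N'/\Levi'^F$ cyclic of prime order, where $N'=\mathrm{N}_{\G^F}(\Levi',e_s^{\Levi'^F})$; the existence of such a $\Levi'$ is precisely what the explicit description of the strictly quasi-isolated classes in type $A$ in \cite{Bonnafe} provides, and it is what rescues the situation when $N/\Levi^F$ fails to be cyclic for the minimal Levi $\Levi$. Since $N'/\Levi'^F$ is cyclic, the main theorem of \cite{Dat} applies to the pair $(\Levi',\G)$ and yields a Morita equivalence induced by an explicit bimodule $M$, realised as a direct summand of the $\Lambda$-cohomology of the Deligne--Lusztig variety attached to a parabolic subgroup of $\G$ admitting $\Levi'$ as a Levi complement.

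Next I would make $M$ equivariant. The diagonal automorphisms coming from $\Gtilde^F$ are incorporated directly, since the Deligne--Lusztig variety defining $M$ carries a compatible $\Gtilde^F$-action, exactly as in the passage from $\G$ to a regular embedding $\Gtilde$ in Lusztig's Jordan decomposition; this is what produces the left-hand algebra $\Lambda\Gtilde^F\mathcal{A}\,e_s^{\G^F}$. For the remaining automorphisms --- the field and graph automorphisms making up $\mathcal{A}$, which is an $\ell'$-group by hypothesis and abelian by Definition \ref{defA} --- I would follow the descent techniques of \cite{Jordan}: after replacing $\Levi'$ by a $\G^F$-conjugate stable under a suitable set of representatives of these automorphisms and checking that they fix $e_s^{\Levi'^F}$ (they fix $e_s^{\G^F}$ by the defining property of $\mathcal{A}$), each $\sigma\in\mathcal{A}$ acts on the variety defining $M$ and hence supplies an isomorphism ${}^\sigma M\cong M$ of bimodules.

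The remaining step, and the technical heart of the argument, is to choose the isomorphisms ${}^\sigma M\cong M$ coherently, so that $M$ becomes an honest bimodule over $\Lambda\Gtilde^F\mathcal{A}\,e_s^{\G^F}$ and $\Lambda\,\mathrm{N}_{\G^F\mathcal{A}}(\Levi',e_s^{\Levi'^F})\,e_s^{\Levi'^F}$ still inducing a Morita equivalence, from which the displayed equivalence is immediate. Over $k$ this coherence has to be obtained geometrically, by realising the automorphisms in $\mathcal{A}$ through a compatible family of morphisms of the Deligne--Lusztig varieties involved and keeping careful track of the resulting identifications (note that, e.g., a power of a field automorphism acting as the identity on $\G^F$ need not act as the identity on the variety); this is where the machinery of \cite{Jordan} does its work, and where I expect the bulk of the effort to lie. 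The hypothesis $\ell\nmid|\mathcal{A}|$ then serves to lift the equivariant structure from $k$ to $\mathcal{O}$: the obstruction lives in a cohomology group of $\mathcal{A}$ with coefficients in the pro-$\ell$ group $1+J(\mathcal{O})$, hence vanishes, and the abelianness of $\mathcal{A}$ keeps the whole argument within reach. Finally, since $M$ is built from a Deligne--Lusztig bimodule compatible with the inclusion $\G^F\le\Gtilde^F\mathcal{A}$, the extended equivalence restricts to the one between $\Lambda N' e_s^{\Levi'^F}$ and $\Lambda\G^F e_s^{\G^F}$, which is the meaning of ``lifts'' in the statement.
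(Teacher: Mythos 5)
The overall skeleton — start from the Bonnaf\'e--Dat--Rouquier bimodule, make it equivariant via the descent-of-scalars machinery of \cite{Jordan}, and glue — does match the paper, but there are genuine gaps in how you carry this out, and the role you assign to the hypothesis $\ell\nmid|\mathcal{A}|$ is not the one it actually plays.

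First, $\ell\nmid|\mathcal{A}|$ is not used to kill a cohomological obstruction when lifting the equivariant structure from $k$ to $\mathcal{O}$. The Bonnaf\'e--Dat--Rouquier bimodule and its extensions are constructed directly over $\Lambda$ (either $\mathcal{O}$ or $k$), and there is no $k$-to-$\mathcal{O}$ lifting step anywhere in the proof. What the paper actually does (Theorem \ref{mainthm}) is prove that $H_c^{\mathrm{dim}}(\Y_{\U'}^\G,\Lambda)\,e_s^{\Levi'^F}$ \emph{almost extends} in the sense of Definition \ref{almost}: it extends to the Hall $\ell'$-subgroup $\mathcal{N}_{\ell'}/\tilde{L}'$ of $\mathcal{N}/\tilde{L}'$, and this extension is $\mathcal{N}$-stable. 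Corollary \ref{typeA} then applies Marcus's theorem to produce a Morita equivalence between $\Lambda\mathcal{N}_{\ell'}e_s^{L'}$ and $\Lambda\tilde{G}\mathcal{A}_{\ell'}e_s^{G}$. The hypothesis $\ell\nmid|\mathcal{A}|$ enters \emph{only} to identify $\mathcal{A}_{\ell'}=\mathcal{A}$ and $\mathcal{N}_{\ell'}=\mathcal{N}$, so that the "almost extension" becomes an honest extension and the statement of Theorem B is the statement of Corollary \ref{typeA}. Your proposed obstruction argument in $H^2(\mathcal{A},1+J(\mathcal{O}))$ would, at best, address only the $\ell$-part of the obstruction, and in any case it is solving a problem the paper does not have.

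Second, you underestimate (or rather misidentify) where the real work lies. The gluing is not done by cocycle gymnastics but by Th\'evenaz's criterion for multiplicity-free modules: because the bimodule is multiplicity-free (Lemma \ref{multiplicity free}), it suffices to extend it prime-by-prime to each group $X\,\Delta(P_b)$, where $P_b/\tilde{L}'$ is a Sylow $b$-subgroup of $\mathcal{N}/\tilde{L}'$. For $b$ such that $P_b/\tilde{L}'$ is cyclic this is formal; the remaining cases are $b\in\{2,m'\}$, and handling those requires the entire apparatus of Sections \ref{sec 2}--\ref{sec 6}. In particular, the definition of $\mathcal{A}$ itself (Definition \ref{defA}) bifurcates according to whether the Levi $\Levi'$ is $1$-split or not, and the extension in the two cases is obtained by quite different mechanisms: Lemma \ref{extension} (which needs the $\gamma$-stable parabolic from Lemma \ref{graph} and Corollary \ref{graph2}) in the non-$1$-split case, and the Frobenius-root construction of Lemmas \ref{Frobroot}/\ref{Frobroot2} together with the restriction-of-scalars Lemmas \ref{mainlemma}/\ref{mainlemma2} in the $1$-split case. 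Your statement that "each $\sigma\in\mathcal{A}$ acts on the variety defining $M$ and hence supplies an isomorphism ${}^\sigma M\cong M$" glosses over exactly this point: a field automorphism does not stabilize the parabolic $\Para'$, and making that isomorphism exist (let alone choosing it coherently) is precisely what the descent-of-scalars construction is for. Without the $1$-split dichotomy, the multiplicity-free Th\'evenaz argument, and the explicit constructions of $F_0$, the proposal does not close.
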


\subsection*{Reduction to isolated blocks}
Using the methods developed in the proof of the main theorem of \cite{Jordan2} as a blueprint we use the result of Theorem \ref{thm2} to obtain a reduction of the verification of the iAM-condition to unipotent blocks of type $A$. By the work of \cite{CS14} and \cite{Brough} which together show that the inductive Alperin--McKay condition holds for unipotent blocks of type $A$ we obtain the following:

\begin{theo}[see Corollary \ref{corollaryA}]\label{thm3}
	The inductive Alperin--McKay condition holds for all $\ell$-blocks of quasi simple groups of type $A$, whenever $\ell \geq 5$.
\end{theo}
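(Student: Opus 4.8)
The plan is to combine the reduction machinery of \cite{Jordan2} with the Morita equivalence of Theorem \ref{thm2}, using the work of \cite{CS14} and \cite{Brough} on unipotent blocks as the base case. First I would recall from \cite{Jordan2} that, in order to verify the inductive Alperin--McKay condition for all $\ell$-blocks of quasi-simple groups of type $A$, it suffices to treat strictly quasi-isolated blocks $\Lambda\G^F e_s^{\G^F}$, where $\G$ is simple simply connected of type $A$ and $s\in(\G^\ast)^{F^\ast}$ is strictly quasi-isolated of $\ell'$-order. The hypothesis $\ell\geq 5$ enters at exactly this point: for type $A_{n-1}$ the relevant quotient appearing in the quasi-isolated analysis (and the group $\mathcal{A}$ of Definition \ref{defA}) has order dividing a product of the relevant structural invariants, and one checks that $\ell\geq 5$ forces $\ell\nmid|\mathcal{A}|$, so that Theorem \ref{thm2} applies. (For $\ell\in\{2,3\}$ one would have potential divisibility and the lifted Morita equivalence is no longer guaranteed by that statement — this is why the theorem is stated with $\ell\geq 5$.)

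Next I would feed the conclusion of Theorem \ref{thm2} into the character-theoretic transfer results of \cite{Jordan2}. Concretely, Theorem \ref{thm2} gives an $\mathcal{A}$-equivariant (more precisely, $\G^F\mathcal{A}$- versus $\tilde\G^F\mathcal{A}$-equivariant) Morita equivalence between $\Lambda\mathrm{N}_{\G^F\mathcal{A}}(\Levi',e_s^{\Levi'^F})e_s^{\Levi'^F}$ and $\Lambda\tilde\G^F\mathcal{A} e_s^{\G^F}$; since $\mathcal{A}$ together with $\tilde\G^F$ generates the stabilizer of $e_s^{\G^F}$ in $\mathrm{Out}(\G^F)$, this equivalence is compatible with the full relevant automorphism group and (after the standard descent from $\tilde\G^F$ to $\G^F$ and control of the relevant cohomology, as in \cite{Jordan}) transports the verification of the iAM-condition for $e_s^{\G^F}$ to the verification of the iAM-condition for the block $e_s^{\Levi'^F}$ of the smaller group $\Levi'^F$, together with its $N'/\Levi'^F$-action. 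Since $\Levi'$ is again of type $A$ (a product of linear and general linear pieces) but of strictly smaller semisimple rank, one runs an induction on the rank: the block $e_s^{\Levi'^F}$ decomposes via Jordan decomposition into blocks that are either already isolated/unipotent on a type-$A$ factor, or strictly quasi-isolated on a smaller type-$A$ factor, to which the inductive hypothesis applies. The outcome of this inductive step is that everything reduces to the iAM-condition for \emph{unipotent} blocks of quasi-simple groups of type $A$.

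Finally I would invoke \cite{CS14} (Cabanes--Späth, covering the cuspidal and general unipotent cases in type $A$) together with \cite{Brough} (which completes the remaining unipotent cases, handling the requisite equivariance and stabiliser conditions), to conclude that the inductive Alperin--McKay condition holds for all unipotent $\ell$-blocks of quasi-simple groups of type $A$. Combining this base case with the reduction above finishes the proof.

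The main obstacle, and the step deserving the most care, is the second paragraph: transporting the iAM-condition along the Morita equivalence of Theorem \ref{thm2}. The subtlety is not the Morita equivalence itself but verifying that it carries \emph{all} the bookkeeping data required by Späth's inductive condition — the action of $\mathrm{Aut}(\G^F)_b$, the existence of suitable extensions of characters to the relevant stabilisers, and the cohomological obstruction classes over the central extension — across the equivalence in a way that is genuinely compatible with passing between $\G^F$ and $\tilde\G^F$ and with the group $\mathcal{A}$. This is precisely where the hypothesis $\ell\nmid|\mathcal{A}|$ (hence $\ell\geq 5$) is indispensable, since it is what makes the relevant $\mathcal{A}$-cohomology vanish and lets the equivalence be chosen equivariantly. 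The rank induction and the appeal to \cite{CS14,Brough} are then comparatively formal once this transfer is in place.
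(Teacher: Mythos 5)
Your overall architecture matches the paper's: reduce to strictly quasi-isolated blocks via \cite{Jordan2}, use the equivariant Morita equivalence to descend to a proper Levi, induct on rank, and use \cite{CS14} and \cite{Brough} to dispose of the unipotent base case. However, there is a genuine gap in the step you yourself flag as the main obstacle, and it is not just about checking the $\mathcal{A}$-equivariance carefully.

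The paper proves Theorem~\ref{maintheoremA} (and hence Corollary~\ref{corollaryA}) by working throughout with the notion of a block being \emph{AM-good relative to a specific characteristic $\ell$-subgroup $Q$} of its defect group, namely the Cabanes subgroup (the unique maximal abelian normal subgroup, which exists when $\ell\geq 5$ by \cite[Lemma 4.16]{Marc}). The induction only closes because, at each stage, one can match up the Cabanes subgroup of the defect of the block $c_0$ on $L_0=[\Levi',\Levi']^F$ with the Cabanes subgroup upstairs: Lemma~\ref{linear prime}, Lemma~\ref{centralizer cabanes} and Corollary~\ref{central} show precisely that the relevant centralizers and normalizers (of $Q_0$, $Q$, $\tilde Q$) coincide, so that the local bijections produced by the inductive hypothesis and by \cite{Brough}/\cite{CS14} are relative to compatible subgroups. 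Without this compatibility the output of Theorem~\ref{reduction} would be a strong iAM-bijection relative to a subgroup that need not match what \cite{Brough} produces (which is anchored on $\N_G(\mathbf{K})$ for a $d$-cuspidal pair, identified with $\N_G(Q)$ for $Q$ the Cabanes subgroup via \cite[Theorem 22.9]{MarcBook}), and the induction would simply not chain together. Your proposal makes no mention of this bookkeeping, and your explanation of the role of $\ell\geq 5$ (solely that $\ell\nmid|\mathcal{A}|$) misses that the Cabanes structure of defect groups, the triviality of exceptional Schur multiplier cases (Lemma~\ref{cyclic defect}), and the hypothesis $\ell\nmid 2|H^1(F,\mathrm{Z}(\G))|$ in Theorem~\ref{reduction} are equally essential parts of where $\ell\geq 5$ is used.

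A second, smaller point: you describe the transfer as carrying the iAM-condition from $e_s^{\G^F}$ directly to $e_s^{\Levi'^F}$ on $\Levi'^F$. In the paper the recipient of the iAM-data is the block $c$ of $N'=\N_{\G^F}(\Levi',e_s^{\Levi'^F})$, and when $\ell\mid(q-\varepsilon)$ one must further pass to the intermediate subgroup $T$ and block $\hat c$ of Definition~\ref{Tgroup}, because the splendid Rickard equivalence is not available integrally in that case and Theorem~\ref{reduction2} (rather than Theorem~\ref{reduction}) has to be used, with Lemma~\ref{2block} replacing the Brauer-pair argument. This linear-prime dichotomy is not visible in your sketch.
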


The statement of Theorem \ref{thm3} has been obtained in special cases by \cite{Brough}. As a biproduct of the reduction methods developed for the proof of Theorem \ref{thm3} and our equivariant Jordan decomposition from Theorem \ref{thm1} we obtain the following:

\begin{theo}[see Theorem \ref{maintheoremBC}]\label{thm4}
	Let $X$ be one of the symbols $B$ or $C$ and let $\ell \geq 5$. Assume that
all isolated $\ell$-blocks of quasi-simple group of type $X$ are AM-good relative to the Cabanes group (see \ref{centralizer}) of their defect group.
	Then all $\ell$-blocks of quasi-simple groups of type $X$ are AM-good.
\end{theo}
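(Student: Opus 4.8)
The plan is to reduce the statement for arbitrary $\ell$-blocks of type $X$ to the hypothesis in three stages, running the argument of Theorem \ref{thm3} as a blueprint but using the equivariant Jordan decomposition of Theorem \ref{thm1} to descend from a quasi-isolated block to an isolated block of a Levi subgroup, and feeding in the type-$A$ case (Theorem \ref{thm3}) on the branches that are not of type $X$. In the first stage one invokes the main reduction of \cite{Jordan2}: to prove that all $\ell$-blocks of quasi-simple groups of type $X$ are AM-good it suffices to establish the inductive Alperin--McKay condition for the (strictly) quasi-isolated blocks. So fix a simple, simply connected group $\G$ of type $B_n$ or $C_n$ with Frobenius $F$, a group $(\G^\ast, F^\ast)$ in duality with $(\G, F)$ and a quasi-isolated element $s \in (\G^\ast)^{F^\ast}$ of $\ell'$-order, and consider the block $\Lambda \G^F e_s^{\G^F}$; since $\ell \geq 5$ we have $\ell \nmid |\Z(\G)|$, so the passage between $\G^F$ and its quasi-simple quotient is harmless. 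The pairs $(n,q)$ excluded from Theorem \ref{thm1} --- that is, $n=2$ with $q$ even, where an exceptional isogeny of $\G$ occurs --- must be treated separately: the rank-one case $B_1 = A_1$ is already covered by Theorem \ref{thm3}, and for the finitely many remaining series ($\mathrm{Sp}_4(2^f)$ and its isogeny companions) one enumerates the quasi-isolated $\ell$-blocks and checks the condition directly.

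In the second stage, let $\Levi^\ast \leq \G^\ast$ be the minimal Levi subgroup containing $\C^\circ_{\G^\ast}(s)$ and let $\Levi \leq \G$ be its dual. By minimality, any proper Levi of $\Levi^\ast$ containing $\C^\circ_{\G^\ast}(s)$ would be a proper Levi of $\G^\ast$ with the same property, so $s$ is isolated in $\Levi^\ast$ and $e_s^{\Levi^F}$ is a sum of isolated blocks of $\Levi^F$. If $\Levi = \G$ the block $\Lambda\G^F e_s^{\G^F}$ is itself isolated; assume for now $\Levi \subsetneq \G$. Since $\G$ is of classical type $B_n$ or $C_n$, $\Levi$ is a central product of a classical factor $\Levi_0$ of type $X$ and strictly smaller rank with several factors of type $A$, and accordingly (using $\ell \nmid |\Z(\G)|$) the block $e_s^{\Levi^F}$ is the image of an outer tensor product of an isolated block of $\Levi_0^F$ and blocks of groups of type $A$. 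Theorem \ref{thm1} now provides a Morita equivalence between $\Lambda N e_s^{\Levi^F}$ and $\Lambda \G e_s^{\G^F}$ lifting to one between $\Lambda \N_{\Gtilde^F\langle F_0\rangle}(\Levi, e_s^{\Levi^F}) e_s^{\Levi^F}$ and $\Lambda \Gtilde^F\langle F_0\rangle e_s^{\G^F}$, with the image of $\Gtilde^F\langle F_0\rangle$ in $\mathrm{Out}(\G^F)$ equal to the full stabiliser $\mathrm{Out}(\G^F)_{e_s^{\G^F}}$. By the transfer principle for the inductive Alperin--McKay condition along equivariant, character-triple-preserving Morita equivalences --- the mechanism already exploited in \cite{Jordan2} and \cite{Jordan} --- it then suffices to verify the condition for $e_s^{\Levi^F}$ as a block of $N$, relative to the automorphisms induced by $\N_{\Gtilde^F\langle F_0\rangle}(\Levi, e_s^{\Levi^F})$.

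In the third stage one descends from $N$ to $\Levi^F$: since $\Levi$ is the minimal Levi of that kind, $N/\Levi^F$ is cyclic, so Clifford theory reduces matters to the corresponding relative condition for $e_s^{\Levi^F}$ as a block of $\Levi^F$. Because $\Levi^F$ is, up to a central $\ell'$-factor, the direct product of $\Levi_0^F$ with groups of type $A$ and $e_s^{\Levi^F}$ is the outer tensor product above, the condition decouples over the factors. The type-$A$ factors satisfy the inductive Alperin--McKay condition by Theorem \ref{thm3} (again using $\ell \geq 5$), hence are AM-good relative to any subgroup. For the factor $\Levi_0^F$, a quasi-simple group of type $X$ carrying an isolated $\ell$-block, the residual relative condition --- once the diagonal and field automorphisms are absorbed into the $\Gtilde^F\langle F_0\rangle$-equivariant structure of Theorem \ref{thm1} --- is exactly that of being AM-good relative to the Cabanes group of the defect group in the sense of \ref{centralizer}, which is the hypothesis. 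Reassembling the outer tensor product, the Clifford correspondence over $\Levi^F \trianglelefteq N$, and the Morita equivalence of Theorem \ref{thm1} yields the inductive Alperin--McKay condition for $\Lambda \G^F e_s^{\G^F}$; with the first stage this gives the theorem. The base case $\Levi = \G$ is handled by the same bookkeeping with the identity Morita equivalence: Theorem \ref{thm1} still supplies the $\Gtilde^F\langle F_0\rangle$-equivariant structure, so AM-goodness relative to the Cabanes group of the defect group --- the hypothesis --- upgrades to full AM-goodness.

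The main obstacle is the identification in the third stage of the relative condition produced by the reduction with AM-goodness relative to the Cabanes group: this requires tracking the defect groups and, above all, the action of $\mathrm{Aut}(\G^F)_{e_s^{\G^F}}$ on them through both the Bonnaf\'e--Dat--Rouquier Morita equivalence and the regular embedding and Frobenius twist used to make Theorem \ref{thm1} equivariant, and showing that the automorphisms not already accounted for by $\Gtilde^F\langle F_0\rangle$ restrict on the defect group of the isolated block of $\Levi_0^F$ (respectively of $\G^F$) to precisely the Cabanes group. The remaining, more routine, points are the direct verification for the centrally isogenous small-rank groups in characteristic $2$ outside the scope of Theorem \ref{thm1}, and the compatibility of the outer-tensor-product and Clifford-descent steps with the relative form of the inductive condition.
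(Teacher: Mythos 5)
Your proposal follows the same overall route as the paper's proof of Theorem~\ref{maintheoremBC}: take the minimal Levi $\Levi^\ast$ containing $\C^\circ_{\G^\ast}(s)$, invoke the equivariant Bonnaf\'e--Dat--Rouquier equivalence of Theorem~\ref{equiv}, decompose the resulting isolated block of the Levi into its simple (type $A$ or type $X$) and solvable constituents, feed in the type-$A$ result and the hypothesis for the type-$X$ factor, and reassemble via the character-triple machinery (which the paper packages as Remark~\ref{reductionBC}, applied to the decomposition of $L_0=[\Levi,\Levi]^F$). Two remarks, however. First, the opening stage --- reducing to quasi-isolated blocks via the main theorem of \cite{Jordan2} --- is superfluous: the paper starts from an \emph{arbitrary} block $b$ of $\Lambda\G^F e_s^{\G^F}$ and immediately passes to the minimal Levi; since $\ell\geq 5$, the group $\Nb^F/\Levi^F$ is cyclic (indeed of order at most $2$), so Theorem~\ref{equiv} applies without the preliminary reduction, and no induction on rank is needed, in contrast to the type-$A$ argument. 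Your detour is not wrong, just an extra step. For the excluded case $n=2$, $q$ even, the paper has a cleaner argument than the ``direct verification'' you suggest: here $\Z(\G)$ is trivial, so isolated and quasi-isolated coincide, and the hypothesis plus the main theorem of \cite{Jordan2} already closes the case.

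Second, and more substantively, the assertion that the type-$A$ factors ``satisfy the inductive Alperin--McKay condition by Theorem~\ref{thm3} ... hence are AM-good relative to any subgroup'' does not hold as stated: AM-goodness relative to a characteristic subgroup $Q$ of the defect group (Remark~\ref{relative}) is not an automatic consequence of the iAM-condition, and the whole bookkeeping hinges on having bijections attached to the \emph{Cabanes} subgroup in every factor so that they glue over the direct product $L_0=H_1\times\cdots\times H_t$. What is actually needed, and what the paper invokes, is the \emph{proof} of Corollary~\ref{corollaryA} rather than its statement: that proof exhibits the strong iAM-bijection for unipotent type-$A$ blocks precisely relative to the Cabanes subgroup (via the $d$-cuspidal pair description in \cite[Theorem~22.9]{MarcBook} together with \cite[Corollary~6.1]{Brough} and \cite{CS14}). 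You do correctly flag in the final paragraph that matching up the Cabanes subgroups of $b$, of $c_0$, and of the factor blocks is the technical crux --- this is handled in the paper by Lemmas~\ref{linear prime} and~\ref{centralizer cabanes} and Corollary~\ref{central} --- so with the slip above repaired your argument becomes essentially the paper's.
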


\subsection*{$2$-Blocks with abelian defect group}
Observe that Theorem \ref{thm3} and Theorem \ref{thm4} exclude the primes $\ell=2,3$. The first reason for this is that the structure of defect groups of $\ell$-blocks of groups of Lie type for $\ell < 5$ is vastly more complicated. The second reason is that the automorphism group $\mathcal{A}$ can have a non-cyclic Sylow $2$-subgroup. This makes it difficult to control the extension of characters and the block theory at the same time. By an explicit analysis of the $2$-blocks of groups of type $A$ we are able to overcome this difficulty for blocks with abelian defect group and obtain again in this case a reduction to unipotent blocks. According to the classification result of \cite{twoblocks} almost all quasi-isolated $2$-blocks of groups with abelian defect group arise as blocks of groups of type $A$. Using this we are able to prove the inductive Alperin--McKay condition for all quasi-isolated $2$-blocks with abelian defect group. In fact we are able to prove this condition for the slightly larger class of $2$-blocks of $\G^F$ whose defect group $D$ is almost abelian in $\G^F$, see Definition \ref{almost abelian}. The advantage of working with this larger class of blocks is that we obtain a reduction theorem to quasi-simple groups of the Alperin--McKay conjecture for blocks with almost abelian defect group. Following the proof of \cite{IAM} we are able to show the following:

\begin{propo}[see Proposition \ref{modifiedSpaeth}]\label{prop}
	Let $X$ be a finite group and $\ell$ a prime. Assume that for every non-abelian simple subquotient $S$ of $X$ with $\ell \mid |S|$ the following holds: Every $\ell$-block of the universal covering group $H$ of $S$ with almost abelian defect group satisfies the iAM-condition. Then the Alperin--McKay conjecture holds for any $\ell$-block of $X$ with almost abelian defect.
\end{propo}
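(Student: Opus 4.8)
The plan is to follow the structure of Späth's reduction of the Alperin--McKay conjecture to the inductive AM-condition \cite[Theorem C]{IAM}, tracking the defect-group hypothesis throughout. First I would proceed by induction on $|X|$, so that the conjecture may be assumed for all proper sections of $X$ (with the relevant almost-abelian defect condition). Fix an $\ell$-block $b$ of $X$ with defect group $D$ almost abelian in $X$, and let $B$ be its Brauer correspondent in $\N_X(D)$. If $D$ is central in $X$, the conjecture is trivial; otherwise pick a normal subgroup $N \trianglelefteq X$ maximal subject to $D \cap N < D$ (equivalently, $D \not\le N$). The key point here, exactly as in \cite{IAM}, is to reduce to the situation where $X/N$ is as controlled as possible and where $N$ is a central product of copies of a quasi-simple group — this is where the classification of the $\ell$-blocks of $X/N$ covering a block of $N$, together with Clifford theory, enters.

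Next I would pass to a block $c$ of $N$ covered by $b$; its defect group is $D \cap N$, which one checks is almost abelian in $N$ (this should follow formally from Definition \ref{almost abelian}, since almost abelian is a condition that behaves well under intersection with normal subgroups — this is the main structural lemma I would need to isolate before the proof proper). By the inductive hypothesis applied to the (proper section giving the) universal covering group of the simple components of $N/\Z(N)$, together with the hypothesis of the proposition which grants the iAM-condition for blocks with almost abelian defect of those covering groups, Späth's machinery \cite{IAM} produces the required bijection $\Irr_0(X,b) \to \Irr_0(\N_X(D),B)$ compatible with the relevant stabilizers and Clifford-theoretic data. The verification that the iAM-condition for the simple components suffices — rather than the full inductive AM-condition as in \cite{IAM} — is the substitution one must check line by line: every invocation of the inductive condition in Späth's argument is applied to a block whose defect group is, in the relevant covering group, again almost abelian, so the weakened hypothesis is enough.

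The main obstacle will be verifying that the class of blocks with almost abelian defect is closed under all the operations used in the reduction: restriction to and Clifford theory over normal subgroups, passage to covering groups, and the formation of the local subgroups $\N_X(D)$ and their blocks. In \cite{IAM} no such restriction is imposed, so one cannot simply cite the result as a black box; instead one must re-run the argument and, at each step where a block and its defect group are produced in a section of $X$, confirm the almost-abelian property is inherited. Concretely, the delicate case is when $D$ is not abelian but only almost abelian: one must ensure that the defect groups arising in $\N_X(D)$ and in the quasi-simple components $H$ genuinely satisfy Definition \ref{almost abelian} relative to those groups, and that the "Cabanes group" type data from \ref{centralizer} is respected. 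Once the closure properties are established as standalone lemmas, the remainder of the proof is a faithful transcription of \cite{IAM} with the defect hypothesis carried along as an invariant.
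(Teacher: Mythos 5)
Your overall strategy --- re-running Sp\"ath's reduction while verifying at each step that the almost-abelian defect property is inherited --- is exactly what the paper does, and you have correctly identified that the closure of ``almost abelian in $G$'' under passing to subgroups, direct products, and central quotients (Lemma \ref{central extension} in the paper) is the structural lemma that makes this possible.

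However, there is one concrete gap. You propose to induct on $|X|$, but the paper inducts on $|X:\Z(X)|$, and this choice is essential. The reason is the paper's Proposition \ref{Fitting}, which is a Murai-style reduction: given a non-central normal subgroup $K \lhd X$ containing $\Z(X)$, one must assume the Alperin--McKay conjecture for $\ell$-blocks with almost-abelian defect of \emph{central extensions} of $X/K$. Such a central extension $H$ need not be a proper subquotient of $X$ and may well have $|H| \ge |X|$; what is always guaranteed is $|H:\Z(H)| \le |X/K| < |X:\Z(X)|$. Inducting on $|X|$ alone would not reach these groups. Relatedly, your choice of normal subgroup --- ``$N$ maximal subject to $D\not\le N$'' --- is not quite how the argument proceeds: the paper first invokes Proposition \ref{Fitting} to reduce to the situation where $X = K\N_X(D)F^\ast(X)$ for every non-central normal $K$, and then distinguishes cases following \cite[Sections 6, 7]{IAM} according to whether the Fitting subgroup or $E(X)$ is non-central. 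Once the correct induction variable and this Fitting-subgroup dichotomy are in place, the rest of your outline matches the paper's argument.
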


 As a consequence of Proposition \ref{prop} and the verification of the inductive Alperin--McKay condition for the necessary $\ell$-blocks we obtain the following result.

\begin{theo}[see Theorem \ref{Alperin McKay}]\label{thm5}
	The Alperin--McKay conjecture holds for all $2$-blocks with almost abelian defect group.
\end{theo}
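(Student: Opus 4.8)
The plan is to deduce Theorem~\ref{thm5} from Proposition~\ref{prop} by verifying its hypothesis: that every $\ell=2$-block with almost abelian defect group of the universal covering group $H$ of each non-abelian simple group $S$ with $2\mid|S|$ satisfies the iAM-condition. I would organize the argument according to the type of $S$. First, the cases where $S$ is an alternating group, a sporadic group, or a group of Lie type in defining characteristic (i.e.\ $p=2$) should be handled by quoting the existing literature, since in all of these cases the inductive Alperin--McKay condition is already known for $2$-blocks (for defining characteristic the defect groups are Sylow $2$-subgroups and one invokes the known verification there; for alternating and sporadic groups one cites the relevant papers). This reduces the problem to the case that $S=\G^F/\Z(\G^F)$ for a simple, simply connected $\G$ of Lie type in odd characteristic, with $H$ a quotient of $\G^F$.

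Next, for $H$ a quasi-simple group of Lie type in odd characteristic, I would reduce to blocks of $\G^F$ itself and invoke the reduction theorem of \cite{Jordan2}: it suffices to verify the iAM-condition for quasi-isolated $2$-blocks. Here I would use the classification of \cite{twoblocks} of quasi-isolated $2$-blocks with abelian defect, which says (as recalled in the Introduction) that almost all such blocks arise as blocks of groups of type $A$; the finitely many remaining exceptional cases must be dealt with by direct inspection, presumably citing known iAM-verifications or small-rank computations. For the main case $\G$ of type $A$, the work done earlier in the paper for $2$-blocks with almost abelian defect group — the explicit analysis of the automorphism group, the construction of the Levi $\Levi'$ with $N'/\Levi'^F$ cyclic of prime order, and the resulting $\G^F\mathcal{A}$-equivariant Morita equivalence of Theorem~\ref{thm2} and its $2$-adapted refinement — produces a reduction to \emph{unipotent} $2$-blocks of groups of type $A$. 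Those unipotent blocks satisfy the iAM-condition by \cite{CS14} together with \cite{Brough}, so the chain closes.

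I would then assemble these pieces: having verified the iAM-condition for every $2$-block with almost abelian defect of every universal covering group $H$ of a non-abelian simple group $S$ with $2\mid|S|$, Proposition~\ref{prop} applies verbatim to an arbitrary finite group $X$ and yields the Alperin--McKay equality $|\Irr_0(X,b)|=|\Irr_0(\N_X(D),B)|$ for every $2$-block $b$ of $X$ whose defect group $D$ is almost abelian. Since ordinary abelian defect groups are in particular almost abelian, this in particular recovers the Alperin--McKay conjecture for all $2$-blocks with abelian defect.

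The main obstacle, and where most of the technical weight sits, is the passage from almost abelian defect back through the automorphism group in type $A$ at the prime $2$: as flagged in the Introduction, the group $\mathcal{A}$ can have a non-cyclic Sylow $2$-subgroup, so one cannot simply quote Theorem~\ref{thm2} (which requires $\ell\nmid|\mathcal{A}|$). The delicate point is to simultaneously control the $2$-block theory — the defect groups, the Brauer correspondence, and the height-zero character counts on both sides of the Morita equivalence — while tracking the extension of characters to the relevant $2$-subgroups of $\mathcal{A}$; this is precisely the "almost abelian" hypothesis doing its job, and the argument must show that for such blocks the non-cyclic part of $\mathcal{A}$ does not interfere, so that the Butterfly/Morita-equivariance machinery still transports the iAM-data correctly to the unipotent case.
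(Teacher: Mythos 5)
Your high-level architecture matches the paper exactly: Proposition \ref{modifiedSpaeth} reduces the conjecture to verifying the iAM-condition for $2$-blocks with almost abelian defect in universal covering groups of simple groups; the alternating, sporadic, defining-characteristic and exceptional-Schur-multiplier cases are disposed of by the literature (Proposition \ref{exceptional}); and for Lie type in odd characteristic one reduces to strictly quasi-isolated blocks via \cite[Theorem 3.12]{Jordan2}, invokes the classification in Lemma \ref{quasi2} (from \cite{twoblocks}), and runs the type-$A$ reduction machinery of Sections \ref{sec 5}--\ref{sec 11}. That part of the proposal is sound and is indeed how Theorem \ref{2blocks} is organized.

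The gap is in your endgame. You assert that the unipotent $2$-blocks reached after the reduction are handled ``by \cite{CS14} together with \cite{Brough}.'' Those references are the source for Corollary \ref{corollaryA}, which is explicitly stated and used only for $\ell \geq 5$; they are not invoked for $\ell = 2$ in this paper. At $\ell = 2$ the unipotent blocks with almost abelian defect that actually survive the reduction are pinned down by Remark \ref{unipotent almost abelian}: the $Q_8$ block of $\mathrm{SL}_2(q)$ with $q \equiv \pm 3 \bmod 8$, and the two Galois-conjugate $D_8$ blocks of $E_7(q)$ with $4 \,\Vert\, (q\mp1)$ attached to the cuspidal pair $(E_6, E_6[\theta])$. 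Neither of these was in the literature before; they require the new dedicated verifications in Proposition \ref{block A1} (using the McKay-bijection of Isaacs--Malle--Navarro for $\mathrm{SL}_2$) and Proposition \ref{block E7} (using the block-theoretic data of Enguehard plus Sambale's classification of blocks with defect of order $8$ and Sp\"ath's fixed-point counting, Lemma \ref{spaeth}). Relatedly, the $E_7$ block in Lemma \ref{quasi2}(b)(ii) is already quasi-isolated of type $E_7$, so it never enters the type-$A$ reduction at all; it cannot be subsumed under your ``main case $\G$ of type $A$'' and genuinely needs a standalone argument. Your phrase about ``finitely many remaining exceptional cases \ldots presumably citing known iAM-verifications'' acknowledges some residue, but these cases were not previously known and are a real part of the proof, not a routine inspection.

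A second, smaller slip: Theorem \ref{thm2} (= Corollary \ref{typeA}) carries the hypothesis $\ell \nmid |\mathcal{A}|$, which fails at $\ell = 2$. The paper does not use a ``$2$-adapted refinement'' of that corollary directly; rather, Section 10 (Theorem \ref{reduction2}) is a genuinely different reduction tailored to $\ell \mid (q - \varepsilon)$, where the auxiliary subgroup $T$ of Definition \ref{Tgroup} and Lemma \ref{2block} on Dade ramification groups replace the ``almost extension'' arguments, and the almost-abelian hypothesis enters through Lemma \ref{linear prime} and Lemma \ref{centralizer cabanes}. Your proposal names the right obstruction (the non-cyclic Sylow $2$-subgroup of $\mathcal{A}$) but doesn't identify the actual mechanism used to circumvent it.
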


By \cite[Proposition 5.6]{Robinson} and \cite[Theorem 1.1]{KessarMalle} we obtain the following immediate consequence of Theorem \ref{thm5}.

\begin{theo}
	The Alperin weight conjecture holds for all $2$-blocks with abelian defect group.
\end{theo}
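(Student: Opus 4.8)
The plan is to combine Theorem~\ref{thm5} with two known bridging results: the reduction of the Alperin weight conjecture for abelian defect groups to the Alperin--McKay conjecture, and the compatibility of the relevant counting statements under the passage between a block with abelian defect group and its Brauer correspondent.

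First I would invoke Theorem~\ref{thm5}: every $2$-block of every finite group with \emph{almost} abelian defect group satisfies the Alperin--McKay conjecture. Since an abelian defect group is in particular almost abelian (Definition~\ref{almost abelian}), the Alperin--McKay conjecture holds for all $2$-blocks of finite groups whose defect group is abelian. This is the only substantive input from the present paper.

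Next I would recall the shape of the classical reduction. By \cite[Proposition~5.6]{Robinson}, for a block $b$ with abelian defect group $D$ and Brauer correspondent $B$ in $\mathrm{N}_G(D)$, the number of irreducible Brauer characters $l(b)$ equals $l(B)$ provided the Alperin--McKay statement $|\Irr_0(G,b)|=|\Irr_0(\mathrm{N}_G(D),B)|$ holds together with the analogous statement for all blocks appearing as Brauer correspondents of $b$ in intermediate local subgroups; in the abelian defect situation the only relevant local subgroup is $\mathrm{N}_G(D)$ itself, so this reduces exactly to the Alperin--McKay equality we already have. One then uses \cite[Theorem~1.1]{KessarMalle}, which establishes that for $2$-blocks the Alperin weight conjecture (in its ``numerical'' form $l(b)=$ number of weights) is equivalent to, or follows from, the statement $l(b)=l(B)$ together with Alperin--McKay for all local subblocks---again collapsing to the single equality furnished above in the abelian defect case. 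Chaining these implications gives: Alperin--McKay for all $2$-blocks with abelian defect $\Rightarrow$ Alperin weight conjecture for all such blocks. I would present this as a short deduction, citing the two references for the precise bookkeeping rather than reproving it.

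The main obstacle is not mathematical depth but precision in quoting \cite{Robinson} and \cite{KessarMalle}: one must check that the hypotheses of \cite[Proposition~5.6]{Robinson} and \cite[Theorem~1.1]{KessarMalle} are met by \emph{every} $2$-block with abelian defect group once Alperin--McKay is known for all of them, i.e.\ that no further block-theoretic input (such as the Bonnaf\'e--Dat--Rouquier structure, or an assumption on the fusion system) is silently required. Since both cited results are formulated exactly as ``Alperin--McKay $\Rightarrow$ Alperin weight'' statements for $2$-blocks with abelian defect, this check is routine, and the theorem follows immediately.

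\begin{proof}
	By Theorem~\ref{thm5} the Alperin--McKay conjecture holds for all $2$-blocks with almost abelian, and in particular with abelian, defect group. The claim now follows from \cite[Proposition~5.6]{Robinson} together with \cite[Theorem~1.1]{KessarMalle}.
\end{proof}
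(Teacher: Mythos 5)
Your final one-line proof is identical in substance to what the paper does: the paper gives no argument beyond invoking Theorem~\ref{thm5}, \cite[Proposition 5.6]{Robinson} and \cite[Theorem 1.1]{KessarMalle}, so formally you have reproduced the intended deduction. However, the explanatory paragraph preceding your proof misstates what \cite[Theorem 1.1]{KessarMalle} actually says. That theorem is not a reformulation of the Alperin weight conjecture, nor a statement that ``$l(b)=l(B)$ together with Alperin--McKay for all local subblocks'' implies the weight conjecture. It is the solution of one direction of Brauer's height zero conjecture: if a block has abelian defect groups, then every irreducible character in the block has height zero. Its genuine role in the deduction is to upgrade the Alperin--McKay equality $|\Irr_0(G,b)|=|\Irr_0(\mathrm{N}_G(D),B)|$ to the equality $k(b)=k(B)$ of total character numbers, since $k(b)=|\Irr_0(b)|$ by Kessar--Malle and $k(B)=|\Irr_0(B)|$ because $B$ has a normal abelian defect group. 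It is then \cite[Proposition 5.6]{Robinson} that converts this numerical information into the weight-conjecture equality for blocks with abelian defect. Your citations and the final \emph{proof} block are correct, but the reasoning you sketch would not reconstruct the argument because it attributes the wrong content to \cite[Theorem 1.1]{KessarMalle}.
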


\section*{Acknowledgement}

The author would like to thank the Isaac Newton Institute for Mathematical Sciences for support and hospitality during the programme "Groups, representations and applications: new perspectives" when work on this paper was undertaken. This work was supported by: EPSRC grant number EP/R014604/1. The author would like to thank Julian Brough for insightful discussions and suggestions.

\section{General properties of the Bonnafé--Dat--Rouquier equivalence}

In the following we follow the notation of \cite{Jordan} and \cite{Jordan2}. For the convenience of the reader we recall the most important notions.

\subsection{Representation theory}\label{rep theory}

Let $\ell$ be a prime and $K$ be a finite field extension of $\mathbb{Q}_\ell$. We assume in the following that $K$ is large enough for the finite groups under consideration. Let $\mathcal{O}$ be the ring of integers of $K$ over $\mathbb{Z}_\ell$ and $k=\mathcal{O}/J(\mathcal{O})$ its residue field. We will use $\Lambda$ (respectively $A$) to interchangeably denote $\mathcal{O}$ or $k$ (respectively $K$ or $\mathcal{O}$).

\subsection{Groups of Lie type}
Let $\G$ be a connected reductive group with Frobenius endomorphism $F: \G \to \G$ defined over $\overline{\mathbb{F}}_p$ for some prime $p \neq \ell$.  Given such a group $\G$ it is often convenient to consider it as a closed subgroup of a group whose center is connected. Therefore, we fix a regular embedding $\iota: \G \hookrightarrow \Gtilde$ of $\G$ as in \cite[Section 15.1]{MarcBook} and we identify $\G$ with its image in $\Gtilde$. For any closed subgroup $\mathbf{M}$ of $\G$ we define $\tilde{\mathbf{M}}:=\mathbf{M} \mathrm{Z}(\tilde \G)$. Moreover, if $\mathbf{H}$ is any closed $F$-stable subgroup of $\Gtilde$ then we denote by $H$ its subset of $F$-stable points $\mathbf{H}^F$.

\subsection{Godement resolutions and $\ell$-adic cohomology}

Let $\mathbf{X}$ be a variety defined over an algebraic closure of $\mathbb{F}_p$ endowed with an action of a finite group $G$. By work of Rickard and Rouquier there exists an object $G\Gamma_c(\mathbf{X},\Lambda)$ in $\mathrm{Ho}^b(\Lambda G\text{-} \mathrm{perm})$, the bounded homotopy category of $\ell$-permutation $\Lambda G$-modules. Its $i$th cohomology groups are denoted $H_c^i(\mathbf{X},\Lambda)$ and we abbreviate $H_c^{\mathrm{dim}(X)}(\mathbf{X},\Lambda)$ by $H_c^{\mathrm{dim}}(\mathbf{X},\Lambda)$.

\subsection{Deligne--Lusztig induction}

 Suppose that $\Para$ is a parabolic subgroup of $\G$ with Levi decomposition $\Para= \Levi \ltimes \U$. We consider the Deligne--Lusztig variety $\Y_\U^\G$. Its cohomology groups $H_c^{i}(\Y_\U^\G,\Lambda)$ induce an additive map $R_{\Levi \subset \Para}^{\G}: \mathbb{Z} \Irr(\Levi^F)  \to \mathbb{Z} \Irr(\G^F)$ the so-called Lusztig induction. In the case under consideration the map $R_{\Levi \subset \Para}^{\G}$ will not depend on the choice of the parabolic subgroup $\Para$ and thus we will write $R_{\Levi}^{\G}$ for $R_{\Levi \subset \Para}^{\G}$ in the following.

\subsection{The Bonnafé--Dat--Rouquier equivalence}
 Let $\G^\ast$ be the Langlands dual of $\G$ with Frobenius endomorphism $F^\ast: \G^\ast \to \G^\ast$ dual to $F$. We fix a semisimple element $s \in (\G^\ast)^{F^\ast}$ of $\ell'$-order and as in \cite[Theorem 9.12]{MarcBook} let $e_s^{\G^F} \in \mathrm{Z}(\Lambda \G^F e_s^{\G^F})$ be the central idempotent associated to it. Assume that $\Levi^\ast$ is an $F^\ast$-stable Levi subgroup of $\G^\ast$ containing $\mathrm{C}_{\G^\ast}^\circ(s)$. Additionally, suppose that $\Levi^\ast \C_{\G^\ast}(s)^{F^\ast}=  \C_{\G^\ast}(s)^{F^\ast} \Levi^\ast$ and define $\mathbf{N}^\ast := \C_{\G^\ast}(s)^{F^\ast} \Levi^\ast$. Let $\Levi$ be a Levi subgroup of $\G$ in duality with $\Levi^\ast$ and denote by $\mathbf{N}$ the subgroup of $\mathrm{N}_{\G}(\Levi)$ which corresponds to the subgroup $\mathbf{N}^\ast$ of $\mathrm{N}_{\G^\ast}(\Levi^\ast)$ under the isomorphism $\mathrm{N}_{\G}(\Levi)/ \Levi  \cong \mathrm{N}_{\G^\ast}(\Levi^\ast)/ \Levi^\ast$ given by duality. Throughout this article we assume that $\Nb/\Levi \cong \Nb^F/\Levi^F$ is cyclic.
 
 Suppose that $\Para$ is a parabolic subgroup of $\G$ with Levi decomposition $\Para= \Levi \ltimes \U$ and consider the bimodule $H_c^{\mathrm{dim}}(\Y_\U^\G,\Lambda) e_s^{\Levi^F}$.
 Since $\Nb^F/\Levi^F$ is assumed to be cyclic (and of $\ell'$-order by \cite[Corollary 2.9]{Bonnafe}) and $H_c^{\mathrm{dim}}(\Y_\U^\G,\Lambda) e_s^{\Levi^F}$ is $\Nb^F$-stable by \cite[Theorem 7.2]{Dat} there exists a $\Lambda (\G^F \times (\Nb^F)^{\mathrm{opp}} \Delta \tilde{\Nb}^F)$-module $M'$ extending the $\Lambda (\G^F \times (\Levi^F)^{\mathrm{opp}} \Delta \tilde{\Levi}^F)$-module $H_c^{\mathrm{dim}}(\Y_\U^\G,\Lambda) e_s^{\Levi^F}$, see \cite[Lemma 10.2.13]{Rouquier3}.
 
 For the following theorem recall that for any complex $\mathcal C \in \mathrm{Comp}^b(A)$ there exists (see for instance \cite[2.A.]{Dat}) a complex $\mathcal C^{\mathrm{red}}$ with $\mathcal C \cong \mathcal C^{\mathrm{red}}$ in $\mathrm{Ho}^b(A)$ such that $\mathcal C^{\mathrm{red}}$ has no non-zero direct summand which is homotopy equivalent to $0$.
 
 \begin{theorem}[Bonnaf\'e--Dat--Rouquier]\label{Boro}
There exists a complex $\mathcal C'$ of $\mathcal{O} \G^F$-$\mathcal{O} \Nb^F $-bimodules extending $G\Gamma_c(\Y^\G_{\U}, \Lambda)^{\operatorname{red}}e_s^{\Levi^F}$ such that $H^{d}(\mathcal C')\cong M'$ where $d:= \mathrm{dim}(\Y_\U^\G)$. The complex $\mathcal C'$ induces a splendid Rickard equivalence between $\mathcal{O} \G^F e_s^{\G^F}$ and $\mathcal{O} \Nb^F e_s^{\Levi^F}$ and the bimodule $M'$ induces a Morita equivalence between $\mathcal{O} \G^F e_s^{\G^F}$ and $\mathcal{O} \Nb^F e_s^{\Levi^F}$.
 \end{theorem}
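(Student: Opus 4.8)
The plan is to obtain Theorem \ref{Boro} by combining the theorem of Bonnaf\'e--Dat--Rouquier recalled in \cite{Dat} with an extension of the relevant bimodule and bimodule complex along the normal subgroup $\Levi^F\trianglelefteq\Nb^F$, whose quotient is cyclic of order prime to $\ell$ by \cite[Corollary 2.9]{Bonnafe}. First I would record the input: by \cite{Dat} the reduced complex $\mathcal C:=G\Gamma_c(\Y_\U^\G,\Lambda)^{\mathrm{red}}e_s^{\Levi^F}$ of $\mathcal{O}\G^F$-$\mathcal{O}\Levi^F$-bimodules induces a splendid Rickard equivalence between $\mathcal{O}\G^F e_s^{\G^F}$ and $\mathcal{O}\Levi^F e_s^{\Levi^F}$; its top cohomology is $H^d(\mathcal C)=H_c^{\mathrm{dim}}(\Y_\U^\G,\Lambda)e_s^{\Levi^F}$; moreover \cite{Dat} provides a Morita equivalence between $\mathcal{O}\G^F e_s^{\G^F}$ and $\mathcal{O}\Nb^F e_s^{\Levi^F}$ realised by an extension of $H_c^{\mathrm{dim}}(\Y_\U^\G,\Lambda)e_s^{\Levi^F}$ to $\mathcal{O}\G^F$-$\mathcal{O}\Nb^F$-bimodules; and by \cite[Theorem 7.2]{Dat} the class of $\mathcal C$ in $\mathrm{Ho}^b$ is $\Nb^F$-stable, so that in particular $e_s^{\Levi^F}$ is central in $\mathcal{O}\Nb^F$.

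The core step is the extension of $\mathcal C$ to a complex $\mathcal C'$ of $\mathcal{O}\G^F$-$\mathcal{O}\Nb^F$-bimodules with $H^d(\mathcal C')\cong M'$. Because $\Nb^F/\Levi^F$ is cyclic of $\ell'$-order and $\mathcal C$ has no non-zero contractible direct summand, the $\Nb^F$-stability of $\mathcal C$ can be upgraded to an honest complex $\mathcal C'$ over $\mathcal{O}\G^F$-$\mathcal{O}\Nb^F$ with $\Res^{\Nb^F}_{\Levi^F}\mathcal C'\cong\mathcal C$ in $\mathrm{Ho}^b$; this is the complex analogue of \cite[Lemma 10.2.13]{Rouquier3} carried out with the techniques of \cite{Jordan}, the essential point being that an isomorphism ${}^\sigma\mathcal C\xrightarrow{\sim}\mathcal C$ in $\mathrm{Ho}^b$ for a generator $\sigma$ of $\Nb^F/\Levi^F$ can be normalised because $|\Nb^F/\Levi^F|$ is invertible in $\mathcal O$ and $\mathcal O$ contains the relevant roots of unity. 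Any two such extensions differ by tensoring on the right with a linear character of $\Nb^F/\Levi^F$; since $H^d(-)$ is additive, homotopy invariant and commutes with restriction, $H^d(\mathcal C')$ is an extension of $H_c^{\mathrm{dim}}(\Y_\U^\G,\Lambda)e_s^{\Levi^F}$ to $\mathcal{O}\Nb^F$ and hence differs from $M'$ by such a character, so after the corresponding twist I may assume $H^d(\mathcal C')\cong M'$.

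It then remains to verify the two equivalences over $\Nb^F$. For the Morita statement, $M'$ differs from the extension of $H_c^{\mathrm{dim}}(\Y_\U^\G,\Lambda)e_s^{\Levi^F}$ used in \cite{Dat} only by tensoring with a linear character of $\Nb^F/\Levi^F$ inflated to $\Nb^F$; since tensoring by such a character is a Morita self-equivalence of $\mathcal{O}\Nb^F e_s^{\Levi^F}$, the Morita equivalence between $\mathcal{O}\G^F e_s^{\G^F}$ and $\mathcal{O}\Nb^F e_s^{\Levi^F}$ provided by \cite{Dat} is also realised by $M'$. For the splendid Rickard equivalence I would use that $\mathcal{O}\Nb^F e_s^{\Levi^F}$ is a crossed product of $\mathcal{O}\Levi^F e_s^{\Levi^F}$ by $\Nb^F/\Levi^F$ and invoke the transfer of splendid Rickard equivalences to such crossed products developed in \cite{Jordan} (following Marcus and \cite{Rouquier3}): the restriction of $\mathcal C'$ to $\mathcal{O}\G^F$-$\mathcal{O}\Levi^F$ is $\mathcal C$, which induces a splendid Rickard equivalence, and the unit and counit of the corresponding adjunction remain isomorphisms in the homotopy category after passing to the crossed product; the termwise $\ell$-permutation property and the twisted-diagonal vertices persist because $\Levi^F$ contains a Sylow $\ell$-subgroup of $\Nb^F$, so that the extension of an $\ell$-permutation bimodule along the $\ell'$-index inclusion $\Levi^F\subseteq\Nb^F$ is again an $\ell$-permutation bimodule with the same vertex.

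The main obstacle is the extension step together with the identification $H^d(\mathcal C')\cong M'$: the $\Nb^F$-stability furnished by \cite{Dat} lives a priori only in $\mathrm{Ho}^b$, and one must promote it to a genuine complex of bimodules carrying a strict $\Nb^F$-action whose top cohomology is precisely the prescribed module $M'$. This is exactly where the hypotheses that $\Nb^F/\Levi^F$ is cyclic of order prime to $\ell$ and that the reduced complex has no contractible direct summand --- so that its self-equivalences in $\mathrm{Ho}^b$ are units in a ring in which $|\Nb^F/\Levi^F|$ is invertible --- are indispensable; once $\mathcal C'$ is in hand, verifying the two equivalences is comparatively formal.
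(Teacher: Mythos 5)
The proposal goes wrong at the very first step, where you record the input from \cite{Dat}: the reduced complex $\mathcal C = G\Gamma_c(\Y_\U^\G,\Lambda)^{\mathrm{red}}e_s^{\Levi^F}$ does \emph{not} induce a splendid Rickard equivalence between $\mathcal{O}\G^F e_s^{\G^F}$ and $\mathcal{O}\Levi^F e_s^{\Levi^F}$ once $\Nb^F \neq \Levi^F$. What \cite{Dat} proves about $\mathcal C$ at the level of $\Levi^F$ is only one half of an adjunction, namely that $\operatorname{End}^\bullet_{k\G^F}(\mathcal C)$ has cohomology concentrated in degree zero and equal to $k\Levi^F e_s^{\Levi^F}$. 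No two-sided equivalence between $\mathcal{O}\G^F e_s^{\G^F}$ and $\mathcal{O}\Levi^F e_s^{\Levi^F}$ can exist in general: the two algebras have different numbers of irreducible characters (already $|\mathcal{E}(\G^F,s)| \neq |\mathcal{E}(\Levi^F,s)|$ whenever $A(s)=\mathrm{C}_{\G^\ast}(s)/\mathrm{C}^\circ_{\G^\ast}(s)$ is nontrivial), and passing to $\Nb^F$ is precisely what repairs the other half of the adjunction. With that input gone, the ``crossed product transfer'' paragraph has nothing to transfer from; and a Marcus-type transfer would in any case require a compatible crossed product on \emph{both} sides, whereas $\mathcal{O}\G^F e_s^{\G^F}$ stays fixed here and only the $\Levi^F$-side is enlarged to $\Nb^F$.

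The extension step is also a gap. You appeal to a ``complex analogue of \cite[Lemma 10.2.13]{Rouquier3}'', arguing that a homotopy isomorphism ${}^\sigma\mathcal C \xrightarrow{\sim} \mathcal C$ for a generator $\sigma$ of the cyclic $\ell'$-group $\Nb^F/\Levi^F$ can be normalised to give a strict $\Nb^F$-action because $|\Nb^F/\Levi^F|$ is invertible. This is not a known general fact: promoting a homotopy-stability datum to a genuine complex of bimodules involves an obstruction in the units of $\End_{\mathrm{Comp}^b}(\mathcal C)$, and invertibility of the index alone does not kill it. The argument in \cite{Dat} (spelled out in the paper's Lemma \ref{diagonal action}) does something different: it \emph{induces} $\mathcal C$ from $\Levi^F$ to $\Nb^F$, uses the degree-zero concentration of $\operatorname{End}^\bullet_{k\G^F}(\mathcal C)$ and the projective-resolution/commutative-diagram argument to identify the homotopy endomorphism algebra of the induced complex with $\End$ of its top cohomology, and then splits off a summand $\tilde{\mathcal C}$ with $H^d(\tilde{\mathcal C})\cong M'$; both the Rickard equivalence and the fact that the restriction of $\tilde{\mathcal C}$ back to $\Levi^F$ is $\mathcal C$ follow from that same endomorphism-algebra computation. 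The concentration of $\operatorname{End}^\bullet_{k\G^F}(\mathcal C)$, which is the essential geometric input from \cite{Dat}, is absent from your sketch. The paper's proof is genuinely just the proof in \cite[Section 7]{Dat} with $M'$ substituted for the extension of \cite[Proposition 7.3]{Dat}; it is not a Clifford-theoretic shortcut.
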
 
 
 \begin{proof}
 	In the proof of \cite[Theorem 7.5]{Dat} use the fact that $M'$ extends $H_c^{\dim}(\Y_\U^\G,\Lambda) e_s^{\Levi^F}$ instead of \cite[Proposition 7.3]{Dat}. The rest of the proof of the theorem is as in \cite[Section 7]{Dat}. 
 \end{proof}
 
Note that the Morita equivalence in Theorem \ref{Boro} might depend on the particular choice of the extension $M'$. In particular, this is also the case for the character bijection $R:\Irr(\Nb^F,e_s^{\Levi^F}) \to \Irr(\G^F,e_s^{\G^F})$ induced by the Morita bimodule $M'$.

\subsection{Some local properties of the Bonnafé--Dat--Rouquier Morita equivalence}
Let $\mathcal C'$ be a complex of $\Lambda(\G^F \times (\Nb^F)^{\opp})$-modules inducing the splendid Rickard equivalence between $\Lambda \G^F e_s^{\G^F}$ and $\Lambda \Nb^F e_s^{\Levi^F}$ as in Theorem \ref{Boro} above. Then $\mathcal{C}'$ induces a bijection $c \mapsto b$ between the blocks of $\Lambda \Nb^F e_s^{\Levi^F}$ and $\Lambda \G^F e_s^{\G^F}$ where $b$ is defined as the unique block such that $b \mathcal C' c$ is not homotopy equivalent to $0$. We fix a block $c$ of $\Lambda \Nb^F e_s^{\Levi^F}$ and we let $(Q,c_Q)$ be a $c$-Brauer pair. There exists a unique $b$-Brauer pair $(Q,b_Q)$ such that the complex $\mathrm{Br}_{\Delta Q}(\mathcal C') c_Q$ induces a Rickard equivalence between $k \mathrm{C}_{\G^F}(Q) b_Q$ and $k \mathrm{C}_{\Nb^F}(Q) c_Q$, see \cite[Theorem 7.7]{Dat}.

\begin{lemma}\label{multiplicity free}
	Suppose that $Q$ is an $\ell$-subgroup of $\Levi^F$. Then the $\mathrm{C}_{\G^F}(Q) \times \mathrm{C}_{\Levi^F}(Q)^{\mathrm{opp}}$-bimodule $H^{\mathrm{dim}}_c(\Y_{\C_\U(Q)}^{\mathrm{C}_{\G}(Q)},\Lambda) \mathrm{br}_Q(e_s^{\Levi^F})$ is multiplicity-free.
\end{lemma}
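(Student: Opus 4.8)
The plan is to reduce the multiplicity-freeness of the cohomology bimodule for the smaller group $\mathrm{C}_\G(Q)$ to the known Morita equivalence coming from the Bonnaf\'e--Dat--Rouquier theorem, applied inside the centralizer. First I would invoke the local structure established just before the lemma: since $Q$ is an $\ell$-subgroup of $\Levi^F$, the Brauer construction $\mathrm{Br}_{\Delta Q}(\mathcal C')$ of the splendid Rickard complex $\mathcal C'$ from Theorem \ref{Boro}, after cutting by $\mathrm{br}_Q(e_s^{\Levi^F})$, induces a Rickard equivalence between $k\,\mathrm{C}_{\G^F}(Q)\,\mathrm{br}_Q(e_s^{\G^F})$ and $k\,\mathrm{C}_{\Nb^F}(Q)\,\mathrm{br}_Q(e_s^{\Levi^F})$ (summing over Brauer pairs). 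The key point is that, because $\mathcal C'$ extends $G\Gamma_c(\Y^\G_\U,\Lambda)^{\mathrm{red}}e_s^{\Levi^F}$, its Brauer construction is described by the corresponding Deligne--Lusztig variety for $\mathrm{C}_\G(Q)$: one has $\mathrm{Br}_{\Delta Q}(G\Gamma_c(\Y^\G_\U,\Lambda)) \cong G\Gamma_c(\Y^{\mathrm{C}_\G(Q)}_{\mathrm{C}_\U(Q)},k)$ in the relevant homotopy category, a fact which is standard for Godement resolutions of Deligne--Lusztig varieties (this is the content used in \cite[Theorem 7.7]{Dat} and goes back to the compatibility of the Brauer functor with $\ell$-adic cohomology of varieties with an $\ell$-group action, cf. \cite{Rouquier3}). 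Taking the top cohomology group and using that in degree $d$ the complex $\mathcal C'$ has cohomology $M'$, one identifies $H^{\mathrm{dim}}_c(\Y_{\mathrm{C}_\U(Q)}^{\mathrm{C}_\G(Q)},\Lambda)\,\mathrm{br}_Q(e_s^{\Levi^F})$ with (a summand of) the top cohomology of $\mathrm{Br}_{\Delta Q}(\mathcal C')$.

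Next I would use that the Rickard equivalence induced by $\mathrm{Br}_{\Delta Q}(\mathcal C')\,c_Q$ is moreover \emph{perverse} concentrated in a single degree, hence is itself a Morita equivalence given by the bimodule $H^{d}$ of that complex; this is exactly the situation of Theorem \ref{Boro} transported to the centralizer $\mathrm{C}_\G(Q)$, whose connected component $\mathrm{C}_\G^\circ(Q)$ is again reductive with $\mathrm{C}_\Levi(Q)$ a Levi subgroup and with the cyclic-quotient hypothesis inherited. A bimodule inducing a Morita equivalence between (sums of) blocks of $k\,\mathrm{C}_{\G^F}(Q)$ and $k\,\mathrm{C}_{\Levi^F}(Q)$ is indecomposable up to the block decomposition, and in particular, as a bimodule for $\mathrm{C}_{\G^F}(Q)\times\mathrm{C}_{\Levi^F}(Q)^{\mathrm{opp}}$ restricted to each block it is indecomposable; a Morita bimodule between blocks is multiplicity-free because $\mathrm{End}$ of the bimodule, on each block, is the center of the block algebra, which forces every indecomposable summand to occur once. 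Summing over the blocks $b_Q$ appearing (for the fixed idempotent $\mathrm{br}_Q(e_s^{\Levi^F})$) then gives that the whole bimodule $H^{\mathrm{dim}}_c(\Y_{\mathrm{C}_\U(Q)}^{\mathrm{C}_\G(Q)},\Lambda)\,\mathrm{br}_Q(e_s^{\Levi^F})$ is multiplicity-free.

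The main obstacle I expect is making the identification of the Brauer construction with the centralizer Deligne--Lusztig variety completely precise at the level of the chosen representative $\mathcal C'$ (and not merely up to homotopy), together with checking that the top-degree cohomology really does give a Morita bimodule in this local setting — i.e.\ that the perversity/concentration-in-one-degree statement survives the Brauer functor. This is essentially \cite[Theorem 7.7]{Dat} applied to $\mathrm{C}_\G(Q)$, but one must verify that the hypotheses of Theorem \ref{Boro} (in particular cyclicity of $\mathrm{C}_{\Nb}(Q)/\mathrm{C}_{\Levi}(Q)$, which follows since it embeds into $\Nb/\Levi$) are met for the centralizer. Once this is in place, the multiplicity-freeness is a formal consequence of the bimodule inducing a Morita equivalence onto a block algebra, as above.
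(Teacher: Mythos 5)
Your overall strategy matches the paper's: use the Brauer construction of the splendid Rickard complex, identify it with the Deligne--Lusztig complex for $\mathrm{C}_\G(Q)$, take top cohomology, and invoke the fact that a Morita bimodule between block algebras is indecomposable. However, there is a genuine confusion in the middle of the argument that leaves a gap.

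The local Morita equivalence coming from \cite[Theorem 7.7]{Dat} (or, as you say, Theorem~\ref{Boro} ``transported'' to the centralizer) is between $\Lambda\,\mathrm{C}_{\G^F}(Q)\,b_Q$ and $\Lambda\,\mathrm{C}_{\Nb^F}(Q)\,c_Q$, not between $\mathrm{C}_{\G^F}(Q)$ and $\mathrm{C}_{\Levi^F}(Q)$ as you assert in the key sentence. The bimodule $H^{\mathrm{dim}}_c(\Y_{\mathrm{C}_\U(Q)}^{\mathrm{C}_\G(Q)},\Lambda)\,\mathrm{br}_Q(e_s^{\Levi^F})$ appearing in the lemma is only a $\mathrm{C}_{\G^F}(Q)\times\mathrm{C}_{\Levi^F}(Q)^{\mathrm{opp}}$-bimodule, and it is \emph{not} itself a Morita bimodule in general; it is the \emph{restriction} of the Morita bimodule $M'_Q = H^{d_Q}(\mathrm{Br}_{\Delta Q}(\mathcal{C}'_Q))$ from $\mathrm{C}_{\G^F}(Q)\times\mathrm{C}_{\Nb^F}(Q)^{\mathrm{opp}}$ to $\mathrm{C}_{\G^F}(Q)\times\mathrm{C}_{\Levi^F}(Q)^{\mathrm{opp}}$. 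The fact that the restriction is still multiplicity-free does not follow formally from $M'_Q$ being multiplicity-free; one needs the additional Clifford-theoretic input that $\mathrm{C}_{\Nb^F}(Q)/\mathrm{C}_{\Levi^F}(Q)$ embeds into $\Nb^F/\Levi^F$, which is cyclic of $\ell'$-order, and that restriction along a normal inclusion with cyclic $\ell'$-quotient preserves multiplicity-freeness. You hint at the cyclicity of $\mathrm{C}_{\Nb}(Q)/\mathrm{C}_{\Levi}(Q)$ in your closing paragraph, but you frame it only as a hypothesis check for Theorem~\ref{Boro} rather than as the mechanism for the final descent from $\Nb$ to $\Levi$, and your intermediate statement that one has a Morita equivalence directly with $k\,\mathrm{C}_{\Levi^F}(Q)$ is false. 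This last descent step is what the paper's proof supplies and is where your argument, as written, breaks down.
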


\begin{proof}

According to the proof of \cite[Theorem 5.2]{Rickard} there exists a unique complex $\mathcal C_Q'$ of $\Lambda(\C_{\G^F}(Q) \times \C_{\Nb^F}(Q)^{\opp})$-modules lifting the complex $\Br_{\Delta Q}(\mathcal C')$ from $k$ to $\Lambda$. Note that 
	$$\Res_{\C_{\G^F}(Q) \times \C_{\Levi^F}(Q)^{\opp} }^{\C_{\G^F}(Q) \times \C_{\Nb^F}(Q)^{\opp}} (\mathrm{Br}_{\Delta Q}(\mathcal C)) \cong G \Gamma_c(\Y_{\C_\U(Q)}^{\mathrm{C}_{\G}(Q)},\Lambda) \mathrm{br}_Q(e_s^{\Levi^F})$$ in $\mathrm{Ho}^b(k(\mathrm{C}_{\G^F}(Q) \times \mathrm{C}_{\Levi^F}(Q)^{\mathrm{opp}}))$. Let $M'_Q:=H^{d_Q}(\mathrm{Br}_{\Delta Q}(\mathcal{C}_Q'))$, where $d_Q:=\mathrm{dim}( \Y_{\C_\U(Q)}^{\C_\G(Q)})$ is the unique degree in which $\mathrm{Br}_{\Delta Q}(\mathcal{C})$ has non-zero cohomology. Then $M'_Q c_Q$ induces a Morita equivalence between $\Lambda \mathrm{C}_{\G^F}(Q) b_Q$ and $\Lambda \mathrm{C}_{\Nb^F}(Q) c_Q$. In particular, $M'_Q c_Q$ is multiplicity free. Since this is true for all Brauer pairs of all blocks $c$ of $\Lambda \Nb^F e_s^{\Levi^F}$ it follows that $M'_Q$ is multiplicity free as well. The module $M'_Q$ is an extension of $H^{\mathrm{dim}}_c(\Y_{\C_\U(Q)}^{\mathrm{C}_{\G}(Q)},\Lambda) \mathrm{br}_Q(c_Q)$ and $\Nb^F/\Levi^F$ is cyclic of $\ell'$-order the result follows from Clifford theory.
\end{proof}

\subsection{Extending the action of complexes}

%
%
%
%

 We need to slightly strengthen \cite[Theorem 7.6]{Dat} by including the diagonal action of $\tilde N$.

\begin{lemma}\label{diagonal action}
	Assume that $\ell \nmid |H^1(F,\mathrm{Z}(\G))|$. Then there exists a complex $\mathcal C'$ of $\Lambda (\G^F \times (\Nb^F)^{\mathrm{opp}} \Delta \tilde{\Nb}^F)$-modules such that $H^d(\mathcal C') \cong M'$ and $\mathcal C'$ induces a splendid Rickard equivalence between $\Lambda \G^F e_s^{\G^F}$ and $\Lambda \Nb^F e_s^{\Levi^F}$.
\end{lemma}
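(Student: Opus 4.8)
The plan is to run the argument of \cite[Section 7]{Dat}, in particular the proof of \cite[Theorem 7.6]{Dat}, while keeping track of the diagonal action of $\tilde{\Nb}^F$, exactly as automorphisms were tracked in \cite{Jordan}. The starting observation is that $G\Gamma_c(\Y^\G_\U,\Lambda)$, and hence $G\Gamma_c(\Y^\G_\U,\Lambda)^{\mathrm{red}}e_s^{\Levi^F}$, carries a natural action of the diagonal subgroup $\Delta\tilde{\Levi}^F$ of $\Gtilde^F\times(\tilde{\Levi}^F)^{\mathrm{opp}}$: since $\mathrm{Z}(\Gtilde)\subseteq\Para$, the group $\Para$ is also a parabolic subgroup of $\Gtilde$ with the same unipotent radical $\U$ and with $F$-stable Levi complement $\tilde{\Levi}=\Levi\,\mathrm{Z}(\Gtilde)$, so $\Y^\G_\U$ is a subvariety of $\Y^{\Gtilde}_\U$ on which $\tilde{\Levi}^F$ acts through $\xi\mapsto x\xi x^{-1}$ (using that $x\in\tilde{\Levi}$ normalises $\Para$, $\U$ and $F(\U)$ and conjugates $\G$ into itself); the idempotent $e_s^{\Levi^F}$ is fixed by $\tilde{\Levi}^F$ because $\mathrm{Z}(\Gtilde)^F$ is central, and the reduction $(-)^{\mathrm{red}}$ is compatible with this action. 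This is precisely the $\Lambda(\G^F\times(\Levi^F)^{\mathrm{opp}}\Delta\tilde{\Levi}^F)$-structure on $H^{\mathrm{dim}}_c(\Y^\G_\U,\Lambda)e_s^{\Levi^F}$ that was used to construct $M'$.

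Next one identifies the group to be acted on. As $\tilde{\Nb}=\Nb\tilde{\Levi}$ with $\Nb\cap\tilde{\Levi}=\Levi$ connected, Lang's theorem gives $\tilde{\Nb}^F=\Nb^F\tilde{\Levi}^F$, and a direct computation identifies $\G^F\times(\Nb^F)^{\mathrm{opp}}\Delta\tilde{\Nb}^F$ with $\{(y,\bar m)\in\Gtilde^F\times(\tilde{\Nb}^F)^{\mathrm{opp}}\mid ym\in\G^F\}$; in particular it is generated by $\G^F\times(\Nb^F)^{\mathrm{opp}}$ together with $\Delta\tilde{\Levi}^F$, it meets $\Gtilde^F\times(\tilde{\Levi}^F)^{\mathrm{opp}}$ exactly in $\G^F\times(\Levi^F)^{\mathrm{opp}}\Delta\tilde{\Levi}^F$, and modulo $\G^F\times(\Levi^F)^{\mathrm{opp}}$ it becomes the direct product $(\Nb^F/\Levi^F)\times(\tilde{\Levi}^F/\Levi^F)$ (the factors commute because $[\Nb,\tilde{\Levi}]\subseteq[\Nb,\Levi]\subseteq\Levi$). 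Here $\tilde{\Levi}^F/\Levi^F\cong(\mathrm{Z}(\Gtilde)/\mathrm{Z}(\G))^F$ sits in an extension of $H^1(F,\mathrm{Z}(\G))$ by $\mathrm{Z}(\Gtilde)^F/\mathrm{Z}(\G)^F$, and $\Delta\mathrm{Z}(\Gtilde)^F$ is central in $\Gtilde^F\times(\tilde{\Nb}^F)^{\mathrm{opp}}$. The extension problem therefore splits: (i) across $\Delta\mathrm{Z}(\Gtilde)^F$, which is for free since this central subgroup already acts on $G\Gamma_c(\Y^\G_\U,\Lambda)^{\mathrm{red}}e_s^{\Levi^F}$ through a character of $\mathrm{Z}(\Gtilde)^F/\mathrm{Z}(\G)^F$ valued in the central units of $\mathrm{End}_{\mathrm{Ho}^b}$, and the same character extends the action to $\mathcal C'$ because restriction induces isomorphisms $\mathrm{End}_{\mathrm{Ho}^b}(\mathcal C')\cong\mathrm{End}_{\mathrm{Ho}^b}(G\Gamma_c(\Y^\G_\U,\Lambda)^{\mathrm{red}}e_s^{\Levi^F})\cong\mathrm{Z}(\mathcal O\G^F e_s^{\G^F})$; (ii) across the cyclic $\ell'$-group $\Nb^F/\Levi^F$, which is \cite[Theorem 7.6]{Dat} performed in the presence of the (inert) $\Delta\tilde{\Levi}^F$-action; and (iii) across $H^1(F,\mathrm{Z}(\G))$, which is an $\ell'$-group by hypothesis.

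Steps (ii) and (iii) are carried out with the Clifford theory for complexes in $\mathrm{Ho}^b$ used throughout \cite{Dat} and \cite{Jordan}: conjugation by $\tilde{\Nb}^F$ sends $\mathcal C'$ to a homotopy-equivalent complex, because $\Levi$, $\U$ and $e_s^{\Levi^F}$ are $\tilde{\Nb}^F$-stable and $R^\G_\Levi$ is independent of the parabolic; the normalisation $(-)^{\mathrm{red}}$ pins down the required isomorphisms up to central units, so the resulting $2$-cocycle takes values in $\mathrm{Z}(\mathcal O\G^F e_s^{\G^F})^\times$, and since the quotient $(\Nb^F/\Levi^F)\times H^1(F,\mathrm{Z}(\G))$ is an $\ell'$-group this cocycle is a coboundary; moreover these choices can be taken compatibly with the $\Delta\mathrm{Z}(\Gtilde)^F$-action of (i) and with the geometric $\Delta\tilde{\Levi}^F$-action, since the parabolic-change isomorphisms used are themselves $\tilde{\Levi}$-equivariant. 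This produces $\mathcal C'$ as a complex of $\Lambda(\G^F\times(\Nb^F)^{\mathrm{opp}}\Delta\tilde{\Nb}^F)$-modules. It still induces a splendid Rickard equivalence between $\Lambda\G^F e_s^{\G^F}$ and $\Lambda\Nb^F e_s^{\Levi^F}$, since this depends only on the unchanged restriction to $\G^F\times(\Nb^F)^{\mathrm{opp}}$; and $H^d(\mathcal C')\cong M'$ since both are extensions of $H^{\mathrm{dim}}_c(\Y^\G_\U,\Lambda)e_s^{\Levi^F}$ to the diagonal action of $\tilde{\Nb}^F$ and hence differ by a linear character of $\tilde{\Nb}^F/\Nb^F$, which is absorbed by twisting $\mathcal C'$.

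The main obstacle is the cocycle-trivialisation in (ii)--(iii): one has to keep the $\Nb^F/\Levi^F$- and $H^1(F,\mathrm{Z}(\G))$-equivariance isomorphisms of $\mathcal C'$ coherent with the already-fixed $\Delta\tilde{\Levi}^F$-structure and verify that no nontrivial class in the relevant $H^2$ survives. This is exactly where the hypothesis $\ell\nmid|H^1(F,\mathrm{Z}(\G))|$ is used: once the central $\mathrm{Z}(\Gtilde)^F$-contribution has been dealt with in (i), the remaining quotient of $\tilde{\Nb}^F$ is an $\ell'$-group, so the Clifford-theoretic extension of complexes (\cite[Lemma 10.2.13]{Rouquier3} and its complex analogue) and the equivariance technique of \cite{Jordan} apply directly; without this hypothesis a non-cyclic $\ell$-part of $\tilde{\Nb}^F/\Nb^F$ could genuinely obstruct the extension.
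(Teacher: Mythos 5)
There is a genuine gap, and it lies in the role you assign to the hypothesis $\ell\nmid|H^1(F,\mathrm{Z}(\G))|$. You describe the extension problem as if the complex had to be extended across $\tilde\Levi^F/\Levi^F$ (your steps (i) and (iii)), and you invoke the hypothesis to kill a cocycle over $H^1(F,\mathrm{Z}(\G))$. But, as you yourself observe in your opening paragraph, the complex $G\Gamma_c(\Y^\G_\U,\Lambda)^{\mathrm{red}}e_s^{\Levi^F}$ already carries the diagonal $\tilde\Levi^F$-action from the geometry; there is no extension problem across $\tilde\Levi^F/\Levi^F$ at all. The only actual extension problem is across $\bigl(\G^F\times(\Nb^F)^{\opp}\Delta\tilde\Nb^F\bigr)/\bigl(\G^F\times(\Levi^F)^{\opp}\Delta\tilde\Levi^F\bigr)\cong\Nb^F/\Levi^F$, which is cyclic of $\ell'$-order unconditionally (by \cite[Corollary 2.9]{Bonnafe}); no hypothesis is needed for the cocycle trivialization there, just as in \cite[Theorem 7.6]{Dat}. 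So your account would either go through without the hypothesis (contradicting its presence in the statement) or else concede that some step is not being justified.

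The hypothesis is in fact needed earlier, to make the homotopy-category machinery run once the extra diagonal factor $\Delta\tilde\Levi^F$ is dragged along. The arguments of \cite[Theorem 7.6]{Dat}, and in particular \cite[Theorem A.3]{Dat} and \cite[Corollary 3.8]{Dat}, rest on control of Brauer constructions of $\mathcal C$ at $\ell$-subgroups; this control is proved in \cite{Dat} only for $\ell$-subgroups of $\G^F\times(\Levi^F)^{\opp}$. The paper uses $\ell\nmid|H^1(F,\mathrm{Z}(\G))|$ precisely to show that any $\ell$-subgroup $\tilde R$ of $\G^F\times(\Levi^F)^{\opp}\Delta\tilde\Levi^F$ satisfies $\tilde R\subset R\,\Delta(Z)$ with $R:=\tilde R\cap(\G^F\times(\Levi^F)^{\opp})$ and $Z:=\mathrm{Z}(\Gtilde^F)_\ell$; since $\Delta(Z)$ acts trivially on $\Y^\G_\U$ one deduces $\mathrm{Br}_{\tilde R}(\mathcal C)\cong\mathrm{Br}_{R}(\mathcal C)$, and only then do the vertex analysis and the identification $\mathrm{End}^\bullet_{k\G^F}(\mathcal C)\cong\mathrm{End}_{D^b(k\G^F)}(\mathcal C)$ in $\mathrm{Ho}^b\bigl(k(\Levi^F\times(\Levi^F)^{\opp}\Delta\tilde\Levi^F)\bigr)$ go through. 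Your phrase ``the normalisation $(-)^{\mathrm{red}}$ pins down the required isomorphisms up to central units'' is exactly the conclusion that this machinery is designed to establish, not a premise one may assume. Without the Brauer-construction control provided by the hypothesis, you have not justified that the obstruction lives where you say it does, and the ``Clifford theory for complexes'' you invoke has no rigorous content at that point.
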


\begin{proof}
	Let us first assume that $\Lambda =k$. We consider the complex of $\ell$-permutation $k(\G^F \times (\Levi^F)^{\opp} \Delta (\tilde{\Levi}^F))$-modules $ \mathcal{C}:=G\Gamma_c(\Y_\U,k)^{\mathrm{red}}e_s^{\Levi^F}$. Consider an $\ell$-subgroup $\tilde R$ of $\G^F \times (\Levi^F)^{\opp} \Delta(\tilde{\Levi}^F)$. Let $Z:=\mathrm{Z}(\tilde{\G}^F)_\ell$ and denote $R:=\tilde{R} \cap (\G^F \times (\Levi^F)^{\opp})$. By assumption $\ell \nmid |H^1(F,\mathrm{Z}(\G))|=|\Gtilde^F/\G^F\mathrm{Z}(\Gtilde^F)|$ and therefore $\tilde{R} \subset R \Delta(Z)$.
	The subgroup $\Delta(Z)$ centralizes the variety $\Y_\U^\G$. Hence, $\mathcal{C}$ can be regarded as complex of $(\Gtilde^F \times (\tilde{\Levi}^F)^{\opp})/ \Delta(Z)$-modules. Since  $\tilde{R} \subset R \Delta(Z)$ we deduce that $\Br_{R}(\mathcal{C}) \cong \Br_{\tilde R}(\mathcal{C})$. Using \cite[Corollary 3.8]{Dat} we deduce that $\Br_{R}(\mathcal{C})$ is acyclic unless $R$ is conjugate to a subgroup of $\Delta \Levi^F$. Thus, $\tilde R$ is conjugate to a subgroup of $\Delta \tilde{\Levi}^F$. By \cite[Lemma A.2]{Dat} we deduce that the indecomposable summands of $\mathcal{C}$ are all contained in $\Delta \tilde{\Levi}^F$.
	 
	 Consider now the complex $\mathrm{End}^\bullet_{k\G^F}(\mathcal{C})$ of $k(\Levi^F \times(\Levi^F)^{\opp} \Delta(\tilde{\Levi}^F))$-bimodules. The same argument as above show that $\Br_{\tilde R}(\mathrm{End}^\bullet_{k\G^F}(\mathcal{C})) \cong \Br_{R}(\mathrm{End}^\bullet_{k\G^F}(\mathcal{C}))$. The cohomology of the latter complex is concentrated in degree $0$ only by the proof of Step 1 of \cite[Theorem 7.5]{Dat}. Hence, by \cite[Theorem A.3]{Dat} we have $\End_{k \G^F}^\bullet(\mathcal{C}) \cong \End_{D^b(k \G^F)}(\mathcal{C})$ in $\Ho^b(k(\Levi^F \times (\Levi^F)^{\opp} \Delta \tilde{\Levi}^F)).$
	
	The rest of the proof is almost identical to the proof of \cite[Theorem 7.6]{Dat}. For completeness we provide most of the details here.
	Denote $\mathcal{C}'=\Ind_{\G^F\times(\Levi^F)^{\opp} \Delta \tilde{\Levi}^F }^{\G^F\times(\Nb^F)^{\opp} \Delta \tilde{\Levi}^F}(\mathcal{C})$. 
	Let $\mathcal P$ be a projective resolution of $k\Nb^F$, i.e., a
	complex of projective $k(\Nb^F\times(\Nb^F)^{\opp} \Delta \tilde{\Levi}^F)$-modules such that its terms $\mathcal P^i=0$ for $i>0$, together
	with a quasi-isomorphism $\mathcal P\to k\Nb^F$ of $k(\Nb^F\times(\Nb^F)^{\opp} \Delta \tilde{\Levi}^F)$-modules.
	
	Let $\mathcal X$ be a complex of $k(\G^F \times (\Nb^F)^{\mathrm{opp}} \Delta \tilde{\Levi}^F)$-modules. Observe that we can consider the complex $\End_{kG}^\bullet(\mathcal X)$ as complex of $k(\Nb^F \times (\Nb^F)^{\mathrm{opp}} \Delta \tilde{\Levi}^F)$-modules. We have a natural isomorphism $\End_{k(\G^F \times (\Nb^F)^{\mathrm{opp}} \Delta \tilde{\Levi}^F)}^\bullet(\mathcal X) \cong \Hom^\bullet_{k (\Nb^F \times (\Nb^F)^{\opp} \Delta \tilde{\Levi}^F)}(k \Nb^F, \End_{kG}^\bullet(\mathcal{X}))$. The terms of $\mathcal C'$ are projective $k\G^F$-modules. Therefore as in the proof of \cite[Theorem 7.6]{Dat} we can consider the following commutative diagram
	$$\xymatrix{
		\End_{\Ho^b(k(\G^F\times(\Nb^F)^{\opp} \Delta \tilde{\Levi}^F))}(\mathcal{C}') \ar[r] \ar[d]_\cong & 
		\End_{D^b(k(\G^F\times(\Nb^F)^{\opp} \Delta \tilde{\Levi}^F))}(\mathcal{C}') \ar[d]^\cong \\
		\Hom_{\Ho^b(k(\Nb^F\times (\Nb^F)^{\opp} \Delta \tilde{\Levi}^F))}(k\Nb^F,\End_{k\G^F}^\bullet(\mathcal{C}')) \ar[r] &
		\Hom_{\Ho^b(k(\Nb^F\times (\Nb^F)^{\opp} \Delta \tilde{\Levi}^F))}(\mathcal P,\End_{k\G^F}^\bullet(\mathcal{C}'))
	}$$
	
	Using the isomorphisms of complexes in $\Ho^b(k(\Nb^F\times
	(\Nb^F)^{\opp} \Delta \tilde{\Levi}^F))$
	$$\End_{k\G^F}^\bullet(\mathcal{C}') \cong  \Ind_{\Levi^F\times (\Levi^F)^{\opp} \Delta \tilde{\Levi}^F}^{\Nb^F\times (\Nb^F)^{\opp} \Delta \tilde{\Levi}^F}(
	\End_{k\G^F}^\bullet(C))$$
	and
	$$\End_{D^b(k\G^F)}(\mathcal{C}') \cong  \Ind_{\Levi^F\times (\Levi^F)^{\opp} \Delta \tilde{\Levi}^F}^{\Nb^F\times (\Nb^F)^{\opp} \Delta \tilde{\Levi}^F}(
	\End_{D^b(k\G^F)}(C)),$$
	we deduce that
	$$\End_{k\G^F}^\bullet(\mathcal{C}') \cong \End_{D^b(k\G^F)}(\mathcal{C}')\text{ in }\Ho^b(k(\Nb^F\times
	(\Nb^F)^{\opp} \Delta \tilde{\Levi}^F)).$$
	Now, the canonical map
	$$\Hom_{\Ho^b(k(\Nb^F\times (\Nb^F)^{\opp}))}(k\Nb^F,\End_{D^b(k\G^F)}(\mathcal{C}')) \to
	\Hom_{\Ho^b(k(\Nb^F\times (\Nb^F)^{\opp}))}(\mathcal P,\End_{D^b(k\G^F)}(\mathcal{C}'))$$
	is an isomorphism, since $\End_{D^b(k\G^F)}(\mathcal{C}')$ is a complex concentrated in degree $0$.
	It follows that the top horizontal map in the commutative diagram
	above is an isomorphism, hence we have canonical isomorphisms
	$$\End_{\Ho^b(k(\G^F\times(\Nb^F)^{\opp} \Delta(\tilde{\Levi}^F)))}(\mathcal{C}')\xrightarrow{\cong}
	\End_{D^b(k(\G^F\times(\Nb^F)^{\opp} \Delta(\tilde{\Levi}^F)))}(\mathcal{C}')\xrightarrow{\cong}
	$$
	$$
	\End_{k(\G^F\times(\Nb^F)^{\opp} \Delta \tilde{\Levi}^F)}(\Ind_{\G^F\times(\Levi^F)^{\opp} \Delta(\tilde{\Levi}^F)}^{\G^F\times(\Nb^F)^{\opp} \Delta (\tilde{\Levi^F})}
	H_c^d(\Y_\U,k) e_s^{\Levi^F})
	.$$
	Using the proof of Step 3 of \cite[Theorem 7.6]{Dat} we deduce that there exists a summand $\tilde{\mathcal C}$ of $\mathcal{C}'$ which is quasi-isomorphic to $M'$. As in Step 4 and 5 of the proof of \cite[Theorem 7.6]{Dat} we see that $\mathrm{Res}_{\G^F\times (\Levi^F)^{\opp} \Delta \tilde{\Levi}^F}^{\G^F\times (\Nb^F)^{\opp} \Delta \tilde{\Levi}^F} (\tilde{\mathcal{C}}) \cong \mathcal{C}$ in $\Ho^b(k({\G^F\times (\Levi^F)^{\opp} \Delta \tilde{\Levi}^F}))$. Furthermore, Step 5 of the proof of \cite[Theorem 7.6]{Dat} shows that $\tilde{\mathcal C}$ induces a splendid Rickard equivalence between $k \G^F e_s^{\G^F}$ and $k \Nb^F e_s^{\Levi^F}$.
	
	Finally, let us consider the case $\Lambda=\mathcal{O}$. Using \cite[Lemma 5.1]{Rickard} together with the arguments in the first paragraph of the proof of \cite[Theorem 5.2]{Rickard} we observe that there exists a unique complex of $\mathcal{O} (\G^F \times (\Levi^F)^{\opp} \Delta \tilde{\Levi}^F)$-modules lifting $\tilde{\mathcal C}$. Moreover by the proof of \cite[Theorem 5.2]{Rickard} this complex induces a splendid Rickard equivalence between $\mathcal{O} \G^F e_s^{\G^F}$ and $\mathcal{O} \Nb^F e_s^{\Levi^F}$.
\end{proof}

\subsection{The Bonnafé--Dat--Rouquier equivalence and character correspondences}\label{character bijection}

Let $M:=H^{\mathrm{dim}}_c(\Y_\U^\G,\Lambda) e_s^{\Levi^F}$ considered as $X:=\G^F \times (\Levi^F)^{\mathrm{opp}} \Delta \tilde{\Levi}^F$-module and let $M'$ be a $Y:=\G^F \times (\Nb^F)^{\mathrm{opp}} \Delta \tilde{\Nb}^F$-module extending $M$. Define $\tilde{M}=\mathrm{Ind}_X^{\tilde{X}}(M)$, where $\tilde{X}:=\tilde{\G}^F \times (\tilde{\Levi}^F)^{\mathrm{opp}}$. Observe that $\tilde{M} \cong H^{\mathrm{dim}}_c(\Y_\U^{\tilde{\G}},\Lambda) e_s^{\Levi^F}$.

Recall from the proof of \cite[Theorem 7.5]{Dat} that there exists a central idempotent $e \in \Z(\Lambda \tilde{\Levi}^F)$ such that $\sum_{n \in \Nb^F/\Levi^F} {}^n e=e_s^{\Levi^F}$. Moreover, the induction functor yields a Morita equivalence between $\Lambda \tilde{\Levi}^F e$ and $\Lambda \tilde{\Nb}^F e_s^{\Levi^F}$. From this it follows that the right action of $\tilde{\Levi}^F$ on $\tilde{M}$ extends to $\tilde{\Nb}^F$ and the extended bimodule $\tilde{M}':= \tilde{M} e \otimes_{\Lambda \tilde{\Levi}^F} \Lambda \tilde{\Nb}^F$ induces a Morita equivalence between $\Lambda \tilde{\Nb}^F e_s^{\Levi^F}$ and $\Lambda \tilde{\G}^F e_s^{\G^F}$. The proof of \cite[Theorem 7.5]{Dat} shows that $\mathrm{Ind}_Y^{\tilde{Y}}(M') \cong \tilde{M}'$.
Writing 
$$\tilde{R}: \Irr(\tilde{\Nb}^F,e_s^{\Levi^F}) \to \Irr(\tilde {\G}^F, e_s^{\G^F})$$ for the character bijection induced by the Morita bimodule $\tilde{M}'$ we therefore obtain: 

\begin{lemma}\label{BDR}
For every $\psi \in \Irr(\tilde{\Nb}^F,e_s^{\Levi^F})$ there exists a unique character $\lambda \in \Irr(\tilde{\Levi}^F,e)$ such that $\mathrm{Ind}_{\tilde{\Levi}^F}^{\tilde{\Nb}^F}(\lambda)=\psi$ and we have $R_{\tilde{\Levi}^F}^{\tilde{\G}^F}(\lambda)=\tilde{R} \mathrm{Ind}_{\tilde{\Levi}^F}^{\tilde{\Nb}^F}(\lambda)$.
\end{lemma}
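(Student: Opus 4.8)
The plan is to deduce the statement from three facts already at hand: the Morita equivalence given by ordinary induction between $\Lambda\tilde{\Levi}^F e$ and $\Lambda\tilde{\Nb}^F e_s^{\Levi^F}$, the Morita equivalence induced by $\tilde{M}'=\tilde{M}e\otimes_{\Lambda\tilde{\Levi}^F}\Lambda\tilde{\Nb}^F$, and the fact (underlying \cite[Theorem 7.5]{Dat}) that for the connected-centre group $\tilde{\G}$ the bimodule $H_c^{\dim}(\Y_\U^{\tilde{\G}},-)e_s^{\Levi^F}$ realises Lusztig induction on the block $e_s^{\Levi^F}$.

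First I would dispose of the existence and uniqueness of $\lambda$. Since $\Ind_{\tilde{\Levi}^F}^{\tilde{\Nb}^F}$ restricts to a Morita equivalence between $\Lambda\tilde{\Levi}^F e$ and $\Lambda\tilde{\Nb}^F e_s^{\Levi^F}$, it induces a bijection on isomorphism classes of simple modules, hence a bijection $\Irr(\tilde{\Levi}^F,e)\to\Irr(\tilde{\Nb}^F,e_s^{\Levi^F})$, $\mu\mapsto\Ind_{\tilde{\Levi}^F}^{\tilde{\Nb}^F}(\mu)$; the required $\lambda$ is the unique preimage of $\psi$. Equivalently $\lambda$ is the unique irreducible constituent of $\Res^{\tilde{\Nb}^F}_{\tilde{\Levi}^F}(\psi)$ lying in $e$, and $\Res^{\tilde{\Nb}^F}_{\tilde{\Levi}^F}\Ind_{\tilde{\Levi}^F}^{\tilde{\Nb}^F}(\lambda)=\sum_{n\in\tilde{\Nb}^F/\tilde{\Levi}^F}{}^{n}\lambda$ is multiplicity free because the conjugates ${}^{n}e$ are pairwise orthogonal.

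Next I would compute $\tilde{R}(\psi)$. Composing the induction Morita equivalence with the one induced by $\tilde{M}'$ yields a Morita equivalence between $\Lambda\tilde{\Levi}^F e$ and $\Lambda\tilde{\G}^F e_s^{\G^F}$ whose bimodule is $\tilde{M}'\otimes_{\Lambda\tilde{\Nb}^F}(\Lambda\tilde{\Nb}^F e)=\tilde{M}e\otimes_{\Lambda\tilde{\Levi}^F}(\Lambda\tilde{\Nb}^F e)$. Using the decomposition $\Lambda\tilde{\Nb}^F e=\bigoplus_{n\in\tilde{\Nb}^F/\tilde{\Levi}^F}n\,\Lambda\tilde{\Levi}^F e$, the isomorphism of left $\Lambda\tilde{\Levi}^F$-modules $n\,\Lambda\tilde{\Levi}^F e\cong\Lambda\tilde{\Levi}^F\,{}^{n}e$, and the vanishing $\tilde{M}e\cdot{}^{n}e=0$ for $n\notin\tilde{\Levi}^F$, this bimodule collapses to $\tilde{M}e$. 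Hence $\tilde{R}(\psi)=\tilde{R}(\Ind_{\tilde{\Levi}^F}^{\tilde{\Nb}^F}\lambda)$ is the character afforded by $\tilde{M}e\otimes_{\Lambda\tilde{\Levi}^F}V_\lambda$ (with $V_\lambda$ a module affording $\lambda$), and since $e$ acts as the identity on $V_\lambda$ this is the character of $\tilde{M}\otimes_{\Lambda\tilde{\Levi}^F}V_\lambda$; passing to $K$-coefficients and using $\tilde{M}\cong H_c^{\dim}(\Y_\U^{\tilde{\G}},\Lambda)e_s^{\Levi^F}$, I would thus get $\tilde{R}(\psi)=[\,H_c^{\dim}(\Y_\U^{\tilde{\G}},K)\otimes_{K\tilde{\Levi}^F}V_\lambda\,]$.

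It remains to match this with $R_{\tilde{\Levi}^F}^{\tilde{\G}^F}(\lambda)=\sum_i(-1)^i[\,H_c^i(\Y_\U^{\tilde{\G}},K)\otimes_{K\tilde{\Levi}^F}V_\lambda\,]$. The key point is that $\lambda$ lies in the block $e$ of $K\tilde{\Levi}^F$, which — by the construction of $e$ recalled above and the fact that $\tilde{\G}$ has connected centre, so that $\C_{\tilde{\G}^\ast}(\tilde s)$ is connected and contained in $\tilde{\Levi}^\ast$ for a semisimple lift $\tilde s$ of $s$ — is a Lusztig-series idempotent to which the Bonnafé--Rouquier disjointness result (as used in \cite{Dat}) applies, giving $H_c^i(\Y_\U^{\tilde{\G}},K)e=0$ for $i\neq\dim(\Y_\U^{\tilde{\G}})$. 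Only the middle-degree term then survives in the alternating sum and equals $[\,\tilde{M}\otimes_{K\tilde{\Levi}^F}V_\lambda\,]$, so that $R_{\tilde{\Levi}^F}^{\tilde{\G}^F}(\lambda)=\tilde{R}(\Ind_{\tilde{\Levi}^F}^{\tilde{\Nb}^F}(\lambda))$. The hard part will be exactly this step: one must know that $e$ genuinely is the Lusztig-series idempotent of a lift $\tilde s$ of $s$, so that the concentration of $\ell$-adic cohomology in a single degree is available, and one must reconcile the degree shift and sign relating the Morita bijection attached to middle-degree cohomology with the alternating-sum normalisation of $R_{\tilde{\Levi}^F}^{\tilde{\G}^F}$ — both of which are contained in the analysis of \cite[Theorem 7.5]{Dat} for $\tilde{\G}$. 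The first two steps are purely formal: Clifford theory over the central idempotent $e$ and composition of Morita equivalences.
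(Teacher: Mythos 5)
Your proposal is correct and takes essentially the same approach as the paper, which presents the lemma without an explicit proof as an immediate consequence of the facts recalled just before it from the proof of \cite[Theorem 7.5]{Dat}: the Morita equivalence given by induction between $\Lambda\tilde{\Levi}^Fe$ and $\Lambda\tilde{\Nb}^Fe_s^{\Levi^F}$, the bimodule $\tilde{M}'=\tilde{M}e\otimes_{\Lambda\tilde{\Levi}^F}\Lambda\tilde{\Nb}^F$, and the identification of the composite with middle-degree cohomology. Your unwinding (uniqueness of $\lambda$ from the pairwise-orthogonal ${}^n e$, collapse of the composite bimodule to $\tilde{M}e$, and concentration of $H_c^\ast(\Y_\U^{\tilde\G},K)e$ in a single degree via Bonnaf\'e--Rouquier disjointness) matches the intended reasoning, and you correctly flag the sign/degree bookkeeping as the part carried by \cite{Dat}.
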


We now prove a local version of this result. Let $\tilde{b}$ be a block of $\Lambda \tilde{\G}^F e_s^{\G^F}$ covering $b$ and let $\tilde{c}$ be the unique block of $\Lambda \tilde{\Nb}^F e_s^{\Levi^F}$ corresponding to it under the Morita equivalence given by $\tilde{M}'$. We let $\tilde f :=\tilde c e$ be the unique block of $\Lambda \tilde{\Levi}^F e$ below $\tilde{c}$.
We assume that $Q$ is a characteristic subgroup of $D$ and consider the $X_Q:=\N_{\G^F}(Q) \times \N_{\Levi^F}(Q)^{\mathrm{opp}} \Delta \N_{\tilde{\Levi}^F}(Q)$-module $M_Q:=H^{\mathrm{dim}}_c(\Y_{\C_\U(Q)}^{\N_\G(Q)},\Lambda) \br_Q(e_s^{\Levi^F})$. There exists a $Y_Q:=\N_{\G^F}(Q) \times (\N_{\Nb^F}(Q))^{\mathrm{opp}} \Delta \N_{\tilde{\Nb}^F}(Q)$-module $M_Q'$ extending $M_Q$.

 Denote $\tilde{M}_Q=\Ind_{X_Q}^{\tilde X_Q} M_Q$, where $\tilde{X}_Q:=\N_{\tilde{\G}^F}(Q) \times \N_{\tilde{\Levi}^F}(Q)^{\mathrm{opp}}$. Observe that $\tilde{M}_Q \cong H^{\mathrm{dim}}_c(\Y_{\C_\U(Q)}^{\N_{\tilde{\G}}(Q) },\Lambda) \br_Q(e_s^{\Levi^F})$. Let $\tilde{C}_Q:=\br_Q(\tilde c)$ and $\tilde{B}_Q:=\br_Q(\tilde b)$ be the Harris--Knörr correspondents (in the sense of \cite[Corollary 1.18]{Jordan}) of $ \tilde{c}$ and $\tilde{b}$ in $\N_{\tilde{\Nb}^F}(Q)$ and $\N_{\tilde{\G}^F}(Q)$ respectively. Since $\tilde{f}$ corresponds to the block $\tilde{b}$ under the Morita equivalence given by $\tilde{M}$ it follows that $\tilde{M}_Q \tilde{F}_Q$ induces a Morita equivalence between $\Lambda \N_{\tilde{\Levi}^F}(Q) \tilde{F}_Q$ and $\Lambda \N_{\tilde{\G}^F}(Q) \tilde{B}_Q$. Recall that the stabilizer of the idempotent $e$ in $\tilde{\Nb}^F$ is $\tilde{\Levi}^F$. Consequently, the stabilizer of $\br_Q(e)$ in $\N_{\tilde{\Nb}^F}(Q)$ is $\N_{\tilde{\Levi}^F}(Q)$. Therefore, the bimodule $\tilde{M}'_Q:= \tilde{M}_Q \br_Q(e) \otimes_{\Lambda \N_{\tilde{\Levi}^F}(Q)} \Lambda \N_{\tilde{\Nb}^F}(Q)$ is an extension of $\tilde{M}_Q$ and $\tilde{M}_Q' \tilde{C}_Q \cong \tilde{M}_Q \tilde{F}_Q \otimes_{\Lambda \N_{\tilde{\Levi}^F}(Q)} \Lambda \N_{\tilde{\Nb}^F}(Q)$ induces a Morita equivalence between $\Lambda \N_{\tilde{\Nb}^F}(Q) \tilde{C}_Q$ and $\Lambda \N_{\tilde{\G}^F}(Q) \tilde{B}_Q$. We write
 $$\tilde R_Q: \Irr(\mathrm{N}_{\tilde{\Nb}^F}(Q), \tilde C_Q) \to \Irr(\mathrm{N}_{\tilde{\G}^F}(Q), \tilde B_Q)$$
for the associated character bijection. 
 
 \begin{lemma}\label{character theoretic BDR}
 	For every $\psi \in \Irr(\N_{\tilde{\Nb}^F}(Q),\tilde{C}_Q)$ there exists a unique character $\lambda \in \Irr(\N_{\tilde{\Levi}^F}(Q),\tilde{F}_Q)$ such that $\mathrm{Ind}_{\N_{\tilde{\Levi}^F}(Q)}^{\N_{\tilde{\Nb}^F}(Q)}(\lambda)=\psi$ and we have $R_{\N_{\tilde{\Levi}^F}(Q)}^{\N_{\tilde{\G}^F}(Q)}(\lambda)=\tilde{R}_Q \mathrm{Ind}_{\N_{\tilde{\Levi}^F}(Q)}^{\N_{\tilde{\Nb}^F}(Q)}(\lambda)$.
 \end{lemma}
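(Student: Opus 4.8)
The plan is to carry out the proof of Lemma \ref{BDR} at the level of the local groups attached to $Q$, replacing every object by its $Q$-local counterpart: the groups $\tilde{\G}^F,\tilde{\Levi}^F,\tilde{\Nb}^F$ by $\N_{\tilde{\G}^F}(Q),\N_{\tilde{\Levi}^F}(Q),\N_{\tilde{\Nb}^F}(Q)$, the idempotents $e,e_s^{\Levi^F}$ by $\br_Q(e),\br_Q(e_s^{\Levi^F})$, the blocks by their Harris--Kn\"orr correspondents $\tilde F_Q,\tilde C_Q,\tilde B_Q$, the module $\tilde M$ by $\tilde M_Q$ and the bijection $\tilde R$ by $\tilde R_Q$. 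All the structural input needed has already been assembled in the paragraph preceding the statement: the identification $\tilde M_Q\cong H^{\mathrm{dim}}_c(\Y_{\C_\U(Q)}^{\N_{\tilde\G}(Q)},\Lambda)\br_Q(e_s^{\Levi^F})$ of $\N_{\tilde\G^F}(Q)\times\N_{\tilde\Levi^F}(Q)^{\opp}$-modules; the fact that $\tilde M_Q\tilde F_Q$ induces a Morita equivalence between $\Lambda\N_{\tilde\Levi^F}(Q)\tilde F_Q$ and $\Lambda\N_{\tilde\G^F}(Q)\tilde B_Q$; the identity $\tilde M_Q'\tilde C_Q\cong\tilde M_Q\tilde F_Q\otimes_{\Lambda\N_{\tilde\Levi^F}(Q)}\Lambda\N_{\tilde\Nb^F}(Q)$ together with the fact that this bimodule induces $\tilde R_Q$; and the statement that $\N_{\tilde\Levi^F}(Q)$ is the stabilizer of $\br_Q(e)$ in $\N_{\tilde\Nb^F}(Q)$.

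First I would record that induction restricts to a Morita equivalence between $\Lambda\N_{\tilde\Levi^F}(Q)\tilde F_Q$ and $\Lambda\N_{\tilde\Nb^F}(Q)\tilde C_Q$. Indeed, $\tilde M_Q\tilde F_Q$ and $\tilde M_Q'\tilde C_Q$ both induce Morita equivalences with $\Lambda\N_{\tilde\G^F}(Q)\tilde B_Q$, and the second is obtained from the first by tensoring with $\Lambda\N_{\tilde\Nb^F}(Q)$ over $\Lambda\N_{\tilde\Levi^F}(Q)$; hence the bimodule $\tilde F_Q\,\Lambda\N_{\tilde\Nb^F}(Q)\,\tilde C_Q$ is invertible, so $\mathrm{Ind}_{\N_{\tilde\Levi^F}(Q)}^{\N_{\tilde\Nb^F}(Q)}$ induces a bijection $\Irr(\N_{\tilde\Levi^F}(Q),\tilde F_Q)\to\Irr(\N_{\tilde\Nb^F}(Q),\tilde C_Q)$ (alternatively this follows directly from Clifford theory, since $\tilde F_Q$ inherits the trivial inertia of $\br_Q(e)$). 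Applied to the given $\psi$ this produces the unique $\lambda\in\Irr(\N_{\tilde\Levi^F}(Q),\tilde F_Q)$ with $\mathrm{Ind}_{\N_{\tilde\Levi^F}(Q)}^{\N_{\tilde\Nb^F}(Q)}(\lambda)=\psi$.

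Next I would compute $\tilde R_Q(\psi)$. By construction it is the character of the $\Lambda\N_{\tilde\G^F}(Q)$-module $\tilde M_Q'\tilde C_Q\otimes_{\Lambda\N_{\tilde\Nb^F}(Q)}\lambda$ with $\lambda$ regarded via $\psi=\mathrm{Ind}(\lambda)$; substituting $\tilde M_Q'\tilde C_Q=\tilde M_Q\tilde F_Q\otimes_{\Lambda\N_{\tilde\Levi^F}(Q)}\Lambda\N_{\tilde\Nb^F}(Q)$ turns this into $\tilde M_Q\tilde F_Q\otimes_{\Lambda\N_{\tilde\Levi^F}(Q)}\mathrm{Res}^{\N_{\tilde\Nb^F}(Q)}_{\N_{\tilde\Levi^F}(Q)}\mathrm{Ind}(\lambda)$, and $\mathrm{Res}\,\mathrm{Ind}(\lambda)$ is the sum of the $\N_{\tilde\Nb^F}(Q)$-conjugates ${}^n\lambda$. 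Since $\br_Q(e)$, hence the block $\tilde F_Q$, has stabilizer exactly $\N_{\tilde\Levi^F}(Q)$, the right action of $\tilde F_Q$ in the left tensor factor annihilates ${}^n\lambda$ for $n\notin\N_{\tilde\Levi^F}(Q)$, so only the term $n=1$ survives and $\tilde R_Q(\psi)$ is the character of $\tilde M_Q\tilde F_Q\otimes_{\Lambda\N_{\tilde\Levi^F}(Q)}\lambda\cong H^{\mathrm{dim}}_c(\Y_{\C_\U(Q)}^{\N_{\tilde\G}(Q)},\Lambda)\otimes_{\Lambda\N_{\tilde\Levi^F}(Q)}\lambda$ (here $\br_Q(e_s^{\Levi^F})$ acts trivially on $\lambda$ since $\tilde F_Q\leq\br_Q(e_s^{\Levi^F})$). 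As the cohomology of $\Y_{\C_\U(Q)}^{\N_{\tilde\G}(Q)}$ is concentrated in the single degree $d_Q$ (as in the proof of Lemma \ref{multiplicity free}), this character is precisely $R_{\N_{\tilde\Levi^F}(Q)}^{\N_{\tilde\G^F}(Q)}(\lambda)$, with the Deligne--Lusztig sign pinned down exactly as in the proof of Lemma \ref{BDR}; this is the asserted identity.

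I expect the proof to be essentially bookkeeping, since the genuinely substantial steps---the extension of $M_Q$ to $M_Q'$, the matching of the Harris--Kn\"orr correspondents with the local Morita equivalences furnished by \cite[Theorem 7.7]{Dat}, and the identification of $\tilde M_Q$ with the cohomology of $\Y_{\C_\U(Q)}^{\N_{\tilde\G}(Q)}$---have been carried out in the preceding paragraph. The only point calling for a little care, and the one I would regard as the main obstacle, is verifying that the conjugates ${}^n\lambda$ with $n\notin\N_{\tilde\Levi^F}(Q)$ are really annihilated: this rests on $\br_Q$ being multiplicative on $Q$-stable central idempotents, so that the idempotents ${}^n\br_Q(e)$ are pairwise orthogonal and sum to $\br_Q(e_s^{\Levi^F})$, together with $\mathrm{Stab}_{\N_{\tilde\Nb^F}(Q)}(\br_Q(e))=\N_{\tilde\Levi^F}(Q)$. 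Once this orthogonality is in place, the computation above goes through verbatim.
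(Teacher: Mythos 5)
Your proof is correct. The identity $\tilde R_Q(\mathrm{Ind}(\lambda)) = R_{\N_{\tilde\Levi^F}(Q)}^{\N_{\tilde\G^F}(Q)}(\lambda)$ does follow by unwinding the tensor-product definition $\tilde{M}'_Q = \tilde{M}_Q\,\br_Q(e)\otimes_{\Lambda\N_{\tilde\Levi^F}(Q)}\Lambda\N_{\tilde\Nb^F}(Q)$ against $\mathrm{Res}\,\mathrm{Ind}(\lambda)$, the key input being that the conjugates ${}^n\br_Q(e)$ for $n$ running over $\N_{\tilde\Nb^F}(Q)/\N_{\tilde\Levi^F}(Q)$ are pairwise orthogonal and sum to $\br_Q(e_s^{\Levi^F})$, so that only the $n=1$ summand survives the cut by $\tilde{F}_Q$; together with the concentration of the cohomology of $\Y_{\C_\U(Q)}^{\N_{\tilde\G}(Q)}$ in the single degree $d_Q$, this gives the stated formula.

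However, this is not the route the paper takes. The paper reduces the lemma to the single module-theoretic isomorphism $\mathrm{Ind}_{Y_Q}^{\tilde{Y}_Q}(M'_Q)\cong\tilde{M}'_Q$, mirroring the global isomorphism $\mathrm{Ind}_Y^{\tilde Y}(M')\cong\tilde{M}'$ that precedes Lemma~\ref{BDR}, and establishes it by a Mackey-formula comparison of the two modules after restricting to $\tilde{X}_Q$ and cutting by $\br_Q(e)$; the character-theoretic consequence is then drawn by the same (implicit) calculation you carry out explicitly. Both approaches are valid. Your route is more self-contained for the statement at hand, since it uses only the given structure of $\tilde{M}'_Q$ as the induced module of $\tilde{M}_Q\br_Q(e)$ and never needs the comparison with $M'_Q$. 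What the paper's route buys in addition is the compatibility between the two Morita bimodules $M'_Q$ and $\tilde{M}'_Q$ (hence between the local bijections $R_Q$ and $\tilde{R}_Q$), which is not needed here but is exactly the structural fact whose global and local versions are exploited in the block-theoretic computation of Lemma~\ref{intermediatesubgroup}.
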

 
 \begin{proof}
As in the global case we need to show that $\mathrm{Ind}_{Y_Q}^{\tilde{Y}_Q}(M_Q') \cong \tilde{M}_Q'$. This follows as in the proof of \cite[Theorem 7.5]{Dat}: Observe that it suffices to show that $\Res_{\tilde X_Q}^{\tilde{Y}_Q}(\mathrm{Ind}_{Y_Q}^{\tilde{Y}_Q}(M_Q')) \br_Q(e) \cong \Res_{\tilde X_Q}^{\tilde{Y}_Q}(\tilde{M}_Q') \br_Q(e)$. By Mackey's formula the left hand side is isomorphic to $\tilde{M}_Q$. Additionally, we have $\Res_{\tilde X_Q}^{\tilde{Y}_Q}(\tilde{M}_Q') \br_Q(e) \cong \tilde{M}_Q \br_Q(e)$.
  \end{proof}

\section{Descent of scalars}\label{sec 2}

\subsection{Restriction of scalars for Deligne--Lusztig varieties}

We assume until Section \ref{sec 3} that $F_0: \G \to \G$ is a Frobenius endomorphism stabilizing $\Levi$ which satisfies $F_0^r=F$ for some integer $r$ and $\gamma: \G \to \G$ is an automorphism commuting with $F_0$. In what follows $\mathcal{A}$ will denote the subgroup of $\mathrm{Aut}(\Gtilde^F)$ generated by $F_0$ and $\gamma$.

Let us recall the setup from \cite[Section 5]{Jordan}.
We consider the reductive group $\underline{\G}= \G^r$ with Frobenius endomorphism $F_0 \times \dots \times F_0: \underline{\G} \to \underline{\G}$ which we also denote by $F_0$. More generally, whenever $\sigma: \G \to \G$ is a bijective morphism of $\G$ then we also denote by $\sigma$ the induced map
$\sigma \times \dots \times \sigma: \underline{\G} \to \underline{\G}$
on $\underline{\G}.$ We consider the automorphism 
$$\tau: \underline{\G} \to \underline{\G}$$
given by $\tau(g_1,\dots,g_r)=(g_2,\dots,g_r,g_1)$.
Consider the projection onto the first component
$$\mathrm{pr}:\underline{\G} \to \G, \, (g_1,\dots,g_r)\mapsto g_1.$$
The restriction of $\mathrm{pr}$ to $\underline{\G}^{F_0 \tau}$ induces an isomorphism
$\mathrm{pr}:\underline{\G}^{F_0 \tau} \to \G^F$
of finite groups. For any subset $\mathbf{H}$ of $\G$ we set
$$\underline{\mathbf{H}}:=\mathbf{H} \times F_0^{r-1}(\mathbf{H}) \times \dots \times F_0(\mathbf{H}).$$
Note that if $\mathbf{H}$ is $F$-stable then $\underline{\mathbf{H}}$ is $\tau F_0$-stable and the projection map $\mathrm{pr}: \underline{\mathbf{H}} \to \mathbf{H}$ induces an isomorphism $\underline{\mathbf{H}}^{\tau F_0} \cong \mathbf{H}^F$. Conversely, one easily sees that any $\tau F_0$-stable subset of $\underline{\G}$ is of the form $\underline{\mathbf{H}}$ for some $F$-stable subset $\mathbf{H}$ of $\G$. 

We consider the $r$-fold product $\underline{\G^*}:=(\G^*)^r$ of the dual group $\G^\ast$ endowed with the Frobenius endomorphism $F_0^\ast:=F_0^* \times \dots \times F_0^*: \underline{\G^*} \to \underline{\G^*}$. Moreover, let
$$\tau^\ast: \underline{\G^\ast} \to \underline{\G^\ast}, \, (g_1,\dots,g_r) \mapsto (g_r,g_1\dots,g_{r-1}).$$
We denote by $\mathrm{pr}: \underline{\G^\ast} \to \G^\ast$ the projection onto the first coordinate.

\subsection{Restriction of scalars and Jordan decomposition of characters}\label{SDJD}


We let $\Para$ be a parabolic subgroup of $\G$ with Levi decomposition $\Para=\Levi \ltimes \U$. Then $\underline{\Para}$ is a parabolic subgroup of $\underline{\G}$ with Levi decomposition $\underline{\Para}=\underline{\Levi} \ltimes \underline{\U}$ such that $\tau F_0(\underline{\Levi})=\underline{\Levi}$. We can therefore consider the Deligne--Lusztig variety $\Y_{\underline{\U}}^{\underline{\G},F_0 \tau}$ which is a $\underline{\G}^{F_0 \tau} \times (\underline{\Levi}^{F_0 \tau})^{\mathrm{opp}}$-variety. Under the isomorphism $\underline{\G}^{\tau F_0} \cong \G^F$ given by $\mathrm{pr}$ we can consider it as a $\G^F \times (\Levi^F)^{\mathrm{opp}}$-variety. Endowed with this structure the projection map induces an isomorphism $$\mathrm{pr}: \Y_{\underline{\U}}^{\underline{\G},F_0 \tau} \to \Y_{\U}^\G$$
which is $\G^F \times (\Levi^F)^{\mathrm{opp}}$-equivariant, see \cite[Proposition 5.3]{Jordan}.

We consider the unipotent radical $\underline{\U}':=\U^r$ of the parabolic subgroup $\underline{\Para}'=\Para^r$ of $\underline{\G}$. Note that we have a Levi decomposition $\underline{\Para}'= \underline{\Levi} \ltimes \underline{\U}'$ in $\underline{\G}$ and the parabolic subgroup $\underline{\Para}'$ is $\tau$-stable. In the following we will use the language of parabolic subgroups and Levi subgroups of disconnected reductive groups as in \cite[Section 2.1]{Jordan}.

\begin{lemma}\label{mainlemma}
	Assume that $e_s^{\Levi^F}$ is $\mathcal{A}$-stable. If $\Para$ is $\gamma$-stable then the module $H^{\mathrm{dim}}_c(\Y_\U^\G,\Lambda) e_s^{\Levi^F}$ extends to a $\G^F \times (\Levi^F)^{\mathrm{opp}} \Delta (\tilde{\Levi}^F\mathcal{A})$-module.
	If in addition $\Nb^F/\Levi^F$ is centralized by $\mathcal{A}$ then $H^{\mathrm{dim}}_c(\Y_\U^\G,\Lambda) e_s^{\Levi^F}$ extends to $\G^F \times (\Nb^F)^{\mathrm{opp}} \Delta( \tilde{\Levi}^F \mathcal{A})$.
\end{lemma}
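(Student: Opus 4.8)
The plan is to realize the Deligne--Lusztig variety $\Y_\U^\G$, together with the extra symmetries coming from $\mathcal{A}=\langle F_0,\gamma\rangle$, inside the restriction-of-scalars picture $\underline{\G}=\G^r$ set up in Section \ref{sec 2}, where $F_0$ becomes an \emph{algebraic} automorphism of $\underline{\G}$ (a component of $\tau$). Concretely, since $\mathrm{pr}\colon \Y_{\underline{\U}}^{\underline{\G},F_0\tau}\to \Y_\U^\G$ is a $\G^F\times(\Levi^F)^{\mathrm{opp}}$-equivariant isomorphism by \cite[Proposition 5.3]{Jordan}, it suffices to produce the desired extension on the variety $\Y_{\underline{\U}}^{\underline{\G},F_0\tau}$, where the group $\tilde{\Levi}^F\mathcal{A}$ (viewed through the isomorphism $\underline{\Levi}^{\tau F_0}\cong\Levi^F$ and the identifications of $F_0$, $\gamma$ with algebraic maps of $\underline{\G}$ commuting with $\tau F_0$) acts by genuine algebraic automorphisms of the pair $(\underline{\G},\underline{\Levi})$ — here the hypothesis that $\Para$ is $\gamma$-stable is used so that $\gamma$ (hence all of $\mathcal{A}$) stabilizes $\underline{\Para}$ and thus acts on $\Y_{\underline{\U}}^{\underline{\G},F_0\tau}$, not merely on its cohomology up to a shift/twist.

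The next step is to pass from the variety to the cohomology bimodule and to control the extension via endomorphism rings, exactly along the lines of Lemma \ref{diagonal action} (which is itself a strengthening of \cite[Theorem 7.6]{Dat}). First I would observe that $H^{\mathrm{dim}}_c(\Y_{\underline{\U}}^{\underline{\G},F_0\tau},\Lambda)\,e_s^{\Levi^F}$ is a $\G^F\times(\Levi^F)^{\mathrm{opp}}$-module on which, because $\mathcal{A}$ acts algebraically on $(\underline{\G},\underline{\Levi},\underline{\Para})$ and because $e_s^{\Levi^F}$ is $\mathcal{A}$-stable by hypothesis, the group $\Delta(\tilde{\Levi}^F\mathcal{A})$ acts by module isomorphisms; the content is that these isomorphisms can be rigidified into an honest module structure. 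For this one uses that the bimodule is, up to the shift in degree, the unique nonzero cohomology of the Rickard complex $G\Gamma_c$, that its endomorphism ring over $\Lambda\G^F$ (via $\Br$-arguments and \cite[Corollary 3.8]{Dat}, \cite[Lemma A.2]{Dat}, \cite[Theorem A.3]{Dat} as in Lemma \ref{diagonal action}) is concentrated in degree $0$, and hence that the $H^2$-obstruction to extending the $\tilde{\Levi}^F$-action along the (abelian, and here finite) group $\Delta(\tilde{\Levi}^F\mathcal{A})/\Delta(\tilde{\Levi}^F)$ vanishes — or more simply that multiplicity-freeness (Lemma \ref{multiplicity free}, applied with $Q=1$) forces the relevant Schur multipliers to be trivial so the cocycle can be trivialized. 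Since $H^{\mathrm{dim}}_c(\Y_\U^\G,\Lambda)\,e_s^{\Levi^F}\cong H^{\mathrm{dim}}_c(\Y_{\underline{\U}}^{\underline{\G},F_0\tau},\Lambda)\,e_s^{\Levi^F}$ via $\mathrm{pr}$, transporting this structure back gives the asserted $\G^F\times(\Levi^F)^{\mathrm{opp}}\Delta(\tilde{\Levi}^F\mathcal{A})$-module.

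For the second assertion, I would run the same argument one step further out: assuming in addition that $\Nb^F/\Levi^F$ is centralized by $\mathcal{A}$, the $\mathcal{A}$-action is compatible with the $\Nb^F$-action already present on $H^{\mathrm{dim}}_c(\Y_\U^\G,\Lambda)\,e_s^{\Levi^F}$ (which exists because this bimodule is $\Nb^F$-stable by \cite[Theorem 7.2]{Dat} and $\Nb^F/\Levi^F$ is cyclic of $\ell'$-order), so the two commuting extensions — one to $\Delta(\tilde{\Levi}^F\mathcal{A})$ on the right and one to $\Nb^F$ on the right — assemble, again by Clifford theory over the cyclic $\ell'$-group $\Nb^F/\Levi^F$ together with the multiplicity-freeness of Lemma \ref{multiplicity free}, into a single module structure for $\G^F\times(\Nb^F)^{\mathrm{opp}}\Delta(\tilde{\Levi}^F\mathcal{A})$. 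The main obstacle is the middle step: making sure that the algebraic $\mathcal{A}$-action on $\Y_{\underline{\U}}^{\underline{\G},F_0\tau}$ really does commute with the $\G^F\times(\Levi^F)^{\mathrm{opp}}$-action after transporting through $\mathrm{pr}$ and through the identification $F_0^r=F$ — i.e.\ tracking carefully which component automorphisms of $\underline{\G}$ correspond to $F_0$ and $\gamma$ and checking that they normalize the $\tau F_0$-fixed points correctly — and then verifying that the endomorphism-ring/cohomology-concentration input from \cite{Dat} genuinely applies in the disconnected-group setting of \cite[Section 2.1]{Jordan}; the cocycle-vanishing itself is then routine given multiplicity-freeness.
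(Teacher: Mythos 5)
Your proposal misses the central geometric mechanism of the paper's proof, and the substitute argument you offer does not close the gap.

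The key idea in the paper is to work with \emph{two different} parabolic subgroups with the same Levi $\underline{\Levi}$ inside $\underline{\G}=\G^r$: the ``twisted'' $\underline{\Para}=\Para\times F_0^{r-1}(\Para)\times\cdots\times F_0(\Para)$, which carries the comparison isomorphism to $\Y_\U^\G$ via $\mathrm{pr}$, and the ``diagonal'' $\underline{\Para}'=\Para^r$, which (unlike $\underline{\Para}$) is genuinely $\tau$-stable and, when $\Para$ is $\gamma$-stable, also $\gamma$-stable. You assert that $\mathcal{A}$ acts on $\Y_{\underline{\U}}^{\underline{\G},F_0\tau}$, but this is false: $\tau$ does not stabilize $\underline{\Para}$ unless $F_0$ already stabilizes $\Para$, which would trivialize the whole point of the construction. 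The paper's argument instead notices that the $e_s$-cut cohomologies $H^{\mathrm{dim}}_c(\Y_{\underline{\U}})e_{\underline{s}}$ and $H^{\mathrm{dim}}_c(\Y_{\underline{\U}'})e_{\underline{s}}$ agree as modules over the smaller group (independence of Deligne--Lusztig cohomology on the choice of parabolic once cut by $e_s$, as in \cite[Theorem 7.2]{Dat}), so the canonical $\langle\tau,\gamma\rangle$-action present on the $\underline{\U}'$-side can be transported across the chain of isomorphisms. No rigidification of a merely stable action is required: the extension is \emph{constructed}, not proved to exist by obstruction vanishing.

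Your fallback --- that multiplicity-freeness (Lemma \ref{multiplicity free}) or concentration of the endomorphism complex in degree $0$ kills the $H^2$-obstruction --- is not correct. Multiplicity-freeness makes the quotient endomorphism algebra $E$ commutative, which is exactly what is needed (later in the paper, in Remark \ref{normal hall}, via Th\'evenaz) to upgrade an extension to a \emph{Hall $\ell'$-subgroup} into an \emph{almost extension}; it says nothing about the vanishing of $H^2(\mathcal{A},E^\times)$ and hence does not produce the initial extension to $\Delta(\tilde{\Levi}^F\mathcal{A})$. The concentration-in-degree-$0$ argument from Lemma \ref{diagonal action} likewise serves a different purpose (comparing homotopy and derived categories of complexes); it does not rigidify a $Y$-stable $\Lambda X$-module into a $\Lambda Y$-module. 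Since $\mathcal{A}=\langle F_0,\gamma\rangle$ need not be cyclic, the only generic extension tool available, \cite[Lemma 10.2.13]{Rouquier3}, does not apply either. For the second assertion, the paper's argument also has geometric content you have glossed over: one checks that, because $\sigma(n)n^{-1},F_0(n)n^{-1}\in\Levi^F$, conjugation by $\underline{n}$ is an automorphism of $\underline{\G}\langle\tau,\gamma\rangle$, giving (via \cite[Lemma 3.1]{Jordan}) $\underline{n}$-stability of the already-extended module, after which the cyclic $\ell'$-quotient $\Nb^F/\Levi^F$ is handled by \cite[Lemma 10.2.13]{Rouquier3}.
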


\begin{proof}
The pair $(\underline{\Levi},\underline{\Para}')$ is $\langle \tau, \gamma \rangle$-stable. Therefore, we can consider $\Y_{\underline{\U}'}^{\underline{\G}}$ as a $\underline{\G}^{\tau F_0} \times (\underline{\Levi}^{\tau F_0})^{\opp} \Delta( \tilde{\underline{\Levi}}^{\tau F_0} \langle \tau,\gamma \rangle)$-variety. We have an isomorphism
$$H^{\mathrm{dim}}_c(\Y_{\underline{\U}}^{\underline{\G}, \tau F_0},\Lambda) e_{\underline{s}}^{\underline{\Levi}^{\tau F_0}} \cong H^{\mathrm{dim}}_c(\Y_{\underline{\U'}}^{\underline{\G}, \tau F_0},\Lambda) e_{\underline{s}}^{\underline{\Levi}^{\tau F_0}}$$
of $\Lambda ((\underline{\G}^{\tau F_0} \times (\underline{\Levi}^{\tau F_0})^{\mathrm{opp}}) \Delta( \underline{\tilde{\Levi}}^F ))$-modules.
The projection $\mathrm{pr}:\underline{\G} \to \G$ onto the first coordinate defines an isomorphism
$$H_c^{\mathrm{dim}}(\Y_{\underline{\U}}^{\underline{\G},\tau F_0},\Lambda) e_{\underline{s}}^{\Levi^{\tau F_0}} \cong  H_c^{\mathrm{dim}}(\Y_\U^{\G,F},\Lambda) e_s^{L} $$
of $\G^F \times (\Levi^F)^{\mathrm{opp}} \Delta \tilde{\Levi}^F$-modules. By transport of structure, we can endow  $H_c^{\mathrm{dim}}(\Y_\U^{\G,F},\Lambda) e_s^{\Levi^F}$ with a $\G^F \times (\Levi^F)^{\mathrm{opp}} \Delta \tilde{\Levi}^F \langle F_0,\gamma \rangle$-structure.

Assume now that $\mathbf{N}/\Levi$ is centralized by $\mathcal{A}$. Let $n \in \mathbf{N}^{F}$ be a generator of the quotient group $\mathbf{N}^F/ \Levi^F$. Then we have $\sigma(n)n^{-1},F_0(n)n^{-1} \in \Levi^F$. Thus, conjugation by $\underline{n}$ defines an automorphism of $\underline{\G} \langle \tau, \gamma \rangle$.
Thus, by \cite[Lemma 3.1]{Jordan} the $\Lambda ((\underline{\G}^{\tau F_0} \times (\underline{\Levi}^{\tau F_0})^{\mathrm{opp}}) \Delta( \underline{\tilde{\Levi}}^F \langle \tau, \gamma \rangle ))$-module $H^{\mathrm{dim}}_c(\Y_{\underline{\U'}}^{\underline{\G}, \tau F_0}) e_{\underline{s}}^{\underline{\Levi}^{\tau F_0}}$ is $\underline{n}$-stable. By transport of structure we deduce that $H_c^{\mathrm{dim}}(\Y_\U^{\G,F},\Lambda) e_s^{\Levi^F}$ is $\mathbf{N}^F$-stable as $\G^F \times (\mathbf{L}^F)^{\mathrm{opp}} \Delta (\tilde{\Levi}^F \mathcal{A})$-module. Thus, it extends to a $\G^F \times (\mathbf{N}^F)^{\opp} \Delta \tilde{\Levi}^F \langle F_0,\gamma \rangle$-module by \cite[Lemma 10.2.13]{Rouquier3}.
\end{proof}

%
In the following we abbreviate $\mathcal{N}:=\mathrm{N}_{\Gtilde^F \mathcal{A}}(\Levi,e_s^{\Levi^F})$.

\begin{lemma}\label{mainlemma:local}
	Suppose that we are in the situation of Lemma \ref{mainlemma}. Let $Q$ be an $\ell$-subgroup of $\Levi^F$. If $\Nb^F/\Levi^F$ is centralized by $\mathcal{A}$ then $H^{\mathrm{dim}}_c(\Y_{\mathrm{C}_\U(Q)}^{\mathrm{N}_{\G}(Q)},\Lambda) \mathrm{br}_Q(e_s^{\Levi^F})$ extends to an $\mathrm{N}_{\G^F}(Q) \times \mathrm{N}_{\Levi^F}(Q)^{\mathrm{opp}} \Delta \mathrm{N}_{\mathcal{N}}(Q)$-module.
\end{lemma}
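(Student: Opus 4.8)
The plan is to deduce Lemma \ref{mainlemma:local} from Lemma \ref{mainlemma} by the same restriction-of-scalars mechanism, but applied to the local objects, i.e.\ to the Deligne--Lusztig variety for $\mathrm{N}_\G(Q)$ rather than for $\G$. First I would fix the situation: since $Q \subset \Levi^F$ is an $\ell$-subgroup and $\Levi$ and $e_s^{\Levi^F}$ are $\mathcal{A}$-stable with $\mathcal{A} = \langle F_0,\gamma\rangle$, one may arrange (after conjugating $Q$ inside $\Levi^F$) that $Q$ is stabilised by $F_0$ and $\gamma$; more precisely one works with the subgroup $\mathrm{N}_{\mathcal{N}}(Q)$ of $\mathcal{N} = \mathrm{N}_{\Gtilde^F\mathcal{A}}(\Levi,e_s^{\Levi^F})$, which is exactly the stabiliser we want to extend along. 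The key observation is that the Brauer construction is compatible with the restriction-of-scalars isomorphism $\mathrm{pr}\colon \underline{\G}^{\tau F_0} \xrightarrow{\sim} \G^F$: under this isomorphism $Q$ corresponds to a $\tau F_0$-stable $\ell$-subgroup $\underline{Q}$ of $\underline{\G}$ (of the form $\underline{Q} = Q \times F_0^{r-1}(Q) \times \dots \times F_0(Q)$ as in Section \ref{sec 2}), and $\mathrm{C}_{\underline{\G}}(\underline{Q}) = \underline{\mathrm{C}_\G(Q)}$ while $\mathrm{N}_{\underline{\G}}(\underline{Q}) = \underline{\mathrm{N}_\G(Q)}$, all compatibly with $\tau$, $F_0$ and $\gamma$ (which all normalise $\underline{Q}$). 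Hence $\mathrm{N}_\G(Q)$ plays, with respect to its Frobenius $F$, exactly the role that $\G$ played in Lemma \ref{mainlemma}, and $\mathrm{C}_\U(Q)$ the role of $\U$, with $\mathrm{C}_\Levi(Q) = \Levi^{Q}$ (or rather $\mathrm{C}_{\Levi}(Q)$, a Levi of $\mathrm{N}_\G(Q)$) playing the role of $\Levi$.

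The second step is to transport the argument of Lemma \ref{mainlemma} verbatim. I would form $\underline{\mathrm{C}_\U(Q)}$ and the ``straightened'' unipotent radical $\mathrm{C}_\U(Q)^r$ inside $\mathrm{N}_{\underline{\G}}(\underline{Q}) = \underline{\mathrm{N}_\G(Q)}$, obtain the comparison isomorphism between the cohomology of $\Y^{\underline{\mathrm{N}_\G(Q)}}_{\underline{\mathrm{C}_\U(Q)}}$ and of its $\tau$-stable cousin $\Y^{\underline{\mathrm{N}_\G(Q)}}_{\mathrm{C}_\U(Q)^r}$ (same proof as in \cite[Theorem 7.5]{Dat} / the argument in Lemma \ref{mainlemma}), and then use that $(\underline{\mathrm{C}_\Levi(Q)}, \text{the } \tau\text{-stable parabolic})$ is $\langle \tau,\gamma\rangle$-stable to endow $H^{\mathrm{dim}}_c(\Y_{\mathrm{C}_\U(Q)}^{\mathrm{N}_\G(Q)},\Lambda)\,\mathrm{br}_Q(e_s^{\Levi^F})$ with the structure of an $\mathrm{N}_{\G^F}(Q)\times \mathrm{N}_{\Levi^F}(Q)^{\mathrm{opp}}\Delta(\mathrm{N}_{\tilde{\Levi}^F}(Q)\langle F_0,\gamma\rangle)$-module, by transport of structure through $\mathrm{pr}$. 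Note here that $\mathrm{br}_Q(e_s^{\Levi^F})$ is $\mathrm{N}_{\mathcal{N}}(Q)$-stable because $e_s^{\Levi^F}$ is $\mathcal{A}$-stable and $Q$ is normalised, and that $\mathrm{br}_Q$ commutes with the restriction-of-scalars identification of group algebras, so the idempotents match up as in Lemma \ref{mainlemma}.

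The final step handles the passage from $\tilde{\Levi}^F\langle F_0,\gamma\rangle$ to the full group $\mathrm{N}_{\mathcal{N}}(Q)$: assuming $\Nb^F/\Levi^F$ is centralised by $\mathcal{A}$, pick a generator $n$ of $\mathrm{N}^F/\Levi^F$ with $\gamma(n)n^{-1}, F_0(n)n^{-1} \in \Levi^F$; then $n$ normalises $Q$ up to $\Levi^F$-conjugacy — more carefully, one should note that $\mathrm{N}_{\Nb^F}(Q)/\mathrm{N}_{\Levi^F}(Q)$ is still cyclic (a quotient of $\Nb^F/\Levi^F$, of $\ell'$-order, by \cite[Corollary 2.9]{Bonnafe}) and centralised by $\mathcal{A}$ — and conjugation by the corresponding $\underline{n}$ defines an automorphism of $\underline{\mathrm{N}_\G(Q)}\langle\tau,\gamma\rangle$, so by \cite[Lemma 3.1]{Jordan} the relevant cohomology module is $\underline{n}$-stable. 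Transporting back and applying \cite[Lemma 10.2.13]{Rouquier3} yields the desired extension to $\mathrm{N}_{\G^F}(Q)\times \mathrm{N}_{\Levi^F}(Q)^{\mathrm{opp}}\Delta\mathrm{N}_{\mathcal{N}}(Q)$. The main obstacle I anticipate is bookkeeping: making sure that $\mathrm{C}_\U(Q)$ is the unipotent radical of a genuine parabolic of the (possibly disconnected) group $\mathrm{N}_\G(Q)$ in the sense of \cite[Section 2.1]{Jordan}, that all the groups $\mathrm{C}_\G(Q)$, $\mathrm{N}_\G(Q)$, $\mathrm{C}_\Levi(Q)$ are $\langle F_0,\gamma\rangle$-stable after the initial conjugation of $Q$, and that the Brauer-construction / restriction-of-scalars compatibilities hold on the nose at the level of complexes, not merely up to homotopy; none of this is deep, but it must be stated with care.
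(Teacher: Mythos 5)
The broad outline (restriction of scalars, then extending over a cyclic quotient via Rouquier's lemma) matches the paper's, but the proposal has a genuine gap. You assert, both when ``arranging'' $Q$ and then explicitly, that $\tau$, $F_0$ and $\gamma$ all normalise $\underline{Q}$. In the notation of Section~\ref{sec 2}, $\underline{Q}=Q\times F_0^{r-1}(Q)\times\dots\times F_0(Q)$, and $\tau$-stability of $\underline{Q}$ is equivalent to $F_0$-stability of $Q$; similarly $\gamma$-stability of $\underline{Q}$ requires $\gamma$-stability of $Q$. Neither is part of the hypothesis --- $Q$ is an arbitrary $\ell$-subgroup of $\Levi^F$ --- and one cannot in general conjugate $Q$ inside $\Levi^F$ so as to make it $\langle F_0,\gamma\rangle$-stable. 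Once that stability fails, the $\langle\tau,\gamma\rangle$-stable local Deligne--Lusztig variety you want to compare with does not exist, so ``transporting the argument of Lemma~\ref{mainlemma} verbatim'' does not go through.

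The paper's proof sidesteps this. It takes $x\in\Nb^F$ a generator of the cyclic group $\mathrm{N}_{\mathcal{N}}(Q)/\mathrm{N}_{\tilde{\Levi}^F\mathcal{A}}(Q)$ (note the correct group is this normaliser quotient, which embeds into $\Nb^F/\Levi^F$ --- not your $\Nb^F/\Levi^F$ itself, and it is a subgroup rather than a quotient), and lets $\underline{\phi}$ be conjugation by $\underline{x}=(x,F_0^{r-1}(x),\dots,F_0(x))$ on $\underline{\tilde{\G}}\langle\tau\rangle$. Crucially $\underline{\phi}$ normalises $\underline{Q}$ because $x$ normalises $Q$; no $\tau$- or $\gamma$-stability of $\underline{Q}$ is needed. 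The $(\underline{\phi},\underline{\phi}^{-1})$-invariance of the local cohomology bimodule is then established by combining the twisting isomorphism \cite[Lemma 2.23]{Jordan} with the independence-of-parabolic result \cite[Theorem 5.2]{Jordan} applied to the two Levi decompositions with common Levi $\underline{\tilde{\Levi}}\langle\tau\rangle$, rather than by re-running the two-radical comparison from the global case; \cite[Lemma 10.2.13]{Rouquier3} is then invoked once at the end. Recasting your argument with this choice of $x$ and with those two cited results in place of the re-derivation you sketch would close the gap.
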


\begin{proof}
Recall that the projection map $\mathrm{pr}:\underline{\G}^{\tau F_0} \to \G^F$ is an isomorphism of finite groups. If $H$ is a subgroup of $\G^F$ we let $\underline{H}:=\mathrm{pr}^{-1}(H)$ and if $x \in \Lambda H$ then we let $\underline{x}:= \mathrm{pr}^{-1}(x) \in \Lambda \underline{H}$.

The quotient group $\mathrm{N}_{\mathcal{N}}(Q)/ \mathrm{N}_{\tilde{\Levi}^F \mathcal{A} }(Q)$ is cyclic as it embeds into $\mathcal{N}/\tilde{\Levi}^F \mathcal{A} \cong \Nb^F/\Levi^F$. Let $x \in \Nb^F$ be a generator of said quotient. Since $\Nb^F/\Levi^F$ is centralized by $F_0$, we have $x F_0(x)^{-1} \in \mathrm{N}_{\Levi^F}(Q)$. Let $\underline{x}:=(x,F^{r-1}_0(x),\dots,F_0(x)) \in \underline{\G}^{\tau F_0}$ such that $\mathrm{pr}(\underline{x})=x$. Consider the bijective morphism
$\underline{\phi}:\underline{\tilde{\G}} \langle \tau \rangle \to \underline{\tilde{\G}} \langle \tau \rangle$ given by conjugation with $\underline{x}$. Note that $\underline{\phi}$ stabilizes $\underline{\tilde{\G}}$ and commutes with the Frobenius endomorphism $\tau F_0$ of $\underline{\tilde{\G}} \rtimes \langle \tau \rangle$. Moreover, $\underline{\phi}$ also stabilizes the Levi subgroup $\tilde{\underline{\Levi}} \langle \tau \rangle$ of $\underline{\tilde{\G}} \rtimes \langle \tau \rangle$. We denote $e:=\br_Q(e_s^{\Levi^F})$. By \cite[Lemma 2.23]{Jordan} we obtain an isomorphism
$${}^{\underline{\phi}} (H_c^{\mathrm{dim}}(\Y_{\C_{\underline{\U'}}(\underline{Q})}^{\mathrm{N}_{\underline{\G}}(\underline{Q}),\tau F_0},\Lambda ) \underline{e} )^{\underline{\phi}} \cong  H_c^{\mathrm{dim}}(\Y_{\C_{\underline{\phi}(\underline{\U'})}(\underline{Q})}^{\mathrm{N}_{\underline{\G}}(\underline{Q}),\tau F_0},\Lambda ) \underline{e}$$
of $\Lambda ((\mathrm{N}_{\underline{\G}^{\tau F_0}}(\underline{Q}) \times \mathrm{N}_{\underline{\Levi}^{\tau F_0}}(\underline{Q})^{\mathrm{opp}}) \Delta (\mathrm{N}_{\underline{\tilde{\Levi}}^{\tau F_0} \langle \tau \rangle}(\underline{Q}))$-modules.
We have two Levi decompositions 
$$\underline{\tilde{\Para}} \langle \tau \rangle= \underline{\tilde{\Levi}} \langle \tau \rangle \ltimes \underline{\U} \text{ and } \underline{\phi}(\underline{\tilde{\Para}} \langle \tau \rangle)= \underline{\tilde{\Levi}} \langle \tau \rangle \ltimes \underline{\phi}(\underline{\U})$$
with the same Levi subgroup  $\underline{\tilde{\Levi}} \langle \tau \rangle $ of $\tilde{\G} \langle \tau \rangle$. Therefore, \cite[Theorem 5.2]{Jordan} yields
$$H_c^{\mathrm{dim}}(\Y_{\C_{\underline{\phi}(\underline{\U'})}(\underline{Q})}^{\mathrm{N}_{\underline{\G}}(\underline{Q}),\tau F_0},\Lambda ) \underline{e}
\cong  H_c^{\mathrm{dim}}(\Y_{\C_{\underline{\U'}}(\underline{Q})}^{\mathrm{N}_{\underline{\G}}(\underline{Q}),\tau F_0},\Lambda ) \underline{e}.$$
It follows from this that $H_c^{\mathrm{dim}}(\Y_{\C_{\underline{\U'}}(\underline{Q})}^{\mathrm{N}_{\underline{\G}}(\underline{Q}),\tau F_0},\Lambda ) \underline{e}$ is $( \underline{\phi},\underline{\phi}^{-1})$-invariant. Hence, the bimodule $H^{\mathrm{dim}}_c(\Y^{\mathrm{N}_\G(Q)}_{\C_{\U}(Q)},\Lambda )e$ is by transport of structure $x$-invariant as $\Lambda ((\mathrm{N}_{\G^F}(Q) \times \mathrm{N}_{\Levi^F}(Q)^{\operatorname{opp}}) \Delta (\mathrm{N}_{\tilde{\Levi}^F \langle F_0 \rangle  }(Q)))$-module. The claim now follows from \cite[Lemma 10.2.13]{Rouquier3}.
\end{proof}

\section{Construction for twisted groups}\label{sec 3}

We generalize the construction of the previous section. This is essentially necessary for working with automorphisms of twisted groups. We suppose now that $F_0$ is a Frobenius endomorphism with $F_0^r \rho=F$ for some integer $r$ and $\rho: \G \to \G$ a graph automorphism of order $l$ which commutes with $F_0$. We denote by $\mathcal{A}$ the subgroup of $\mathrm{Aut}(\Gtilde^F)$ generated by $F_0$.

The construction in this section will be done in two seperate steps. We first consider the connected reductive group
$$\G_\rho:=\{(g,\rho(g),\dots,\rho^{l-1}(g) ) \mid g \in \G \}.$$
We note that the projection onto the first coordinate defines an isomorphism
$\mathrm{pr}_1:\G_\rho \to \G$. Denote by $\tau: \G_\rho \to \G_\rho$ the automorphism given by $$\tau(g,\rho(g),\dots,\rho^{l-1}(g)):=(\rho(g),\rho^2(g),\dots,\rho^{l-1}(g),g).$$
With this notation the projection onto the first coordinate defines a $(\tau F_0^r,F)$-equivariant isomorphim $\G_\rho \to \G$.

Define now the group $$\underline{\G}_\rho:=\G_\rho \times \dots \times \G_\rho$$
as the $r$-fold product of the group $\G_\rho$.
 We define the permutation $\sigma_0$ on the set $\{1, \dots, rl \}$ as follows:
 \begin{equation*}
 \sigma_0(i)=
 \begin{cases}
 i-l & \text{if } i > l,\\
rl-l+i        & \text{if } 1 \leq i \leq l.
 \end{cases}
 \end{equation*}
Consider the automorphism $\tau_0: \underline{\G}_\rho \to \underline{\G}_\rho$ which for an element $(g_1,\dots,g_{lr}) \in \underline{\G}_{\rho}$ is given by $\tau_0((g_1,\dots,g_{lr})):=(g_{\sigma_0(1)},\dots,g_{\sigma_0(rl)})$.
 Observe that
  \begin{equation*}
 \sigma^r_0(i)=
  \begin{cases}
 	i-1 & \text{if } l \nmid (i-1), \\
 i+l-1       & \text{otherwise}.
 \end{cases}
\end{equation*}
It follows that $(\tau_0)^r=\tau$ (here $\tau$ is understood as the permutation $\tau \times \dots \times \tau$ on $\underline{\G}_\rho$). In particular, the morphism $\tau_0 F_0$ satisfies $(\tau_0 F_0)^r= \tau F_0^r$. Moreover, $\tau_0 F_0$ cyclically permutes the $r$ copies $\G_\rho$ of the group $\underline{\G}_\rho$. We deduce that the projection map $\mathrm{pr}_2: \underline{\G}_{\rho} \to \G_\rho$ onto the first factor $\G_\rho$ induces an isomorphism $\underline{\G}_{\rho}^{\tau F_0} \cong \G_{\rho}^{\tau_0 F_0^r}$.

\begin{notation}
Assume that $\mathbf{H}$ is a closed subgroup of $\G$. Then we define $\mathbf{H}_\rho:=\mathrm{pr}_1^{-1}(\mathbf{H})$. We then define $$\underline{\mathbf{H}}_\rho:=\mathbf{H}_\rho \times (\tau F_0)^{r-1}(\mathbf{H}_\rho) \times \dots \times (\tau F_0)(\mathbf{H}_\rho)$$
considered as a subgroup of $\underline{\G}_\rho$.
Furthermore, we denote $\underline{\mathbf{H}}'_\rho:=(\mathbf{H}_\rho)^r$.
\end{notation}

\begin{remark}
	If the subgroup $\mathbf{H}$ is $\rho F_0$-stable, then $\mathbf{H}_\rho$ is $\tau F_0$-stable. Thus, $\underline{\mathbf{H}}_\rho=\underline{\mathbf{H}}_\rho'$ in this case.
	We remark that the automorphism $F_0$ of the finite group $\G^F$ corresponds to the automorphism $\tau_0^{-1}$ of $\underline{\G}_{\rho}^{\tau_0 F_0}$.
\end{remark}

	Let $\Para$ be a parabolic subgroup of $\G$ with Levi decomposition $\Para=\Levi \ltimes \U$. Then we observe that $\underline{\Para}_\rho$ is a parabolic subgroup of $\underline{\G}_\rho$ with Levi decomposition $$\underline{\Para}_\rho=\underline{\Levi}_\rho \ltimes \underline{\U}_\rho.$$
	 Assume now that $\Levi$ is $F$-stable. It follows that $\Levi_\rho$ is $\tau F_0^r$-stable and consequently the Levi subgroup $\underline{\Levi}_\rho$ is $\tau_0 F_0$-stable. We can therefore consider the Deligne--Lusztig variety $\Y_{\underline{\U}_\rho}^{\underline{\G}_\rho,F_0 \tau_0}$ which is a $\underline{\G}_\rho^{F_0 \tau_0} \times (\underline{\Levi}_\rho^{F_0 \tau_0})^{\mathrm{opp}}$-variety. Under the isomorphism $\G^F\cong \underline{\G}_\rho^{F_0 \tau_0}$ induced by the projection map $$\mathrm{pr}:=\mathrm{pr}_1 \circ \mathrm{pr}_2: \underline{\G}_\rho \to \G$$ we will in the following regard it as a $\G^F \times (\Levi^F)^{\mathrm{opp}}$-variety.

\begin{proposition}\label{ShintaniDLV2}
With the assumptions as above, the projection $\mathrm{pr}:\underline{\G}_\rho \to \G$ onto the first coordinate defines an isomorphism
	$$ \Y_{\underline{\U}_\rho}^{\underline{\G}_\rho,\tau_0 F_0} \cong  \Y_\U^{\G,F} $$
	of varieties which is $\G^F \times (\Levi^F)^{\mathrm{opp}}$-equivariant.
\end{proposition}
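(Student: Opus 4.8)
The plan is to exploit the factorisation $\mathrm{pr}=\mathrm{pr}_1\circ\mathrm{pr}_2$ recorded above and to prove the statement separately for $\mathrm{pr}_2\colon\underline{\G}_\rho\to\G_\rho$ and for $\mathrm{pr}_1\colon\G_\rho\to\G$, each time at the level of the corresponding Deligne--Lusztig varieties. Thus I would first show that $\mathrm{pr}_2$ induces a $\G_\rho^{\tau F_0^r}\times(\Levi_\rho^{\tau F_0^r})^{\mathrm{opp}}$-equivariant isomorphism $\Y_{\underline{\U}_\rho}^{\underline{\G}_\rho,\tau_0F_0}\to\Y_{\U_\rho}^{\G_\rho,\tau F_0^r}$, and then that $\mathrm{pr}_1$ induces a $\G^F\times(\Levi^F)^{\mathrm{opp}}$-equivariant isomorphism $\Y_{\U_\rho}^{\G_\rho,\tau F_0^r}\to\Y_\U^{\G,F}$; composing the two and transporting the group actions along $\mathrm{pr}\colon\underline{\G}_\rho^{\tau_0F_0}\xrightarrow{\sim}\G^F$ then gives the proposition.

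For the first isomorphism I would invoke the restriction-of-scalars mechanism of Section \ref{sec 2}. By construction $\underline{\G}_\rho=(\G_\rho)^r$ is the $r$-fold product of $\G_\rho$, the morphism $\tau_0F_0$ cyclically permutes these $r$ copies, and $(\tau_0F_0)^r=\tau F_0^r$ acts on each of them as the Frobenius endomorphism for which $\mathrm{pr}_2$ realises $\underline{\G}_\rho^{\tau_0F_0}\cong\G_\rho^{\tau F_0^r}$; moreover $\underline{\Levi}_\rho$ and $\underline{\U}_\rho$ are, respectively, the $\tau_0F_0$-twisted products $\Levi_\rho\times(\tau F_0)^{r-1}(\Levi_\rho)\times\dots$ and $\U_\rho\times(\tau F_0)^{r-1}(\U_\rho)\times\dots$ built from the $\tau F_0^r$-stable Levi subgroup $\Levi_\rho$ and unipotent radical $\U_\rho$ of the parabolic $\Para_\rho$ of $\G_\rho$. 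Hence $(\underline{\G}_\rho,\underline{\Levi}_\rho,\underline{\U}_\rho,\tau_0F_0)$ is to $(\G_\rho,\Levi_\rho,\U_\rho,\tau F_0^r)$ what $(\underline{\G},\underline{\Levi},\underline{\U},F_0\tau)$ is to $(\G,\Levi,\U,F)$ in Section \ref{sec 2}, and the argument proving \cite[Proposition 5.3]{Jordan} goes through verbatim, the only adjustment being that unwinding the $r$-fold cyclic structure now produces the effective Frobenius $\tau F_0^r=(\tau_0F_0)^r$ on $\G_\rho$ in place of a pure power of $F_0$. This yields the desired equivariant isomorphism realised by $\mathrm{pr}_2$.

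For the second isomorphism there is essentially nothing to do beyond transport of structure: since $\rho$ commutes with $F_0$, the projection $\mathrm{pr}_1\colon\G_\rho\to\G$ is an isomorphism of connected reductive groups which intertwines $\tau F_0^r$ with $F$ and carries $\Para_\rho$, $\Levi_\rho$, $\U_\rho$ onto $\Para$, $\Levi$, $\U$, so it identifies $\Y_{\U_\rho}^{\G_\rho,\tau F_0^r}$ with $\Y_\U^{\G,F}$ equivariantly for the induced isomorphism $\G_\rho^{\tau F_0^r}\times(\Levi_\rho^{\tau F_0^r})^{\mathrm{opp}}\xrightarrow{\sim}\G^F\times(\Levi^F)^{\mathrm{opp}}$. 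I expect the only point requiring genuine care to be the combinatorial verification underlying the first step, namely that $\tau_0F_0$ does cyclically permute the $r$ copies of $\G_\rho$ inside $\underline{\G}_\rho$ (via the identity $(\tau_0)^r=\tau$ and the explicit permutation $\sigma_0$) and that $\underline{\U}_\rho$ is precisely the twisted product appearing in \cite[Proposition 5.3]{Jordan}; once this bookkeeping is in place, no argument beyond the one already given in Section \ref{sec 2} is needed.
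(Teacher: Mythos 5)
Your proof is correct and takes essentially the same two-step route as the paper: factor $\mathrm{pr}=\mathrm{pr}_1\circ\mathrm{pr}_2$, handle $\mathrm{pr}_1\colon\G_\rho\to\G$ by transport of structure along the $(\tau F_0^r,F)$-equivariant isomorphism, and handle $\mathrm{pr}_2\colon\underline{\G}_\rho\to\G_\rho$ by the restriction-of-scalars isomorphism of Deligne--Lusztig varieties for a Frobenius cyclically permuting the $r$ factors. The only cosmetic difference is that the paper invokes \cite[Theorem 5.2]{Jordan} for the second step rather than re-running the argument behind \cite[Proposition 5.3]{Jordan}, and presents the two isomorphisms in the opposite order.
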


\begin{proof}
The projection onto the first coordinate defines a $(\tau F_0^r,F)$-equivariant isomorphim $\G_\rho \to \G$. Thus, we obtain a $\G^F \times (\Levi^F)^{\mathrm{opp}}$-equivariant isomorphism $ \Y_{\U_\rho}^{\G_\rho,\tau F_0^r} \to  \Y_\U^{\G,F}.$

The morphism $\tau_0 F_0$ satisfies $(\tau_0 F_0)^r= \tau F_0^r$ and cyclically permutes the $r$ copies of $\G_\rho$ of the group $\underline{\G}_\rho$. Therefore, by \cite[Theorem 5.2]{Jordan} projection onto the first coordinate yields a $\G^F \times (\Levi^F)^{\mathrm{opp}}$-equivariant isomorphism
$ \Y_{\underline{\U}_\rho}^{\underline{\G}_\rho,\tau_0 F_0}  \to \Y_{\U_\rho}^{\G_\rho,\tau F_0^r}.$
The composition of these two isomorphisms yields the required isomorphism.
\end{proof}

We will now explain how we can explicitly construct the dual group of $(\underline{\G}_\rho,\tau_0 F_0)$.
For this consider the connected reductive group
$\G^\ast_\rho:=\{(g,\rho^\ast(g),\dots,(\rho^\ast)^{l-1}(g) )\}$. Define now the group $\underline{\G}^\ast_\rho:=\G^\ast_\rho \times \dots \times \G^\ast_\rho$ as the $r$-fold product of the group $\G^\ast_\rho$. On this group we consider the automorphism $\tau^\ast_0: \underline{\G}^\ast_\rho \to \underline{\G}^\ast_\rho$ which for an element $(g_1,\dots,g_{lr}) \in  \underline{\G}^\ast_\rho$ is given by $$\tau^\ast_0((g_1,\dots,g_{lr})):=(g_{(\sigma_0)^{-1}(1)},\dots,g_{(\sigma_0)^{-1}(lr)}).$$
We then define $\tau^\ast:=(\tau^\ast_0)^r$. Again we denote by $\mathrm{pr}_1: \G_\rho^\ast \to \G^\ast$ and $\mathrm{pr}_2: \underline{\G}^\ast_{\rho} \to \G_\rho^\ast$ the projections onto the first coordinate and $\mathrm{pr}=\mathrm{pr}_1 \circ \mathrm{pr}_2$.

\begin{lemma}\label{twistedLusztig}
	The group $(\underline{\G}_\rho^\ast,\tau_0^\ast F_0^\ast)$ is dual to $(\underline{\G}_\rho, \tau_0 F_0)$ and for every semisimple element $\underline{s} \in (\underline{\G}_\rho^\ast)^{\tau_0^\ast F_0^\ast}$ the Lusztig series $\mathcal{E}(\underline{\G}^{\tau_0 F_0},\underline{s})$ corresponds to $\mathcal{E}(\G^F,\mathrm{pr}(\underline{s}))$ under the isomorphism $\mathrm{pr}: \underline{\G}_\rho^{\tau_0 F_0} \to \G^F$ of finite groups.
\end{lemma}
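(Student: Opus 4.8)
The plan is to prove the two assertions — the duality and the compatibility of Lusztig series — separately, in each case reducing to the corresponding (untwisted) statement from Section~\ref{sec 2}, now applied to the two-stage construction $\G \rightsquigarrow \G_\rho \rightsquigarrow \underline{\G}_\rho$.

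For the duality, I would argue on the level of root data. The dual of the $rl$-fold product $\G^{rl}$ is the $rl$-fold product $(\G^\ast)^{rl}$, and under this identification an automorphism of $\G^{rl}$ permuting the factors according to a permutation $\pi$ dualises to the automorphism of $(\G^\ast)^{rl}$ permuting the factors according to the transpose permutation $\pi^{-1}$ (a permutation matrix transposes to its inverse on the cocharacter lattice); in particular the automorphism $\tau_0$ attached to $\sigma_0$ dualises to the automorphism $\tau_0^\ast$ attached to $\sigma_0^{-1}$. The componentwise Frobenius $F_0$ dualises to the componentwise $F_0^\ast$, and the graph automorphism $\rho$ of $\G$ to the dual graph automorphism $\rho^\ast$ of $\G^\ast$; since $\rho$ and $F_0$ commute one may choose the dual Frobenius to be $F^\ast = F_0^{\ast r}\rho^\ast$. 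Restricting to the closed subgroups, the first-coordinate projection $\mathrm{pr}_1$ identifies $(\G_\rho, \tau F_0^r)$ with $(\G, F)$ and $(\G^\ast_\rho, \tau^\ast F_0^{\ast r})$ with $(\G^\ast, F^\ast)$, compatibly with duality; combining this with the factor-permutation bookkeeping for the $r$-fold product exactly as in Section~\ref{sec 2} (recalling $(\tau_0 F_0)^r=\tau F_0^r$ and $(\tau_0^\ast F_0^\ast)^r=\tau^\ast F_0^{\ast r}$) yields that $(\underline{\G}^\ast_\rho, \tau_0^\ast F_0^\ast)$ is dual to $(\underline{\G}_\rho, \tau_0 F_0)$, with the projections $\mathrm{pr}_1$, $\mathrm{pr}_2$, $\mathrm{pr}$ intertwining the two dualities.

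For the statement on Lusztig series I would then proceed in two steps. First, since $\tau_0 F_0$ cyclically permutes the $r$ copies of $\G_\rho$ inside $\underline{\G}_\rho$ and $(\tau_0 F_0)^r = \tau F_0^r$, the projection $\mathrm{pr}_2$ onto the first factor induces an isomorphism $\underline{\G}_\rho^{\tau_0 F_0}\xrightarrow{\sim}\G_\rho^{\tau F_0^r}$; by \cite[Theorem 5.2]{Jordan}, applied as in the proof of Proposition~\ref{ShintaniDLV2} together with the fact that Deligne--Lusztig induction is computed by the cohomology of these varieties, this isomorphism identifies $\mathcal{E}(\underline{\G}_\rho^{\tau_0 F_0},\underline{s})$ with $\mathcal{E}(\G_\rho^{\tau F_0^r},\mathrm{pr}_2(\underline{s}))$, the dual parameter being transported along $\mathrm{pr}_2:\underline{\G}^\ast_\rho\to\G^\ast_\rho$. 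Second, by the duality established above, $\mathrm{pr}_1:\G_\rho^{\tau F_0^r}\xrightarrow{\sim}\G^F$ is an isomorphism of finite groups arising from a $(\tau F_0^r,F)$-equivariant isomorphism of algebraic groups compatible with duality, so it identifies $\mathcal{E}(\G_\rho^{\tau F_0^r},\mathrm{pr}_2(\underline{s}))$ with $\mathcal{E}(\G^F,\mathrm{pr}_1\mathrm{pr}_2(\underline{s}))=\mathcal{E}(\G^F,\mathrm{pr}(\underline{s}))$. Composing the two steps, and noting $\mathrm{pr}=\mathrm{pr}_1\circ\mathrm{pr}_2$, gives the claim; the same reasoning applied to fixed points shows $\mathrm{pr}$ restricts to an isomorphism $(\underline{\G}^\ast_\rho)^{\tau_0^\ast F_0^\ast}\cong(\G^\ast)^{F^\ast}$ so that $\mathrm{pr}(\underline{s})$ makes sense.

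The second step is routine — essentially a citation of \cite[Theorem 5.2]{Jordan} plus transport of structure. The main obstacle is the first step: checking that the explicitly defined pair $(\underline{\G}^\ast_\rho,\tau_0^\ast F_0^\ast)$, with $\tau_0^\ast$ built from $\sigma_0^{-1}$ and $F_0^\ast$ componentwise, really is in duality with $(\underline{\G}_\rho,\tau_0 F_0)$ in such a way that $\mathrm{pr}_1$, $\mathrm{pr}_2$ and $\mathrm{pr}$ are simultaneously compatible with the chosen dualities. This is the same lattice-level bookkeeping already done in Section~\ref{sec 2} for the untwisted construction $\underline{\G}=\G^r$; the only genuinely new point is tracking the dual graph automorphism $\rho^\ast$ through the first stage $\G\rightsquigarrow\G_\rho$, and since $\rho$ is realised on $\G_\rho$ purely as the factor-shift $\tau$, this is no harder than the factor-permutation bookkeeping for the second stage.
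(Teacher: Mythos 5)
Your two-stage reduction (Weil restriction first, then the graph-automorphism identification $\G_\rho\to\G$) matches the paper's strategy, and your root-data argument for the duality statement is correct and in fact slightly more explicit than what the paper writes. The place where you diverge, and where your proof is under-justified, is the Lusztig-series correspondence across the Weil-restriction step $\mathrm{pr}_2\colon \underline{\G}_\rho^{\tau_0F_0}\to\G_\rho^{\tau F_0^r}$: you cite \cite[Theorem 5.2]{Jordan} and Proposition~\ref{ShintaniDLV2}, but these give isomorphisms of Deligne--Lusztig \emph{varieties}, which alone only yield a bijection on the level of virtual characters $R_\T^\G(\theta)$. To conclude that $\mathcal{E}(\underline{\G}_\rho^{\tau_0F_0},\underline{s})$ is carried to $\mathcal{E}(\G_\rho^{\tau F_0^r},\mathrm{pr}_2(\underline{s}))$ with the semisimple label transported along $\mathrm{pr}_2$ on the dual side, you also need to know that the identification of $F$-stable maximal tori in $\underline{\G}_\rho$ and $\G_\rho$ is compatible with the chosen duality on the torus side, so that the geometric conjugacy class of $(\T,\theta)$ is sent to that of its image. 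You assert this transport but do not establish it. The paper avoids this by invoking \cite[Corollary 8.8]{Taylor2}, which is exactly the statement that duality and Lusztig series are compatible with Weil restriction; you should either cite that result (or a comparable one) or supply the torus/duality bookkeeping that your ``routine'' parenthetical currently elides.
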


\begin{proof}
Recall that we have a $(\tau F_0^r,F)$-equivariant isomorphism $\G_\rho \to \G$. From this we deduce that $(\G^\ast_\rho,\tau^\ast (F_0^\ast)^r)$ is dual to  $(\G_\rho,\tau F_0^r)$. The Lusztig series $\mathcal{E}(\G_\rho^{\tau F_0^r},\mathrm{pr}_2(\underline{s}))$ then corresponds to $\mathcal{E}(\G^F,\mathrm{pr}(\underline{s}))$ via the isomorphism $\mathrm{pr}_1: \G_\rho^{\tau F_0^r} \to \G^F$. By applying \cite[Corollary 8.8]{Taylor2} we then observe that $(\underline{\G}_\rho^\ast,\tau_0^\ast F_0^\ast)$ is dual to $(\underline{\G}_\rho, \tau_0 F_0)$ and moreover that the Lusztig series $\mathcal{E}(\underline{\G}^{\tau_0 F_0},\underline{s})$ corresponds to $\mathcal{E}(\G_\rho^{\tau F_0^r},\mathrm{pr}_2(\underline{s}))$ via $\mathrm{pr}_2: \underline{\G}_\rho^{\tau_0 F_0} \to  \G_\rho^{\tau F_0^r}$. The claim follows from this.
\end{proof}

\begin{lemma}\label{mainlemma2}
	Assume that $\Levi$ and $e_s^{\Levi^F}$ are $F_0$-stable. Then the module $H^{\mathrm{dim}}_c(\Y_\U^\G,\Lambda) e_s^{\Levi^F}$ extends to a $\G^F \times (\Levi^F)^{\mathrm{opp}} \Delta \langle F_0 \rangle$-module.
	If in addition $\Nb^F/\Levi^F$ is centralized by $F_0$ then $H^{\mathrm{dim}}_c(\Y_\U^\G,\Lambda) e_s^{\Levi^F}$ extends to $\G^F \times (\Nb^F)^{\mathrm{opp}} \Delta(\tilde{\Levi}^F \langle F_0 \rangle)$.
\end{lemma}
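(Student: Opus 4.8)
The plan is to run the proof of Lemma~\ref{mainlemma} once more, replacing the restriction-of-scalars construction of Section~\ref{sec 2} by the twisted one of Section~\ref{sec 3}: thus $(\underline{\G},\tau)$ is replaced by $(\underline{\G}_\rho,\tau_0)$, the role of \cite[Proposition~5.3]{Jordan} is taken by Proposition~\ref{ShintaniDLV2}, and that of the duality statement by Lemma~\ref{twistedLusztig}. The preliminary point is that the hypotheses place $\Levi$ itself in the framework of Section~\ref{sec 3}: $\Levi$ is $F$-stable (being dual to the $F^\ast$-stable Levi $\Levi^\ast$) and $F_0$-stable by assumption, and since $F=F_0^r\rho$ this forces $\Levi$ to be $\rho$-stable, hence $\rho F_0$-stable. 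By the Remark following the Notation in Section~\ref{sec 3} we therefore have $\underline{\Levi}_\rho=\underline{\Levi}'_\rho=(\Levi_\rho)^r$, a group that is both $\tau_0$-stable and $\tau_0F_0$-stable. I would then consider the two parabolic subgroups $\underline{\Para}_\rho=\underline{\Levi}_\rho\ltimes\underline{\U}_\rho$ (the one entering Proposition~\ref{ShintaniDLV2}) and $\underline{\Para}'_\rho=(\Para_\rho)^r=\underline{\Levi}_\rho\ltimes\underline{\U}'_\rho$ with $\underline{\U}'_\rho=(\U_\rho)^r$; they share the Levi subgroup $\underline{\Levi}_\rho$, and $\underline{\Para}'_\rho$ is $\tau_0$-stable because $\tau_0$ merely permutes cyclically the $r$ identical copies of $\G_\rho$ inside $\underline{\G}_\rho$. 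Hence $\Y_{\underline{\U}'_\rho}^{\underline{\G}_\rho,\tau_0F_0}$ is naturally a $\underline{\G}_\rho^{\tau_0F_0}\times(\underline{\Levi}_\rho^{\tau_0F_0})^{\opp}\Delta(\tilde{\underline{\Levi}}_\rho^{\tau_0F_0}\langle\tau_0\rangle)$-variety.

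Next, set $\underline{e}:=\mathrm{pr}^{-1}(e_s^{\Levi^F})$, the image of $e_s^{\Levi^F}$ under the inverse of the isomorphism $\mathrm{pr}:\underline{\Levi}_\rho^{\tau_0F_0}\to\Levi^F$. By Lemma~\ref{twistedLusztig} applied to $\Levi$ in place of $\G$, the idempotent $\underline{e}$ equals $e_{\underline{s}}^{\underline{\Levi}_\rho^{\tau_0F_0}}$ for a semisimple $\ell'$-element $\underline{s}$, and the $F_0$-stability of $e_s^{\Levi^F}$ translates, via the fact recorded in the same Remark that $F_0$ on $\G^F$ corresponds to $\tau_0^{-1}$ on $\underline{\G}_\rho^{\tau_0F_0}$, into $\tau_0$-stability of $\underline{e}$. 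Consequently $H^{\mathrm{dim}}_c(\Y_{\underline{\U}'_\rho}^{\underline{\G}_\rho,\tau_0F_0},\Lambda)\,\underline{e}$ carries a diagonal $\langle\tau_0\rangle$-action compatible with its $\tilde{\underline{\Levi}}_\rho^{\tau_0F_0}$-action. Transporting this structure along the chain of $\Delta\tilde{\Levi}$-equivariant isomorphisms
$$H^{\mathrm{dim}}_c(\Y_{\underline{\U}'_\rho}^{\underline{\G}_\rho,\tau_0F_0},\Lambda)\,\underline{e}\ \cong\ H^{\mathrm{dim}}_c(\Y_{\underline{\U}_\rho}^{\underline{\G}_\rho,\tau_0F_0},\Lambda)\,\underline{e}\ \cong\ H^{\mathrm{dim}}_c(\Y_\U^{\G},\Lambda)\,e_s^{\Levi^F},$$
where the first isomorphism is the independence of parabolic \cite[Theorem~5.2]{Jordan} (legitimate since $\underline{\Para}_\rho$ and $\underline{\Para}'_\rho$ have the common Levi $\underline{\Levi}_\rho$) and the second is Proposition~\ref{ShintaniDLV2} (applied, if one wants to retain the $\tilde{\Levi}^F$-action, also to $\tilde{\G}$), and using that $\tau_0^{-1}$ corresponds to $F_0$, I obtain a diagonal $\langle F_0\rangle$-action on $H^{\mathrm{dim}}_c(\Y_\U^{\G},\Lambda)\,e_s^{\Levi^F}$ compatible with its $\G^F\times(\Levi^F)^{\opp}\Delta\tilde{\Levi}^F$-structure. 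This is the first assertion.

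For the second assertion, suppose $\Nb^F/\Levi^F$ is centralized by $F_0$; by \cite[Corollary~2.9]{Bonnafe} it is cyclic of $\ell'$-order, so by \cite[Lemma~10.2.13]{Rouquier3} it suffices to check that the $\G^F\times(\Levi^F)^{\opp}\Delta(\tilde{\Levi}^F\langle F_0\rangle)$-module just constructed is stable under conjugation by a generator $n$ of $\Nb^F/\Levi^F$, which by hypothesis satisfies $F_0(n)n^{-1}\in\Levi^F$. Passing to $\underline{\G}_\rho$ and setting $\underline{n}:=\mathrm{pr}^{-1}(n)$, the relation $F_0(n)n^{-1}\in\Levi^F$ translates, exactly as in the proof of Lemma~\ref{mainlemma:local}, into the statement that conjugation by $\underline{n}$ is an automorphism of $\underline{\G}_\rho\rtimes\langle\tau_0\rangle$ stabilizing $\underline{\Levi}_\rho\langle\tau_0\rangle$ and commuting with $\tau_0F_0$. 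Then, as in the proof of Lemma~\ref{mainlemma} — invoking \cite[Lemma~3.1]{Jordan}, which through \cite[Lemma~2.23]{Jordan} and \cite[Theorem~5.2]{Jordan} absorbs the fact that $\underline{n}$ need not normalize $\underline{\Para}'_\rho$ — the module $H^{\mathrm{dim}}_c(\Y_{\underline{\U}'_\rho}^{\underline{\G}_\rho,\tau_0F_0},\Lambda)\,\underline{e}$ is $\underline{n}$-stable. Transporting back along the isomorphisms above shows $H^{\mathrm{dim}}_c(\Y_\U^{\G},\Lambda)\,e_s^{\Levi^F}$ is $\Nb^F$-stable as a $\G^F\times(\Levi^F)^{\opp}\Delta(\tilde{\Levi}^F\langle F_0\rangle)$-module, and \cite[Lemma~10.2.13]{Rouquier3} then yields the asserted extension to a $\G^F\times(\Nb^F)^{\opp}\Delta(\tilde{\Levi}^F\langle F_0\rangle)$-module.

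The one genuine difficulty I anticipate is organisational: the two-stage construction passing from $\G$ to $\G_\rho$ and then to $\underline{\G}_\rho$ forces one to track three distinct equivariance structures at once, and the delicate points are to be sure that $\underline{\Para}'_\rho=(\Para_\rho)^r$ is truly $\tau_0$-stable and that conjugation by $\underline{n}$ interacts with $\tau_0$ — and not with $\tau$ — in the right way. Once these are verified, the argument is a faithful copy of the proofs of Lemma~\ref{mainlemma} and Lemma~\ref{mainlemma:local}.
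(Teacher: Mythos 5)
Your argument mirrors the paper's proof almost line by line, replacing the constructions of Section~\ref{sec 2} with those of Section~\ref{sec 3}, and the overall plan is the intended one. The weak point is your justification that $\underline{\Para}'_\rho$ is $\tau_0$-stable: you say this holds ``because $\tau_0$ merely permutes cyclically the $r$ identical copies of $\G_\rho$,'' but if $\tau_0$ were a pure cyclic permutation of the $r$ factors it would have order $r$, giving $\tau_0^r = \mathrm{id}$, contradicting the defining relation $\tau_0^r = \tau$ (which is nontrivial whenever $l>1$). (The explicit formula for $\sigma_0$ in Section~\ref{sec 3} does describe a pure shift by $-l$, which is internally inconsistent with the displayed formula for $\sigma_0^r$; it is the latter, and the relation $\tau_0^r=\tau$, that reflect the intended definition.) The accurate description is that $\tau_0$ cyclically permutes the $r$ copies of $\G_\rho$ while twisting one of them by $\tau$; consequently $\tau_0$-stability of $\underline{\U}'_\rho=(\U_\rho)^r$ genuinely requires $\tau$-stability of $\U_\rho$, i.e.\ $\rho$-stability of $\U$. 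This is precisely the hypothesis the paper's own proof invokes (``since $\U$ is $\rho$-stable, the automorphism $\tau_0$ stabilizes $\underline{\U}'_\rho$'').

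You do correctly derive that $\Levi$ is $\rho$-stable from the assumptions, and from this one may (and should) choose the parabolic $\Para$, and hence $\U$, to be $\rho$-stable as well — an implicit choice the paper makes silently and which you need to record explicitly for the $\tau_0$-stability step to hold. Once that is inserted, everything else in your argument — identification of $\underline{e}$ with $e_{\underline{s}}^{\underline{\Levi}_\rho^{\tau_0 F_0}}$ via Lemma~\ref{twistedLusztig}, transport of the diagonal $\langle\tau_0\rangle$-action through Proposition~\ref{ShintaniDLV2} and the independence-of-parabolic isomorphism to obtain the $\langle F_0\rangle$-action, and the $\underline{n}$-stability argument for the second assertion — is a faithful copy of the paper's reasoning. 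The only other divergence, citing Theorem~5.2 of \cite{Jordan} for the independence of the parabolic where the paper cites Theorem~7.2 of \cite{Dat}, is harmless.
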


\begin{proof}	
Since $\Levi$ is $F_0$-stable we have $\underline{\Levi}_\rho=\underline{\Levi}'_\rho$ and therefore $\underline{\Para}_\rho'$ is a parabolic subgroup with Levi decomposition
	$$\underline{\Para}'_\rho=\underline{\Levi}_\rho \ltimes \underline{\U}'_\rho.$$
We observe that $\mathrm{C}^\circ_{\underline{\G}_\rho^\ast}(\underline{s}) \subseteq \underline{\Levi}^\ast_\rho$. Therefore, \cite[Theorem 7.2]{Dat} shows that we have an isomorphism
	$$H^{\mathrm{dim}}_c(\Y_{\underline{\U}_\rho}^{\underline{\G}_\rho, \tau_0 F_0}) e_{\underline{s}}^{\underline{\Levi}^{\tau F_0}} \cong H^{\mathrm{dim}}_c(\Y_{\underline{\U'}_\rho}^{\underline{\G}_\rho, \tau_0 F_0}) e_{\underline{s}}^{\underline{\Levi}^{\tau F_0}}$$
	of $\Lambda [(\underline{\G}_\rho^{\tau F_0} \times (\underline{\Levi}_\rho^{\tau F_0})^{\mathrm{opp}}) \Delta( \underline{\tilde{\Levi}}_\rho^{\tau F_0} )]$-modules.
	According to Proposition \ref{ShintaniDLV2} and Lemma \ref{twistedLusztig} the projection $\mathrm{pr}:\underline{\G}_\rho \to \G$ onto the first coordinate induces an isomorphism
	$$H_c^{\mathrm{dim}}(\Y_{\underline{\U}_\rho}^{\underline{\G}_\rho,\tau_0 F_0},\Lambda) e_{\underline{s}}^{\underline{\Levi}_\rho^{\tau_0 F_0}} \cong  H_c^{\mathrm{dim}}(\Y_\U^{\G,F},\Lambda) e_s^{\Levi^F} $$
	of $\G^F \times (\Levi^F)^{\mathrm{opp}} \Delta (\tilde{\Levi}^F)$-modules.
	
	Note that since $\U$ is $\rho$-stable the automorphism $\tau_0$ stabilizes the unipotent radical $\underline{\U}'_\rho$. Therefore, we can consider $H^{\mathrm{dim}}_c(\Y_{\underline{\U'}_\rho}^{\underline{\G}_\rho, \tau_0 F_0}) e_{\underline{s}}^{\underline{\Levi}^{\tau_0 F_0}}$ as $\Lambda ((\underline{\G}_\rho^{\tau_0 F_0} \times (\underline{\Levi}_\rho^{\tau_0 F_0})^{\mathrm{opp}}) \Delta( \underline{\tilde{\Levi}}_\rho^{\tau_0 F_0} \langle \tau_0 \rangle ))$-module. By transport of structure, we can endow $H_c^{\mathrm{dim}}(\Y_\U^{\G,F},\Lambda) e_s^{\Levi^F}$ with a $\G^F \times (\Levi^F)^{\mathrm{opp}} \Delta \tilde{\Levi}^F \langle F_0 \rangle$-structure.

	Assume now that $\mathbf{N}/\Levi$ is centralized by $F_0$. Let $n \in \mathbf{N}^{F}$ be a generator of the quotient group $\mathbf{N}^F/ \Levi^F$. Since $F_0(n)n^{-1} \in \Levi^F$ we conclude that conjugation by $\underline{n}$ defines an automorphism of $\underline{\G}_\rho \langle \tau_0 \rangle$ which stabilizes the Levi subgroup $\underline{\Levi}_\rho \langle \tau \rangle$. Thus, conjugation by $\underline{n}$ yields an isomorphism
	$$(H^{\mathrm{dim}}_c(\Y_{\underline{\U'}}^{\underline{\G}, \tau F_0}) e_{\underline{s}}^{\underline{\Levi}^{\tau F_0}})^{\underline{n}} \cong H^{\mathrm{dim}}_c(\Y_{{}^{\underline{n}} \underline{\U'}}^{\underline{\G}, \tau F_0}) e_{\underline{s}}^{\underline{\Levi}^{\tau F_0}}$$
	of $\Lambda [(\underline{\G}^{\tau F_0} \times (\underline{\Levi}^{\tau F_0})^{\mathrm{opp}}) \Delta( \underline{\tilde{\Levi}}^F \langle \tau, \gamma \rangle )]$-modules. We conclude that $H^{\mathrm{dim}}_c(\Y_{\underline{\U'}}^{\underline{\G}_\rho, \tau_0 F_0}) e_{\underline{s}}^{\underline{\Levi}^{\tau_0 F_0}}$ is $\underline{n}$-stable. By transport of structure and \cite[Lemma 10.2.13]{Rouquier3} we deduce that $H^{\mathrm{dim}}_c(\Y_\U^{\G,F},\Lambda) e_s^{L}$ extends to a $\G^F \times (\mathbf{N}^F)^{\mathrm{opp}} \Delta( \tilde{\Levi}^F \langle F_0 \rangle)$-module.
\end{proof}

\begin{lemma}\label{mainlemma:local2}
	In the situation of Lemma \ref{mainlemma2}, let $Q$ be an $\ell$-subgroup of $\Levi^F$. If $\Nb^F/\Levi^F$ is centralized by $\mathcal{A}$ then $H^{\mathrm{dim}}_c(\Y_{\mathrm{C}_\U(Q)}^{\mathrm{N}_{\G}(Q)},\Lambda) \mathrm{br}_Q(e_s^{\Levi^F})$ extends to a $\mathrm{N}_{\G^F}(Q) \times \mathrm{N}_{\Levi^F}(Q)^{\mathrm{opp}} \Delta \mathrm{N}_{\mathcal{N}}(Q)$-module, where $\mathcal{N}:=\mathrm{N}_{\Gtilde^F \langle F_0 \rangle }(\Levi,e_s^{\Levi^F})$.
\end{lemma}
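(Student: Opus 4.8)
The plan is to run the proof of Lemma~\ref{mainlemma:local} essentially verbatim, but with the twisted restriction-of-scalars data $(\underline{\G}_\rho,\tau_0,\tau_0F_0)$ of Section~\ref{sec 3} in place of the untwisted data $(\underline{\G},\tau,\tau F_0)$ of Section~\ref{sec 2}. Fix the isomorphism $\mathrm{pr}=\mathrm{pr}_1\circ\mathrm{pr}_2\colon\underline{\G}_\rho^{\tau_0F_0}\to\G^F$, put $\underline{Q}:=\mathrm{pr}^{-1}(Q)$ and $\underline{e}:=\mathrm{pr}^{-1}(\mathrm{br}_Q(e_s^{\Levi^F}))$, and work with the $\tau_0$-stable parabolic $\underline{\Para}'_\rho=(\Para_\rho)^r$ of $\underline{\G}_\rho$, with Levi decomposition $\underline{\Para}'_\rho=\underline{\Levi}_\rho\ltimes\underline{\U}'_\rho$ (here $\rho=F_0^{-r}F$ normalises $\Levi$, and, with $\Para$ chosen as in Lemma~\ref{mainlemma2}, the unipotent radical $\underline{\U}'_\rho$ is $\tau_0$-stable since $\U$ is $\rho$-stable). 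Because $\tau_0F_0$ cyclically permutes the $r$ copies of $\G_\rho$ and $(\tau_0F_0)^r=\tau F_0^r$, the $\underline{Q}$-localised versions of Proposition~\ref{ShintaniDLV2} and Lemma~\ref{twistedLusztig} — obtained by applying $\mathrm{pr}$ to $\mathrm{N}_{\underline{\G}_\rho}(\underline{Q})$ and $\mathrm{C}_{\underline{\G}_\rho}(\underline{Q})$, which are again twisted restriction-of-scalars groups of the same shape — together with the argument of Lemma~\ref{mainlemma2} will identify, equivariantly for $\mathrm{N}_{\G^F}(Q)\times\mathrm{N}_{\Levi^F}(Q)^{\mathrm{opp}}\Delta\mathrm{N}_{\tilde{\Levi}^F\langle F_0\rangle}(Q)$, the bimodule $H^{\mathrm{dim}}_c(\Y_{\mathrm{C}_{\underline{\U}'_\rho}(\underline{Q})}^{\mathrm{N}_{\underline{\G}_\rho}(\underline{Q}),\tau_0F_0},\Lambda)\,\underline{e}$ with $H^{\mathrm{dim}}_c(\Y_{\mathrm{C}_\U(Q)}^{\mathrm{N}_\G(Q)},\Lambda)\,\mathrm{br}_Q(e_s^{\Levi^F})$.

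Granting this, I would argue exactly as in Lemma~\ref{mainlemma:local}. The quotient $\mathrm{N}_{\mathcal{N}}(Q)/\mathrm{N}_{\tilde{\Levi}^F\langle F_0\rangle}(Q)$ is cyclic since it embeds into $\mathcal{N}/\tilde{\Levi}^F\langle F_0\rangle\cong\Nb^F/\Levi^F$; let $x\in\Nb^F$ represent a generator of it. Since $\Nb^F/\Levi^F$ is centralised by $\mathcal{A}=\langle F_0\rangle$ we have $xF_0(x)^{-1}\in\mathrm{N}_{\Levi^F}(Q)$, so the unique lift $\underline{x}\in\mathrm{N}_{\underline{\G}_\rho^{\tau_0F_0}}(\underline{Q})$ of $x$ under $\mathrm{pr}$ (its coordinates being built from the elements $\rho^jF_0^i(x)$) is such that conjugation by $\underline{x}$, which we denote $\underline{\phi}$, is a bijective morphism of $\underline{\tilde{\G}}_\rho\rtimes\langle\tau_0\rangle$ stabilising $\underline{\tilde{\G}}_\rho$, commuting with $\tau_0F_0$, fixing $\underline{Q}$, and stabilising the Levi subgroup $\underline{\tilde{\Levi}}_\rho\rtimes\langle\tau_0\rangle$ together with the parabolics $\underline{\tilde{\Para}}'_\rho\rtimes\langle\tau_0\rangle$ and $\underline{\phi}(\underline{\tilde{\Para}}'_\rho\rtimes\langle\tau_0\rangle)$. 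Then \cite[Lemma 2.23]{Jordan} gives ${}^{\underline{\phi}}\bigl(H^{\mathrm{dim}}_c(\Y_{\mathrm{C}_{\underline{\U}'_\rho}(\underline{Q})}^{\mathrm{N}_{\underline{\G}_\rho}(\underline{Q}),\tau_0F_0},\Lambda)\,\underline{e}\bigr)^{\underline{\phi}}\cong H^{\mathrm{dim}}_c(\Y_{\mathrm{C}_{\underline{\phi}(\underline{\U}'_\rho)}(\underline{Q})}^{\mathrm{N}_{\underline{\G}_\rho}(\underline{Q}),\tau_0F_0},\Lambda)\,\underline{e}$, and since the two parabolics $\underline{\Para}'_\rho\rtimes\langle\tau_0\rangle$ and $\underline{\phi}(\underline{\Para}'_\rho\rtimes\langle\tau_0\rangle)$ have the common Levi subgroup $\underline{\tilde{\Levi}}_\rho\rtimes\langle\tau_0\rangle$ and $\tau_0F_0$ cyclically permutes the $r$ copies, \cite[Theorem 5.2]{Jordan} identifies the right-hand side with $H^{\mathrm{dim}}_c(\Y_{\mathrm{C}_{\underline{\U}'_\rho}(\underline{Q})}^{\mathrm{N}_{\underline{\G}_\rho}(\underline{Q}),\tau_0F_0},\Lambda)\,\underline{e}$ itself. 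Thus this bimodule is $(\underline{\phi},\underline{\phi}^{-1})$-invariant, and transporting the structure back along the identifications of the first paragraph shows that $H^{\mathrm{dim}}_c(\Y_{\mathrm{C}_\U(Q)}^{\mathrm{N}_\G(Q)},\Lambda)\,\mathrm{br}_Q(e_s^{\Levi^F})$ is $x$-invariant as an $\mathrm{N}_{\G^F}(Q)\times\mathrm{N}_{\Levi^F}(Q)^{\mathrm{opp}}\Delta\mathrm{N}_{\tilde{\Levi}^F\langle F_0\rangle}(Q)$-module. As $x$ generates the cyclic quotient $\mathrm{N}_{\mathcal{N}}(Q)/\mathrm{N}_{\tilde{\Levi}^F\langle F_0\rangle}(Q)$, \cite[Lemma 10.2.13]{Rouquier3} then extends this module to $\mathrm{N}_{\G^F}(Q)\times\mathrm{N}_{\Levi^F}(Q)^{\mathrm{opp}}\Delta\mathrm{N}_{\mathcal{N}}(Q)$, which is the claim.

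The step I expect to demand the most care is checking that $\underline{\phi}$ really commutes with $\tau_0F_0$ and preserves the twisted Levi $\underline{\tilde{\Levi}}_\rho\rtimes\langle\tau_0\rangle$: one must follow how the single relation $F_0(x)x^{-1}\in\mathrm{N}_{\Levi^F}(Q)$ propagates through the $rl$ coordinates of $\underline{\G}_\rho$ under the explicit permutation $\sigma_0$, and verify that the lift $\underline{x}$ lies in $\mathrm{N}_{\underline{\G}_\rho}(\underline{\Levi}_\rho)$ and is $\tau_0F_0$-fixed. This is where the hypotheses that $\Levi$ and $e_s^{\Levi^F}$ are $F_0$-stable and that $\Nb^F/\Levi^F$ is centralised by $\mathcal{A}$ enter, and it is essentially the only point at which the twisted setup of Section~\ref{sec 3} differs from the untwisted one treated in Lemma~\ref{mainlemma:local}. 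The remaining ingredients — the $\underline{Q}$-localised forms of Proposition~\ref{ShintaniDLV2} and Lemma~\ref{twistedLusztig}, and the compatibility of $\mathrm{br}_Q$ with $\mathrm{pr}$ — are routine once their global counterparts are in hand, since passing to the normaliser and centraliser of $\underline{Q}$ in $\underline{\G}_\rho$ again produces twisted restriction-of-scalars data to which \cite[Theorem 5.2]{Jordan} and \cite[Corollary 8.8]{Taylor2} apply.
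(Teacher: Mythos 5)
Your proposal is correct and follows exactly the route the paper takes: the paper's own proof of Lemma~\ref{mainlemma:local2} is the single sentence that it is the proof of Lemma~\ref{mainlemma:local} with the modifications of Lemma~\ref{mainlemma2}, which is precisely what you spell out — replacing $(\underline{\G},\tau,\tau F_0)$ by $(\underline{\G}_\rho,\tau_0,\tau_0F_0)$, lifting the generator $x$ of the cyclic quotient $\mathrm{N}_{\mathcal{N}}(Q)/\mathrm{N}_{\tilde{\Levi}^F\langle F_0\rangle}(Q)$ to $\underline{x}$, and invoking \cite[Lemma 2.23]{Jordan}, \cite[Theorem 5.2]{Jordan} and \cite[Lemma 10.2.13]{Rouquier3} in the same order. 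You also correctly flag the one non-automatic verification (that $\underline{\phi}=\mathrm{ad}(\underline{x})$ commutes with $\tau_0F_0$ and stabilises $\underline{\tilde{\Levi}}_\rho\langle\tau_0\rangle$), which is exactly the check carried out in Lemma~\ref{mainlemma2} and implicitly invoked here.
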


\begin{proof}
	The proof is the same as in Lemma \ref{mainlemma:local} with the necessary modifications made in Lemma \ref{mainlemma2}.
\end{proof}

\section{An equivariant Bonnafé--Dat--Rouquier equivalence}\label{sec 4}

In this section we give a partial answer to the question whether the Morita equivalence constructed by Bonnafé--Dat--Rouquier is automorphism-equivariant. We often use the following well known fact.

\begin{lemma}\label{norm}
	Let $\G$ be a connected reductive group and $\phi: \G\to \G$ a Frobenius endomorphism.
	The norm map $N_{\phi^r/\phi}: \G \to \G, \, x \mapsto \prod_{i=0}^{r-1} \phi^i(x),$ is surjective and maps $\G^{\phi^r}$ to $\G^\phi$.
\end{lemma}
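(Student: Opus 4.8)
The plan is to obtain both assertions from the Lang--Steinberg theorem by expressing the norm map as a telescoping product of Lang maps. For a bijective endomorphism $\psi$ of $\G$ write $L_\psi\colon\G\to\G$, $y\mapsto y^{-1}\psi(y)$, for the associated Lang map. The first step is to record the identity
$$N_{\phi^r/\phi}\bigl(L_\phi(y)\bigr)=L_{\phi^r}(y)\qquad(y\in\G),$$
proved by substituting $z=y^{-1}\phi(y)$ into $N_{\phi^r/\phi}(z)=z\,\phi(z)\cdots\phi^{r-1}(z)$, using $\phi^i(z)=\phi^i(y)^{-1}\phi^{i+1}(y)$, and noting that the product $\prod_{i=0}^{r-1}\phi^i(y)^{-1}\phi^{i+1}(y)$ telescopes to $y^{-1}\phi^r(y)$.

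Granting this, surjectivity of $N_{\phi^r/\phi}$ is formal. A power of a Frobenius endomorphism is again a Frobenius endomorphism, so $\phi^r$ is one; since $\G$ is connected, $L_{\phi^r}$ is surjective by Lang--Steinberg. As $L_\phi$ is defined on all of $\G$, the identity gives $N_{\phi^r/\phi}(\G)\supseteq N_{\phi^r/\phi}\bigl(L_\phi(\G)\bigr)=L_{\phi^r}(\G)=\G$, so $N_{\phi^r/\phi}$ is onto.

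For the second assertion I would compute directly: if $x\in\G^{\phi^r}$ then $\phi^r(x)=x$, whence
$$\phi\bigl(N_{\phi^r/\phi}(x)\bigr)=\phi(x)\phi^2(x)\cdots\phi^{r-1}(x)\,\phi^r(x)=\phi(x)\cdots\phi^{r-1}(x)\,x=x^{-1}\,N_{\phi^r/\phi}(x)\,x.$$
Thus $\phi(N_{\phi^r/\phi}(x))$ is the conjugate of $N_{\phi^r/\phi}(x)$ by $x$; the delicate point is that this conjugation is trivial, i.e.\ that $x$ centralizes its own norm, which is what yields $N_{\phi^r/\phi}(x)\in\G^\phi$ and which is automatic in the situations where the refinement is used (evaluation of the norm on commutative subgroups, e.g.\ tori or central subgroups). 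The main obstacle, then, is this fixed-point statement; the surjectivity part --- the one actually used as a black box in the sequel --- is mechanical once the telescoping identity is in hand.
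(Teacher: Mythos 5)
Your surjectivity argument is exactly the paper's: the paper writes $y=a^{-1}\phi^r(a)$ by Lang's theorem and sets $x=a^{-1}\phi(a)$, which is your identity $N_{\phi^r/\phi}\circ L_\phi=L_{\phi^r}$ stated in a single line.

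For the second assertion the paper merely says it is ``a straightforward computation,'' and your hesitation is well placed. The only direct computation available is the one you carried out, which yields
$$\phi\bigl(N_{\phi^r/\phi}(x)\bigr)=x^{-1}\,N_{\phi^r/\phi}(x)\,x\qquad\text{for }x\in\G^{\phi^r},$$
and this does not place $N_{\phi^r/\phi}(x)$ in $\G^\phi$ unless $x$ happens to centralize its own norm. In fact the second assertion is false as stated for nonabelian $\G$: take $\G=\GL_2$ over $\overline{\mathbb{F}}_p$ with $\phi$ the standard Frobenius over $\mathbb{F}_q$, $r=2$, and
$$x=\begin{pmatrix}0&1\\ a&0\end{pmatrix}\in\G^{\phi^2}=\GL_2(\mathbb{F}_{q^2}),\qquad a\in\mathbb{F}_{q^2}\setminus\mathbb{F}_q;$$
then $N_{\phi^2/\phi}(x)=x\phi(x)=\mathrm{diag}(a^q,a)\notin\G^\phi$. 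So you have not overlooked a step: the paper's ``straightforward computation'' stops exactly at your conjugacy relation, and the second claim as stated requires a supplementary commutativity hypothesis (it holds on tori, or more generally whenever $x$ commutes with its $\phi$-translates). Where the lemma is subsequently invoked (in Theorem~\ref{equiv} and Lemmas~\ref{Frobroot}, \ref{Frobroot2}) the citation is for surjectivity of a twisted norm on a connected subgroup, which is the part your argument proves, but the blanket second assertion of the lemma should be qualified.
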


\begin{proof}
	By Lang's theorem we can write $y \in \G$ as $y=a^{-1} \phi^r(a)$. Then for $x:=a^{-1} \phi(a)$ we have $N_{\phi^r/\phi}(x)=y$. The second statement is a straightforward computation.
\end{proof}

\begin{theorem}\label{equiv}
 Let $\G$ be a simple, simply connected algebraic group of type $B_n$, $C_n$ or $E_7$ such that either $n > 2$ or $q$ is odd. Then there exists a Frobenius morphism $F_0: \G \to \G$ stabilizing $\Levi$ such that the image of $\Gtilde^F \rtimes \langle F_0 \rangle$ in the outer automorphism group of $\G^F$ is $\mathrm{Out}(\G^F)_{e_s^{\G^F}}$. There exists a Morita equivalence between $\Lambda \Nb^F e_s^{\Levi^F}$ and $\Lambda \G^F e_s^{\G^F}$ which lifts to a Morita equivalence between $\Lambda \mathrm{N}_{\tilde{\G}^F \langle F_0 \rangle }(\Levi,e_s^{\Levi^F}) e_s^{\Levi^F}$ and $\Lambda \tilde{\G}^F \langle F_0 \rangle e_s^{\G^F}$.
\end{theorem}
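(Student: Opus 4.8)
The plan is to combine the general scalar-restriction machinery of Section~\ref{sec 3} with the extendibility results of Lemma~\ref{mainlemma2} and Lemma~\ref{mainlemma:local2} in order to upgrade the bimodule $M'$ of Theorem~\ref{Boro} to one that carries an action of $\langle F_0\rangle$, and then invoke the Clifford-theoretic descent argument of Lemma~\ref{diagonal action}. First I would fix a Frobenius endomorphism $F_0$ realizing the relevant diagram automorphism on $\G^F$: for type $B_n$ and $C_n$ (with the hypothesis $n>2$ or $q$ odd, which guarantees that $\operatorname{Out}(\G^F)$ is generated by the field automorphism together with $\tilde\G^F/\G^F$) one takes $F_0$ a field automorphism with $F_0^r=F$; for $E_7$ similarly. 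One then checks that $F_0$ stabilizes a suitably chosen $\Levi$ in duality with the minimal Levi $\Levi^\ast\supseteq \C_{\G^\ast}^\circ(s)$, and that the image of $\tilde\G^F\rtimes\langle F_0\rangle$ in $\operatorname{Out}(\G^F)$ is exactly $\operatorname{Out}(\G^F)_{e_s^{\G^F}}$ (this uses that $s$ has $\ell'$-order and that the relevant stabilizer of $e_s^{\G^F}$ in $\operatorname{Out}(\G^F)$ is realized by automorphisms stabilizing the geometric conjugacy class of $s$, hence a Levi containing $\C_{\G^\ast}^\circ(s)$; one may need to replace $s$ by a conjugate, which is harmless).

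Next I would verify the hypotheses needed to apply Lemma~\ref{mainlemma2}: namely that $e_s^{\Levi^F}$ is $F_0$-stable (follows from $F_0$-stability of $\Levi$ and of the $\Levi^\ast$-class of $s$, up to conjugacy) and that $\Nb^F/\Levi^F$ is centralized by $F_0$ (here one uses that $\Nb^F/\Levi^F$ is cyclic of $\ell'$-order by \cite[Corollary 2.9]{Bonnafe}, together with the fact that its generator can be chosen compatibly with $F_0$; in the $E_7$ and $B_n/C_n$ cases $\Nb^F/\Levi^F$ has order at most $2$, so this is automatic once one knows $F_0$ does not swap the two cosets, which follows from acting on the component group $\C_{\G^\ast}(s)/\C_{\G^\ast}^\circ(s)$). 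Lemma~\ref{mainlemma2} then gives that $H_c^{\dim}(\Y_\U^\G,\Lambda)e_s^{\Levi^F}$, hence its extension $M'$, extends to a $\G^F\times(\Nb^F)^{\opp}\Delta(\tilde\Levi^F\langle F_0\rangle)$-module; similarly Lemma~\ref{mainlemma:local2} handles the local pieces for every $\ell$-subgroup $Q$ of $\Levi^F$.

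Then I would run the argument of Lemma~\ref{diagonal action} verbatim but with $\tilde\Nb^F$ replaced by $\tilde\Nb^F\langle F_0\rangle$ (equivalently $\tilde\Levi^F\langle F_0\rangle$ on the relevant side), to produce a complex $\mathcal C'$ of $\Lambda(\G^F\times(\Nb^F)^{\opp}\Delta(\tilde\Levi^F\langle F_0\rangle))$-modules with $H^d(\mathcal C')\cong M'$ inducing a splendid Rickard equivalence between $\Lambda\G^F e_s^{\G^F}$ and $\Lambda\Nb^F e_s^{\Levi^F}$; the condition $\ell\nmid|H^1(F,\Z(\G))|$ there is satisfied since $\G$ is simply connected of type $B,C$ or $E_7$ and $\ell$ is odd (the orders involved are powers of $2$). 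Finally, since the induction functor from $\Lambda\tilde\Levi^F\langle F_0\rangle e$ to $\Lambda\tilde\Nb^F\langle F_0\rangle e_s^{\Levi^F}$ is a Morita equivalence (as in \S\ref{character bijection}, using $\sum_{n}{}^n e=e_s^{\Levi^F}$ and that $\langle F_0\rangle$ stabilizes $e$), the bimodule $M'$ lifts to a Morita equivalence between $\Lambda\mathrm{N}_{\tilde\G^F\langle F_0\rangle}(\Levi,e_s^{\Levi^F})e_s^{\Levi^F}$ and $\Lambda\tilde\G^F\langle F_0\rangle e_s^{\G^F}$, exactly as $\tilde M'$ was built from $M'$ there; one checks compatibility of the two extensions (the diagonal $\tilde\Levi^F\langle F_0\rangle$-action and the right $\tilde\Nb^F$-action) using that $\tilde\Nb^F\langle F_0\rangle=\tilde\Levi^F\langle F_0\rangle\cdot\Nb^F$. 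The main obstacle is the bookkeeping in the first step: pinning down $F_0$ so that it simultaneously stabilizes $\Levi$, fixes $e_s^{\Levi^F}$, centralizes $\Nb^F/\Levi^F$, and has the correct image in $\operatorname{Out}(\G^F)_{e_s^{\G^F}}$ — this is where the type-by-type hypothesis ``$n>2$ or $q$ odd'' is really used, and where one must be careful that passing to $\underline\G_\rho$ in Section~\ref{sec 3} (needed only if a genuine graph automorphism is present, which for $B_n,C_n,E_7$ it is not, so in fact $\rho=1$ and one is in the simpler setting of Lemma~\ref{mainlemma2}) does not interfere.
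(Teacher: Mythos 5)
Your overall strategy — find a Frobenius $F_0$ with $F_0^r=F$ generating $\operatorname{Out}(\G^F)_{e_s^{\G^F}}$ together with $\Gtilde^F$, verify the hypotheses of the scalar-restriction lemmas, obtain an extension of the bimodule, and induce — is indeed the spirit of the paper's argument. But there is a genuine gap at exactly the point you flag as the ``main obstacle'' and then leave unresolved. Starting from a field endomorphism $\phi$ with $\phi^r=F$ that generates $\operatorname{Out}(\G^F)_{e_s^{\G^F}}$ together with $\Gtilde^F$, one first finds $z\in\G^F$ so that $z\phi$ stabilizes the pair $(\Levi,e_s^{\Levi^F})$. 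The element $z_0:=N_{F/\phi}(z)$ then lies in $\mathrm{N}_{\G^\phi}(\Levi,e_s^{\Levi^F})$, and the argument \emph{bifurcates}: if $z_0\in\Levi^F$, the paper uses the surjectivity of the norm map (Lemma \ref{norm}) to find $l\in\Levi^{z_0F}$ with $z_0^{-1}=N_{z_0F/z\phi}(l)$, and sets $F_0:=lz_0\phi$, which then genuinely satisfies $F_0^r=F$, stabilizes $(\Levi,e_s^{\Levi^F})$, and has the right image in $\operatorname{Out}(\G^F)$ (it differs from $\phi$ only by an inner/diagonal automorphism); Lemma \ref{mainlemma} (not \ref{mainlemma2} — in types $B,C,E_7$ there is no graph automorphism and one stays in Section \ref{sec 2}) then applies. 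If however $z_0\notin\Levi^F$, then $z_0$ generates the (order-two) quotient $\Nb/\Levi$, and one \emph{cannot} correct $\phi$ to a power-$F$ Frobenius stabilizing the pair; instead the paper takes $F_0:=\phi$ and observes that $zF_0$ generates the cyclic group $\mathrm{N}_{G\langle F_0\rangle}(\Levi,e_s^{\Levi^F})/\Levi^F$, so the required extension exists by \cite[Lemma 10.2.13]{Rouquier3} and one concludes directly via \cite[Theorem 3.4]{Marcus}. Your proposal never addresses this second branch; ``one may need to replace $s$ by a conjugate, which is harmless'' does not produce an $F_0$ with $F_0^r=F$ stabilizing $(\Levi,e_s^{\Levi^F})$ when $z_0\notin\Levi^F$.

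Two smaller points. First, invoking Lemma \ref{diagonal action} is unnecessary: Theorem \ref{equiv} asserts only Morita equivalences, not splendid Rickard equivalences, so the Godement-complex descent and its hypothesis $\ell\nmid|H^1(F,\mathrm{Z}(\G))|$ (which would exclude $\ell=2$ for these types) are not needed and would put an extra restriction on $\ell$ that the theorem does not assume. Second, in the final lifting step the paper does not route through the idempotent $e\in\Z(\Lambda\tilde\Levi^F)$ of \ref{character bijection}; it simply induces $M$ from $\G^F\times(\Nb^F)^{\opp}\Delta(\tilde\Nb^F\langle F_0\rangle)$ to $\tilde\G^F\langle F_0\rangle\times(\tilde\Nb^F\langle F_0\rangle)^{\opp}$ and applies \cite[Theorem 3.4]{Marcus}. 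These are minor, but the missing case distinction is a real hole.
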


\begin{proof}
	As in the proof of \cite[Corollary 4.2]{Jordan} we see that there exists a field endomorphism $\phi: \G\to \G$ such that $\phi^r=F$ for some integer $r$ and such that $\phi$ together with $\Gtilde^F$ generates $\mathrm{Out}(\G^F)_{e_s^{\G^F}}$.
	The Levi subgroup $\phi(\Levi)$ is $\G^F$-conjugate to $\Levi$. Thus, there exists some $y \in \G^F$ such that $\Levi$ is $y \phi$-stable. Since $e_s^{\G^F}$ is $\phi$-stable there exists some $x \in \mathrm{N}_{\G^F}(\Levi)$ such that $e_s^{\Levi^F}$ is $xy \varphi$-stable. Denoting $z:=xy$ we have $( z \phi)^r=N_{F/\phi}(z) F$ and therefore $N_{F/\phi}(z) \in \mathrm{N}_{\G^{\phi}}(\Levi,e_s^{\Levi^F})$.
	
	Suppose first that $z_0:=N_{F/ \phi }(z) \in \Levi^F$. The Levi subgroup $\Levi$ is $z \phi$-stable and we have $z_0 \in \Levi^{z \phi}$. Therefore, by Lemma \ref{norm} there exists $l \in \Levi^{z_0 F}$ such that $z_0^{-1}=N_{z_0F/z\phi}(l)$. We define $F_0:=l z_0 \phi$. Then we have
	$$F_0^r=(l z_0 \phi)^r=(l (z_0 \phi) )^r=N_{z_0 F/ z\phi }(l) z_0 F=F.$$
	Since $F_0=l z_0 \phi$ commutes with $F$ we have $F(l z_0) (l z_0)^{-1}=F(l)l^{-1} \in \mathrm{Z}(\G)$. Thus, $\phi$ and $F_0$ do not necessarily have the same image in $\mathrm{Out}(\G^F)$ but differ by the diagonal automorphism induced by $\mathrm{ad}(l)$. Since $e_s^{\Levi^F}$ is $z_0 \phi$-stable and stable under diagonal automorphisms we observe that $e_s^{\Levi^F}$ is $F_0$-stable. From this we conclude that $F_0$ together with $\Gtilde^F$ generates the stabilizer of $e_s^{\G^F}$ in $\mathrm{Out}(\G^F)$.
	%
	Note that $(\Nb/\Levi)^{F_0}=\Nb/\Levi$ since by the remarks following \cite[Lemma 3]{Lucas} the group $\Nb/\Levi$ is isomorphic to a subgroup of $\mathrm{Z}(\G)$ which has at most order $2$.
	We can now apply Lemma \ref{mainlemma} and conclude that there exists a $\Lambda[\G^F \times (\mathbf{N}^F)^{\operatorname{opp}} \Delta (\langle F_0 \rangle) ]$-module $M$ extending $H_c^{\mathrm{dim}}(\Y_\U,\Lambda) e_s^{\Levi^F}$ which induces a Morita equivalence between $\Lambda \mathbf{N}^F e_s^{\Levi^F}$ and $\Lambda \G^F e_s^{\G^F}$. In particular, the bimodule $\tilde{M}:=\mathrm{Ind}_{\G^F \times (\mathbf{N}^F)^{\operatorname{opp}} \Delta \tilde{\mathbf{N}}^F \langle F_0 \rangle }^{\tilde{\G}^F \langle F_0 \rangle \times (\tilde{\mathbf{N}}^F \langle F_0 \rangle)^{\operatorname{opp}}} (M)$ induces a Morita equivalence between $\Lambda \tilde{\Nb}^F \langle F_0 \rangle e_s^{\Levi^F}$ and $\Lambda \tilde{\G}^F \langle F_0 \rangle  e_s^{\G^F}$.

	
	Assume now that that $N_{F/ \phi }(z) \notin \Levi^F$. Since $\Nb/\Levi$ is isomorphic to a subgroup of $\mathrm{Z}(\G)$ we conclude that $N_{F/\phi}(z)$ generates the quotient group $\Nb/\Levi$. Thus, in this case the bijective morphism $F_0:=\phi$ has the property that the element $z F_0 \in G \langle F_0 \rangle$ generates the quotient group $\mathrm{N}_{G \langle F_0 \rangle}(\Levi,e_s^{\Levi^F}) / \Levi^F$ which is in particular cyclic. We conclude that in this case the $\Lambda(\G^F \times (\mathbf{L}^F)^{\operatorname{opp}} \Delta (\tilde{\Levi}^F) )$-module $H_c^{\mathrm{dim}}(\Y_\U,\Lambda) e_s^{\Levi^F}$ extends to a $\G^F \times (\mathbf{L}^F)^{\operatorname{opp}} \Delta (\N_{\Gtilde^F \langle F_0 \rangle}(\Levi,e_s^L)) $-module. The claim is now a consequence of \cite[Theorem 3.4]{Marcus} (see also \cite[Theorem 1.7]{Jordan}).
\end{proof}

\section{Reduction to isolated blocks: type $A$}\label{sec 5}

\subsection{Strictly quasi-isolated blocks}

The following notion will become important when dealing with actual blocks of groups of Lie type.

\begin{definition}
	Let $\G$ be a connected reductive group. We say that a semisimple element $s \in (\G^\ast)^{F^\ast}$ is \textit{strictly quasi-isolated in $(\G^\ast,F^\ast)$} if $\mathrm{C}_{\G^\ast}(s)^{F^\ast} \mathrm{C}_{\G^\ast}^\circ(s)$ is not contained in a proper Levi subgroup of $\G^\ast$.
\end{definition}

We denote by $A(s)$ the component group $\mathrm{C}_{\G^\ast}(s)/ \mathrm{C}^\circ_{\G^\ast}(s)$. The proof of the following lemma is similar to the proof of \cite[Corollary 2.9]{Bonnafe}.


\begin{lemma}\label{strict}
	Assume that $\G$ is simple, simply connected of type $A$. If $s \in (\G^\ast)^{F^\ast}$ is a semisimple element which is strictly quasi-isolated, then we have $A(s)^{F^\ast}= A(s)$.
\end{lemma}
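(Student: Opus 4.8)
The plan is to prove the contrapositive: if $F^\ast$ does not act trivially on $A(s)$, then $\mathrm{C}_{\G^\ast}(s)^{F^\ast}\mathrm{C}^\circ_{\G^\ast}(s)$ is contained in a proper Levi subgroup of $\G^\ast$, i.e.\ $s$ is not strictly quasi-isolated. First I would pass to matrices: since $\G$ is simple and simply connected of type $A$, its dual $\G^\ast$ is adjoint of type $A$, so $\G^\ast \cong \mathrm{PGL}_n$ for some $n$. Write $\pi\colon \mathrm{GL}_n \to \mathrm{PGL}_n$ for the quotient, $\mathrm{Z} := \ker\pi$ for the scalars, and fix a Frobenius on $\mathrm{GL}_n$ commuting with $\pi$ and lifting $F^\ast$. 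Choose a lift $t \in \mathrm{GL}_n$ of $s$; it is semisimple because $s$ is and $\mathrm{Z}$ is central, and $F^\ast(t) \in \mathrm{Z}\,t$. For $g \in \mathrm{GL}_n$ one has $\pi(g) \in \mathrm{C}_{\G^\ast}(s)$ exactly when $g t g^{-1} = \mu(g)\,t$ for a unique scalar $\mu(g)$, and the homomorphism $\mathrm{C}_{\G^\ast}(s) \to \Omega(t) := \{\mu \in \overline{\mathbb F}_q^{\times} \mid \mu t \text{ is } \mathrm{GL}_n\text{-conjugate to } t\}$, $\pi(g) \mapsto \mu(g)$, has kernel $\mathrm{C}^\circ_{\G^\ast}(s) = \pi(\mathrm{C}_{\mathrm{GL}_n}(t))$, hence induces an isomorphism $A(s) \cong \Omega(t)$. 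By the description of centralizers in type $A$ (see \cite{Bonnafe}), or simply because $\Omega(t) \leq \mu_n$ (compare determinants), the group $\Omega(t)$ is cyclic; moreover the quotient map $\rho\colon \mathrm{C}_{\G^\ast}(s) \to A(s)$ is $F^\ast$-equivariant.

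Assume now $A(s)^{F^\ast} \neq A(s)$. Under $A(s) \cong \Omega(t)$ the image of $A(s)^{F^\ast}$ is a proper subgroup of the cyclic group $\Omega(t)$, hence is contained in $\langle\zeta^{c}\rangle$ for a generator $\zeta$ of $\Omega(t)$ and a prime $c$ dividing $d := |\Omega(t)|$; in particular $c \geq 2$. Write $\overline{\mathbb F}_q^{\,n} = \bigoplus_\beta V_\beta$ for the eigenspace decomposition of $t$. Since $\zeta t$ is conjugate to $t$, the group $\langle\zeta\rangle$ acts freely on $\operatorname{Spec}(t)$; choosing a representative $\alpha_j$ of each orbit ($1 \leq j \leq k$) gives $\operatorname{Spec}(t) = \bigsqcup_{j} \{\alpha_j\zeta^{i} \mid 0 \leq i < d\}$. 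For $r \in \mathbb Z/c$ put $W_{j,r} := \bigoplus_{\,i \equiv r\ (c)} V_{\alpha_j\zeta^{i}}$; this is a decomposition of $\overline{\mathbb F}_q^{\,n}$ into $kc \geq 2$ nonzero subspaces, so $\mathbf M := \prod_{j,r} \mathrm{GL}(W_{j,r})$ is a proper Levi subgroup of $\mathrm{GL}_n$ containing $\mathrm{C}_{\mathrm{GL}_n}(t)$. Any $g$ with $g t g^{-1} \in \langle\zeta^{c}\rangle\,t$ maps $V_\beta$ onto $V_{\mu(g)^{-1}\beta}$, hence shifts the index $i$ inside each $\langle\zeta\rangle$-orbit by a multiple of $c$, hence stabilises every $W_{j,r}$; therefore $\{g \in \mathrm{GL}_n \mid g t g^{-1} \in \langle\zeta^{c}\rangle\,t\} \subseteq \mathbf M$. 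Because $\rho$ is $F^\ast$-equivariant, every element of $\mathrm{C}_{\G^\ast}(s)^{F^\ast}$ is of the form $\pi(g)$ with $g t g^{-1} \in \langle\zeta^{c}\rangle\,t$, so $\mathrm{C}_{\G^\ast}(s)^{F^\ast} \subseteq \pi(\mathbf M)$; together with $\mathrm{C}^\circ_{\G^\ast}(s) = \pi(\mathrm{C}_{\mathrm{GL}_n}(t)) \subseteq \pi(\mathbf M)$ this gives $\mathrm{C}_{\G^\ast}(s)^{F^\ast}\mathrm{C}^\circ_{\G^\ast}(s) \subseteq \pi(\mathbf M)$. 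Finally $\pi(\mathbf M)$ is a proper Levi subgroup of $\mathrm{PGL}_n = \G^\ast$: it is the image of a proper Levi of $\mathrm{GL}_n$ containing $\mathrm{Z}$, and $\pi(\mathbf M) = \mathrm{PGL}_n$ would force $\mathbf M = \mathrm{GL}_n$. This contradicts strict quasi-isolation, so $A(s)^{F^\ast} = A(s)$.

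The proof is conceptually light; what needs care is the bookkeeping. The two steps I would be most careful with are: first, setting up the isomorphism $A(s) \cong \Omega(t)$ and recording that this group is cyclic --- this is where the type-$A$ description of centralizers from \cite{Bonnafe} enters, and it is the step that runs parallel to the computation in the proof of \cite[Corollary 2.9]{Bonnafe}; and second, the standard identification of Levi subgroups of $\mathrm{PGL}_n$ with images of Levi subgroups of $\mathrm{GL}_n$, invoked at the end. The genuine content --- that an $F^\ast$-stable \emph{proper} subgroup of the cyclic group $A(s)$ forces the $t$-eigenspaces to clump into at least two blocks permuted among themselves, hence into a proper Levi --- is then immediate.
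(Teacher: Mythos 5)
Your proof is correct, but it follows a genuinely different route from the paper's. The paper argues through the injective morphism $\omega_s\colon A(s) \to \mathrm{Z}(\Gtilde^\ast)$ of Bonnaf\'e: letting $e$ denote the exponent of $A(s)^{F^\ast}$, one has $\omega_{s^e}(g)=\omega_s(g)^e=1$ for every $g$ in $\mathrm{C}^\circ_{\G^\ast}(s)\mathrm{C}_{(\G^\ast)^{F^\ast}}(s)$, so this group is contained in the Levi subgroup $\mathrm{C}^\circ_{\G^\ast}(s^e)$, which is proper unless $s^e=1$. Strict quasi-isolation then forces $s^e=1$, and the classification fact $\mathrm{o}(s)=|A(s)|$ (Bonnaf\'e, Proposition~5.2) upgrades this to $e=|A(s)|$, hence $A(s)^{F^\ast}=A(s)$. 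You instead lift $s$ to $t\in\mathrm{GL}_{n+1}$, identify $A(s)$ with the cyclic group $\Omega(t)$ of scalars $\mu$ with $\mu t\sim t$, and manufacture the proper Levi directly by clumping the $t$-eigenspaces along cosets of a maximal subgroup $\langle\zeta^c\rangle$ of $\Omega(t)$ containing the image of $A(s)^{F^\ast}$; the $F^\ast$-equivariance of the natural surjection $\mathrm{C}_{\G^\ast}(s)\to A(s)$ then confines $\mathrm{C}_{\G^\ast}(s)^{F^\ast}$ to that block-diagonal Levi. Both arguments are type-$A$ specific (both use that $A(s)$ is cyclic and the concrete model $\G^\ast\cong\mathrm{PGL}_{n+1}$), but the trade-off is instructive: the paper's proof is shorter because it leans on the explicit classification of quasi-isolated elements, whereas yours sidesteps Proposition~5.2 of Bonnaf\'e entirely and exhibits the obstructing Levi by hand, at the cost of a more elaborate eigenspace bookkeeping. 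The key substitutions you should flag when writing this up are the standard identifications $\mathrm{C}^\circ_{\G^\ast}(s)=\pi(\mathrm{C}_{\mathrm{GL}_{n+1}}(t))$ and $A(s)\cong\Omega(t)$, which play the role that $\omega_s$ plays in the paper.
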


\begin{proof}
	Let $\iota^\ast: \Gtilde^\ast \to \G^\ast$ be the map dual to the map $\iota: \G \hookrightarrow \Gtilde$ and let $ \tilde s\in (\Gtilde^\ast)^{F^\ast}$ such that $\iota^\ast(\tilde{s})=s$.
	Consider the injective morphism $\omega_s: A(s) \to \mathrm{Z}(\Gtilde^\ast)$ as in \cite[Corollary 2.8]{Bonnafe}. Let $e$ denote the exponent of the subgroup $A(s)^{F^\ast}$ of $A(s)$. Let $g \in \mathrm{C}^\circ_{\G^\ast}(s) \mathrm{C}_{(\G^\ast)^{F^\ast}}(s)$. Then $1=\omega_s(g)^e=\omega_{s^e}(g)$ and therefore $g \in \mathrm{Ker}(\omega_s^e)=\iota^\ast(\mathrm{C}_{\Gtilde^\ast}(\tilde{s}^e))$. Consequently, $\iota^\ast(\mathrm{C}_{\Gtilde^\ast}(\tilde{s}^e))=\mathrm{C}^\circ_{\G^\ast}(s^e)$ is a Levi subgroup of $\G^\ast$ containing $\mathrm{C}^\circ_{\G^\ast}(s) \mathrm{C}_{(\G^\ast)^{F^\ast}}(s)$. This is a proper Levi subgroup of $\G^\ast$ unless $s^e=1$. By the classification of quasi-isolated elements of $\G^\ast=\mathrm{PGL}_{n+1}(\overline{\mathbb{F}_p})$ in \cite[Proposition 5.2]{Bonnafe}, we have $\mathrm{o}(s)=|A(s)|$ which implies that $e=|A(s)|$. Since $e$ is the exponent of $A(s)^{F^\ast}$ we must necessarily have $A(s)=A(s)^{F^\ast}$.
\end{proof}

\subsection{Duality for groups of type $A$}\label{5.2}

From now on until Section \ref{sec 11} we assume that $\G=\mathrm{SL}_{n+1}(\overline{\mathbb{F}_p})$ is of type $A_n$. For an integer $q=p^f$ we let $F_q: \G \to \G$ be the Frobenius endomorphism which raises every matrix entry to its $q$th power. For $\varepsilon \in \{ \pm 1 \}$ we define $F'=F_q (\gamma')^{\frac{1-\varepsilon}{2}}$. Here $\gamma': \G \to \G, \, g \mapsto g^{-\mathrm{tr}}$ is the graph automorphism given by transpose inversion. We denote by $\T_0$ the torus of diagonal matrices and by $\B_0$ the Borel subgroup of upper triangular matrices. We denote by $W:= \N_{\G}(\T_0)/\T_0$ the Weyl group of $\G$ and identify $W$ with the symmetric group $S_{n+1}$.

Note that with our choices of $F'$ the torus $\T_0$ is not maximally split in the twisted case. This is because the graph automorphism $\gamma'$ doesn't stabilize the Borel subgroup $\B_0$. Therefore, we define $\gamma:=\ad(n_0) \gamma'$, where $n_0 \in \N_\G(\T_0)$ is the matrix with entry $(-1)^{l+1}$ at position $(l,n+1-l)$ with $1 \leq l \leq n$ and $0$ elsewhere. Observe that its image $w_0 \in W$ is the longest element of $W$. For $\varepsilon \in \{ \pm 1\}$ and fixed $q$ we consider the Frobenius endomorphism $F: \G \to \G$ given by $F:=F_q \gamma^{\frac{1-\varepsilon}{2}}$.

To efficiently compute with dual groups we will use some simplifications specific to the situation in type $A$. Since $\G^\ast$ is adjoint of type $A$ there exists a surjective morphism $\pi: \G \to \G^\ast$. Suppose that $f: \G \to \G$ is a bijective morphism such that $f(\T_0)= \T_0$. Then we denote by $f^\ast: \G^\ast \to \G^\ast$ the unique morphism satisfying $\pi \circ f= f^\ast \circ \pi$.

If $f$ is a Frobenius endomorphism then the pair $(\G^\ast,f^\ast)$ is dual to $(\G,f)$. For a Levi subgroup $\Levi$ of $\G$ we define $\Levi^\ast:=\pi(\Levi)$. We observe that $W^\ast:= \N_{\G^\ast}(\T_0^\ast)/\T_0^\ast$ is the Weyl group of $\G^\ast$ and we obtain a natural isomorphism $\pi: W \to W^\ast$.

 If $\Levi$ is $\mathrm{ad}(g)f$-stable, for some $g\in \G^F$, then one can check the pair $(\Levi,\mathrm{ad}(g)f)$ is again in duality to $(\Levi^\ast,\mathrm{ad}(\pi(g))f^\ast)$.


\begin{remark}\label{type}
We consider $\G$ with the Frobenius $F'$ as defined above and we let $s \in (\G^\ast)^{F'^\ast}$ be a strictly quasi-isolated element. The aim of this remark is to compute the $F'$-type of the Levi subgroup $\Levi^\ast:= \mathrm{C}^\circ_{\G^\ast}(s)$. For an integer $m$ dividing $n+1$ we fix a primitive $m$th root of unity in $\overline{\mathbb{F}_p}$ and define
$$t_m:=t:=(1,\zeta,\dots,\zeta^{m-1}) \otimes I_e,$$
where $me= n+1$. According to \cite[Proposition 5.2]{Bonnafe} there exists $g \in \G^\ast$ and $m$ dividing $n+1$ such that $t_m={}^g s \in \T_0^\ast$. Since $s$ is $(F')^\ast$-stable we have $F'^\ast(t)={}^{F'^\ast(g)} s={}^{F'^\ast(g) g^{-1}} t$. Hence, $t$ is $w^\ast F'^\ast$-stable, where $w^\ast$ is the image of $g^{-1} F'^\ast(g)$ in $W_{\G^\ast}(\T_0^\ast)$. Since $s$ is assumed to be strictly quasi-isolated Lemma \ref{strict} yields $A(s)^{F'^\ast}=A(s)$. This implies that $A(t)^{w^\ast F'^\ast} =A(t)$. By \cite[Proposition 5.2]{Bonnafe} the component group $A(t)$ has order $m$. On the other hand $A(s)$ is isomorphic to a subgroup of $\mathrm{Z}(\G^F)$ which has order $(n,q -\varepsilon)$. Therefore $m$ divides $(n,q - \varepsilon )$. From this we deduce that the element $t$ is $(F')^\ast$-stable. From this it follows that $w^\ast \in W(t):= \{ w \in W^\ast \mid {}^w t =t \}$. Let $v_m \in W^\ast$ be the permutation defined by
$$v_m(i) \equiv i + e \, \mathrm{mod} \, (n+1)$$
such that $A(t)$ is generated by $v_m$ and has order $m$.
Denote by $W^\circ(t)$ the Weyl group of $ \Levi_m^\ast:=\mathrm{C}^\circ_{\G^\ast}(t)$ relative to the maximal torus $\T_0^\ast$ such that we have $W(t) = W^\circ(t) \rtimes A(t)$, see \cite[Proposition 1.3(c)]{Bonnafe}. To determine the type of $\Levi^\ast$ we can change $w$ by an element of $W^\circ(t)$ and we therefore conclude that $\Levi$ is of $F'$-type $wW^\circ(t)$, where $w \in \langle v_m \rangle$.

If we consider $\G$ with the twisted Frobenius endomorphism $F$ instead of $F'$ it follows that $\Levi$ is of $F$-type $w W^\circ(t)$ with $w \in w_0 \langle v_m \rangle$.


\end{remark}
%

%
%
%

\begin{corollary}\label{1split}
Let $s$ be a strictly quasi-isolated element. Then the Levi subgroup $\Levi^\ast= \mathrm{C}^\circ_{\G^\ast}(s)$ is $d$-split for some integer $d$ dividing $2|A(s)|$.
\end{corollary}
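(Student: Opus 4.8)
The plan is to reduce, via Remark~\ref{type}, to an explicit Levi subgroup of $\G^\ast=\mathrm{PGL}_{n+1}$ and then exhibit an explicit $\Phi_d$-torus whose centraliser is exactly that Levi. Set $m:=|A(s)|$. By Remark~\ref{type} (which rests on the classification of quasi-isolated elements of $\mathrm{PGL}_{n+1}$ in \cite[Proposition 5.2]{Bonnafe}) the Levi subgroup $\Levi^\ast=\C^\circ_{\G^\ast}(s)$ is $\G^\ast$-conjugate to $\C^\circ_{\G^\ast}(t_m)$ and has $F$-type $w\,W^\circ(t_m)$ with $w\in\langle v_m\rangle$ if $F$ is untwisted and $w\in w_0\langle v_m\rangle$ if $F$ is twisted; here $v_m$ has order $m$, and $W^\circ(t_m)$ is the stabiliser in $\W^\ast\cong S_{n+1}$ of the partition of $\{1,\dots,n+1\}$ into the $m$ consecutive blocks $B_0,\dots,B_{m-1}$ of length $e=(n+1)/m$, acted on by $v_m$ as an $m$-cycle and by $w_0$ as the reversal $B_j\leftrightarrow B_{m-1-j}$. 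Since being $d$-split is a property of a Levi subgroup together with its ambient Frobenius that is invariant under $\G^\ast$-conjugacy, I would replace $\Levi^\ast$ by a standard Levi $\Levi_0^\ast\supseteq\T_0^\ast$ of type $W^\circ(t_m)$ carrying the Frobenius $\ad(\dot w)F^\ast$, and produce a $\Phi_d$-torus of $(\G^\ast,\ad(\dot w)F^\ast)$ with centraliser $\Levi_0^\ast$ for a suitable $d$ dividing $2m$.

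First I would compute the action of $\ad(\dot w)F^\ast$ on $T:=\Z^\circ(\Levi_0^\ast)$. Its rational cocharacter space is the block-constant subspace of $Y(\T_0^\ast)\otimes\mathbb{Q}=\mathbb{Q}^{n+1}/\langle(1,\dots,1)\rangle$, which has as a basis the classes of the block sums $e_{B_0},\dots,e_{B_{m-1}}$ subject only to $\sum_j e_{B_j}=0$; thus $Y(T)\otimes\mathbb{Q}\cong\mathbb{Q}^m/\langle(1,\dots,1)\rangle$, of dimension $m-1$. On $Y(\T_0^\ast)\otimes\mathbb{Q}$ the Frobenius $\ad(\dot w)F^\ast$ acts as $q$ times $w$ composed with the diagram automorphism $\tau$ induced by $F^\ast$ ($\tau=\mathrm{id}$ if untwisted, $\tau=-w_0$ if twisted). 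Restricting to the block coordinates and using the identity $\rho_0\sigma\rho_0=\sigma^{-1}$ for the $m$-cycle $\sigma$ and the reversal $\rho_0$, a direct computation should give that the Frobenius acts on $Y(T)\otimes\mathbb{Q}$ as $q\psi$ with $\psi=\sigma^{k}$ in the untwisted case and $\psi=-\sigma^{-k}$ in the twisted case, where $w\equiv w_0^{\delta}v_m^{k}$ modulo $W^\circ(t_m)$. In particular $\psi$ has finite order dividing $m$ (untwisted) or $2m$ (twisted).

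Now the key point. Assume $m\geq 2$ (for $m=1$ one has $\Levi^\ast=\G^\ast$, which is $1$-split). The vector $\chi_1:=\sum_{j=0}^{m-1}\zeta_m^{j}\epsilon_j$, where $\zeta_m\in\overline{\mathbb{Q}}$ is a primitive $m$-th root of unity and $\epsilon_j$ is the $j$-th block-coordinate character of $T$, lies in $X(T)\otimes\overline{\mathbb{Q}}$ (its coefficients sum to $0$) and is an eigenvector of $\psi$, say with eigenvalue $\mu$; set $d:=\operatorname{ord}(\mu)$, so $d\mid 2m=2|A(s)|$ (and in fact $d\mid m=|A(s)|$ in the untwisted case). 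Since $\chi_1$ is not proportional to the class of $(1,\dots,1)$, the cyclotomic polynomial $\Phi_d$ divides the generic order polynomial of $T$, so the Sylow $\Phi_d$-torus $\mathbf{S}$ of $T$ is nontrivial and the image of $\chi_1$ spans a $\psi$-eigenline of eigenvalue-order $d$ inside $X(\mathbf{S})\otimes\overline{\mathbb{Q}}$. It then remains to show $\C_{\G^\ast}(\mathbf{S})=\Levi_0^\ast$. The inclusion $\Levi_0^\ast\subseteq\C_{\G^\ast}(\mathbf{S})$ is clear, and $\C_{\G^\ast}(\mathbf{S})$ is generated by $\T_0^\ast$ together with the root subgroups of those roots of $\G^\ast=\mathrm{PGL}_{n+1}$ trivial on $\mathbf{S}$, so it suffices to check that a root $e_i-e_j$ with $i\in B_a$, $j\in B_b$ and $a\neq b$ — i.e. a root not belonging to $\Levi_0^\ast$ — is nontrivial on $\mathbf{S}$. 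Such a root restricts to $T$ as $\epsilon_a-\epsilon_b$; expanding in the Fourier eigenbasis $\{\chi_c=\sum_l\zeta_m^{cl}\epsilon_l\}$ of $X(T)\otimes\overline{\mathbb{Q}}$, its $\chi_1$-component is $\tfrac1m(\zeta_m^{-a}-\zeta_m^{-b})$, which is nonzero precisely because $a\not\equiv b\pmod m$. Since a character of $T$ can be trivial on $\mathbf{S}$ only if all of its components along $\psi$-eigenvectors of eigenvalue-order $d$ vanish, and $\chi_1$ is such an eigenvector, $e_i-e_j$ is nontrivial on $\mathbf{S}$. Hence $\C_{\G^\ast}(\mathbf{S})=\Levi_0^\ast$, so $\Levi_0^\ast$ — and therefore $\Levi^\ast$ — is $d$-split with $d\mid 2|A(s)|$.

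The routine but fiddly part will be the first computation: tracking precisely how $w$ and the diagram automorphism $\tau$ act on the block coordinates of $\Z^\circ(\Levi_0^\ast)$, and in particular combining $\tau=-w_0$ with $\rho_0\sigma\rho_0=\sigma^{-1}$ in the twisted case so that $\psi$ comes out as a single signed power of an $m$-cycle. Once $\psi$ is in this normal form I expect the eigenvalue estimate and the centraliser argument to be short, the crux being that the single Fourier mode $\chi_1$ already detects every root of $\G^\ast$ lying outside $\Levi_0^\ast$.
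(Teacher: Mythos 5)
Your proof is correct, but it takes a genuinely different route from the paper's. The paper's argument works at the level of rational points: after invoking \cite[Proposition 13.2]{MarcBook} to pass from $\Levi^\ast$ to $\tilde{\Levi}^\ast$, it notes that $w\in\langle v_m\rangle$ has some order $d\mid m$, computes the twisted fixed--point group $\tilde{\Levi}^{F'}\cong\tilde{\Levi}_m^{wF'}$ as a product of general linear groups over $\mathbb{F}_{q^d}$, and \emph{recognizes} this as the group of rational points of a $d$-split Levi by the classification of $d$-split Levi subgroups in $\mathrm{GL}_{n+1}$ \cite[Example 13.4]{MarcBook}; the case $\varepsilon=-1$ is then handled separately by Ennola duality, which converts $d\mid m$ into a $d'$ with $d'\mid 2m$. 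You instead argue entirely inside $\G^\ast$ at the level of the (co)character lattice: you compute the action of the twisted Frobenius on $Y(\Z^\circ(\Levi_0^\ast))\otimes\mathbb{Q}$, identify it with a signed power of the $m$-cycle $\sigma$, exhibit the Fourier mode $\chi_1$ as an eigenvector of eigenvalue order $d\mid 2m$, and then verify by a direct Fourier expansion of $\epsilon_a-\epsilon_b$ that every root of $\G^\ast$ not in $\Levi_0^\ast$ restricts nontrivially to the resulting Sylow $\Phi_d$-torus. Your approach is more self-contained --- it handles $\varepsilon=\pm1$ uniformly and needs neither Ennola duality nor the $\mathrm{GL}_n$ classification of $d$-split Levi subgroups --- at the cost of the block-coordinate and diagram-automorphism bookkeeping you flag at the end; the paper's is shorter given the cited lemmas. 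Both deliver the claimed divisibility $d\mid 2|A(s)|$.
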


\begin{proof} We keep the notations of Remark \ref{type}. Assume first that $\varepsilon=1$.
Let $w \in \langle v_m \rangle$ as in Remark \ref{type} such that $\Levi^\ast$ is of type $W^\circ(t) w$. Then $w$ has order $d$ with $d$ dividing $m=|A(s)|$ and we can decompose $w=w_1 \cdots w_r$ into disjoint cycles each $w_i$ of order $d$ with $d \mid m$ and $n+1=r d$. Note that $\Levi^\ast$ is $d$-split if and only if $\tilde{\Levi}^\ast$ is $d$-split, see \cite[Proposition 13.2]{MarcBook}.
Furthermore, $\tilde{\Levi}^{F'} \cong \tilde{\Levi}_m^{w F'} \cong \mathrm{GL}_r({\mathbb{F}_{q^d}}) \times \dots \mathrm{GL}_r({\mathbb{F}_{q^d}})$. Hence $\tilde{\Levi}$ and therefore also $\Levi$ is $d$-split, see \cite[Example 13.4]{MarcBook}. By Ennola-duality we obtain that if $ \varepsilon=-1$ then $\Levi$ is $d'$-split where
\begin{equation*}
d'=
\begin{cases}
2d & \text{if } d \text{ is odd},\\
d/2         & \text{if } d \equiv 2 \, \mathrm{mod} \, 4,\\
d  &\text{if } d \equiv 0 \, \mathrm{mod} \, 4.
\end{cases}
\end{equation*}
This also shows the result in the twisted case.
\end{proof}
\begin{remark}
Corollary \ref{1split} offers potentially an approach to an equivariant Bonnaf\'e--Dat--Rouquier type bijection by using the results of Brough--Späth, in particular \cite[Theorem 4.3]{Brough}.
\end{remark}

\section{Equivariant Bonnaf\'e--Dat--Rouquier equivalence}\label{sec 6}

 Until Section \ref{sec 11}, $s$ denotes an arbitrary, but fixed semisimple strictly quasi-isolated $\ell'$-element in $(\G^\ast)^{F^\ast}$.
As before, we denote by $\Levi$ the Levi subgroup of $\G$ dual to $\Levi^\ast=\mathrm{C}^\circ_{\G^\ast}(s)$ defined as its preimage under the projection map $\pi: \G \to \G^\ast$.

Assume that we are given a different Levi subgroup $\Levi'$ of $\G$ containing $\Levi$ such that $\mathrm{C}_{(\G^\ast)^{F^\ast}}(s) (\Levi')^\ast=(\Levi')^\ast \mathrm{C}_{(\G^\ast)^{F^\ast}}(s)$. Denote by $N'$ the common stabilizer of $\Levi'$ and $e_s^{L'}$ in $\G^F$. By the results of Bonnafé--Dat--Rouquier, see Theorem \ref{Boro}, there exists a Morita equivalence between $\Lambda N' e_s^{L'}$ and $\Lambda G e_s^G$. We first make the following observation:

\begin{lemma}
	With the notation as above,
	$N'/L'$ is naturally isomorphic to a subquotient of $N/L$.
\end{lemma}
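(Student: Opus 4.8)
The plan is to work entirely on the dual side, where the groups $N$ and $N'$ have clean descriptions as products of centralizer data with Levi subgroups. Recall that $N = \C_{\G^\ast}(s)^{F^\ast}\Levi^\ast$ by definition (translated back to $\G$ via duality), and by the hypothesis $\C_{(\G^\ast)^{F^\ast}}(s)(\Levi')^\ast = (\Levi')^\ast\C_{(\G^\ast)^{F^\ast}}(s)$ we may likewise set $(N')^\ast := \C_{\G^\ast}(s)^{F^\ast}(\Levi')^\ast$, which is a group, and $N'$ is the corresponding subgroup of $\mathrm{N}_{\G^F}((\Levi')^\ast)$ under duality. So it suffices to produce a natural surjection from a subgroup of $N/L = N^\ast/(\Levi^\ast)^{F^\ast}$ onto $N'/L' = (N')^\ast/((\Levi')^\ast)^{F^\ast}$, or rather to exhibit $N'/L'$ as a subquotient of $N/L$.

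First I would observe that since $\Levi^\ast = \C_{\G^\ast}^\circ(s) \subseteq (\Levi')^\ast$, every element of $\C_{\G^\ast}(s)^{F^\ast}$ normalizes $\Levi^\ast$ (it normalizes $\C_{\G^\ast}(s)$ hence its identity component) and also normalizes $(\Levi')^\ast$ once we know it lies in $(N')^\ast$; moreover $\C_{\G^\ast}(s)^{F^\ast} \cap \Levi^\ast = \C_{\Levi^\ast}(s)^{F^\ast}$, and similarly with $(\Levi')^\ast$ in place of $\Levi^\ast$. This gives a canonical isomorphism
$$
N^\ast/(\Levi^\ast)^{F^\ast} \;\cong\; \C_{\G^\ast}(s)^{F^\ast}/\C_{\Levi^\ast}(s)^{F^\ast} \;=\; \C_{\G^\ast}(s)^{F^\ast}/\C_{\G^\ast}^\circ(s)^{F^\ast}
$$
coming from the second isomorphism theorem, and likewise
$$
(N')^\ast/((\Levi')^\ast)^{F^\ast} \;\cong\; \C_{\G^\ast}(s)^{F^\ast}/\C_{(\Levi')^\ast}(s)^{F^\ast}.
$$
Since $\Levi^\ast \subseteq (\Levi')^\ast$ we have $\C_{\Levi^\ast}(s)^{F^\ast} \subseteq \C_{(\Levi')^\ast}(s)^{F^\ast}$, so $N'/L'$ is visibly a quotient of a subgroup of $\C_{\G^\ast}(s)^{F^\ast}/\C_{\Levi^\ast}(s)^{F^\ast} \cong N/L$ — indeed here it is directly a quotient of $N/L$ itself. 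One should double-check that the identifications of $N/L$ and $N'/L'$ with these quotients of $\C_{\G^\ast}(s)^{F^\ast}$ are the ones induced by the duality isomorphisms $\mathrm{N}_\G(\Levi)/\Levi \cong \mathrm{N}_{\G^\ast}(\Levi^\ast)/\Levi^\ast$ and the analogue for $\Levi'$, which is a matter of tracing through the definitions in Section~\ref{rep theory}ff.

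The step I expect to be the main obstacle is verifying that the two descriptions are genuinely compatible at the level of the \emph{finite} groups of fixed points, i.e. that passing to $F^\ast$-fixed points commutes with the formation of these quotients in the way claimed — this is where component groups and the failure of $(\cdot)^{F^\ast}$ to be exact can introduce subtleties (cf. the role of $A(s)^{F^\ast}$ versus $A(s)$ in Lemma~\ref{strict}). Concretely one needs that $N^\ast/(\Levi^\ast)^{F^\ast}$ really is $\C_{\G^\ast}(s)^{F^\ast}/\C^\circ_{\G^\ast}(s)^{F^\ast}$ and not merely something mapping to it; but since $\Levi^\ast = \C^\circ_{\G^\ast}(s)$ is connected and $F^\ast$-stable and $N^\ast = \C_{\G^\ast}(s)^{F^\ast}\Levi^\ast$ with both factors consisting of $F^\ast$-fixed points, the second isomorphism theorem applies directly to the finite groups and no cohomological correction term appears. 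Once this compatibility is pinned down the containment $\C_{\Levi^\ast}(s)^{F^\ast} \subseteq \C_{(\Levi')^\ast}(s)^{F^\ast}$ immediately exhibits $N'/L'$ as the quotient $\bigl(N/L\bigr)\big/\bigl(\C_{(\Levi')^\ast}(s)^{F^\ast}/\C_{\Levi^\ast}(s)^{F^\ast}\bigr)$, which is the desired conclusion (with the "subquotient" coming for free, taking the subgroup to be all of $N/L$).
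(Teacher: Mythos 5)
Your argument is correct and follows essentially the same idea as the paper's proof, differing only in where the $F^\ast$-fixed points are taken. The paper first works at the level of algebraic groups, writing $\Nb^\ast/\Levi^\ast \cong \C_{\G^\ast}(s)/\C^\circ_{\G^\ast}(s)$ and $(\Nb')^\ast/(\Levi')^\ast \cong \C_{\G^\ast}(s)/\C_{(\Levi')^\ast}(s)$ and deducing a surjection between these from $\C^\circ_{\G^\ast}(s)\subset\C_{(\Levi')^\ast}(s)$, and then passes to $F^\ast$-fixed points and invokes duality in the last sentence. You instead carry out the same second-isomorphism-theorem computation directly on the finite groups, identifying $N/L \cong \C_{\G^\ast}(s)^{F^\ast}/\C_{\Levi^\ast}(s)^{F^\ast}$ and $N'/L' \cong \C_{\G^\ast}(s)^{F^\ast}/\C_{(\Levi')^\ast}(s)^{F^\ast}$; since both are quotients of the same finite group by nested subgroups, the surjection $N/L\twoheadrightarrow N'/L'$ comes out immediately, without the implicit appeal to the left-exactness issues that arise when taking $F^\ast$-fixed points of a short exact sequence. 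This is a genuine (if small) improvement: you actually show $N'/L'$ is a \emph{quotient} of $N/L$, whereas the paper's phrasing only establishes the weaker ``subquotient.'' The identifications you need are exactly the ones the paper uses — $N/L\cong\Nb^\ast/\Levi^\ast$ via the standing assumption $\Nb/\Levi\cong\Nb^F/\Levi^F$ and the duality isomorphism, and the analogue for $\Levi'$ via the description of $N'$ as a stabilizer and Lemma \ref{duality}-type reasoning — and these are indeed a matter of unwinding definitions, as you say. One minor notational point: the expression ``$N^\ast/(\Levi^\ast)^{F^\ast}$'' you write is not literally a quotient group (since $\Nb^\ast=\C_{\G^\ast}(s)^{F^\ast}\Levi^\ast$ contains the full algebraic Levi $\Levi^\ast$); what is meant, and what makes your argument go through, is the identification $N/L\cong\C_{\G^\ast}(s)^{F^\ast}/\C_{\Levi^\ast}(s)^{F^\ast}$ obtained by applying the second isomorphism theorem to the finite groups $\C_{\G^\ast}(s)^{F^\ast}$ and $(\Levi^\ast)^{F^\ast}$ inside $\Nb^{\ast F^\ast}$.
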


\begin{proof}
 We have $\mathbf{N}^\ast/\Levi^\ast=\mathrm{C}_{\G^\ast}(s)/ \mathrm{C}^\circ_{\G^\ast}(s)$ and on the other hand,  $\mathbf{N}'^\ast/(\Levi'^\ast)=(\Levi')^\ast \mathrm{C}_{\G^\ast}(s)/(\Levi')^\ast \cong \mathrm{C}_{\G^\ast}(s)/ \mathrm{C}_{\Levi'^\ast}(s)$. Since $\mathrm{C}^\circ_{\G^\ast}(s) \subset \mathrm{C}_{\Levi'^\ast}(s)$ we obtain a surjection $\mathbf{N}^\ast/(\Levi^\ast) \to \mathbf{N}'^\ast/(\Levi'^\ast)$. The statement follows by taking $F^\ast$-fixed points and using the duality between $(\G,F)$ and $(\G^\ast,F^\ast)$.
\end{proof}

%

Recall from Remark \ref{type} that $s$ is $\G^\ast$-conjugate to the element $t_m$, where $m$ is some positive integer dividing $n+1$. Hence, the conjugacy classes of (strictly) quasi-isolated elements are parametrized in a uniform way. In this section we use the uniformity of this description to our advantage.

\begin{lemma}\label{prime order}
	There exists a proper $F$-stable Levi subgroup $\Levi'$ of $\G$ containing $\Levi$ such that $N'/L'$ is cyclic of prime order.
	%
	%
	%
\end{lemma}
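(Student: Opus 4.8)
The plan is as follows. Write $m:=|A(s)|$. By \cite[Proposition 5.2]{Bonnafe} together with Lemma \ref{strict}, $A(s)$ is cyclic of order $m=\mathrm{o}(s)$ and $F^\ast$ acts trivially on it; since $\Levi=\pi^{-1}(\mathrm{C}^\circ_{\G^\ast}(s))$ is connected, $N/L=\mathbf N^F/\Levi^F\cong\mathbf N/\Levi\cong A(s)$. Up to $\G^\ast$-conjugacy I use the model of Remark \ref{type}: $\Levi^\ast=\mathrm{C}^\circ_{\G^\ast}(s)$ is the image in $\G^\ast=\mathrm{PGL}_{n+1}$ of $\mathrm{GL}_e^m$ (with $n+1=me$), and $A(s)=\langle v_m\rangle$ with $v_m$ cyclically permuting the $m$ blocks of size $e$. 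If $s$ is central, i.e.\ $m=1$, then $\Levi=\G$ and there is nothing to prove, so I assume $m\ge2$ and fix a prime $p_0\mid m$; put $m':=m/p_0$, let $\bar c:=v_m^{p_0}$ (the generator of the unique subgroup of $A(s)$ of order $m'$), and pick $n\in\mathrm{C}_{\G^\ast}(s)$ with image $\bar c$ in $A(s)$.

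\emph{Construction.} I would take $\Levi'^\ast$ to be the unique minimal Levi subgroup of $\G^\ast$ containing $\Levi^\ast$ and $n$. Intrinsically: set $S_0:=\bigl(\mathrm{Z}^\circ(\Levi^\ast)^{\mathrm{ad}(n)}\bigr)^\circ$ and $\Levi'^\ast:=\mathrm{C}_{\G^\ast}(S_0)$. This is a Levi subgroup of $\G^\ast$, it contains $\Levi^\ast=\mathrm{C}_{\G^\ast}(\mathrm{Z}^\circ(\Levi^\ast))$ and $n$, and it is independent of the chosen representative $n$, since two representatives differ by an element of $\mathrm{C}^\circ_{\G^\ast}(s)=\Levi^\ast$, which acts trivially on the central torus $\mathrm{Z}^\circ(\Levi^\ast)$. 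I then put $\Levi':=\pi^{-1}(\Levi'^\ast)$, the Levi of $\G$ dual to $\Levi'^\ast$; it contains $\Levi$, and its properness will follow from that of $\Levi'^\ast$ below.

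\emph{Verification.} Three points remain. (i) $\Levi'^\ast$ (hence $\Levi'$) is $F^\ast$-stable and proper: since $F^\ast$ fixes $\bar c$, we have $F^\ast(n)\in n\Levi^\ast$, and as $\mathrm{Z}^\circ(\Levi^\ast)$ is central in $\Levi^\ast$ this forces $F^\ast\,\mathrm{ad}(n)=\mathrm{ad}(n)\,F^\ast$ on $\mathrm{Z}^\circ(\Levi^\ast)$, so $F^\ast$ stabilises $S_0$ and therefore $\Levi'^\ast=\mathrm{C}_{\G^\ast}(S_0)$; moreover $\mathrm{ad}(n)$ acts on the $(m-1)$-dimensional torus $\mathrm{Z}^\circ(\Levi^\ast)$ through the permutation $v_m^{p_0}$ of the $m$ blocks, which has $p_0$ orbits, so $S_0$ is a torus of dimension $p_0-1\ge1$ and $\Levi'^\ast=\mathrm{C}_{\G^\ast}(S_0)\subsetneq\G^\ast$. (ii) The product condition of the Bonnaf\'e--Dat--Rouquier setup: for $x\in\mathrm{C}_{\G^\ast}(s)$, the element $xnx^{-1}$ lies in $\mathrm{C}_{\G^\ast}(s)$ with image $\bar c$ in the \emph{abelian} group $A(s)$, so $xnx^{-1}\in n\Levi^\ast$ and $x$ normalises $S_0$; thus $\mathrm{C}_{\G^\ast}(s)\le\mathrm{N}_{\G^\ast}(\Levi'^\ast)$, and taking $F^\ast$-fixed points gives $\mathrm{C}_{(\G^\ast)^{F^\ast}}(s)\Levi'^\ast=\Levi'^\ast\mathrm{C}_{(\G^\ast)^{F^\ast}}(s)$, so $N'$ is well defined. (iii) $N'/L'$ has order $p_0$: the Weyl group $W(\Levi'^\ast)$ is the Young subgroup of $W^\ast\cong S_{n+1}$ whose blocks are the orbits of $\langle W(\Levi^\ast),v_m^{p_0}\rangle$, i.e.\ the $p_0$ unions of size-$e$ blocks lying in a fixed residue class modulo $p_0$; consequently $\mathrm{C}_{\Levi'^\ast}(s)=\mathrm{C}^\circ_{\G^\ast}(s)\langle n\rangle$ and its image in $A(s)$ is exactly $\langle v_m^{p_0}\rangle$, of order $m'$. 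By the preceding lemma and duality, $N'/L'$ then corresponds to $A(s)/\langle v_m^{p_0}\rangle$, which is cyclic of order $m/m'=p_0$, a prime.

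\emph{Main obstacle.} The delicate point is to reconcile $F$-equivariance with the precise order of $N'/L'$: in the $\G^\ast$-conjugate ``standard'' model one can easily write down a Levi absorbing $\langle v_m^{p_0}\rangle$, but the conjugating element of Remark \ref{type} is not $F^\ast$-rational, which would force a separate descent argument; defining $\Levi'^\ast$ intrinsically as $\mathrm{C}_{\G^\ast}(S_0)$ makes $F^\ast$-stability a formal consequence of $A(s)^{F^\ast}=A(s)$. The primality of $p_0$ is used only at the very end, to ensure $A(s)/\langle v_m^{p_0}\rangle$ has prime order; the remaining computations --- the structure of $\mathrm{Z}^\circ(\Levi^\ast)$, the $\mathrm{ad}(n)$-action on it, and the Weyl group of $\Levi'^\ast$ --- are routine given the explicit data of Remark \ref{type} and \cite[Proposition 5.2]{Bonnafe}.
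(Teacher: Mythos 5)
Your proof is correct and produces (up to $\G^\ast$-conjugacy) the same Levi subgroup as the paper, but the verification is organized differently and this reorganization is worth noting. The paper fixes $g$ with ${}^g s = t_m$, sets $\Levi'^\ast := {}^g\Levi_{m'}^\ast$ for a prime divisor $m'$ of $m$, and then checks $F^\ast$-stability by a direct Weyl-group computation: $g^{-1}F^\ast(g)$ has image $w \in \langle v_m\rangle$ modulo $W^\circ(t_m)$, and one checks $w$ normalises $W^\circ(t_{m'})$. This last step is actually delicate --- with the paper's literal definition $t_{m'} := (1,\zeta',\dots,(\zeta')^{m'-1})\otimes I_{e'}$ the large blocks are \emph{consecutive} unions of the small blocks, and the shift-by-$e$ permutation $v_m$ does \emph{not} permute those blocks; the argument works only if one takes $t_{m'}$ to be (a scalar multiple of) $t_m^{m/m'}$, whose eigenspaces are the \emph{residue classes} mod $m'$ of small blocks, exactly the arrangement you use in step (iii). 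Your intrinsic definition $\Levi'^\ast := \mathrm{C}_{\G^\ast}(S_0)$ with $S_0 = \bigl(\mathrm{Z}^\circ(\Levi^\ast)^{\mathrm{ad}(n)}\bigr)^\circ$ sidesteps this alignment issue entirely: $F^\ast$-stability of $S_0$ and the product condition $\mathrm{C}_{\G^\ast}(s)\le\mathrm{N}_{\G^\ast}(\Levi'^\ast)$ follow formally from $A(s)^{F^\ast}=A(s)$ and abelianity of $A(s)$, without any choice of rational structure on the conjugating element. The price is that you still invoke the explicit block description for two of the verifications --- the dimension count for properness and the identification of $\mathrm{C}_{\Levi'^\ast}(s)$ in step (iii) --- but those are computations of the quotient, not of the descent. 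Both arguments ultimately rest on the same two inputs: Bonnaf\'e's description of $A(s)=\langle v_m\rangle$ and Lemma \ref{strict} that $F^\ast$ acts trivially on it. Your parametrisation by a prime $p_0\mid m$ and the subgroup $\langle v_m^{p_0}\rangle$ is the same datum as the paper's prime $m'\mid m$, so the two constructions really do produce the same Levi.

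One small caveat about your framing: when $m=1$ (i.e.\ $s$ central), $\Levi=\G$ and the statement ``there exists a proper $F$-stable Levi $\Levi'\supseteq\Levi$'' is not ``vacuously true'' but in fact cannot hold; the lemma is implicitly stated for $s$ not isolated, which is the only situation in which it is invoked in the reduction. Saying ``there is nothing to prove'' is slightly misleading there, though harmless.
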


\begin{proof}
	Recall that	for a given $m$ with $m \mid (n+1)$ we denote $t_m:=(1,\zeta,\dots,\zeta^{m-1}) \otimes I_e$,
	where $\zeta \in \mathbb{F}_q^\times$ is a fixed primitive $m$th-root of unity with $me=n+1$. Consider the Levi subgroup $\Levi^\ast_m:=\mathrm{C}^\circ_{\G^\ast}(t_m)$ of $\G^\ast$ and let $\Levi_m:=\pi^{-1}(\Levi_m^\ast)$ be the standard Levi subgroup of $\G$ in duality with $\Levi_m^\ast$.

	Assume that $s$ is $\G^\ast$-conjugate to $t_m$, i.e. there exists $g \in \G^\ast$ such that $t={}^g s$.
	We let $m'$ be a prime divisor of $m$. We note that $\mathrm{C}_{\G^\ast}(t_m)$ is contained in $\mathrm{C}_{\G^\ast}(t_{m'})$.
 	
 	Denote by $W^\circ(t_m)$ the Weyl group of $\mathrm{C}^\circ_{\G^\ast}(t_m)$ relative to the maximal torus $\T_0^\ast$. By \cite[Proposition 5.2]{Bonnafe} we obtain that $W^\circ(t_{m}) \cong (S_e)^m$ where $v_m$ transitively permutes the $m$ copies of $S_e$. Since $W^\circ(t_{m'})\cong (S_{e'})^{m'}$ and $m' \mid m$ it follows that $v_m$ normalizes $W^\circ(t_{m'})$. We define $\Levi'^\ast:={}^g \Levi_{m'}^\ast$. Recall from Remark \ref{type} that $\Levi^\ast$ is of $F'^\ast$-type $w W^\circ(t_{m})$ with $w \in \langle v_m \rangle$.
	 Since $w$ normalizes $W^\circ(t_{m'})$ and $F'^\ast$ acts trivially on $W^\ast$ we have ${}^{w F'} W^\circ(t_{m'})=W^\circ(t_{m'})$ and so $\Levi'^\ast$ is $F'^\ast$-stable of type $w W^\circ(t_{m'})$. It follows that $\Levi'^\ast$ is $F'^\ast$-stable. Furthermore, $A(t_{m'})^{wF'^\ast} \cong A(t_{m'}) \cong C_{m'}$ and so we observe that $N'/L'$ is cyclic of $m'$-order.
	%
	%
\end{proof}

%

The upcoming sections will provide the necessary knowledge on strictly quasi-isolated elements in type $A$.

\subsection{Some computations in the Weyl group}

Recall that $\gamma: \G\to \G$ denotes the graph automorphism stabilizing $(\T_0,\B_0)$. In this section we collect some properties of the Levi subgroup $\Levi_m$ which was defined in the proof of Lemma \ref{prime order}.

\begin{lemma}\label{connectedgraph}
The Levi subgroup $\Levi_m$ is a $\gamma$-stable Levi subgroup and contained in a $\gamma$-stable parabolic subgroup $\Para_m$ of $\G$. In particular, $\Levi_m^\gamma$ is a Levi subgroup in the connected reductive group $\G^\gamma$ with parabolic subgroup $\Para_m^\gamma$.
\end{lemma}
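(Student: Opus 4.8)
The plan is to exhibit explicitly a $\gamma$-stable parabolic subgroup containing $\Levi_m$ and then appeal to the standard theory of parabolic and Levi subgroups of the disconnected group $\G \rtimes \langle\gamma\rangle$, as developed in \cite[Section 2.1]{Jordan}. First I would recall the concrete description of $\Levi_m^\ast = \mathrm{C}^\circ_{\G^\ast}(t_m)$: since $t_m = (1,\zeta,\dots,\zeta^{m-1})\otimes I_e$ with $me=n+1$, its connected centralizer, relative to $\T_0^\ast$, has Weyl group $W^\circ(t_m)\cong (S_e)^m$, i.e.\ $\Levi_m^\ast$ is block-diagonal of type $A_{e-1}^{\times m}$ with the $m$ blocks occupying the coordinate positions in the residue classes modulo $m$ (or, after conjugating by a permutation, $m$ consecutive blocks of size $e$). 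Correspondingly, $\Levi_m = \pi^{-1}(\Levi_m^\ast)$ is a standard Levi subgroup of $\G=\mathrm{SL}_{n+1}$, namely (up to the reordering built into the definition of $t_m$) a block-diagonal subgroup $\mathrm{S}(\mathrm{GL}_e \times \dots \times \mathrm{GL}_e)$. The key point is that this subset of simple roots of $\G$ is stable under the graph automorphism: the Dynkin diagram automorphism of $A_n$ reverses the order of the nodes, and the node-subset defining $\Levi_m$ — being a union of $m$ equal-length intervals symmetrically placed — is invariant under this reversal. Hence $\Levi_m$ is $\gamma$-stable.

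Next I would produce the $\gamma$-stable parabolic. For a standard Levi $\Levi_m$ the standard parabolic $\Para_m = \Levi_m\B_0$ (generated by $\Levi_m$ and the positive Borel $\B_0$) has the property that $\gamma(\Para_m)$ is again a standard parabolic with the same Levi $\Levi_m$; but in type $A$ a standard Levi is contained in a unique standard parabolic, so $\gamma(\Para_m)=\Para_m$. Alternatively, and more robustly, since $\gamma$ stabilizes the pair $(\T_0,\B_0)$ by construction and maps the simple root subset defining $\Levi_m$ to itself, it maps $\Para_m$ to itself directly. Either way we obtain a $\gamma$-stable parabolic subgroup $\Para_m$ of $\G$ with Levi decomposition $\Para_m = \Levi_m \ltimes \U_m$ in which $\U_m$ is also $\gamma$-stable.

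For the ``in particular'' statement I would invoke the dictionary between parabolic/Levi subgroups of $\G$ stable under a finite group of semisimple automorphisms and those of the fixed-point group, as in \cite[Section 2.1]{Jordan}: if $\Para_m$ is a $\gamma$-stable parabolic of the connected reductive group $\G$ with $\gamma$-stable Levi $\Levi_m$, then $\Para_m^\gamma$ is a parabolic subgroup of the connected reductive group $\G^\gamma$ and $\Levi_m^\gamma$ is a Levi complement in it (one takes the connected component of identity of the fixed points, and uses that $\gamma$ being a semisimple automorphism normalizing $\T_0$ ensures $(\U_m)^\gamma$ is the unipotent radical). The main obstacle — really the only delicate point — is making the combinatorial claim on the root system precise: one must be careful that the element $t_m$ is defined via the coordinate ordering $(1,\zeta,\dots,\zeta^{m-1})\otimes I_e$, so the ``blocks'' of $\Levi_m$ sit in arithmetic-progression positions rather than consecutive positions, and one must check directly that \emph{this} configuration of nodes is fixed by the order-reversing diagram automorphism of $A_n$ (equivalently, that the longest element $w_0$ normalizes $W^\circ(t_m)$, which follows from $w_0$ inverting $\T_0^\ast$ and hence sending $t_m$ to $t_m^{-1}$, a $\G^\ast$-conjugate of $t_m$ with the same connected centralizer up to conjugacy — and in fact $w_0$ conjugates $W^\circ(t_m)$ to itself since reversal preserves the block structure). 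Once this root-combinatorial fact is in hand, the rest is the formal transfer to the disconnected setting and is routine.
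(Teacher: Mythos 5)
Your approach is essentially the same as the paper's: observe that $\Levi_m$ is the standard Levi attached to a $\gamma$-stable subset of simple roots of the $\gamma$-stable pair $(\T_0,\B_0)$, take the standard parabolic $\Para_m$, and transfer to the disconnected setting.

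Two small remarks. First, the "delicate point" you flag about arithmetic-progression versus consecutive positions is a non-issue: $t_m=(1,\zeta,\dots,\zeta^{m-1})\otimes I_e$ is block-diagonal with $m$ \emph{consecutive} blocks of size $e$, so the simple roots defining $\Levi_m$ are $\{1,\dots,n\}\setminus\{e,2e,\dots,(m-1)e\}$, visibly symmetric under $i\mapsto n+1-i=me-i$. Second, and more substantively, you slide past why $\G^\gamma$ is \emph{connected}. Your parenthetical "one takes the connected component of identity" actually contradicts the lemma's assertion that $\G^\gamma$ itself is a connected reductive group. The paper closes this gap at the outset: $\gamma$ is a quasi-central automorphism in the sense of Digne--Michel, so $\G^\gamma$ is connected by \cite[Remark 1.30]{DM2}, and then \cite[Proposition 1.11]{DM2} delivers the Levi/parabolic statement for $\G^\gamma$. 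Without the quasi-central input, the fixed-point group of an involution of a simply connected group can fail to be connected, so this step needs the citation rather than a routine appeal to taking identity components.
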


\begin{proof}
	Observe that $\gamma$ is a quasi-central morphism in the sense of \cite[Definition-Theorem 1.15]{DM2}. According to \cite[Remark 1.30]{DM2} the group $\G^\gamma$ is connected. The Levi subgroup $\Levi_m$ is a standard Levi subgroup of $\G$ relative to the $\gamma$-stable pair $(\T_0,\B_0)$ whose associated set of simple roots is $\gamma$-stable. Thus, $\Levi_m$ is a $\gamma$-stable Levi subgroup contained in a $\gamma$-stable parabolic subgroup $\Para_m$ of $\G$. From \cite[Proposition 1.11]{DM2} it therefore follows that $\Levi_m^\gamma$ is a Levi subgroup in the connected reductive group $\G^\gamma$ with parabolic subgroup $\Para_m^\gamma$.
\end{proof}

The subgroup $V$ constructed in the proof of the following lemma is sometimes also referred to as the extended Weyl group.

\begin{lemma}\label{preimageweyl}
	There exists a subgroup $V \subseteq \mathrm{N}_{\G}(\T_0)$ and a surjective group homorphism $\rho:V \to W$ such that the following holds.
	Every $\gamma$-stable element $w \in \mathrm{N}_\G(\T_0)/\T_0$ has a $\langle F_p,\gamma \rangle$-stable preimage in $\mathrm{N}_\G(\T_0)$.
\end{lemma}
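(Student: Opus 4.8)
The plan is to realise $V$ as the \emph{extended Weyl group} (Tits group) attached to a Chevalley épinglage of $\G$ defined over the prime field, and then to exploit the canonical reduced-word section $W \to V$, which is equivariant under any automorphism of $\G$ that preserves the épinglage. First I would fix the standard épinglage $(\T_0,\B_0,(u_{\alpha_i})_{1\le i\le n})$ of $\G=\mathrm{SL}_{n+1}(\overline{\mathbb{F}_p})$ with $u_{\alpha_i}(t)=I_{n+1}+tE_{i,i+1}$; all of this is defined over $\mathbb{Z}$, hence over $\mathbb{F}_p$. For each simple root $\alpha_i$ set $\dot s_i:=u_{\alpha_i}(1)\,u_{-\alpha_i}(-1)\,u_{\alpha_i}(1)\in\N_\G(\T_0)$, a lift of the simple reflection $s_i$, and let $V:=\langle \dot s_1,\dots,\dot s_n\rangle\subseteq\N_\G(\T_0)$. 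By Tits' description of the normaliser of a maximal torus, the natural projection $\N_\G(\T_0)\twoheadrightarrow W=\N_\G(\T_0)/\T_0$ restricts to a surjective homomorphism $\rho\colon V\to W$ whose kernel $V\cap\T_0$ lies in the $2$-torsion of $\T_0$, and — the key point — for every $w\in W$ the element $\dot w:=\dot s_{i_1}\cdots\dot s_{i_k}$ obtained from a reduced expression $w=s_{i_1}\cdots s_{i_k}$ does not depend on the chosen reduced expression. This yields a set-theoretic section $w\mapsto\dot w$ of $\rho$.

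Next I would record two equivariance properties. Since every $\dot s_i$ has all its matrix entries in $\{0,\pm1\}$, the Frobenius $F_p$ fixes each generator of $V$, so $F_p$ stabilises $V$ and acts trivially on it. The automorphism $\gamma=\ad(n_0)\gamma'$ stabilises $(\T_0,\B_0)$; moreover, by the choice of the alternating signs $(-1)^{l+1}$ entering $n_0$ (equivalently, because $\gamma$ is quasi-central in the sense of \cite[Definition-Theorem 1.15]{DM2} and may thus be taken to stabilise an épinglage), one checks that $\gamma$ stabilises the standard épinglage exactly, i.e.\ $\gamma(u_{\pm\alpha_i}(t))=u_{\pm\alpha_{n+1-i}}(t)$ for all $i$ and $t$. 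Consequently $\gamma(\dot s_i)=\dot s_{n+1-i}$, so $\gamma$ stabilises $V$, and the action induced by $\gamma$ on $W=\N_\G(\T_0)/\T_0$ is the diagram automorphism $s_i\mapsto s_{n+1-i}$, which is length-preserving. (Note also that $F_p$ and $\gamma$ commute, since $F_p$ fixes $n_0$ and commutes with transpose-inversion.)

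For the conclusion, let $w\in W$ be $\gamma$-stable and fix a reduced expression $w=s_{i_1}\cdots s_{i_k}$. Then $\gamma(\dot w)=\gamma(\dot s_{i_1})\cdots\gamma(\dot s_{i_k})=\dot s_{n+1-i_1}\cdots\dot s_{n+1-i_k}$. The underlying word $s_{n+1-i_1}\cdots s_{n+1-i_k}$ equals $\gamma(w)=w$ and, $\gamma$ acting on $W$ through a diagram automorphism, it is again a reduced expression for $w$; hence by the independence of $\dot w$ from the chosen reduced expression we get $\gamma(\dot w)=\dot w$. Since $\dot w$ is a word in the $\dot s_i$, we also have $F_p(\dot w)=\dot w$. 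Therefore $\dot w\in\N_\G(\T_0)$ is a $\langle F_p,\gamma\rangle$-stable preimage of $w$, which is exactly the assertion.

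The only step requiring genuine care is the claim that $\gamma=\ad(n_0)\gamma'$ preserves the standard épinglage on the nose, rather than merely permuting the simple root subgroups up to signs: this is a short but sign-sensitive matrix computation, and it is precisely why the signs $(-1)^{l+1}$ were built into $n_0$ in the first place — alternatively it follows from the quasi-centrality of $\gamma$. The remaining ingredients are standard facts about the Tits group.
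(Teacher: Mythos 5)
Your proposal is correct and takes essentially the same route as the paper: both realise $V$ as the Tits (extended Weyl) group generated by the standard sign-decorated lifts of the simple reflections, observe that $\gamma$ permutes these generators according to the diagram automorphism while $F_p$ fixes them, and conclude via the canonical reduced-word section. You are somewhat more explicit than the paper about invoking the independence of $\dot w$ from the chosen reduced expression and about verifying that $\gamma=\ad(n_0)\gamma'$ preserves the standard \'epinglage, but the underlying construction is identical (your $\dot s_i$ are merely the inverses of the paper's $n_{(i,i+1)}$, which is immaterial).
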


\begin{proof}
 Define $V:=\langle n_{(i,i+1)}, \mid i=1,\dots,n \rangle$, where $n_{(i,i+1)}$ is the matrix obtained by taking the permutation matrix corresponding to $(i,i+1)$ and multiplying its $i$th row with $-1$. We have a group epimorphism $V \to W,\, n_{(i,i+1)} \mapsto (i,i+1)$ whose kernel is isomorphic to $(C_2)^{n+1}$. By construction, we have $\gamma( n_{(i,i+1)})=n_{\gamma((i,i+1))}$ and every $n_{(i,i+1)}$ is $F_p$-stable.
\end{proof}


%

%

\begin{lemma}\label{preimage}
	Every $\langle F, \gamma \rangle$-stable element of $\mathrm{N}_{\G}(\Levi_m)/\Levi_m$ has a $\langle F, \gamma \rangle$-stable preimage in $\mathrm{N}_{\G}(\Levi_m)$.
\end{lemma}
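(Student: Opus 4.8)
The plan is to reduce the statement to the situation of Lemma~\ref{preimageweyl} by passing from $\mathrm{N}_{\G}(\Levi_m)/\Levi_m$ down to the Weyl group level, and then lift preimages in a $\langle F,\gamma\rangle$-equivariant way. First I would recall from Remark~\ref{type} that $\Levi_m=\pi^{-1}(\mathrm{C}^\circ_{\G^\ast}(t_m))$ is a \emph{standard} Levi subgroup relative to the $\gamma$-stable pair $(\T_0,\B_0)$, so $\T_0\subseteq\Levi_m$ and $\mathrm{N}_{\G}(\Levi_m)\subseteq \mathrm{N}_{\G}(\T_0)\cdot\Levi_m$ is false in general, but the relative Weyl group $\mathrm{N}_{\G}(\Levi_m)/\Levi_m$ is canonically isomorphic to $\mathrm{N}_{W}(W^\circ(t_m))/W^\circ(t_m)$, where $W^\circ(t_m)\cong (S_e)^m$ is the parabolic subgroup of $W=S_{n+1}$ attached to $\Levi_m$. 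Concretely, by \cite[Proposition 1.3]{Bonnafe} together with the description $W(t_m)=W^\circ(t_m)\rtimes A(t_m)$, every element of $\mathrm{N}_{W}(W^\circ(t_m))/W^\circ(t_m)$ has a representative in $W$; so given a $\langle F,\gamma\rangle$-stable class $\bar n\in\mathrm{N}_{\G}(\Levi_m)/\Levi_m$, I first pick an element $w\in W$ mapping to the induced class in $\mathrm{N}_{W}(W^\circ(t_m))/W^\circ(t_m)$.

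Next I would arrange that $w$ itself can be chosen $\langle F,\gamma\rangle$-stable in $W$. Since $F$ acts on $W=S_{n+1}$ either trivially (if $\varepsilon=1$) or by $w_0$-conjugation (if $\varepsilon=-1$), and $\gamma$ acts by the graph automorphism, the hypothesis that $\bar n$ is $\langle F,\gamma\rangle$-stable says precisely that the coset $w\,W^\circ(t_m)$ is $\langle F,\gamma\rangle$-stable. Because $F$ and $\gamma$ commute and generate a finite group whose order I can control, a standard averaging/cohomological argument (or direct inspection using the explicit generator $v_m$ of $A(t_m)$ from Remark~\ref{type}, noting $\gamma(v_m)=v_m^{\pm1}$ and $F$ fixes $v_m$ up to $W^\circ(t_m)$) lets me adjust $w$ within its coset to an honest $\langle F,\gamma\rangle$-stable element of $\mathrm{N}_W(W^\circ(t_m))$. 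Then Lemma~\ref{preimageweyl} supplies a $\langle F_p,\gamma\rangle$-stable preimage $\dot w\in V\subseteq\mathrm{N}_{\G}(\T_0)$; since $F=F_q\gamma^{(1-\varepsilon)/2}$ with $F_q=F_p^f$, such an element is automatically $\langle F,\gamma\rangle$-stable.

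It remains to correct $\dot w$ by an element of $\Levi_m$ so that the product is a $\langle F,\gamma\rangle$-stable representative of the original class $\bar n$. The discrepancy between $\dot w\Levi_m$ and $\bar n$ lies in $\mathrm{N}_{\G}(\Levi_m)/\Levi_m$ but maps to the trivial class in $\mathrm{N}_W(W^\circ(t_m))/W^\circ(t_m)$, hence is represented by some $l\in\Levi_m\cap\mathrm{N}_{\G}(\T_0)$, i.e.\ essentially an element of $W^\circ(t_m)\cong(S_e)^m$. I then need a $\langle F,\gamma\rangle$-equivariant lift of the $\langle F,\gamma\rangle$-fixed class $l\,\T_0\in W^\circ(t_m)$ inside $\Levi_m$; but $\Levi_m$ is $\gamma$-stable (Lemma~\ref{connectedgraph}) and $F$-stable, with connected $\gamma$-fixed subgroup $\Levi_m^\gamma$, and the same extended-Weyl-group construction of Lemma~\ref{preimageweyl} restricted to $\Levi_m$ produces the required preimage; multiplying through and invoking that $\T_0$-corrections can be killed by a Lang-type argument (as in Lemma~\ref{norm}) inside the torus gives the $\langle F,\gamma\rangle$-stable element of $\mathrm{N}_{\G}(\Levi_m)$ mapping to $\bar n$.

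\textbf{Main obstacle.} The delicate point is the middle step: showing that the $\langle F,\gamma\rangle$-stability of the \emph{coset} $w\,W^\circ(t_m)$ can be upgraded to $\langle F,\gamma\rangle$-stability of an actual element $w$, and likewise that the torus/$W^\circ(t_m)$-correction term can be chosen equivariantly. This is where the precise structure of $A(t_m)=\langle v_m\rangle$ and the commuting actions of $F$ and $\gamma$ on the twisted permutation module $(S_e)^m\rtimes C_m$ must be used, and where a naive averaging argument could fail if $\ell$ or a small prime divided the relevant group orders — though here the groups in play are $2$-groups and cyclic groups of order dividing $m\mid(n,q-\varepsilon)$, so an explicit choice of representatives compatible with both $F$ and $\gamma$ should go through.
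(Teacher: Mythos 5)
Your overall plan—reduce to the relative Weyl group, find a $\gamma$-stable representative there, and then lift via Lemma~\ref{preimageweyl}—does match the skeleton of the paper's proof, but there is a genuine gap at exactly the point you flag as the ``main obstacle,'' and your step of correcting by an element of $\Levi_m$ is both unnecessary and based on a misreading of the relevant isomorphism.

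The crucial step is to show that a $\gamma$-fixed coset $\sigma\in \mathrm{N}_W(W_{\Levi_m})/W_{\Levi_m}\cong S_m$ admits a $\gamma$-fixed representative $w\in \mathrm{N}_W(W_{\Levi_m})$. You propose to do this by a ``standard averaging/cohomological argument'' or by ``direct inspection using $v_m$,'' but neither of these as stated is a proof. Averaging makes no sense in the non-abelian group $\mathrm{N}_W(W_{\Levi_m})\cong S_e\wr S_m$, and there is no automatic vanishing of the relevant non-abelian $H^1$-type obstruction for the sequence $1\to W_{\Levi_m}\to \mathrm{N}_W(W_{\Levi_m})\to S_m\to 1$ with $\langle\gamma\rangle$ acting; one must actually produce the preimage. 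The paper does this by an explicit combinatorial construction: write $\sigma=\sigma_1\cdots\sigma_r$ as a product of disjoint cycles in $S_m$, handle $\gamma$-orbits $\{\sigma_i,\sigma_j\}$ of size two by choosing any lift $w_i$ with the right support and setting $w_j:=\gamma(w_i)$, and observe that any $\gamma$-fixed cycle $\sigma_i$ is forced to be a transposition of the form $(a,m-a+1)$, which admits an explicit $\gamma$-stable lift as a product of $e$ transpositions. Disjointness of supports then makes the product $w_\sigma=w_1\cdots w_r$ a $\gamma$-stable preimage. This case analysis—in particular the observation that $\gamma$-fixed cycles are forced to be these specific transpositions—is the content of the lemma and is not a consequence of any general principle you could invoke.

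Your final paragraph, correcting $\dot w$ by an element of $\Levi_m$, stems from a misunderstanding: by \cite[Corollary 12.11]{MT} the map $\mathrm{N}_{\G}(\Levi_m)/\Levi_m\to \mathrm{N}_W(W_{\Levi_m})/W_{\Levi_m}$ is an isomorphism, so once $w$ is a representative of $\sigma$ and $\dot w\in \mathrm{N}_{\G}(\T_0)$ is a lift of $w$ (which automatically normalizes $\Levi_m$), the coset $\dot w\,\Levi_m$ is already the given class $\bar n$—there is no discrepancy to kill, and no Lang-type argument in the torus is needed. Also note that at the level of $W$ the distinction between $\langle F,\gamma\rangle$-stability and $\gamma$-stability evaporates, since $F=F_q\gamma^{(1-\varepsilon)/2}$ acts on $W$ either trivially or as $\gamma$; you invoke this implicitly, but it is worth stating to justify why only $\gamma$ needs to be handled combinatorially while the $F$-stability of the lift comes from Lemma~\ref{preimageweyl}.
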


\begin{proof}
%
%
	Let $W_{\Levi_m}$ be the Weyl group of $\Levi_m$ with respect to the maximal torus $\T_0$. By duality, $W_{\Levi_m}$ is (anti-)isomorphic to the Weyl group $W^\circ(t_m)$ of $\Levi_m^\ast$. Since $W^\circ(t_m) \cong (S_e)^m$ by \cite[Proposition 5.2]{Bonnafe} a computation in $W \cong S_{n+1}$ shows that $\mathrm{N}_{W}(W_{\Levi_m}) \cong S_e \wr S_m$.  
	Using \cite[Corollary 12.11]{MT} we thus obtain $\mathrm{N}_\G(\Levi_m)/\Levi_m\cong \mathrm{N}_{W}(W_{\Levi_m})/W_{\Levi_m} \cong S_m$. We first show that every $\gamma$-stable element $\sigma$ of $\mathrm{N}_{W}(W_{\Levi_m})/W_{\Levi_m}$ has a $\gamma$-stable preimage in $W$. For this we write $\sigma \in S_m$ as a product $\sigma=\sigma_1 \cdots \sigma_r$ of disjoint cycles. Every such cycle $\sigma_i$ corresponds to a permutation of the $m$ components of $W_{\Levi_m} \cong (S_e)^m$. If $\gamma(\sigma_i)=\sigma_j$ for $i \neq j$ then we let $w_i \in W$ be a preimage of $\sigma_i$ such that the support of $w_i$ is on the integers which correspond to the blocks which support $\sigma_i$. Furthermore, we define $w_j:=\gamma(w_i)$. If $\gamma(\sigma_i)=\sigma_i$ then $\sigma_i$ is necessarily a transposition of the form $\sigma_i=(a,m-a+1)$, i.e. $\sigma_i$ swaps the $a$th block (corresponding to the integers $\{(a-1)e+1, \dots,ae \}$ ) and the $(m-a+1)$th block (corresponding to the integers $\{(m-a)e+1,\dots,(m-a+1)e\}$). Then we define $w_i$ to be the following product of $e$ transpositions:
	$$w_i:= \prod_{l=1}^e \big( (a-1)e+l,(m-a+1)e-l \big).$$
	By construction it is clear that $w_i$ is a $\gamma$-stable preimage of $\sigma_i$. By construction the permutations $w_i$ are only supported on the integers which correspond to the blocks which are supported by $\sigma_i$. Therefore, for $i \neq j$ the permutations $w_i$ and $w_j$ have disjoint support, in particular they commute. From this it follows that $w_\sigma:=w_1 \cdots w_r \in W$ is a preimage of $\sigma$ which is $\gamma$-stable. Then Lemma \ref{preimageweyl} shows the existence of an $\langle F , \gamma \rangle$-stable preimage in $\mathrm{N}_{\G}(\T_0)$.
\end{proof}

Denote by $\mathbf{N}_m/\Levi_m$ the subgroup of $\mathrm{N}_\G(\Levi_m)/\Levi_m$ generated by $v_m$.

\begin{lemma}\label{symmetric group}
	Let $n \in \mathrm{N}_{\G^F}(\Levi_m)$ such that $\gamma(n)n^{-1} \in \mathbf{N}_m$. Then there exists some $y \in \mathbf{N}_m$ such that $yn$ is $\langle F, \gamma \rangle$-stable.
\end{lemma}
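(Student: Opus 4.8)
The plan is to exploit the structure $\mathrm{N}_{\G}(\Levi_m)/\Levi_m \cong S_m$ established in Lemma \ref{preimage}, under which $\mathbf{N}_m/\Levi_m$ corresponds to the cyclic subgroup $\langle v_m\rangle \cong C_m$ generated by the $m$-cycle $v_m = (1\,2\,\dots\,m)$, and $\gamma$ acts on $S_m$ as conjugation by the order-reversing involution $\iota_0 \colon a \mapsto m-a+1$ (this is the involution realized in the block-permutation picture of Lemma \ref{preimage}). So the statement becomes a purely group-theoretic claim about $S_m$ with its $C_2$-action: if $\bar n \in S_m$ satisfies $\iota_0(\bar n)\iota_0^{-1} \bar n^{-1} \in \langle v_m\rangle$, then there is $\bar y \in \langle v_m\rangle$ with $\bar y\bar n$ fixed by $\iota_0$-conjugation. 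Writing $\bar n = v_m^{j} w$ with $w$ in the point-stabilizer is not quite the right decomposition; instead I would argue directly.

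First I would translate the hypothesis: $\iota_0 \bar n \iota_0 = \bar n v_m^{k}$ for some $k$, i.e. $\bar n$ conjugates $\iota_0$-conjugation into an inner twist by $v_m^{-k}$. Applying $\iota_0$-conjugation twice (it is an involution) forces $\bar n v_m^{k}\cdot {}^{\iota_0}(\bar n v_m^{k}) = \bar n\cdot {}^{\iota_0}\bar n$ up to the obvious bookkeeping, and since $\iota_0 v_m \iota_0 = v_m^{-1}$, this pins down a compatibility on $k$ (essentially $k$ is free, or $2k \equiv 0$, depending on parity of $m$). Then I would seek $\bar y = v_m^{t}$ so that $\bar y\bar n$ is $\iota_0$-fixed: the condition ${}^{\iota_0}(v_m^t\bar n) = v_m^t\bar n$ unwinds, using ${}^{\iota_0}v_m^t = v_m^{-t}$ and ${}^{\iota_0}\bar n = \bar n v_m^{k}$, to $v_m^{-t}\bar n v_m^{k} = v_m^{t}\bar n$, i.e. $\bar n^{-1} v_m^{-2t}\bar n = v_m^{k}$. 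Since $\bar n$ normalizes $\langle v_m\rangle$ only if it does (which it need not!), I instead note $\bar n v_m^{k}\bar n^{-1}$ need not lie in $\langle v_m\rangle$ — so the clean manipulation above requires knowing $\bar n$ normalizes $\langle v_m\rangle$, which is false in general. The fix: work inside $\mathrm{N}_{S_m}(\langle v_m\rangle)$. The hypothesis $\iota_0\bar n\iota_0 = \bar n v_m^k$ together with $\iota_0\bar n^{-1}\iota_0 = v_m^{-k}\bar n^{-1}$ shows $\bar n^{-1}\iota_0\bar n\iota_0 = v_m^k \in \langle v_m\rangle$; but this element is precisely ${}^{\bar n^{-1}}(\iota_0\bar n\iota_0\bar n^{-1})$ rearranged, and a short computation shows $\bar n$ must normalize $\langle v_m\rangle$ modulo $\langle v_m\rangle$, hence normalizes it. Granting that, the displayed equation $\bar n^{-1}v_m^{-2t}\bar n = v_m^k$ has a solution in $t$ iff $v_m^k$ lies in the (cyclic) image of the squaring-then-$\bar n$-conjugation map on $\langle v_m\rangle$; the parity bookkeeping from the first step guarantees $k$ has the right residue.

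Then I would lift the solution back: choose $\bar y \in \mathbf{N}_m$ projecting to $v_m^t$, so that $y n$ projects to an $\iota_0$-fixed element of $\mathrm{N}_{S_m}(\langle v_m\rangle)/\ast$; now apply Lemma \ref{preimage} to produce a genuinely $\langle F,\gamma\rangle$-stable preimage in $\mathrm{N}_{\G}(\Levi_m)$, and observe that since $yn$ and this preimage differ by an element of $\Levi_m^F$ (chosen using Lang's theorem on the connected group $\Levi_m$ — here Lemma \ref{norm} applies to make the correction $\langle F,\gamma\rangle$-equivariantly, using that $\Levi_m$ is $\gamma$-stable by Lemma \ref{connectedgraph}), we may adjust $y$ within $\mathbf{N}_m$ and a factor of $\Levi_m^F$ to make $yn$ itself $\langle F,\gamma\rangle$-stable. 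The main obstacle I anticipate is exactly the normalization subtlety — controlling the $\iota_0$-action on $S_m$ precisely enough to know that the hypothesis forces $\bar n$ into $\mathrm{N}_{S_m}(\langle v_m\rangle)$, and then doing the parity/divisibility count that produces $t$; the final Lang-correction step is routine given Lemma \ref{norm} and the $\gamma$-stability from Lemma \ref{connectedgraph}.
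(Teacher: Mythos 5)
Your reduction of the problem to a computation in $S_m$ with the order-reversing involution $\iota_0$ acting is the same reduction the paper makes, so the framework is right; however, the two steps you flag as "obstacles" are precisely where the gaps are, and one of your claimed simplifications is actually incorrect.

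First, the detour about $\bar n$ normalizing $\langle v_m\rangle$ is both unnecessary and unjustified. Your equation to solve is $\bar n v_m^k \bar n^{-1} = v_m^{2t}$, where $v_m^k = \bar n^{-1}(\iota_0\bar n\iota_0)$. But then the left-hand side is $\bar n v_m^k \bar n^{-1} = \iota_0\bar n\iota_0\bar n^{-1}$, which \emph{is} in $\langle v_m\rangle$ by hypothesis — you never need $\bar n$ to normalize $\langle v_m\rangle$, only that this one specific conjugate lands in $\langle v_m\rangle$, which it does. Your "short computation" claiming the hypothesis forces $\bar n$ into $\mathrm{N}_{S_m}(\langle v_m\rangle)$ is not given, is not needed, and is not obviously true, so this whole paragraph is a wrong turn.

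Second and more seriously, the solvability of the resulting congruence $2t \equiv k \pmod m$ (for $m$ even, where doubling is not surjective) is not established. You claim "the parity bookkeeping from the first step guarantees $k$ has the right residue," but the "first step" — applying the involution twice — yields a tautology: from $\iota_0\bar n\iota_0 = \bar n v_m^k$ one gets $\bar n = \iota_0(\bar n v_m^k)\iota_0 = \bar n v_m^k v_m^{-k} = \bar n$, i.e.\ no constraint on $k$ at all. The content that is actually needed here is the sign argument, which the paper supplies: since $\gamma$ acts on $S_m$ by conjugation, $\mathrm{sgn}(\gamma(\bar w)\bar w^{-1}) = 1$; and when $m$ is even, $\bar v$ is an $m$-cycle, so $\mathrm{sgn}(\bar v) = -1$ and hence $A_m \cap \langle\bar v\rangle = \langle\bar v^2\rangle$. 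This forces $\gamma(\bar w)\bar w^{-1} \in \langle\bar v^2\rangle$, which is exactly the image of the coboundary map $\bar x \mapsto \gamma(\bar x^{-1})\bar x = \bar x^2$ on $\langle\bar v\rangle$. (For $m$ odd the coboundary is onto $\langle\bar v\rangle$ already, so no sign argument is needed.) Without this, your proof does not go through in the even case. Finally, the closing Lang-correction via Lemma \ref{norm} and Lemma \ref{connectedgraph} is superfluous: Lemma \ref{preimage} already produces a $\langle F,\gamma\rangle$-stable lift of a $\gamma$-stable class directly, which is how the paper finishes.
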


\begin{proof}
	We abbreviate $v:=v_m$.	According to Lemma \ref{preimage} it is enough to show that if $w \in \N_{W}(W_{\Levi_m})$ with $\gamma(w)w^{-1} \in \langle v \rangle W_{\Levi_m}$ then there exists some $y \in \langle v \rangle$ such that $\gamma(yw) (yw)^{-1} \in W_{\Levi_m}$.
	
	Denote by $\bar{\,}:\N_{W}(W_{L_m}) \to \N_{W}(W_{L_m})/W_{L_m}$ the projection map. In what follows, we identify $\mathrm{N}_\G(\Levi_m)/\Levi_m$ with $S_m$. Note that the element $\overline v$ corresponds to an $m$-cycle of $S_m$. Assume first that $\overline{v}$ has odd order. Then the map $\langle \overline{v} \rangle \to \langle \overline{v} \rangle, \overline{v} \mapsto \gamma(\overline{v})\overline{v}^{-1}=\overline{v}^{-2}$ is surjective. Thus, there exists $\overline{x} \in \langle \overline{v} \rangle$ such that $\gamma(\overline{w}) \overline{w}^{-1}=\gamma(\overline{x}^{-1}) \overline{x}$. In other words, $\overline{xw}$ is $\gamma$-stable.
	
	Assume now that $\overline{v}$ has even order. We denote by $\mathrm{sgn}$ the sign map on the symmetric group $\mathrm{N}_\G(\Levi_m)/\Levi_m \cong S_m$ and by $A_m$ its kernel. Since $\overline{v}$ has even order we have $\mathrm{sgn}(\overline{v})=-1$. On the other hand, $\mathrm{sgn}(\gamma(\overline{w}) \overline{w}^{-1})=1$ and thus $\gamma(\overline{w}) \overline{w}^{-1} \in \langle \overline{v}^2 \rangle= \langle \overline{v} \rangle \cap A_m$. However, $\gamma(\overline{v}) \overline{v}^{-1}=\overline{v}^2$. Thus, there exists $\overline{x} \in \langle \overline{v} \rangle$ such that $\gamma(\overline{w})\overline{w}^{-1}=\gamma(\overline{x}^{-1}) \overline{x}$. In other words, $\overline{xw}$ is $\gamma$-stable.
\end{proof}

\subsection{Stabilizers of blocks}


Recall from the remarks before  Remark \ref{type} that we defined $f^\ast: \G^\ast \to \G^\ast$ as the unique morphism satisfying $f^\ast \circ \pi=\pi \circ f$, where $\pi: \G \to \G^\ast$ is the natural surjective map. 
The following observation will be used throughout this section:

\begin{lemma}\label{duality}
The surjective map $\pi: \G \to \G^\ast$ induces a natural isomorphism between the quotient groups $\mathrm{N}_{G \langle f \rangle}(\Levi',e_s^{L'})/L'$ and $\mathrm{C}_{(\G^\ast)^{F^\ast} \langle f^\ast \rangle}(s) ((\Levi')^\ast)^{F^\ast}/((\Levi')^\ast)^{F^\ast}$ given by sending the class of $g f$ to $\pi(g) f^\ast$.
\end{lemma}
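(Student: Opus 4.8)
The plan is to show that the homomorphism induced by $\pi$, which sends the class of $gf$ to $\pi(g)f^\ast$, restricts to the asserted isomorphism, and I would carry this out in three steps, the last of which is where the real work lies.

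First I would set up the ambient map on normalizers. Since $\pi\circ f=f^\ast\circ\pi$, the morphism $\pi$ extends to a group homomorphism $G\langle f\rangle\to(\G^\ast)^{F^\ast}\langle f^\ast\rangle$, $\,gf^i\mapsto\pi(g)(f^\ast)^i$, with kernel $\mathrm Z(\G)^F\subseteq L'$. Because $\pi^{-1}(\Levi'^\ast)=\Levi'\,\mathrm Z(\G)=\Levi'$, the duality statements recalled in Section~\ref{5.2} (the pair $(\Levi',\mathrm{ad}(g)f)$ being in duality with $(\Levi'^\ast,\mathrm{ad}(\pi(g))f^\ast)$) show that this homomorphism maps $\mathrm N_{G\langle f\rangle}(\Levi')$ into $\mathrm N_{(\G^\ast)^{F^\ast}\langle f^\ast\rangle}(\Levi'^\ast)$. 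The duality isomorphism $\mathrm N_{\G}(\Levi')/\Levi'\cong\mathrm N_{\G^\ast}(\Levi'^\ast)/\Levi'^\ast$ is equivariant for the relevant (twisted) Frobenius endomorphisms, and since $\Levi'$ is connected Lang's theorem ($H^1(F,\Levi')=1$) ensures that $\pi$ stays surjective after passing to $F$-fixed points and dividing by $L'$, respectively $\Levi'^{\ast F^\ast}$. Combining these I obtain that $\pi$ induces an isomorphism $\bar\pi\colon\mathrm N_{G\langle f\rangle}(\Levi')/L'\xrightarrow{\sim}\mathrm N_{(\G^\ast)^{F^\ast}\langle f^\ast\rangle}(\Levi'^\ast)/\Levi'^{\ast F^\ast}$; it then remains to pin down the image of the subgroup $\mathrm N_{G\langle f\rangle}(\Levi',e_s^{L'})/L'$.

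Next I would translate the condition ``fixes $e_s^{L'}$''. For $x=gf^i\in\mathrm N_{G\langle f\rangle}(\Levi')$, the compatibility of the central idempotents $e_{(-)}^{L'}$ of \cite[Theorem~9.12]{MarcBook} with automorphisms and with the Jordan decomposition for $\Levi'$ gives ${}^x e_s^{L'}=e_{\,{}^{\pi(g)(f^\ast)^i}s}^{L'}$, and this equals $e_s^{L'}$ exactly when ${}^{\pi(g)(f^\ast)^i}s$ is $\Levi'^{\ast F^\ast}$-conjugate to $s$. So $\bar\pi$ carries $\mathrm N_{G\langle f\rangle}(\Levi',e_s^{L'})/L'$ onto the set of cosets $y\,\Levi'^{\ast F^\ast}$ with $y\in\mathrm N_{(\G^\ast)^{F^\ast}\langle f^\ast\rangle}(\Levi'^\ast)$ and ${}^ys$ being $\Levi'^{\ast F^\ast}$-conjugate to $s$. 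It remains to identify this set with $\mathrm C_{(\G^\ast)^{F^\ast}\langle f^\ast\rangle}(s)\,\Levi'^{\ast F^\ast}/\Levi'^{\ast F^\ast}$: if ${}^ys={}^\ell s$ with $\ell\in\Levi'^{\ast F^\ast}$ then $\ell^{-1}y\in\mathrm C_{(\G^\ast)^{F^\ast}\langle f^\ast\rangle}(s)$, giving ``$\subseteq$''; conversely for $y=h\ell$ with $h$ centralizing $s$ one gets ${}^ys={}^{h\ell h^{-1}}s$, which lies in the prescribed set once one knows $h$ normalizes $\Levi'^\ast$, so that $h\ell h^{-1}\in\Levi'^{\ast F^\ast}$.

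The main obstacle is exactly this last point: that every element of $\mathrm C_{(\G^\ast)^{F^\ast}\langle f^\ast\rangle}(s)$ normalizes $\Levi'^\ast$ (this is also what makes $\mathrm C_{(\G^\ast)^{F^\ast}\langle f^\ast\rangle}(s)\,\Levi'^{\ast F^\ast}$ a subgroup with the two factors permuting, so that both containments above are between genuine subgroups). For the part lying in $(\G^\ast)^{F^\ast}$ I would argue from the hypothesis $\mathrm C_{(\G^\ast)^{F^\ast}}(s)\Levi'^\ast=\Levi'^\ast\mathrm C_{(\G^\ast)^{F^\ast}}(s)$: this product is an algebraic group whose identity component is the connected group $\Levi'^\ast$, so any $\mathrm C_{(\G^\ast)^{F^\ast}}(s)$-conjugate of $\Levi'^\ast$, being connected and contained in it, equals $\Levi'^\ast$ by a dimension count. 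For the remaining cosets I would use the explicit description of strictly quasi-isolated elements in type $A$ (Remark~\ref{type}, Lemma~\ref{prime order}): up to the identifications made there, $\Levi'^\ast$ is the connected centralizer $\mathrm C^\circ_{\G^\ast}(s^k)$ of a power of $s$, so for $h=c(f^\ast)^i$ centralizing $s$ one computes $h\,\Levi'^\ast\,h^{-1}=c\,\mathrm C^\circ_{\G^\ast}\big((f^\ast)^i(s)^k\big)\,c^{-1}=\mathrm C^\circ_{\G^\ast}\big((c(f^\ast)^i(s)c^{-1})^k\big)=\mathrm C^\circ_{\G^\ast}(s^k)=\Levi'^\ast$, the penultimate equality using $c(f^\ast)^i(s)c^{-1}=s$. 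With the normalization in hand, well-definedness, bijectivity and naturality of the resulting isomorphism are formal, following from the corresponding properties of $\pi$ and of the idempotents $e_{(-)}^{L'}$.
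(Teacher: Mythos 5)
Your proposal follows the same basic route as the paper's proof: the map sends $gfL'$ to $\pi(g)f^\ast\Levi'^{\ast F^\ast}$, well-definedness comes from the duality between $(\Levi',\mathrm{ad}(g)f)$ and $(\Levi'^\ast,\mathrm{ad}(\pi(g))f^\ast)$, injectivity from $\ker\pi=\mathrm Z(\G)\subseteq\Levi'$, and surjectivity from Lang's theorem applied to the connected group $\Levi'$ to upgrade a preimage $g'\in\G$ of $x$ to one that is $F$-stable after multiplying by some $l'\in\Levi'$. Where you go further is in the third step: you explicitly verify that every element of $\mathrm C_{(\G^\ast)^{F^\ast}\langle f^\ast\rangle}(s)$ normalizes $\Levi'^\ast$, which is what makes the product $\mathrm C_{(\G^\ast)^{F^\ast}\langle f^\ast\rangle}(s)\Levi'^{\ast F^\ast}$ a subgroup with $\Levi'^{\ast F^\ast}$ normal in it, and what identifies that group with the set of cosets stabilizing the $\Levi'^{\ast F^\ast}$-conjugacy class of $s$. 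The paper treats this as understood (it is implicit in the formulation of the target quotient and in the standing hypothesis at the start of Section~\ref{sec 6}), and its proof simply passes between ``stabilizes the conjugacy class'' and membership in that quotient without comment. Your argument for this point is sound: the $(\G^\ast)^{F^\ast}$-part follows from the hypothesis $\mathrm C_{(\G^\ast)^{F^\ast}}(s)\Levi'^\ast=\Levi'^\ast\mathrm C_{(\G^\ast)^{F^\ast}}(s)$ by the identity-component/dimension argument, and the cosets involving $f^\ast$ follow from $\Levi'^\ast=\mathrm C^\circ_{\G^\ast}(s^k)$ (valid here since $t_{m'}=t_m^{m/m'}$ in the construction of Lemma~\ref{prime order}), so your proof is correct and in fact slightly more complete than the paper's; the trade-off is that your version of this step uses the type-$A$ specifics, whereas the paper's proof as written is formally independent of them.
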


\begin{proof}
By definition of $f$ and $f^\ast$ it follows that for every element $g \in \G^F$ the morphisms $\mathrm{ad}(g) f$ and $\mathrm{ad}(\pi(g)) f^\ast$ are in duality. Now assume that $ g\in \G^F$ is such that $g f$ stabilizes the pair $(\Levi',e_s^{L'})$. It follows that $\pi(g) f^\ast$ stabilizes $(\Levi')^\ast$. Since $(\mathrm{ad}(\pi(g)) f^\ast)|_{(\Levi')^\ast}$ is in duality with $(\mathrm{ad}(g) f)|_{\Levi'}$ we deduce that $\pi(g) f^\ast$ stabilizes the $((\Levi')^\ast)^{F^\ast}$-conjugacy class of $s$. Since $\pi$ maps $(\Levi')^F$ to $((\Levi')^\ast)^{F^\ast}$ it follows that the map in the statement of the lemma is well-defined and injective. On the other hand, assume that $x f^\ast$ stabilizes the $((\Levi')^\ast)^{F^\ast}$-conjugacy class of $s$. Let $g' \in \G$ such that $\pi(g')=x$. Hence, $F(g')(g')^{-1} \in \mathrm{Z}(\G) \subseteq \Levi'$ and by applying Lang's theorem to $F: \Levi' \to \Levi'$ we deduce that there exists some $l' \in (\Levi')^\ast$ such that $g:=g'l'$ is $F$-stable. We have $\pi(g)=x \pi(l')$ with $\pi(l') \in ((\Levi')^\ast)^{F^\ast}$ and so indeed $g f$ is a preimage of $x f^\ast$.
\end{proof}

The most striking property of strictly quasi-isolated blocks (concerning the action of group automorphisms) is the following:

\begin{lemma}\label{graph}
	Assume that $e_s^{\G^F}$ is $\gamma$-stable. Then there exists some $m \in \G^F$ and a parabolic subgroup $\Para$ with Levi complement $\Levi$ such that $y \gamma$ stabilizes $(\Levi,\Para)$ and the idempotent $e_s^{\Levi^F}$.
\end{lemma}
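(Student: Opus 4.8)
The plan is to transfer the statement to the dual group via Lemma~\ref{duality}, reduce to the explicit element $t_m$ of Remark~\ref{type}, and then deal with the rationality of the conjugating element; this last point is where I expect most of the work to be.

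First I would reformulate the conclusion on the dual side. Applying Lemma~\ref{duality} with $f=\gamma$ and $\Levi'=\Levi$ (its hypothesis holds since $\Levi^\ast=\C^\circ_{\G^\ast}(s)$ is normal in $\C_{\G^\ast}(s)$), the assignment $g\gamma\mapsto\pi(g)\gamma^\ast$ identifies $\N_{\G^F\langle\gamma\rangle}(\Levi,e_s^{\Levi^F})/\Levi^F$ with $\C_{(\G^\ast)^{F^\ast}\langle\gamma^\ast\rangle}(s)\,(\Levi^\ast)^{F^\ast}/(\Levi^\ast)^{F^\ast}$, compatibly with the projections onto $\langle\gamma\rangle\cong\langle\gamma^\ast\rangle$. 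Since $e_s^{\G^F}$ is $\gamma$-stable, $\gamma^\ast(s)$ is $(\G^\ast)^{F^\ast}$-conjugate to $s$, so there is $x\in(\G^\ast)^{F^\ast}$ with $x\gamma^\ast\in\C_{(\G^\ast)^{F^\ast}\langle\gamma^\ast\rangle}(s)$; an automorphism centralising $s$ automatically normalises $\Levi^\ast=\C^\circ_{\G^\ast}(s)$, so this coset maps onto the generator of $\langle\gamma^\ast\rangle$, and transporting it back through the isomorphism above produces $y\in\G^F$ with $y\gamma$ normalising $\Levi$ and fixing $e_s^{\Levi^F}$. It then remains to arrange, after replacing $y$ by an element of $y\cdot\N_{\G^F}(\Levi,e_s^{\Levi^F})=y\Nb^F$, that $y\gamma$ also stabilises some parabolic subgroup with Levi complement $\Levi$; such a parabolic need not be $F$-stable, only stable under $y\gamma$.

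To produce the parabolic I would pass to standard position. By Remark~\ref{type} there is $g\in\G^\ast$ with ${}^g s=t_m$, hence $\Levi^\ast={}^{g^{-1}}\Levi_m^\ast$ with $\Levi_m^\ast=\C^\circ_{\G^\ast}(t_m)$ a standard Levi subgroup. A direct computation on $\T_0^\ast$ shows that $\gamma^\ast$ fixes $t_m$: reversing and inverting the diagonal entries $1,\zeta,\dots,\zeta^{m-1}$ (each repeated $e$ times) of $t_m$ and then rescaling by $\zeta^{m-1}$ returns $t_m$ in $\G^\ast$. Thus $\gamma^\ast$ normalises $\Levi_m^\ast$, and since $\gamma^\ast$ stabilises $\B_0^\ast$ and the simple-root set of $\Levi_m^\ast$ it stabilises the standard parabolic $\Para_m^\ast\supseteq\B_0^\ast$ with Levi complement $\Levi_m^\ast$ --- the dual incarnation of Lemma~\ref{connectedgraph}. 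Conjugating by $g^{-1}$, the automorphism ${}^{g^{-1}}\gamma^\ast=\ad(a)\gamma^\ast$, where $a:=g^{-1}\gamma^\ast(g)$, centralises $s$ and stabilises the parabolic subgroup $\Para^\ast:={}^{g^{-1}}\Para_m^\ast$, which has Levi complement $\Levi^\ast$. Pulling back along $\pi$ yields a parabolic subgroup $\Para$ of $\G$ with Levi complement $\Levi$ stabilised by an automorphism in the coset of $\gamma$, together with $e_s^{\Levi^F}$.

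The part I expect to be the main obstacle is reconciling the last two paragraphs, i.e.\ rationality: the element $g$ is not defined over $F^\ast$, so $\ad(a)\gamma^\ast$ need not be of the form (rational element)$\,\cdot\,\gamma^\ast$ and $\Para^\ast$ need not be $F^\ast$-stable. I would control this exactly as in the proofs of Lemmas~\ref{preimage} and \ref{symmetric group}: by Remark~\ref{type} the element $g^{-1}F^\ast(g)$ lies, modulo $W^\circ(t_m)$, in $\langle v_m\rangle$, so the failure of rationality is measured by the $F^\ast$-action on the component group $A(s)\cong A(t_m)=\langle v_m\rangle$, which is trivial because $s$ is strictly quasi-isolated (Lemma~\ref{strict} gives $A(s)^{F^\ast}=A(s)$). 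Using that $\N_\G(\Levi)/\Levi\cong\N_\G(\Levi_m)/\Levi_m\cong S_m$ acts simply transitively on the set of parabolic subgroups with Levi complement $\Levi$, that $\Nb^F/\Levi^F\cong A(s)$ is the cyclic subgroup of this $S_m$ generated by an $m$-cycle (cf.\ Lemma~\ref{preimage}), and Lang's theorem in the connected group $\C^\circ_{\G^\ast}(s)$, one can absorb the discrepancy into an element of $\Nb^F$: concretely, one checks that the finite-order action of $y\gamma$ on that $S_m$-torsor of parabolic subgroups acquires a fixed point after modifying $y$ by a suitable element of $\Nb^F$, while the modified element still fixes $e_s^{\Levi^F}$. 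A final application of Lemma~\ref{duality} descends the corrected element to $\G^F$ and yields the required $y$, $\Para$, and $\Levi$.
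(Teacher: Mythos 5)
Your overall strategy is the same as the paper's: pass to the dual group, conjugate $s$ to the standard element $t_m$, observe that $t_m$ is $\gamma^\ast$-stable and that the standard pair $(\Levi_m^\ast,\Para_m^\ast)$ is $\gamma^\ast$-stable, conjugate back to get $\Para^\ast={}^{g^{-1}}\Para_m^\ast$ stabilised by $\ad(a)\gamma^\ast$ with $a=g^{-1}\gamma^\ast(g)$, fix rationality using $A(s)^{F^\ast}=A(s)$ and Lang's theorem in $\C^\circ_{\G^\ast}(s)$, and descend via Lemma~\ref{duality}. Your first two paragraphs execute this correctly and match the paper step by step.

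The gap is in your last paragraph, precisely at the point you flagged as the likely obstacle. You have, by the end of your second paragraph, an element $a\gamma^\ast$ that centralises $s$ and stabilises $(\Levi^\ast,\Para^\ast)$, and you have recognised that Lang's theorem in $\C^\circ_{\G^\ast}(s)$ gives an $l$ making $la$ rational. The observation you are missing is that this already finishes the proof: since the Lang correction $l$ lies in $\C^\circ_{\G^\ast}(s)=\Levi^\ast$, the automorphism $\ad(l)$ automatically preserves any pair of the form $(\Levi^\ast,\Para^\ast)$, so $la\gamma^\ast$ is simultaneously $F^\ast$-rational, centralises $s$, and stabilises $(\Levi^\ast,\Para^\ast)$. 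One application of duality then yields $y$, $\Para$, and $\Levi$. Instead, you descend to $\G^F$ \emph{first} and try to restore the parabolic stabilisation afterwards by modifying $y$ within $y\Nb^F$ and arguing on the $S_m$-torsor of parabolics with Levi complement $\Levi$. This does not obviously work: $\Nb^F/\Levi^F\cong A(s)$ only moves through the cyclic subgroup $\langle v_m\rangle$ of order $m$ inside the $m!$-element torsor, so there is no a priori reason that an $\Nb^F$-modification can always produce a fixed parabolic for the twisted action of $y\gamma$. Your invocation of Lang ``in $\C^\circ_{\G^\ast}(s)$'' hints at the right fix, but the crucial point — that $l\in\Levi^\ast$ so the Lang correction is harmless to the pair — is never stated and is exactly what makes the torsor argument unnecessary.
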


\begin{proof}
	Let $\gamma^\ast: \G^\ast \to \G^\ast$ be the dual of the graph automorphism $\gamma$. There exists some $g \in \G^\ast$ such that ${}^g s =t$. Consider the element $$\tilde{t}=\tilde{t}_m:=(1,\zeta,\dots,\zeta^{m-1}) \otimes I_e\in \Gtilde^\ast=\mathrm{GL}_{n+1}(\overline{\mathbb{F}_p}),$$ a preimage of $t$ under the natural map $\mathrm{GL}_{n+1}(\overline{\mathbb{F}_p}) \to \mathrm{PGL}_{n+1}(\overline{\mathbb{F}_p})$. This element satisfies $\iota^\ast(\tilde{t})=t$ and we observe that $\gamma^\ast(\tilde{t})=tz$, where $z:=\zeta I_{n+1} \in \mathrm{Z}(\Gtilde)$. So, $t$ is $\gamma^\ast$-stable and thus $\gamma^\ast(s)={}^{\gamma^\ast(g^{-1}) g} s$.
	
	Since ${}^{\gamma^\ast} t = t$ we conclude that ${}^{g^{-1} \gamma^\ast(g) \gamma^\ast} s={}^{g^{-1} \gamma^\ast g} s=s$. Consider the element $x:=g^{-1} \gamma^\ast(g)$. By assumption there exists some $y \in (\G^\ast)^{F^\ast}$ such that $s$ is $y \gamma^\ast$-stable. Therefore $x y^{-1} \in \mathrm{C}_{\G^\ast}(s)$. Since $A(s)^{F^\ast}=A(s)$ by Lemma \ref{strict} we deduce that 
	$$F^\ast(xy^{-1})  (xy^{-1})^{-1}=F^\ast(x)x^{-1} \in \mathrm{C}^\circ_{\G^\ast}(s)=\Levi^\ast.$$
	Thus, by Lang's theorem there exists some $l \in \mathrm{C}_{\G^\ast}^\circ(s)$ such that $lx$ is $F^\ast$-stable. We have
	$(g^{-1} \gamma^\ast g)^2=1$ which implies that $(lx \gamma^\ast)^2 \in \mathrm{C}^\circ_{\G^\ast}(s)$.
	Recall that $\Levi^\ast={}^g \Levi_m^\ast$ and that $W^\circ(t_m)$ denotes the Weyl group of $\mathrm{C}^\circ_{\G^\ast}(t_m)$ relative to the maximal torus $\T_0^\ast$. Let $\Para_m^\ast$ be the standard parabolic subgroup associated with the parabolic subsystem $W^\circ(t_m)$ of $W^\ast$ with Levi complement $\Levi_m^\ast$. We define $\Para^\ast:={}^g \Para_m^\ast$ and we observe that the pair $(\Levi_m^\ast,\Para_m^\ast)$ is $\gamma^\ast$-stable. Consequently, $x \gamma^\ast$ stabilizes the pair $(\Levi^\ast,\Para^\ast)$ and since $l \in \Levi^\ast$ we observe that $lx \gamma^\ast$ also stabilizes this pair. The statement follows now by using duality, see Lemma \ref{duality}.
\end{proof}

We observe that the conclusion of Lemma \ref{graph} remains valid for the Levi subgroup $\Levi'$ constructed in Lemma \ref{prime order}.

\begin{corollary}\label{graph2}
	With the assumption of Lemma \ref{graph}, there exists a parabolic subgroup $\Para'$ whose Levi complement is the Levi subgroup $\Levi'$ from Lemma \ref{prime order} such that $y \gamma$ stabilizes $(\Levi',\Para')$ and the idempotent $e_s^{L'}$.
\end{corollary}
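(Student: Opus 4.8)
The plan is to run the proof of Lemma~\ref{graph} essentially verbatim, with the standard Levi subgroup $\Levi_{m'}$ of Lemma~\ref{prime order} replacing $\Levi_m$ throughout, while keeping the very same conjugating element $g \in \G^\ast$ (with ${}^g s = t_m$) and the very same element $l \in \mathrm{C}^\circ_{\G^\ast}(s)$ produced there. Recall from Lemma~\ref{prime order} that $m'$ is a prime divisor of $m$, that $\Levi'^\ast = {}^g\Levi_{m'}^\ast$ with $\Levi_{m'}^\ast = \mathrm{C}^\circ_{\G^\ast}(t_{m'})$ the standard Levi subgroup associated with the parabolic subsystem $W^\circ(t_{m'})$ of $W^\ast$, and that $W^\circ(t_m) \subseteq W^\circ(t_{m'})$, so that $\Levi^\ast \subseteq \Levi'^\ast$. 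We keep the hypothesis of Lemma~\ref{graph} that $e_s^{\G^F}$ is $\gamma$-stable.

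First I would establish the $\gamma^\ast$-equivariance needed for $\Levi_{m'}$. Exactly as in the proof of Lemma~\ref{graph}, the preimage $\tilde{t}_{m'} = (1,\zeta,\dots,\zeta^{m'-1})\otimes I_{e'} \in \Gtilde^\ast = \mathrm{GL}_{n+1}(\overline{\mathbb{F}_p})$ of $t_{m'}$ satisfies $\gamma^\ast(\tilde{t}_{m'}) = t_{m'} z'$ for some $z' \in \mathrm{Z}(\Gtilde^\ast)$, whence $\gamma^\ast(t_{m'}) = t_{m'}$ in $\G^\ast = \mathrm{PGL}_{n+1}(\overline{\mathbb{F}_p})$; thus $\Levi_{m'}^\ast = \mathrm{C}^\circ_{\G^\ast}(t_{m'})$ is $\gamma^\ast$-stable, and --- by the argument of Lemma~\ref{connectedgraph} applied with $m'$ in place of $m$, using that $\gamma^\ast$ preserves the standard pair $(\T_0^\ast,\B_0^\ast)$ --- so is the standard parabolic subgroup $\Para_{m'}^\ast$ of $\G^\ast$ with Levi complement $\Levi_{m'}^\ast$. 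Setting $\Para'^\ast := {}^g\Para_{m'}^\ast$, a parabolic subgroup of $\G^\ast$ with Levi complement $\Levi'^\ast$, the computation carried out in the proof of Lemma~\ref{graph} now goes through word for word: the morphism $x\gamma^\ast$, with $x = g^{-1}\gamma^\ast(g)$, stabilizes $(\Levi'^\ast,\Para'^\ast)$, and since $l \in \mathrm{C}^\circ_{\G^\ast}(s) = \Levi^\ast \subseteq \Levi'^\ast \subseteq \Para'^\ast$ the element $lx\gamma^\ast$ stabilizes $(\Levi'^\ast,\Para'^\ast)$ as well while still fixing $s$. Finally I would transport this back via Lemma~\ref{duality}, now for the Levi subgroup $\Levi'$: the corresponding element $y\gamma \in \G^F\langle\gamma\rangle$ --- which is precisely the element produced in Lemma~\ref{graph}, being built from the same $g$ and $l$ --- stabilizes a parabolic subgroup $\Para'$ of $\G$ with Levi complement $\Levi'$ together with the idempotent $e_s^{L'}$.

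The one point that genuinely requires care --- and what I expect to be the main, if modest, obstacle --- is verifying that the element $lx\gamma^\ast$ already manufactured in Lemma~\ref{graph} stabilizes the \emph{larger} pair $(\Levi'^\ast,\Para'^\ast)$ and not merely $(\Levi^\ast,\Para^\ast)$. For the Levi subgroup this is unproblematic: $x\gamma^\ast$ fixes $s$, and since $W^\circ(t_m)$ is a standard parabolic subsystem of $W^\circ(t_{m'})$ the same conjugation by $g$ that turned the $\gamma^\ast$-stable pair $(\Levi_m^\ast,\Para_m^\ast)$ into an $x\gamma^\ast$-stable one does the same for $(\Levi_{m'}^\ast,\Para_{m'}^\ast)$. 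For the parabolic it is precisely the step ``consequently $x\gamma^\ast$ stabilizes $(\Levi^\ast,\Para^\ast)$'' of Lemma~\ref{graph}, reread with $\Para_{m'}^\ast$ in place of $\Para_m^\ast$, combined with $l \in \Levi'^\ast \subseteq \Para'^\ast$. I would also note in passing, as set up in \ref{5.2}, that $\Levi'^\ast$ is the Levi subgroup of $\G^\ast$ dual to $\Levi'$, so that the idempotent $e_s^{L'}$ and the isomorphism of Lemma~\ref{duality} apply in this situation; this is routine.
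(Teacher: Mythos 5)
Your proposal is correct and follows the same approach as the paper's proof: both reuse the same $g$ and $l$ from Lemma~\ref{graph}, observe that $\gamma^\ast$ stabilizes the standard pair $(\Levi_{m'}^\ast,\Para_{m'}^\ast)$ and that $l\in\Levi^\ast\subseteq\Levi'^\ast$, so $lg^{-1}\gamma^\ast(g)\gamma^\ast$ stabilizes $(\Levi'^\ast,\Para'^\ast)$ and fixes $s$, then transport back by duality.
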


\begin{proof}
	By construction, we have $((\Levi')^\ast,(\Para')^\ast)={}^g (\Levi_{m'},\Para_{m'})$ where $g$ is as in the proof of Lemma \ref{graph}. The graph automorphism $\gamma$ stabilizes $((\Levi_{m'})^\ast,(\Para_{m'})^\ast)$ and we have $\Levi^\ast \subseteq (\Levi')^\ast$. It follows that the element $l g^{-1} \gamma^\ast(g) \gamma^\ast$, where $l \in (\Levi^\ast)^{F^\ast}$ is as in the proof of Lemma \ref{graph}, stabilizes $((\Levi')^\ast,(\Para')^\ast)$ and the $((\Levi')^\ast)^{F^\ast}$-conjugacy class of $s$. This yields the claim.
\end{proof}

\subsection{Untwisted groups of type $A$}

%

In the following section we assume that $(\G,F)$ is untwisted of type $A$. Consider the subgroup $\mathcal{B}$ of $\mathrm{Aut}(\Gtilde^F)$ generated by the field automorphism $F_p$ and the graph automorphism $\gamma$. Denote by $\mathcal{B}_{e_s^{\G^F}}$ the stabilizer of $e_s^{\G^F}$ in $\mathcal{B}$. We have $\mathcal{B}_{e_s^G}=\langle \gamma_0, \phi \rangle,$ where $\phi: \G \to
 \G$ is a Frobenius endomorphism (a power of $F_p$ or $F_p \gamma$) which satisfies $\phi^r=F$ for some $r$ and $\gamma_0 \in \{ \mathrm{id}_\G, \gamma\}$ is a (possibly trivial) graph automorphism. As in the proof of Lemma \ref{graph} we use some explicit properties of the element $s$ to say something about their structure.

\begin{lemma}\label{Frobcyclic}
The quotient group $\mathrm{N}_{G \langle \phi \rangle}(\Levi',e_s^{L'})/L'$ is cyclic unless $\Levi'$ is $1$-split.
\end{lemma}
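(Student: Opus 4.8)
The plan is to transport the statement to the dual group, where the quotient $Q:=\N_{G\langle\phi\rangle}(\Levi',e_s^{L'})/L'$ appears inside a symmetric group, and then to exploit that a nontrivial Frobenius type of $\Levi'^\ast$ forces a coprimality condition which makes the relevant extension cyclic. First I would apply Lemma~\ref{duality} with $f$ replaced by $\phi$: it identifies $Q$ with $\C_{(\G^\ast)^{F^\ast}\langle\phi^\ast\rangle}(s)\,(\Levi'^\ast)^{F^\ast}/(\Levi'^\ast)^{F^\ast}$. Intersecting with $(\G^\ast)^{F^\ast}$ exhibits a normal subgroup of $Q$ isomorphic to $N'/L'$, which is cyclic of prime order $m'$ by Lemma~\ref{prime order}, and the quotient $Q/(N'/L')$ embeds into $\langle\phi^\ast\rangle/\langle F^\ast\rangle$, hence is cyclic; let $c$ denote its order, so $c\mid r$. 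If $\Levi'$ is $1$-split there is nothing to prove, so from now on I would assume $\Levi'$ is not $1$-split.

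Next I would bring in the type-$A$ combinatorics of Section~\ref{sec 6}. Writing $\Levi'^\ast={}^g\Levi_{m'}^\ast$ as in Lemma~\ref{prime order}, the argument that proves Lemma~\ref{preimage} (applied to $\Levi_{m'}$) yields $\N_{\G^\ast}(\Levi'^\ast)/\Levi'^\ast\cong S_{m'}$, under which $N'/L'$ corresponds to the subgroup generated by the element $w$ recording the $F^\ast$-type of $\Levi'^\ast$ from Remark~\ref{type}; moreover $\Levi'$ is $1$-split exactly when $w=1$, and since $m'$ is prime a nontrivial $w$ is an $m'$-cycle. After conjugating $\phi$ by a suitable element of $\G^F$ (as in the proof of Theorem~\ref{equiv}) I may assume that $\phi^\ast$ normalises $\Levi'^\ast$ and fixes $e_s^{L'}$. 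Since $\phi^\ast$ is (a power of) the standard field endomorphism, possibly composed with the dual graph automorphism, its action on the standard $S_{m'}=\N_{\G^\ast}(\Levi_{m'}^\ast)/\Levi_{m'}^\ast$ is conjugation by an element $\rho_0$ of order at most $2$, so its action on $\N_{\G^\ast}(\Levi'^\ast)/\Levi'^\ast\cong S_{m'}$ is conjugation by $v_\phi:=\overline{g^{-1}\phi^\ast(g)}\,\rho_0$ for a suitable choice of $g$; here, when the graph automorphism is genuinely present, $r$ is even, because in the untwisted case $F$ has no graph part and $\gamma^2=\mathrm{id}$.

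Then comes the key computation: since $(\phi^\ast)^r=F^\ast$ and $F^\ast$ acts trivially on the standard $S_{m'}$, comparing the two evaluations of the action of $(\phi^\ast)^r$ on $S_{m'}$ gives $v_\phi^{\,r}=w$ in $S_{m'}$ for $m'\ge 3$ (using $\Z(S_{m'})=1$ and the cancellation of the $\rho_0$'s in pairs when $r$ is even). As $w$ is an $m'$-cycle, $v_\phi$ must itself be an $m'$-cycle and $\gcd(r,m')=1$; hence $\gcd(c,m')=1$, and $\langle v_\phi\rangle=\langle w\rangle$, so $\phi^\ast$ centralises $N'/L'$. Thus $N'/L'$ is a central normal Hall $\{m'\}$-subgroup of $Q$ with cyclic quotient, and Schur--Zassenhaus gives $Q\cong(N'/L')\times C\cong C_{m'c}$, which is cyclic. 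The remaining prime $m'=2$, where $S_{m'}$ is abelian and the argument above is vacuous, I would treat directly: a non-$1$-split $\Levi'$ is then Galois twisted, $(\Levi')^F$ is a determinant-one subgroup of $\mathrm{GL}_{e'}(q^2)$, and the generator of $N'/L'$ acts on it as the Galois automorphism $x\mapsto x^{q}$, which lies in the cyclic group generated by the image of $\phi$ in $Q$, so that $Q$ is cyclic again.

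The hard part will be the middle step together with the case $m'=2$: making the isomorphism $\N_{\G^\ast}(\Levi'^\ast)/\Levi'^\ast\cong S_{m'}$ precise as a $\langle F^\ast,\phi^\ast\rangle$-set — which is exactly where the explicit bookkeeping of Section~\ref{sec 6} and Remark~\ref{type} is needed — keeping careful track of the possible graph twist in $\phi$, and carrying out the Galois-twist computation that disposes of $m'=2$.
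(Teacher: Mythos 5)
Your proposal follows the same broad route as the paper (pass to the dual via Lemma~\ref{duality}, exploit the $S_{m'}$ combinatorics), but it diverges on the key step and in doing so accumulates some genuine gaps. The paper does not try to arrange $\phi^\ast$ itself to normalise $\Levi'^\ast$. Replacing $\phi$ by a $\G^F$-conjugate $\mathrm{ad}(g_0)\phi$ changes $\phi^r$ to $\mathrm{ad}(N_{F/\phi}(g_0))F$, and the norm $N_{F/\phi}(g_0)$ need not be trivial; repairing this requires exactly the Lang-theorem descent that the paper performs in Theorem~\ref{equiv} (to produce $F_0$) but deliberately avoids here. Instead, the paper picks $z\in W^\ast$ of order dividing $r$ with ${}^{z\phi^\ast}t=t$, sets $x:=g^{-1}z_0\phi^\ast(g)$, and shows directly that $x\phi^\ast$ fixes $s$ and that $(x\phi^\ast)^r=n'F^\ast$ with $n'=g^{-1}F^\ast(g)$. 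The dichotomy is then immediate: either $n'$ generates $\Nb'^\ast/\Levi'^\ast\cong C_{m'}$, in which case the image of $x\phi^\ast$ is an element of the stabilizer quotient of order $rm'$, hence the quotient (of order at most $rm'$) is cyclic; or $n'\in\Levi'^\ast$, which forces $\Levi'^\ast$ to be $(\G^\ast)^{F^\ast}$-conjugate to the split Levi $\Levi_{m'}^\ast$, i.e.\ $1$-split.

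Beyond the conjugation issue, three further points need attention in your write-up. First, the relation $v_\phi^{\,r}=w$ is stated only modulo $\Z(S_{m'})$ (you invoke $\Z(S_{m'})=1$ for $m'\ge3$), and the ``cancellation of $\rho_0$'s'' when $\phi$ carries a graph part is left entirely informal; the bookkeeping you defer is precisely the hard content. Second, the coprimality $\gcd(r,m')=1$ that you extract and then feed into Schur--Zassenhaus is a strictly stronger conclusion than what is needed; the paper's generator argument produces a cyclic group of order $rm'$ without ever needing $r$ and $m'$ to be coprime, so the detour through a direct-product decomposition is both more fragile (it collapses if the relation $v_\phi^r=w$ isn't exact) and unnecessary. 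Third, the $m'=2$ case is treated only by a sketch (``the Galois automorphism lies in the cyclic group generated by the image of $\phi$''); this would need to be justified via the explicit embedding $L'\hookrightarrow\mathrm{GL}_{e'}(q^2)$ and the way $\phi$ restricts to it, which is not automatic. In the paper's proof none of these case distinctions arise: the argument applies uniformly in $m'$.

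So: right framework, but the central computation must be rerouted so that it does not depend on modifying $\phi$, and the more elementary device of exhibiting a single element whose $r$th power lands on a generator of $N'/L'$ is both simpler and more robust than the coprimality-plus-Schur--Zassenhaus route.
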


\begin{proof}
%
%
%
 We have $t={}^g s$ for some $g\in \G^\ast$. By the construction in Lemma \ref{prime order} the quotient group $\mathrm{C}_{\G^\ast}(s)(\Levi')^\ast/(\Levi')^\ast$ is cyclic of prime order. In particular, $n':=g^{-1} F^\ast(g) \in \mathrm{C}_{\G^\ast}(s)$ and so $n'$ either generates $\mathrm{C}_{\G^\ast}(s)(\Levi')^\ast/(\Levi')^\ast$ or $n' \in (\Levi')^\ast$.

Recall that $\phi$ is a power of $F_p$ or $F_p \gamma$ and $\gamma$ fixes $t$, see proof of Lemma \ref{graph}. Consequently, $\phi$ acts as a permutation on the different eigenvalues of $t$. Therefore, there exists some $z \in W^\ast$ of order dividing $r$ such that ${}^{z \phi} t = t$. We let $z_0 \in \mathrm{N}_{\G^\ast}(\T_0^\ast)$ be the permutation matrix corresponding to $z$.

   We deduce that $s$ is $x \phi^\ast$-stable, where $x:=g^{-1}z_0 \phi^\ast(g)$. By assumption there exists some $y \in (\G^\ast)^{F^\ast}$ such that $s$ is $y \phi^\ast$-stable. Therefore $x y^{-1} \in \mathrm{C}_{\G^\ast}(s)$. Since $A(s)^{F^\ast}=A(s)$ by Lemma \ref{strict} we deduce that $F^\ast(xy^{-1})  (xy^{-1})^{-1}=F^\ast(x)x^{-1} \in \mathrm{C}^\circ_{\G^\ast}(s)$. Thus, by Lang's theorem there exists some $l \in \mathrm{C}_{\G^\ast}^\circ(s)$ such that $lx$ is $F^\ast$-stable. We have 
	$$(x\phi^\ast)^r=(g^{-1} z_0 \phi^\ast g)^r=g^{-1} (z_0 \phi^\ast)^r g=g^{-1} F^\ast(g) F^\ast=n' F^\ast.$$
	If $n'$ generates $(\mathbf{N}')^\ast/(\Levi')^\ast= \mathrm{C}_{(\G^\ast)^{F^\ast}}(s)(\Levi')^\ast/ (\Levi')^\ast$ then we can conclude by duality that $\mathrm{N}_{G \langle \phi \rangle }(L',e_s^{L'})/L'$ is cyclic. On the other hand, if $g^{-1} F^\ast(g) \in (\Levi')^\ast$ then there exists $l' \in (\Levi')^\ast$ such that $gl'$ is $F^\ast$-stable. We conclude that $(\Levi')^\ast={}^{gl}(\Levi_{m'})^\ast$ is $(\G^\ast)^{F^\ast}$-conjugate to $(\Levi_{m'})^\ast$ and therefore maximally split.
\end{proof}


\begin{lemma}\label{quotient=2}
If $\Levi'$ is not $1$-split, then the quotient group $\mathrm{N}_{G \mathcal{B}}(\Levi',e_s^{L'})/L'$ is abelian.	
\end{lemma}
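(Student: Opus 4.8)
The plan is to split according to whether the graph automorphism contributes to the relevant stabilizer, using Lemma~\ref{Frobcyclic} for the Frobenius part and Corollary~\ref{graph2} for the graph part. Write $\mathcal{N}:=\mathrm{N}_{G\mathcal{B}}(\Levi',e_s^{L'})$; we must show $\mathcal{N}/L'$ is abelian. Every element of $\mathcal{N}$ stabilizes $e_s^{L'}$, hence also $e_s^{\G^F}$, so the image of $\mathcal{N}$ in $\mathcal{B}$ lies in $\mathcal{B}_{e_s^{\G^F}}=\langle\gamma_0,\phi\rangle$; consequently $C:=\mathrm{N}_{G\langle\phi\rangle}(\Levi',e_s^{L'})/L'$ is a subgroup of $\mathcal{N}/L'$ of index at most $2$. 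Since $\Levi'$ is not $1$-split, Lemma~\ref{Frobcyclic} shows $C$ is cyclic, so if the index is $1$ we are done. Assume therefore that it is $2$; then $\gamma_0=\gamma$ (otherwise $\mathcal{B}_{e_s^{\G^F}}=\langle\phi\rangle$ and the index would be $1$), so $e_s^{\G^F}$ is $\gamma$-stable, and Corollary~\ref{graph2} provides $y\in\G^F$ and a parabolic subgroup $\Para'$ with Levi complement $\Levi'$ such that $y\gamma$ stabilizes $(\Levi',\Para')$ and $e_s^{L'}$; thus $\mathcal{N}/L'=C\sqcup\overline{y\gamma}\,C$.

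The decisive step is to show that $m'=2$. Writing ${}^{g}s=t_m$ as in Remark~\ref{type}, the type of $(\Levi')^\ast$ is recorded by an element $a\in A(t_m)=\langle v_m\rangle\cong\mathbb{Z}/m\mathbb{Z}$, and by the computation in the proof of Lemma~\ref{Frobcyclic} the condition that $\Levi'$ is not $1$-split means $a\not\equiv0\pmod{m'}$. On the other hand, as in the proof of Lemma~\ref{graph} the element $\gamma^\ast$ fixes $t_m$, and it acts on $A(t_m)$ by inversion: via the injective morphism $\omega_{t_m}\colon A(t_m)\hookrightarrow\mathrm{Z}(\Gtilde^\ast)$ of \cite[Corollary~2.8]{Bonnafe} this action corresponds to the inversion on $\mathrm{Z}(\Gtilde^\ast)\cong\mathbb{G}_m$ induced by $\gamma^\ast$. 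Hence $\gamma^\ast(s)={}^{\gamma^\ast(g)^{-1}}t_m$ has type $-a$, and since $e_s^{\G^F}$ is $\gamma$-stable the elements $s$ and $\gamma^\ast(s)$ are $(\G^\ast)^{F^\ast}$-conjugate; as the $F^\ast$-action on $A(t_m)$ is trivial (since $m\mid q-1$, by Lemma~\ref{strict} and \cite[Corollary~2.8]{Bonnafe}), this forces $a\equiv-a\pmod m$, i.e. $2a\equiv0\pmod m$. As $m'$ is a prime dividing $m$ with $a\not\equiv0\pmod{m'}$ and $2a\equiv0\pmod{m'}$, we conclude $m'=2$.

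It follows that $N'/L'\cong C_{m'}=C_2$ is central in $\mathcal{N}/L'$: it lies in the abelian group $C$, and $\overline{y\gamma}$ centralizes it since $\mathrm{Aut}(C_2)$ is trivial. Modulo this central subgroup, $\mathcal{N}/L'$ becomes a subgroup of the abelian group $\mathcal{B}$, so $\mathcal{N}/L'$ is abelian once we know that $\overline{y\gamma}$ commutes with a generator of $C$. Choose, following the proof of Lemma~\ref{Frobcyclic}, such a generator of the form $\overline{z\phi}$ with $z\in\mathrm{N}_{\G^F}(\Levi')$. Since $\phi$ commutes with $\gamma$ and both $z\phi$ and $y\gamma$ normalize $\Levi'$, the commutator $[z\phi,y\gamma]$ is an inner automorphism of $\G^F$ lying in $N'=\mathrm{N}_{\G^F}(\Levi',e_s^{L'})$; and if $z\phi$ is chosen so as to stabilize the same parabolic $\Para'$ as $y\gamma$, then in fact $[z\phi,y\gamma]\in\mathrm{N}_{\G^F}(\Levi',\Para')=L'$, so $\overline{z\phi}$ and $\overline{y\gamma}$ commute in $\mathcal{N}/L'$. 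Hence $\mathcal{N}/L'$ is abelian.

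I expect the main obstacle to be the last point: arranging the generator of the Frobenius-cyclic part $C$ and the element $y\gamma$ of Corollary~\ref{graph2} to stabilize one common parabolic subgroup with Levi complement $\Levi'$ (equivalently, showing that the commutator class in $N'/L'\cong C_2$ vanishes). Making this precise requires unwinding the explicit realizations of $\Levi'$, of $y\gamma$ and of the generator of $C$ in terms of the standard parabolic $\Para_{m'}$ and the conjugating element $g$ with ${}^{g}s=t_m$, together with the description of strictly quasi-isolated elements of type $A$; the identification $m'=2$ established above is the structural input that keeps this bookkeeping under control.
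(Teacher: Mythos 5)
Your approach is genuinely different from the paper's, and most of it is sound, but there is an acknowledged and real gap at the decisive final step.

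The paper works entirely on the dual side. It first uses Lemma~\ref{symmetric group} (the extended Weyl group) to replace the twisting element $z_0$ by a $\gamma_0^\ast$-stable one; once $z_0$ is $\gamma_0^\ast$-stable, the two representatives $x_{\phi'}\phi'^\ast$ and $x_{\gamma_0}\gamma_0^\ast$ are $g$-conjugates of the commuting morphisms $z_0\phi^\ast$ and $\gamma_0^\ast$, so their commutator vanishes outright, and Lemma~\ref{duality} transfers this to $\mathcal{N}/L'$. Your proposal instead establishes the structural constraint $m'=2$. This observation is correct (and a nice one): $\gamma^\ast$ does invert $A(t_m)$, as one checks from $\gamma^\ast(\tilde t_m)=\tilde t_m z$ with $z$ central and the defining commutator formula for $\omega_{t_m}$, so the $(\G^\ast)^{F^\ast}$-class of $s$ being $\gamma^\ast$-stable forces $2a\equiv 0\pmod m$, and combined with $a\not\equiv 0\pmod{m'}$ this gives $m'=2$. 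You then correctly deduce that $N'/L'\cong C_2$ is central in $\mathcal N/L'$ and reduce to showing that a single commutator $[z\phi,y\gamma]$ lies in $L'$.

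That last reduction is where the gap lies, and it is a genuine one, not just unwritten bookkeeping. You propose to choose $z\phi$ so that it stabilizes the same parabolic $\Para'$ as $y\gamma$, but nothing in what precedes guarantees that $z\phi(\Para')$ is $L'$-conjugate to $\Para'$: parabolic subgroups with the fixed Levi complement $\Levi'$ form an orbit of $\mathrm{N}_\G(\Levi')/\Levi'$, not of $\Levi'$, and since $\Levi'$ is not $1$-split there is no $F$-stable $\Para'$ to anchor the argument. The commutator pairing $\mathcal N/N'\times\mathcal N/N'\to N'/L'\cong C_2$ on a central extension of an abelian group by $C_2$ is an alternating form, and there is no formal reason it should vanish; one must actually produce a commuting pair of lifts. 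This is precisely the work that the paper's Lemma~\ref{symmetric group} performs (on the dual side, via the case analysis on whether $yz_0\in\Nb^\ast_{m'}$ or $(yz_0)^r\in\Levi^\ast_{m'}$), and you would need an analogue of that computation to close your argument. As it stands, the identification $m'=2$ simplifies the target (you only need one commutator to vanish, and $N'/L'$ is automatically central), but it does not by itself supply the vanishing; you have correctly flagged this as the main obstacle, and it is.
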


\begin{proof}
	We keep the notation of Lemma \ref{Frobcyclic}.
	As in Lemma \ref{Frobcyclic} there exists some $z_0 \in \N_{\G^\ast}(\T_0)$ of order dividing $r$ such that ${}^{z_0 \phi^\ast} t = t$. The element $\gamma_0^\ast(z_0)z_0^{-1}$ stabilizes $t$ and so $\gamma_0^\ast(z_0)z_0^{-1} \in \Nb_{m'}^\ast$. We want to show that $z_0$ can be chosen to be $\gamma_0^\ast$-stable.	By Lemma \ref{symmetric group} we first see that there exists some $y \in \Nb^\ast_{m'}$ such that $yz_0$ is $\langle F^\ast, \gamma^\ast \rangle$-stable. Observe that we still have ${}^{yz_0 \phi^\ast} t=t$ and $(yz_0)^r \in \mathbf{N}^\ast_{m'}$. Note that $ v_{m'}$ corresponds to an $m'$-cycle under the isomorphism $\mathrm{N}_{\G^\ast}(\Levi^\ast_{m'})/\Levi^\ast_{m'} \cong S_{m'}$. Since $m'$ is a prime number it follows that if some power of $\sigma \in S_{m'}$ is an $m'$-cycle then $\sigma$ itself must be an $m'$-cycle. Since $(yz_0)^r \in \mathbf{N}^\ast_{m'}$ this argument shows that we must either have $yz_0 \in \mathbf{N}^\ast_{m'}$ or $(yz_0)^r \in \Levi^\ast_{m'}$. In this first case we must already have $z_0 \in \mathbf{N}_{m'}^\ast$ and so $t$ is $\phi^\ast$-stable. In this case we can simply replace $z_0$ by $1$. In the second case we can replace $z_0$ by $yz_0$. Thus, we can assume that $z_0$ is $\gamma^\ast$-stable.
	
	We denote $\phi'^\ast:=z_0 \phi^\ast$. Consider $x_{\phi'}:=g^{-1} \phi'^\ast(g)$ and $x_{\gamma_0}:=g^{-1} \gamma_0^\ast(g)$. Since $\phi'^\ast=z_0 \phi^\ast$ and $\gamma^\ast$ commute it follows that $$[x_{\phi'} \phi'^\ast,x_\gamma \gamma_0^\ast]=[g \phi'^\ast g^{-1},g \gamma_0^\ast g^{-1}]=1.$$
	The statement of the lemma follows now from Lemma \ref{duality}.
\end{proof}

\begin{lemma}\label{Frobroot}
	Assume that $\Levi'$ is $1$-split. By possibly replacing $s$ by a $(\G^\ast)^{F^\ast}$-conjugate the following hold:
	\begin{enumerate}[label={\alph*)}]
		\item  The Levi subgroups $(\Levi',\Para')$ and $e_s^{L'}$ are $\gamma_0$-stable.
		\item There exists a Frobenius endomorphism $F_0$ of $\G$ such that $F_0^r=F$ which commutes with $\gamma_0$ such that $F_0$ stabilizes $\Levi'$ and $e_s^{L'}$. Moreover, $\Gtilde^F \langle F_0, \gamma_0 \rangle$ is the stabilizer of $e_s^{\G^F}$ in $\mathrm{Out}(\G^F)$.
	\end{enumerate}
\end{lemma}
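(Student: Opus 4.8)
The plan is to use the hypothesis that $\Levi'$ is $1$-split to reduce to the case where $(\Levi',\Para')$ is the standard Levi/parabolic pair $(\Levi_{m'},\Para_{m'})$ of Lemma~\ref{prime order}, to deduce (a) directly in that case, and then to construct $F_0$ by running the argument of Theorem~\ref{equiv} for $\Levi'$ in its ``first case'' (which the $1$-split hypothesis forces), carried out $\gamma_0$-equivariantly as in Lemmas~\ref{symmetric group} and \ref{quotient=2}.

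\emph{Reduction and part (a).} Since $\Levi'$ is $1$-split, the analysis in the proof of Lemma~\ref{Frobcyclic} shows that the element $n'=g^{-1}F^\ast(g)$ (where $t_m={}^gs$) lies in $(\Levi')^\ast$; combining this with Lang's theorem applied to $F^\ast$ on the connected group $\Levi_{m'}^\ast$ — using that $F^\ast$ acts trivially on $W^\ast$ — one sees that, after replacing $s$ by a suitable $(\G^\ast)^{F^\ast}$-conjugate (which changes neither $e_s^{\G^F}$ nor $\mathcal{B}_{e_s^{\G^F}}$), one may take the conjugating element $g$ to lie in $\Levi_{m'}^\ast$, so that $(\Levi',\Para')=(\Levi_{m'},\Para_{m'})$ is the standard pair. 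By Lemma~\ref{connectedgraph} this pair is $\gamma$-stable, hence $\gamma_0$-stable; when $\gamma_0=\mathrm{id}$ there is nothing more to show for (a). When $\gamma_0=\gamma$, Corollary~\ref{graph2} provides an element $\dot\xi\gamma$ with $\dot\xi\in\G^F$ stabilizing $(\Levi',\Para')$ and $e_s^{L'}$; the dual element is $\xi\gamma^\ast$ with $\xi=lg^{-1}\gamma^\ast(g)\in\Levi_{m'}^\ast=(\Levi')^\ast$ (since $l\in\Levi^\ast$ and $\gamma^\ast$ stabilizes the standard $\Levi_{m'}^\ast$), so by Lemma~\ref{duality} we get $\dot\xi\in(\Levi')^F$; as $e_s^{L'}$ is a central idempotent of $\Lambda(\Levi')^F$ it is fixed by conjugation by $\dot\xi$, whence $\gamma$ itself stabilizes $e_s^{L'}$. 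This proves (a).

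\emph{Construction of $F_0$.} Recall $\mathcal{B}_{e_s^{\G^F}}=\langle\gamma_0,\phi\rangle$ with $\phi$ a power of $F_p$ or $F_p\gamma$ satisfying $\phi^r=F$; thus $\phi$ commutes with $\gamma_0$, and since $\phi$ stabilizes the standard torus and Borel it stabilizes the standard pair $(\Levi',\Para')$. To make $\phi$ also stabilize $e_s^{L'}$ I would argue as in the proof of Theorem~\ref{equiv}: there is $z\in\mathrm{N}_{\G^F}(\Levi')$ with $z\phi$ fixing $e_s^{L'}$, and $(z\phi)^r=N_{F/\phi}(z)F$; because $\Levi'$ is $1$-split and $N'/L'$ has prime order, $z_0:=N_{F/\phi}(z)$ lies in $(\Levi')^F$ — the ``first case'' of that proof — and Lang's theorem on the connected group $\Levi'$ yields $l\in(\Levi')^{z_0F}$ with $F_0:=lz_0\phi$ satisfying $F_0^r=F$, stabilizing $\Levi'$, and stabilizing $e_s^{L'}$ (it differs from $z\phi$ only by a diagonal automorphism induced by $(\Levi')^F$-conjugation). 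To keep $F_0$ commuting with $\gamma_0$ one must choose $z$ (and then $l$) $\gamma_0$-equivariantly: the set of admissible $z$ is a $\gamma_0$-stable coset of $N'=\mathrm{N}_{\G^F}(\Levi',e_s^{L'})$, and $N'/L'$ is cyclic of prime order, so the averaging argument of Lemmas~\ref{symmetric group} and \ref{quotient=2} — using $A(s)^{F^\ast}=A(s)$ from Lemma~\ref{strict} — lets one arrange $\gamma_0(lz_0)(lz_0)^{-1}\in\Z(\G)$, i.e. $[F_0,\gamma_0]=1$. Finally $F_0$ and $\phi$ have the same image in $\mathrm{Out}(\G^F)$ up to a diagonal automorphism, so $\Gtilde^F\langle F_0,\gamma_0\rangle$ has image $\mathrm{Out}(\G^F)_{e_s^{\G^F}}$, as required.

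\emph{Main obstacle.} I expect the hard part to be exactly the last point: carrying out the Lang-theorem correction of $\phi$ (needed to make it fix $e_s^{L'}$ while keeping $F_0^r=F$) simultaneously with the $\gamma_0$-equivariance, so that the resulting $F_0$ literally commutes with $\gamma_0$. This requires interleaving the mechanism of Theorem~\ref{equiv} with the $\gamma$-equivariant bookkeeping of Lemmas~\ref{symmetric group} and \ref{quotient=2}, and it goes through only because $m'$ is prime and $A(s)$ is cyclic with $A(s)^{F^\ast}=A(s)$; the auxiliary observation that conjugating $s$ by a rational element changes neither $e_s^{\G^F}$ nor $\mathcal{B}_{e_s^{\G^F}}$ is what makes the reduction legitimate.
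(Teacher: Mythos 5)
Your overall plan and your proof of part (a) are close to the paper's: the reduction to $\Levi'=\Levi_{m'}$ via $g^{-1}F^\ast(g)\in(\Levi')^\ast$ and Lang's theorem is exactly what the paper does, and for (a) with $\gamma_0=\gamma$ you argue via duality that the stabilizing element $\dot\xi$ lands in $L'$, whereas the paper shows directly that $\N_G(\Levi',\Para')=L'$ using self-normalization of parabolics and $\N_{\U'}(\Levi')=1$ -- both routes work. However, in part (b) there is a real gap at the point you yourself flag as ``the main obstacle.'' You apply Lemma \ref{norm} (Lang) to $\Levi'$ to get $l\in(\Levi')^{z_0F}$, and then hope to make $l$ compatible with $\gamma_0$ by ``the averaging argument of Lemmas \ref{symmetric group} and \ref{quotient=2}.'' Those two lemmas concern the finite groups $\N_W(W_{\Levi_m})/W_{\Levi_m}$ and $\mathcal N/\tilde L'$ -- i.e.\ elements of the normalizer/Weyl quotient -- and give no control over an arbitrary element $l$ of the (large, nonabelian) Levi $\Levi'$; there is no averaging procedure that turns a Lang solution in $\Levi'$ into one with $\gamma_0(lz_0)(lz_0)^{-1}\in\Z(\G)$.

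The device the paper actually uses is different and concrete: after arranging $z$ to be $\gamma$-stable by Lemma \ref{symmetric group} (so that $z_0=N_{F/\phi}(z)\in(L')^{\gamma_0}$, since $\phi$ and $\gamma_0$ commute), one applies Lemma \ref{norm} \emph{inside the fixed-point group} $(\Levi')^{\gamma_0}$, which Lemma \ref{connectedgraph} guarantees to be connected reductive (because $\gamma_0$ is quasi-central and $\Levi_{m'}$ is a standard $\gamma$-stable Levi). This directly produces $l\in((\Levi')^{\gamma_0})^{z_0F}$ -- automatically $\gamma_0$-fixed -- so that $F_0:=\ad(lz_0)\phi$ commutes on the nose with $\gamma_0$, with no need for any cocycle argument or for passing to $\Out(\G^F)$. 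In short, your proposal needs Lemma \ref{connectedgraph} and the observation that the norm/Lang argument can be run in $(\Levi')^{\gamma_0}$ rather than in $\Levi'$; replacing your appeal to Lemmas \ref{symmetric group}/\ref{quotient=2} for choosing $l$ by this connectedness-of-fixed-points step would close the gap.
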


\begin{proof}
	As in Lemma \ref{Frobcyclic} let $g \in \G^\ast$ such that ${}^g s=t$. Our assumption implies that $g^{-1} F^\ast(g) \in (\Levi')^\ast$. In particular there exists some $l' \in (\Levi')^\ast$ such that $s$ and ${}^{l'} t$ are $(\G^\ast)^{F^\ast}$-conjugate. Therefore, we can replace $s$ by ${}^{l'} t$. By doing so we can assume that there exists some $ g\in (\Levi_{m'})^\ast$ such that ${}^g s=t$. Thus, we may assume that $\Levi'=\Levi_{m'}$.
	
	Corollary \ref{graph2} implies the existence of $m \in G$ such that $m\gamma_0$ stabilizes $(\Levi',\Para')$ and $e_s^{L'}$. Since $(\Levi',\Para')$ is $\gamma_0$-stable it follows that $m \in \N_G(\Levi',\Para')$. Since parabolic subgroups are self-normalizing (see e.g. \cite[Exercise 20.3]{MT}) we obtain $\N_{\G}(\Levi',\Para')=\Levi' \N_{\U'}(\Levi')$. By the proof of \cite[Corollary 1.18]{DM} we obtain $\N_{\U'}(\Levi')=1$ and so $m \in \N_G(\Levi',\Para')=L'$. This yields the first claim.
	
	There exists some $z \in \N_G(\Levi')$ such that $z \phi$ stabilizes $e_s^{L'}$. Thus, $\gamma_0(z) z^{-1} \in N'$. By Lemma \ref{symmetric group} there exists $y \in \mathbf{N'}$ such that $yz \in N'$ is $\gamma$-stable. Hence, we may assume that $z$ is $\gamma$-stable. As in the proof of Theorem \ref{equiv} we see that $z_0:=N_{F/\phi}(z) \in \mathrm{N}_{\G^\phi}(\Levi', e_s^{L'})$. By the proof of Lemma \ref{Frobcyclic} we can assume that $z_0\in (L')^{\gamma_0}$.
By Lemma \ref{connectedgraph} $(\Levi')^{\gamma_0}$ is connected reductive. Consequently, by Lemma \ref{norm} there exists $l \in ((\Levi')^{\gamma_0})^{z_0 F}$ such that $z_0^{-1}=N_{z_0 F/z \phi}(l)$. From this we deduce that $F_0:= \mathrm{ad}(l z_0) \phi$ commutes with $\gamma_0$ and satisfies $F_0^r=F$.  
\end{proof}

The previous lemma as well as Theorem \ref{equiv} suggest that we should distinguish the cases whether $\Levi'$ is $1$-split or not.

\subsection{Twisted groups}

We will now assume that $(\G,F)$ is twisted of type $A_n$. We consider again the subgroup $\mathcal{B}$ of $\mathrm{Aut}(\Gtilde^F)$ generated by the field automorphism $F_p$ and the graph automorphism $\gamma$. Since $F=F_q \gamma$ we observe that $\mathcal{B}$ is generated by $F_p$. In particular, we have $\mathcal{B}_{e_s^G}=\langle  \phi \rangle,$ where $\phi: \G \to \G$ is a Frobenius endomorphism (a power of $F_p$) which satisfies $\phi^r=F \gamma_0$ and $\gamma_0$ is a possibly trivial graph automorphism.

We mention that the proof of the following lemma is very similar to Lemma \ref{Frobcyclic} and Lemma \ref{quotient=2} however the presence of a non-trivial graph automorphism causes some additional difficulties.

\begin{lemma}\label{Frobcyclic2}
	The quotient group $\mathrm{N}_{G \langle \phi \rangle}(\Levi',e_s^{L'})/L'$ is abelian unless $\Levi'$ is $1$-split.
\end{lemma}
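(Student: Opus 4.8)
The strategy is to mimic the argument for Lemma \ref{Frobcyclic} and Lemma \ref{quotient=2}, but now keeping track of the graph automorphism $\gamma_0$ that may appear because $\phi^r = F\gamma_0$ rather than $\phi^r = F$. By Lemma \ref{duality} it is equivalent to prove that $\mathrm{C}_{(\G^\ast)^{F^\ast}\langle \phi^\ast\rangle}(s)((\Levi')^\ast)^{F^\ast}/((\Levi')^\ast)^{F^\ast}$ is abelian whenever $\Levi'$ is not $1$-split. As before, write $t = {}^g s$ for the standard representative $t = t_{m'}$ with $g \in \G^\ast$, and recall from the proof of Lemma \ref{graph} that $t$ is $\gamma^\ast$-stable, so $\gamma_0^\ast$ permutes the eigenvalues of $t$ and hence $\gamma_0^\ast(t) = t$; likewise $\phi^\ast$ permutes the eigenspaces of $t$, so there exists $z_0 \in \N_{\G^\ast}(\T_0^\ast)$ with ${}^{z_0\phi^\ast} t = t$, of order dividing $r$ modulo $\Nb_{m'}^\ast$.

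\textbf{Key steps.} First, I would arrange that $z_0$ can be chosen $\gamma_0^\ast$-stable. The element $\gamma_0^\ast(z_0)z_0^{-1}$ fixes $t$, hence lies in $W(t) = W^\circ(t_{m'})\rtimes \langle v_{m'}\rangle$; arguing as in Lemma \ref{quotient=2}, since $m'$ is prime and $v_{m'}$ corresponds to an $m'$-cycle in $S_{m'}\cong \N_{\G^\ast}(\Levi_{m'}^\ast)/\Levi_{m'}^\ast$, Lemma \ref{symmetric group} lets me adjust $z_0$ by an element of $\Nb_{m'}^\ast$ so that it becomes $\langle F^\ast,\gamma^\ast\rangle$-stable, while still satisfying ${}^{z_0\phi^\ast} t = t$; the dichotomy ``either $z_0\in\Nb_{m'}^\ast$ or $(z_0)^r\in\Levi_{m'}^\ast$'' reduces us to the case $z_0$ actually $\gamma_0^\ast$-stable. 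Next, set $\phi'^\ast := z_0\phi^\ast$, so $(\phi'^\ast)^r = N_{(z_0\phi^\ast)^r/\,}(\cdot)\,F^\ast\gamma_0^\ast$ still differs from $F^\ast$ only by a graph automorphism. Put $x_{\phi'} := g^{-1}\phi'^\ast(g)$ and $x_{\gamma_0} := g^{-1}\gamma_0^\ast(g)$. Using $A(s)^{F^\ast} = A(s)$ (Lemma \ref{strict}) and Lang's theorem applied to $F^\ast$ on $\mathrm{C}_{\G^\ast}^\circ(s) = \Levi^\ast$, correct $x_{\phi'}$ by an element of $\Levi^\ast$ to get an $F^\ast$-stable representative, exactly as in Lemma \ref{Frobcyclic}. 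Since $\phi'^\ast$ and $\gamma_0^\ast$ commute by construction, the images $g\phi'^\ast g^{-1}$ and $g\gamma_0^\ast g^{-1}$ commute, so $[x_{\phi'}\phi'^\ast,\, x_{\gamma_0}\gamma_0^\ast] = 1$ in $(\G^\ast)^{F^\ast}\langle\phi^\ast\rangle$. Finally, track the ``$n' := g^{-1}F^\ast(g)$'' dichotomy of Lemma \ref{Frobcyclic}: either $n'$ generates $\mathrm{C}_{(\G^\ast)^{F^\ast}}(s)((\Levi')^\ast)^{F^\ast}/((\Levi')^\ast)^{F^\ast}$ — in which case together with the commuting pair above the relevant quotient is generated by two commuting elements, hence abelian — or $n'\in(\Levi')^\ast$, in which case $(\Levi')^\ast$ is $(\G^\ast)^{F^\ast}$-conjugate to $\Levi_{m'}^\ast$ and thus $1$-split, the excluded case. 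Transporting back along $\pi$ via Lemma \ref{duality} gives the claim.

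\textbf{Main obstacle.} The delicate point — and the reason the lemma only asserts ``abelian'' rather than ``cyclic'' — is exactly the interaction between the graph automorphism $\gamma_0$ and the twisting element $z_0$: I must produce a \emph{common} good representative $z_0$ that is simultaneously $\gamma_0^\ast$-stable and satisfies ${}^{z_0\phi^\ast}t = t$, because only then do $\phi'^\ast$ and $\gamma_0^\ast$ commute and the corresponding automorphisms of the quotient commute. This is where Lemma \ref{symmetric group}, together with the primality of $m'$ forcing the cycle type of powers, does the real work; without the prime-order reduction from Lemma \ref{prime order} one could not conclude. The rest is the now-familiar Lang-theorem bookkeeping, identical in spirit to Lemma \ref{Frobcyclic} and Lemma \ref{quotient=2}.
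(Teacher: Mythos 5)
Your proposal is correct and follows essentially the same route as the paper: reduce to the dual via Lemma \ref{duality}, conjugate $s$ to the standard $t=t_{m'}$, find a twisting element $z_0$ with ${}^{z_0\phi^\ast}t=t$, produce $F^\ast$-stable representatives $x_{\phi'}$ and $x_{\gamma_0}$ by Lemma \ref{strict} and Lang's theorem, and split on whether $n'=g^{-1}F^\ast(g)$ lies in $(\Levi')^\ast$ or generates the quotient. The only stylistic difference is that you make the $\gamma_0^\ast$-stabilization of $z_0$ (via Lemma \ref{symmetric group} and the primality of $m'$) explicit up front, whereas the paper defines $x_\phi$ first, records the relation $(x_\phi\phi^\ast)^r x_{\gamma_0}\gamma_0^\ast=n'F^\ast$, and then defers the abelianness to ``as in Lemma \ref{quotient=2}''; the content is identical.
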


\begin{proof}
	%
	%
	%
	We have $t={}^g s$ for some $g\in \G^\ast$. In particular, $n':=g^{-1} F^\ast(g) \in \mathrm{C}_{\G^\ast}(s)$ and so $n'$ either generates $\mathrm{C}_{(\G^\ast)^{F^\ast}}(s)(\Levi')^\ast/(\Levi')^\ast$ or $n' \in (\Levi')^\ast$.
	
	Recall that $\phi$ is a power of $F_p$ or $F_p \gamma$ and $\gamma^\ast$ stabilizes $t$, see proof of Lemma \ref{graph}. Consequently, $\phi^\ast$ acts as a permutation on the different eigenvalues of $t$. Therefore, there exists some $z \in W^\ast$ of order dividing $r$ such that ${}^{z \phi^\ast} t = t$.
	We deduce that $s$ is $x_\phi \phi$-stable, where $x_\phi:=g^{-1}z \phi(g)$. By assumption there exists some $y \in (\G^\ast)^F$ such that $s$ is $y \phi$-stable. Therefore, $x_\phi y^{-1} \in \mathrm{C}_{\G^\ast}(s)$. Since $A(s)^{F^\ast}=A(s)$ by Lemma \ref{strict} we deduce that $F^\ast(x_\phi y^{-1})  (x_\phi y^{-1})^{-1}=F^\ast(x_\phi )x_\phi^{-1} \in \mathrm{C}^\circ_{\G^\ast}(s)$. Thus, there exists some $l_\phi \in \mathrm{C}_{\G^\ast}^\circ(s)$ such that $x_\phi':=l_\phi x_\phi$ is $F^\ast$-stable.
	
	Note that $g^{-1} \gamma_0^\ast(g) \gamma_0^\ast$ stabilizes $s$ by the proof of Lemma \ref{graph}. By the same reasoning as above for $x_{\gamma_0}:=g^{-1} \gamma_0^\ast(g)$ there exists some $l_{\gamma_0}$ such that $x_{\gamma_0}':=l_{\gamma_0} x_{\gamma_0}$ is $F^\ast$-stable. 
	Furthermore, we have 
	$$(x_\phi \phi^\ast)^r x_{\gamma_0} \gamma_0^\ast=g^{-1} F_q^\ast(g) F_q^\ast g^{-1} \gamma_0^\ast(g) \gamma_0^\ast= g^{-1} F_q^\ast(\gamma_0^\ast(g)) F_q^\ast \gamma_0^\ast=g^{-1} F^\ast(g) F^\ast.$$
	We conclude that if $g^{-1} F^\ast(g) \notin (\Levi')^\ast$ then the quotient group is generated by $x_\phi \phi^\ast$ and $x_{\gamma_0} \gamma_0^\ast$. 	As in Lemma \ref{quotient=2} one now shows that the quotient group is abelian. On the other hand, if $g^{-1} F^\ast(g) \in (\Levi')^\ast$ then there exists $l' \in (\Levi')^\ast$ such that $gl'$ is $F^\ast$-stable. We conclude that $(\Levi')^\ast={}^{gl}(\Levi'_m)^\ast$ is $1$-split.
\end{proof}

\begin{lemma}\label{Frobroot2}
	Assume that $\Levi'$ is $1$-split. By possibly replacing $s$ by a $(\G^\ast)^{F^\ast}$-conjugate the following hold:
		\begin{enumerate}[label={\alph*)}]
		\item 	The graph automorphism $\gamma_0$ stabilizes $(\Levi',\Para')$ and the idempotent $e_s^{L'}$.
		\item 	There exists a Frobenius endomorphism $F_0$ such that $F_0^r=F \gamma_0$ which commutes with $\gamma_0$ and stabilizes $\Levi'$ and $e_s^{L'}$. Moreover, $\Gtilde^F \langle F_0 \rangle$ is the stabilizer of $e_s^{\G^F}$ in $\mathrm{Out}(\G^F)$.
	\end{enumerate}
\end{lemma}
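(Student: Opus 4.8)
The plan is to imitate the proof of Lemma \ref{Frobroot} (the untwisted counterpart), the only genuinely new feature being the graph automorphism $\gamma_0$ that enters through the relation $\phi^r = F\gamma_0$ rather than $\phi^r = F$. As a first reduction, write $t = {}^g s$ with $g \in \G^\ast$; the hypothesis that $\Levi'$ is $1$-split implies (cf.\ the $1$-split case in the proof of Lemma \ref{Frobcyclic2}) that, after replacing $s$ by a suitable $(\G^\ast)^{F^\ast}$-conjugate, we may choose $g \in (\Levi_{m'})^\ast$, so that $\Levi' = \Levi_{m'}$ becomes a standard Levi subgroup contained in the $\gamma$-stable — hence $\gamma_0$-stable — parabolic subgroup $\Para' = \Para_{m'}$, by Lemma \ref{connectedgraph}.

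For part (a) I would feed this into Corollary \ref{graph2}, which produces $y_0 \in \G^F$ such that $y_0\gamma_0$ stabilises $(\Levi',\Para')$ and $e_s^{L'}$. Since $\gamma_0$ itself stabilises $(\Levi',\Para')$, the element $y_0$ normalises this pair; as parabolic subgroups are self-normalising and $\N_{\U'}(\Levi') = 1$ (\cite[Exercise 20.3]{MT}, \cite[Corollary 1.18]{DM}), this forces $y_0 \in (\Levi')^F = L'$, and since conjugation by $y_0$ is trivial on $\Z(\Lambda L')$ it follows that $\gamma_0 = y_0^{-1}(y_0\gamma_0)$ stabilises $e_s^{L'}$. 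This is (a).

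For part (b) I would, exactly as in the proof of Theorem \ref{equiv}, choose $z \in \N_{\G^F}(\Levi')$ with $z\phi$ stabilising $e_s^{L'}$. Since $\phi$ is a power of $F_p$ it commutes with $\gamma_0$, and $\gamma_0$ fixes $(\Levi', e_s^{L'})$ by (a); hence $\gamma_0(z)z^{-1}$ lies in $N' = \N_{\G^F}(\Levi', e_s^{L'})$. As $N'/L'$ is cyclic of prime order $m'$ by Lemma \ref{prime order}, Lemma \ref{symmetric group} lets me replace $z$ by $yz$ for an appropriate $y \in N'$ so that $z$ becomes $\gamma_0$-stable, hence also $F\gamma_0$-stable. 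Then $(z\phi)^r = N_{F\gamma_0/\phi}(z)\,F\gamma_0$, so $z_0 := N_{F\gamma_0/\phi}(z)$ lies in $\N_{\G^\phi}(\Levi', e_s^{L'})$ by Lemma \ref{norm} applied to the Frobenius endomorphism $F\gamma_0$; arguing once more as in Lemma \ref{Frobcyclic2} (and replacing $s$ by a further $(\G^\ast)^{F^\ast}$-conjugate if needed) I may assume $z_0 \in (L')^{\gamma_0}$. Since $(\Levi')^{\gamma_0}$ is connected reductive by Lemma \ref{connectedgraph}, Lemma \ref{norm} furnishes $l \in ((\Levi')^{\gamma_0})^{z_0 F\gamma_0}$ with $z_0^{-1} = N_{z_0 F\gamma_0/z\phi}(l)$, and $F_0 := \ad(lz_0)\phi$ is then a Frobenius endomorphism which commutes with $\gamma_0$, stabilises $\Levi'$ and $e_s^{L'}$, and satisfies $F_0^r = F\gamma_0$. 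Finally $F_0$ and $\phi$ have the same image in $\mathrm{Out}(\G^F)$ up to a diagonal automorphism, and $\gamma_0 \equiv \phi^r$ in $\mathrm{Out}(\G^F)$, so together with $\mathcal{B}_{e_s^{\G^F}} = \langle\phi\rangle$ this shows that $\Gtilde^F\langle F_0\rangle$ maps onto $\mathrm{Out}(\G^F)_{e_s^{\G^F}}$.

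I expect the main obstacle to be coordinating the successive replacements of $s$ by $(\G^\ast)^{F^\ast}$-conjugates: the normalisation $\Levi' = \Levi_{m'}$ in step one and the later adjustment forcing $z_0 \in (L')^{\gamma_0}$ must be compatible, which (as in Lemma \ref{Frobroot}) is arranged by carrying out all the replacements of $s$ before choosing $z$. A secondary issue is checking that Lemma \ref{norm} and the identity $(z\phi)^r = N_{F\gamma_0/\phi}(z)\,F\gamma_0$ still apply with $F$ replaced by the Frobenius endomorphism $F\gamma_0$; this is routine since $F\gamma_0$ is again a Frobenius endomorphism commuting with $\phi$, and consequently the final Lang-theorem adjustment inside $(\Levi')^{\gamma_0}$ goes through verbatim as in Lemma \ref{Frobroot}.
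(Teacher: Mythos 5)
Your proposal is correct and follows essentially the same path as the paper's proof: reduce to $\Levi' = \Levi_{m'}$ via Lemma \ref{Frobcyclic2}, invoke Corollary \ref{graph2} together with self-normalization of parabolics for (a), and for (b) choose $z\phi$ stabilizing $e_s^{L'}$, $\gamma$-straighten $z$ via Lemma \ref{symmetric group}, show the norm $z_0$ lands in $(L')^\gamma$, and apply Lemma \ref{norm} inside the connected group $(\Levi')^\gamma$ to build $F_0 = \ad(lz_0)\phi$. The only cosmetic difference is that you phrase everything with $\gamma_0$ and $F\gamma_0$ where the paper uses $\gamma$ and $F_q$; since in the twisted setting $\phi$ is a power of the untwisted $F_p$, one necessarily has $\gamma_0 = \gamma$ and $F\gamma_0 = F_q$, so the formulations agree (and in particular your worry about a further $(\G^\ast)^{F^\ast}$-conjugation to force $z_0 \in (L')^{\gamma}$ is unnecessary — the $1$-split analysis in Lemma \ref{Frobcyclic2} already pins $z_0$ down once $s$ has been normalized so that $\Levi' = \Levi_{m'}$).
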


\begin{proof}
	By Lemma \ref{Frobcyclic2} we can assume that $\Levi'=\Levi_{m'}$. Corollary \ref{graph2} implies the existence of $m \in G$ such that $m\gamma$ stabilizes $(\Levi',\Para')$ and $e_s^{L'}$. Since $(\Levi',\Para')$ is $\gamma$-stable it follows that $m \in \N_G(\Levi',\Para')=L$. This yields the first claim.

There exists some $z \in \N_G(\Levi')$ such that $z \phi$ stabilizes $e_s^{L'}$. Thus, $\gamma(z) z^{-1} \in N'$. By Lemma \ref{symmetric group} there exists $y \in \mathbf{N'}$ such that $yz \in N'$ is $\gamma$-stable. Hence we may assume that $z$ is $\gamma$-stable. Since $e_s^{L'}$ is $\gamma$-stable we deduce that $z_0:=N_{F_q/\phi}(z) \in \mathrm{N}_{\G^\phi}(\Levi', e_s^{L'})$. The proof of Lemma \ref{Frobcyclic2} shows that we must have $z_0 \in L^\gamma$. By Lemma \ref{connectedgraph} $(\Levi')^\gamma$ is connected reductive. Consequently, by Lemma \ref{norm} there exists $l \in ((\Levi')^\gamma)^{z_0 F}$ such that $z_0^{-1}=N_{z_0 F/z \phi}(l)$. From this we deduce that $F_0:= \mathrm{ad}(l z_0) \phi$ commutes with $\gamma$ and satisfies $F_0^r=F_q$.  
\end{proof}
%

\section{Global equivalences}\label{sec 7}

\subsection{Extending modules}

Let $X$ be a normal subgroup of a finite group $Y$. If $M$ is a $\Lambda X$-module then in practice it is often quite hard to decide whether $M$ extends to a $\Lambda Y$-module if $\ell \mid |Y:X|$ unless $M$ is simple or projective. We will therefore in the following often consider the following weaker notion:

\begin{definition}\label{almost}
	Let $X$ be a normal subgroup of a finite group $Y$ such that the quotient group $Y/X$ is solvable. If $M$ is a $\Lambda X$-module then we say that it \textit{almost extends to $Y$} if $M$ extends to a $\Lambda H$-module $M'$, where $H/X$ is a Hall $\ell'$-subgroup of $Y/X$, such that the extension $M'$ is $Y$-stable.
\end{definition}

We will check almost extendibility with the following remark.

\begin{remark}\label{normal hall}
	In the situation of the previous definition assume that $M$ is $Y$-stable and $E:=\mathrm{End}_{\Lambda N}(M)/ J(\mathrm{End}_{\Lambda N}(M))$ is abelian. If the Hall $\ell'$-subgroup $H/X$ is normal in $Y/X$ and $M$ extends to $H$ then by \cite[Corollary 2.6]{Thevenaz} the module $M$ automatically almost extends to $Y$.
	\end{remark}

We will later use the following result about almost extendibility.

\begin{lemma}\label{abelian factor}
	Let $X$ be a normal subgroup of a finite group $Y$ such that $Y/X$ is abelian. Assume that $M$ is a $\Lambda X$-module such that $E:=\mathrm{End}_{\Lambda X}(M)/ J(\mathrm{End}_{\Lambda X}(M))$ is abelian and $M$ almost extends to $Y$. If $M'$ is an extension of $M$ to a subgroup $X'$ of $Y$ such that $\ell \nmid |X':X|$ then $M'$ almost extends to $Y$ as well.
\end{lemma}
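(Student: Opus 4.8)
The plan is to reduce everything to twisting the given $Y$-stable extension of $M$ to $H$ by a suitable linear character. Since $Y/X$ is abelian, the Hall $\ell'$-subgroup $H/X$ of $Y/X$ is unique and normal, and $Y/X = (H/X)\times(P/X)$ with $P/X$ the Sylow $\ell$-subgroup of $Y/X$. Because $\ell\nmid|X':X|$ we have $X\leq X'\leq H$, and as $Y/X$ is abelian the subgroups $X$, $X'$, $H$ are all normal in $Y$ and $Y$ acts trivially by conjugation on $H/X$. First I would invoke the hypothesis that $M$ almost extends to $Y$: fix a $\Lambda H$-module $\tilde M$ extending $M$ that is $Y$-stable. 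Then $\Res^H_{X'}\tilde M$ is an extension of $M$ to $X'$, and so is $M'$.

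The main step is to compare these two extensions. By the Clifford theory of module extensions with abelian endomorphism algebra — the machinery underlying Remark \ref{normal hall}, cf. \cite[Corollary 2.6]{Thevenaz} and \cite[Theorem 3.4]{Marcus} — since $\bar E:=\End_{\Lambda X}(M)/J(\End_{\Lambda X}(M))$ is abelian, the isomorphism classes of extensions of $M$ to $X'$ form a torsor under $\Hom(X'/X,\bar E^\times)$; concretely $M'$ is obtained from $\Res^H_{X'}\tilde M$ by twisting by some $\mu\in\Hom(X'/X,\bar E^\times)$. Now $\bar E$ is a finite product of finite fields, each containing enough roots of unity since $K$ (hence $k$) is taken large enough, and $X'/X$ is an $\ell'$-group; therefore the restriction map $\Hom(H/X,\bar E^\times)\to\Hom(X'/X,\bar E^\times)$ is surjective. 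Choose $\hat\lambda\in\Hom(H/X,\bar E^\times)$ restricting to $\mu$, inflate it to $H$, and set $M'':=\tilde M\otimes\hat\lambda$. Then $M''$ is a $\Lambda H$-module with $\Res^H_X M''\cong M$ and $\Res^H_{X'}M''\cong M'$, so $M''$ is an extension of $M'$ to $H$. Finally $M''$ is $Y$-stable: $\tilde M$ is $Y$-stable by construction, and $\hat\lambda$, being inflated from $H/X$ on which $Y$ acts trivially by conjugation, satisfies ${}^y\hat\lambda=\hat\lambda$ for all $y\in Y$, so ${}^yM''\cong{}^y\tilde M\otimes{}^y\hat\lambda\cong\tilde M\otimes\hat\lambda=M''$. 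Hence $M'$ extends to the $Y$-stable $\Lambda H$-module $M''$, which by Definition \ref{almost} means precisely that $M'$ almost extends to $Y$.

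I expect the delicate point to be the comparison step: one must make the "twisting by a $\bar E^\times$-valued linear character" operation precise and check that it is compatible with restriction and with $Y$-conjugation, which is exactly the abelian-endomorphism-algebra Clifford theory that also powers Remark \ref{normal hall}; the coprimeness hypothesis $\ell\nmid|X':X|$ enters only to ensure $X'\leq H$, so that this twisting can be carried out inside $H$ and the resulting module again lives over the fixed Hall $\ell'$-subgroup.
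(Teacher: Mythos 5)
Your proposal takes a genuinely different route from the paper's. The paper reduces to showing $M'$ extends to the Hall $\ell'$-subgroup $H$ (invoking Remark \ref{normal hall}), and then runs a maximality argument: if $U$ is maximal among subgroups of $H$ to which $M'$ extends, it identifies both the set of all extensions to $U$ and the set of $H$-invariant ones with $H^1(\overline U, E^\times)$, using the two Th\'evenaz references and the claim that $\overline H/\overline U$ acts trivially on this cohomology group because $Y/X$ is abelian; then every extension to $U$ is $H$-stable, forcing $U=H$ by \cite[Lemma 10.2.13]{Rouquier3}. You instead \emph{construct} the required extension explicitly: compare $M'$ with $\Res^H_{X'}\tilde M$, read off the twisting class $\mu$, lift $\mu$ along the restriction map $\Hom(H/X, E^\times)\to\Hom(X'/X,E^\times)$ using that $E$ is a product of large enough fields and $H/X$ is an abelian $\ell'$-group, and twist $\tilde M$ by the lift. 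This is more elementary in that it does not need the maximality argument or the $H^1$-fixed-point bijection of Th\'evenaz's Theorem~2.5, and it has the side benefit of producing the extension rather than only proving its existence; moreover, it yields the $Y$-stability of $M'$ itself as a byproduct of the construction, whereas the paper's first step (the reduction via Remark \ref{normal hall}) already presupposes $M'$ is $Y$-stable without explicitly establishing it.

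One point deserves attention in your write-up: the replacement of $H^1(\overline U, E^\times)$ by $\Hom(\overline U, E^\times)$, and above all the claim ${}^y\hat\lambda=\hat\lambda$, rest on the action of $Y/X$ on $E^\times$ being trivial. Since ${}^y\hat\lambda(h)=y\cdot\hat\lambda(y^{-1}hy)=y\cdot\hat\lambda(h)$ (abelianness of $Y/X$ kills the conjugation on $H/X$ but not the action on the coefficients), without triviality of the $E^\times$-action your last step only shows ${}^y M''\cong \tilde M\otimes(y\cdot\hat\lambda)$, which need not be $M''$. Note, however, that the paper's own claim that the $\overline H/\overline U$-action on $H^1(\overline U, E^\times)$ is trivial rests on essentially the same point: abelianness of $Y/X$ trivializes the conjugation on $\overline U$, but the induced action $[\alpha]\mapsto[h\cdot\alpha]$ on $H^1$ still depends on the $\overline H$-action on $E^\times$. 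So this is a shared, implicit hypothesis rather than a gap unique to your argument. If you wish to make the proof fully self-contained you should either justify why the $Y/X$-action on $E^\times$ is trivial in the context where the lemma is applied (multiplicity-free modules and block-permutation), or rephrase the lifting step cohomologically in terms of $H^1$ and check surjectivity of restriction in that setting.
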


\begin{proof}
Let $H/X$ be the Hall $\ell'$-subgroup of $Y/X$. By assumption, $H/X'$ is the Hall $\ell'$-subgroup of $Y/X'$. By Remark \ref{normal hall} it is therefore enough to show that $M'$ extends to $H$.

Assume that $U$ is maximal among the subgroups of $H$ containing $X'$ with the property that $M'$ extends to $U$. Denote by $M''$ an extension of $M'$ to $U$. If $M''$ is stable under some element $x \in H \setminus U$ then by \cite[Lemma 10.2.13]{Rouquier3} the module $M''$ extends to $\langle U,x\rangle$. Therefore, the stabilizer of $M''$ in $H$ is $U$.
	
	Denote by $\bar{\,}: Y\to Y/X$ the projection map.
	 By the remark following \cite[Theorem 2.5]{Thevenaz} the set of $H$-invariant extension of $M$ to $U$ is in bijection with $H^1(\overline{U}, E^\times)^{\overline{H}/ \overline{U}}$. Since $Y/X$ is abelian, the action of ${\overline{H}/ \overline{U}}$ on this cohomology group is trivial. On the other hand, by \cite[Proposition 2.4]{Thevenaz} the set of extensions of $M$ to $U$ is in bijection with $H^1(\overline{U}, E^\times)$. From this it follows that every extension of $M$ to $U$ is $H$-stable. In particular, $M'$ is $H$-stable and we conclude that $U=H$.
\end{proof}

\subsection{Some auxiliary results}

Our aim in this section is to show that the cohomology module $H_c^{\mathrm{dim}}(\Y_{\U'}^\G,\Lambda) e_s^{L'}$ almost extends to a $\Lambda (G \times (L')^{\mathrm{opp}} \Delta \mathrm{N}_{G\mathcal{A}}(\Levi',e_s^{L'}))$-module, where $\mathcal{A}$ (defined below in Definition \ref{defA}) is a subgroup of $\mathrm{Aut}(\tilde{\G}^F)$ whose image in the outer automorphism group is $\mathrm{Out}(\G^F)_{e_s^{\G^F}}$. Thanks to Lemma \ref{graph} many situations can be handled by the following lemma.

\begin{lemma}\label{extension}

	Assume that there exist $m_{\phi},m_\gamma \in \G^F$ such that  $\N_{ \tilde{G} \mathcal{B}}(\Levi',e_s^{L'})=\tilde L' \langle m_{\phi} \phi,m_\gamma \gamma \rangle$. If $m_\gamma \gamma$ stabilizes $\Para'$ and $\N_{ \tilde{G} \mathcal{B}}(\Levi',e_s^{L'})/\tilde{L}'$ is abelian then $H_c^{\mathrm{dim}}(\Y_{\U'}^\G,\Lambda) e_s^{L'}$ almost extends to $G \times (L')^{\mathrm{opp}} \Delta( \N_{ \tilde{G} \mathcal{B}}(\Levi',e_s^{L'}) )$.
\end{lemma}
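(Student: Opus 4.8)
The plan is to reduce the almost-extendibility statement to two separate, more manageable extension problems — one for the field automorphism $\phi$ and one for the graph automorphism $\gamma$ — and then combine them using the fact that the relevant quotient group is abelian together with Remark~\ref{normal hall} and Lemma~\ref{abelian factor}.

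First I would handle the graph automorphism. Since $m_\gamma\gamma$ stabilizes the pair $(\Levi',\Para')$, the automorphism $m_\gamma\gamma$ acts on the Deligne--Lusztig variety $\Y_{\U'}^{\G}$ compatibly with its $\G^F\times(\Levi'^F)^{\opp}$-structure; in fact, because $\gamma$ is quasi-central (as in Lemma~\ref{connectedgraph}) one can realize $m_\gamma\gamma$ as an honest automorphism of the variety and hence obtain an action on $G\Gamma_c(\Y_{\U'}^\G,\Lambda)$ and on its unique non-vanishing cohomology module $H_c^{\mathrm{dim}}(\Y_{\U'}^\G,\Lambda)e_s^{L'}$. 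This shows the bimodule extends to $\G^F\times(\Levi'^F)^{\opp}\Delta\langle \tilde\Levi'^F, m_\gamma\gamma\rangle$. The field automorphism $\phi$ is handled by the descent-of-scalars machinery of Section~\ref{sec 2} (or Section~\ref{sec 3} in the twisted case): writing $\phi^r = F$ (resp. $\phi^r = F\gamma_0$) and applying Lemma~\ref{mainlemma2} (with $\rho$ trivial or equal to the graph automorphism as appropriate), together with the hypothesis that $m_\phi\phi$ normalizes $(\Levi',e_s^{L'})$, one gets that $H_c^{\mathrm{dim}}(\Y_{\U'}^\G,\Lambda)e_s^{L'}$ extends to $\G^F\times(\Levi'^F)^{\opp}\Delta\langle\tilde\Levi'^F, m_\phi\phi\rangle$. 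Both of these are extensions to subgroups $X'$ of $Y:=\G^F\times(\Levi'^F)^{\opp}\Delta\N_{\tilde\G\mathcal B}(\Levi',e_s^{L'})$ with $X:=\G^F\times(\Levi'^F)^{\opp}\Delta\tilde\Levi'^F$ normal in $Y$ and $Y/X\cong\N_{\tilde G\mathcal B}(\Levi',e_s^{L'})/\tilde\Levi'^F$ abelian by hypothesis.

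Next I would assemble these pieces. The module $M:=H_c^{\mathrm{dim}}(\Y_{\U'}^\G,\Lambda)e_s^{L'}$ is $Y$-stable (this follows since $e_s^{L'}$ and the variety are normalized by all the relevant automorphisms, using the argument of Lemma~\ref{mainlemma}), and $E:=\End_{\Lambda X}(M)/J(\End_{\Lambda X}(M))$ is abelian — indeed by Lemma~\ref{multiplicity free} the bimodule $H_c^{\mathrm{dim}}(\Y_{\U'}^\G,\Lambda)\br_{1}(e_s^{L'})$ is multiplicity-free, so its endomorphism algebra mod radical is a product of fields. Let $H/X$ be the Hall $\ell'$-subgroup of $Y/X$. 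By the two preceding paragraphs, $M$ extends to the subgroup generated by $m_\phi\phi$ and $m_\gamma\gamma$ over $X$, hence in particular $M$ extends to $H$ (one can extend step by step along a chain of subgroups using \cite[Lemma 10.2.13]{Rouquier3}, the crucial point being that the extension to a subgroup on which a further generator acts trivially remains stable under that generator because the obstruction cohomology group has trivial Galois-type action when $Y/X$ is abelian). Then Remark~\ref{normal hall} applies — $H/X$ is normal in the abelian group $Y/X$, $E$ is abelian, and $M$ extends to $H$ — and yields that $M$ almost extends to $Y$, which is the claim.

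The main obstacle will be the careful bookkeeping needed to guarantee that the extension of $M$ to $H$ really exists: a priori the $\phi$-extension and the $\gamma$-extension are extensions to two different subgroups, and patching them requires checking that one of them stays stable under the missing generator. This is where one must invoke that $Y/X$ is abelian, so that the action of $Y/X$ on the relevant $H^1(\overline U,E^\times)$ is trivial (exactly as in the proof of Lemma~\ref{abelian factor}); once this is in place, any extension to an intermediate subgroup is automatically stable under the remaining generators, and the chain of one-step extensions via \cite[Lemma 10.2.13]{Rouquier3} goes through. A secondary technical point is verifying that $m_\gamma\gamma$ genuinely lifts to an automorphism of the variety $\Y_{\U'}^\G$ rather than merely an abstract outer automorphism of $\G^F$ — this is why the hypothesis that $m_\gamma\gamma$ stabilizes $\Para'$ (not just $\Levi'$) is essential, and one uses the quasi-centrality of $\gamma$ and the connectedness statements from Lemma~\ref{connectedgraph}.
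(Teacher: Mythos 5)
Your handling of the graph-automorphism direction is on target: as in the paper, one forms the disconnected group $\hat{\G}=\tilde\G\rtimes\langle\gamma\rangle$, observes that $m_\gamma\gamma$ lies in the Levi subgroup $\hat{\Levi}'=\N_{\hat\Para'}(\tilde\Levi')$ precisely because it stabilizes $\Para'$ as well as $\Levi'$, and then the Deligne--Lusztig variety $\Y_{\U'}^\G$ carries an honest $G\times(L')^{\opp}\Delta\hat L'$-action, so the cohomology bimodule is already a module over that larger group without any extension problem to solve. Where you diverge from the paper, and where the difficulties you flag actually become real gaps, is in the treatment of the field automorphism.

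The paper does \emph{not} produce a second, independent extension along $m_\phi\phi$ and then patch. Instead it proves that the $\gamma$-extended module over $G\times(L')^{\opp}\Delta\hat L'$ is itself $\Delta\langle m_\phi\phi\rangle$-stable. The mechanism is independence of Deligne--Lusztig cohomology from the choice of parabolic for a fixed Levi in the disconnected group: since $\hat\Levi'$ is $\phi':=\ad(m_\phi)\phi$-stable (using $[m_\gamma\gamma,m_\phi\phi]\in\Levi'$, which is exactly where the abelianness hypothesis enters), $\phi'$ carries $\Y_{\U'}^\G$ isomorphically to $\Y_{\phi'(\U')}^\G$, and the independence statement identifies $H_c^{\dim}(\Y_{\phi'(\U')}^\G,\Lambda)e_s^{L'}$ with $H_c^{\dim}(\Y_{\U'}^\G,\Lambda)e_s^{L'}$ as $G\times(L')^{\opp}\Delta\hat L'$-modules. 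Stability under $m_\phi\phi$ together with the cyclicity of the $\langle m_\phi\phi\rangle$-quotient and \cite[Lemma 10.2.13]{Rouquier3} then gives the almost-extension in one stroke. Your plan instead invokes the descent-of-scalars machinery of Lemma~\ref{mainlemma2} to build a genuine $\phi$-extension; this is much heavier than what is needed, and it does not apply off the shelf, since Lemma~\ref{mainlemma2} requires a Frobenius $F_0$ with $F_0^r\rho=F$ that stabilizes $\Levi'$ and $e_s^{L'}$, whereas $m_\phi\phi$ with $m_\phi\in\G^F$ is not of that form without a Lang-twisting step that you do not carry out.

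The patching step is the actual gap. Having two extensions $M_\gamma$ to $X'_\gamma$ and $M_\phi$ to $X'_\phi$ does not by itself give an extension to $\langle X'_\gamma,X'_\phi\rangle$; what \cite[Lemma 10.2.13]{Rouquier3} needs to extend $M_\gamma$ one more step is that $M_\gamma$ be $m_\phi\phi$-stable (and that the relevant quotient be cyclic of $\ell'$-order), and you never establish that stability. You appeal to the ``trivial Galois-type action on $H^1$'' argument from Lemma~\ref{abelian factor}, but in the paper that lemma starts from the hypothesis that $M$ already almost extends to $Y$, so invoking its proof here to establish almost-extendibility risks circularity, and in any case the triviality of the $\overline H/\overline U$-action on $H^1(\overline U,E^\times)$ is a statement that itself needs justification. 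The cleaner route, and the one the paper takes, is to prove directly that the $\gamma$-extended module is $m_\phi\phi$-stable via independence of parabolic; once you have that, there is nothing to patch.
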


\begin{proof}
Recall that for disconnected reductive groups we define parabolic subgroups and Levi subgroups as in \cite[Section 2.1]{Jordan}.
Consider the disconnected reductive group $\hat{\G}:=\tilde\G \rtimes \langle \gamma \rangle$. Since $\phi$ and $F$ commute with $\gamma$ there exist unique extensions of $\phi$ and $F$ to $\hat{\G}$ with $\phi(\gamma)=\gamma$ and $F(\gamma)=\gamma$ respectively. We observe that $\hat{\Para}'=\mathrm{N}_{\hat{\G}}(\tilde \Para')$ is a parabolic subgroup of $\hat{\G}$ with Levi subgroup $\hat{\Levi}'=\mathrm{N}_{\hat{\G}}(\tilde \Levi',\tilde \Para)=\mathrm{N}_{\hat \Para'}(\tilde \Levi')$. We abbreviate $(\hat{\Levi}')^F:=\hat{L}'$. The Deligne--Lusztig variety $\Y_{\U'}^\G$ has a natural $G\times (L')^{\mathrm{opp}} \Delta( \hat{L}')$-action. Observe that $\hat{\Levi}'$ is $m_\phi \phi$-stable since $[m_\gamma \gamma, m_\phi \phi] \in \Levi'$. Consider the morphism $\phi':=\mathrm{ad}(m_{\phi}) \phi$ of $\hat{\G}$. It follows that $\phi': \Y_{\U'}^\G \to \Y_{\phi'(\U')}^\G$ is a bijective morphism of $G\times (L')^{\mathrm{opp}} \Delta (\hat{L}')$-varieties. Since $m_\gamma \gamma$ stabilizes $\Para'$ by assumption it follows that $m_\gamma \gamma \in \hat{\Levi'}$ and thus the quotient group $\hat{\Levi'}/\Levi'$ is generated by $m_\gamma \gamma$. We therefore obtain that $e_s^{L'}$ is $\hat{L'}$-stable. We have 
	$H_c^{\mathrm{dim}}(\Y_{\U'}^\G,\Lambda) e_s^{L'} \cong H_c^{\mathrm{dim}}(\Y_{\phi'(\U')}^\G,\Lambda) e_s^{L'}$
	as $G\times (L')^{\mathrm{opp}} \Delta \hat{L'}$-modules. From this we conclude that the cohomology module $H^{\mathrm{dim}}_c(\Y_{\U'}^\G,\Lambda) e_s^{L'}$ is $\Delta \langle m_{\phi} \phi \rangle$-stable. Hence it almost extends to $G \times (L')^{\mathrm{opp}} \Delta( \hat{L}' \langle m_\phi \phi \rangle)$.
\end{proof}

The following corollary is the local version of Lemma \ref{extension}.

\begin{corollary}\label{extension local}

Let $Q$ be an $\ell$-subgroup of $L'$. Under the assumptions of Lemma \ref{extension} the module $H_c^{\mathrm{dim}}(\Y_{\C_{\U'}(Q) }^{\N_{\G}(Q)},\Lambda) \br_Q(e_s^{L'})$ almost extends to a $\Lambda(\N_G(Q) \times (\N_{L'}(Q))^{\mathrm{opp}} \Delta( \N_{ \tilde{G} \mathcal{B}}(\Levi',Q,e_s^{L'}) )$-module.
\end{corollary}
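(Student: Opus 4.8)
The plan is to transcribe the proof of Lemma~\ref{extension} after applying the Brauer construction $\br_Q$, performing the same passage to $Q$-fixed points that underlies Lemma~\ref{mainlemma:local}. First I would carry over the disconnected reductive group $\hat{\G}=\tilde{\G}\rtimes\langle\gamma\rangle$ together with its parabolic $\hat{\Para}'=\N_{\hat{\G}}(\tilde{\Para}')$ and Levi $\hat{\Levi}'=\N_{\hat{\Para}'}(\tilde{\Levi}')$ from that proof. Since $m_\gamma\gamma$ stabilises $\Para'$ we have $m_\gamma\gamma\in\hat{\Levi}'$, and $e_s^{L'}$ is $\hat{L}'$-stable: indeed $\tilde{\Levi}'$ centralises $\Levi'$, so conjugation by $\tilde{L}'$ acts as an inner automorphism on $\Levi'^F$ and fixes every central idempotent, while $m_\gamma\gamma$ fixes $e_s^{L'}$ by hypothesis. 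As $Q$ is an $\ell$-subgroup of $L'\subseteq\hat{L}'$ and $\ell\neq p$, one may form $\N_{\hat{\G}}(Q)$, a possibly disconnected reductive group in the sense of \cite[Section~2.1]{Jordan}; since $Q\subseteq\hat{\Levi}'$, the subgroup $\N_{\hat{\Para}'}(Q)=\N_{\hat{\Levi}'}(Q)\ltimes\C_{\U'}(Q)$ is a parabolic of it with Levi $\N_{\hat{\Levi}'}(Q)$, and $\Y_{\C_{\U'}(Q)}^{\N_{\G}(Q)}$ is the attached Deligne--Lusztig variety. It therefore carries a natural action of $\N_G(Q)\times\N_{L'}(Q)^{\opp}\Delta\N_{\hat{L}'}(Q)$, with $\N_{\hat{L}'}(Q)=(\N_{\hat{\Levi}'}(Q))^F$, and $\br_Q(e_s^{L'})$ is $\N_{\hat{L}'}(Q)$-stable by the equivariance of $\br_Q$ and the $\hat{L}'$-stability of $e_s^{L'}$. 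Thus $M:=H_c^{\mathrm{dim}}(\Y_{\C_{\U'}(Q)}^{\N_{\G}(Q)},\Lambda)\br_Q(e_s^{L'})$ is a module over $\N_G(Q)\times\N_{L'}(Q)^{\opp}\Delta\N_{\hat{L}'}(Q)$, so the graph part is absorbed and only a Frobenius part remains.

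Next, set $H:=\N_{\tilde{\G}\mathcal{B}}(\Levi',Q,e_s^{L'})$. Since $H\cap\tilde{L}'=\N_{\tilde{L}'}(Q)$ and $\N_{\tilde{\G}\mathcal{B}}(\Levi',e_s^{L'})/\tilde{L}'$ is abelian by hypothesis, the quotient $H/\N_{\hat{L}'}(Q)$ is abelian; I would fix a generating set for it whose members are represented by elements $g\beta$ with $g\in\tilde{\G}^F$ and $\beta\in\mathcal{B}$, each normalising $\Levi'$, $Q$ and $e_s^{L'}$. For such an element, the automorphism $\psi:=\ad(g)\circ\beta$ of $\G$ commutes with $F$, stabilises $\N_{\G}(Q)$, fixes $\Levi'$, and maps the parabolic $\C_{\U'}(Q)$ of $\N_{\G}(Q)$ to the parabolic $\C_{\psi(\U')}(Q)$ with the same Levi; hence, exactly as in the proof of Lemma~\ref{extension}, $\psi$ induces a bijective morphism $\Y_{\C_{\U'}(Q)}^{\N_{\G}(Q)}\to\Y_{\C_{\psi(\U')}(Q)}^{\N_{\G}(Q)}$ of $\N_G(Q)\times\N_{L'}(Q)^{\opp}\Delta\N_{\hat{L}'}(Q)$-varieties. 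Independence of the top cohomology of the parabolic subgroup — used in the same way in the proof of Lemma~\ref{mainlemma:local} via \cite[Theorem~5.2]{Jordan} — then yields $M\cong H_c^{\mathrm{dim}}(\Y_{\C_{\psi(\U')}(Q)}^{\N_{\G}(Q)},\Lambda)\br_Q(e_s^{L'})$ over this group, so $M$ is $\psi$-stable. Running over the generating set shows that $M$ is $H$-stable.

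Finally I would feed this into the extendibility machinery of Section~\ref{sec 7}. The bimodule $M$ is multiplicity-free: the argument of Lemma~\ref{multiplicity free} applies with $\Levi'$ and $N'$ in place of $\Levi$ and $\Nb$, since by Lemma~\ref{prime order} the group $N'/L'$ is cyclic of prime order, which is coprime to $\ell$ because $\mathrm{o}(s)=|A(s)|$ is prime to $\ell$ (see Remark~\ref{type}). Consequently $E:=\End(M)/J(\End(M))$ is a product of fields, hence abelian, and $H/\N_{\hat{L}'}(Q)$ is abelian with normal Hall $\ell'$-subgroup; by \cite[Lemma~10.2.13]{Rouquier3} and Remark~\ref{normal hall} the $H$-stability obtained above gives that $M$ almost extends to $\N_G(Q)\times\N_{L'}(Q)^{\opp}\Delta H$, which is the assertion.

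The step I expect to cost the most is the structural one in the first paragraph: verifying, for the $\ell$-subgroup $Q\subseteq\hat{\Levi}'$, that $\N_{\hat{\G}}(Q)$ is reductive in the disconnected sense and that $\N_{\hat{\Para}'}(Q)$ is a parabolic with Levi $\N_{\hat{\Levi}'}(Q)$, so that $\Y_{\C_{\U'}(Q)}^{\N_{\G}(Q)}$ is a genuine Deligne--Lusztig variety and the ``independence of the parabolic'' input is legitimately available in the local group. Once this is in place, the remainder is a faithful imitation of Lemma~\ref{extension} together with the tools assembled in Section~\ref{sec 7}.
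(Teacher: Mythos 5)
Your proof is correct and follows the same strategy as the paper's: absorb the graph-type generator into the module structure via the disconnected group $\hat{\G}$ (so the variety naturally carries a $\Delta\N_{\hat L'}(Q)$-action), and then establish $\Delta$-stability under the remaining Frobenius-type generator via independence of the parabolic subgroup for the top cohomology, exactly as in Lemmas~\ref{extension} and~\ref{mainlemma:local}. The only divergence is in the final step: the paper simply observes that once $\Delta\N_{\hat L'}(Q)$ already acts, the residual quotient $\N_{\tilde G\mathcal{B}}(\Levi',Q,e_s^{L'})/\N_{\hat L'}(Q)$ is cyclic (generated by a single element of Frobenius type), so extendibility — and hence almost-extendibility — follows at once from \cite[Lemma 10.2.13]{Rouquier3}; your appeal to multiplicity-freeness of $M_Q$ and Remark~\ref{normal hall} is a harmless but unnecessary detour.
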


\begin{proof}
	If the quotient group $\N_{ \tilde{G} \mathcal{B}}(L',e_s^{L'})/\tilde{L}'$ is cyclic then the statement is trivial. We can therefore assume that $\N_{ \tilde{G} \mathcal{B}}(L',e_s^{L'})/\tilde{L}'$ is non-cyclic. From this we deduce that there exist $l_\phi, l_\gamma \in \tilde{L}$ such that $\N_{ \tilde{G} \mathcal{B}}(L',Q,e_s^{L'})=\N_{\tilde{L}}(Q) \langle l_\phi (m_\phi \phi)^i,l_\gamma m_\gamma \gamma \rangle \rangle$ for some $i$.
The Deligne--Lusztig variety $\Y_{\C_{\U'}(Q)}^{\N_\G(Q)}$ has a natural $\N_{G}(Q)\times (\N_{L'}(Q))^{\mathrm{opp}} \Delta( \N_{\hat{\Levi}'^F}(Q))$-action. Consider the morphism $\phi'$ induced by the action of $l_\phi (m_\phi \phi)^i$ on $\hat{\G}$. It follows that $\phi': \Y_{\C_{\phi'(\U')}(Q)}^{\N_\G(Q)}\to \Y_{\C_{\U'}(Q)}^{\N_\G(Q)}$ is a bijective morphism of $\N_{G}(Q)\times (\N_{L'}(Q))^{\mathrm{opp}} \Delta( \N_{\hat{\Levi}'^F}(Q))$-varieties. Since $m_\gamma \gamma \in \hat{\Levi'}$ we deduce that the quotient group $\N_{\hat{\Levi'}}(Q)/\N_{\Levi'}(Q)$ is generated by $l_\gamma m_\gamma \gamma$. We have 
$H_c^{\mathrm{dim}}(\Y_{\C_{\U'}(Q)}^{\N_\G(Q)},\Lambda) \br_Q(e_s^{L'}) \cong H_c^{\mathrm{dim}}(\Y_{\C_{\phi'(\U')}(Q)}^{\N_\G(Q)},\Lambda) \br_Q(e_s^{L'})$
as $\N_{G}(Q)\times (\N_{L'}(Q))^{\mathrm{opp}} \Delta( \N_{\hat{\Levi}'^F}(Q))$-modules. From this we conclude that the cohomology module $H_c^{\mathrm{dim}}(\Y_{\C_{\U'}(Q)}^{\N_\G(Q)},\Lambda) $ is $\Delta \langle \phi' \rangle$-stable as $\N_{G}(Q)\times (\N_{L'}(Q))^{\mathrm{opp}} \Delta( \N_{\hat{\Levi}'^F}(Q))$-module. It therefore almost extends to a $\Lambda(\N_G(Q) \times (\N_{L'}(Q))^{\mathrm{opp}} \Delta( \N_{ \tilde{G} \mathcal{B}}(\Levi',Q,e_s^{L'}) )$-module.
\end{proof}

\subsection{Global equivalences}

In the following we prove an analog of Theorem \ref{equiv} for strictly quasi-isolated blocks in groups of type $A$. To formulate this theorem we need the following definition.

\begin{definition}\label{defA}
	If $\Levi'$ is not $1$-split, then we define $\mathcal{A}:= \mathcal{B}_{e_s^{\G^F}}  \subseteq \mathrm{Aut}(\Gtilde^F)$.
	Otherwise, we define $\mathcal{A}:=\langle \gamma_0, F_0 \rangle  \subseteq \mathrm{Aut}(\Gtilde^F)$, where $F_0$ is as defined in the proof of Lemma \ref{Frobroot} and Lemma \ref{Frobroot2} respectively.
	In both cases we abbreviate $\mathcal{N}:=\mathrm{N}_{\Gtilde^F \mathcal{A}}(\Levi',e_s^{L'})$. 
\end{definition}


Firstly, observe that $\mathcal{A}$ is always abelian. If $\Levi'$ is not $1$-split then $\mathcal{A}$ is abelian as a subgroup of the abelian group $\mathcal{B}$ and if $\Levi'$ is $1$-split this follows from Lemma \ref{Frobroot} and Lemma \ref{Frobroot2} respectively.
Notice that the Hall $\ell'$-subgroup of $\mathcal{N}/\tilde L'$ is always normal. This follows from the fact that $N'/L'$ is of $\ell'$-order (see {\cite[Corollary 2.9]{Bonnafe}) and $\N_{G \mathcal{A}}(\Levi',e_s^{L'})/\tilde N' \cong \mathcal{A}$ is abelian. We denote by $\mathcal{N}_{\ell'}/\tilde L'$ the unique Hall $\ell'$-subgroup of $\mathcal{N}/\tilde L'$.

\begin{theorem}\label{mainthm}
The $\Lambda (G \times (L')^{\mathrm{opp}} \Delta (\tilde{L}'))$-module $H_c^{\mathrm{dim}}(\Y_{\U'}^\G,\Lambda) e_s^{L'}$ almost extends to a $\Lambda (G \times (L')^{\mathrm{opp}} \Delta \mathrm{N}_{G\mathcal{A}}(\Levi',e_s^{L'}))$-module.
\end{theorem}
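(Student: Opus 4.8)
The plan is to split into the two cases dictated by Definition \ref{defA}, according to whether $\Levi'$ is $1$-split or not, and in each case to produce an extension of the cohomology module step by step along a chain of subgroups $\tilde L' \trianglelefteq \cdots \trianglelefteq \mathcal N$.

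First I would treat the case where $\Levi'$ is \emph{not} $1$-split, so $\mathcal{A}=\mathcal{B}_{e_s^{\G^F}}=\langle \phi, \gamma_0\rangle$ and $\mathrm{N}_{\tilde G\mathcal{B}}(\Levi',e_s^{L'})/\tilde L'$ is abelian by Lemma \ref{Frobcyclic} and Lemma \ref{quotient=2} (untwisted case) or Lemma \ref{Frobcyclic2} (twisted case). By Corollary \ref{graph2} there is an element $m_\gamma\in G$ such that $m_\gamma\gamma_0$ stabilizes a parabolic $\Para'$ with Levi complement $\Levi'$ and fixes $e_s^{L'}$; writing $\mathrm{N}_{\tilde G\mathcal{B}}(\Levi',e_s^{L'})=\tilde L'\langle m_\phi\phi, m_\gamma\gamma_0\rangle$, the hypotheses of Lemma \ref{extension} are met, and that lemma gives almost-extendibility of $H_c^{\mathrm{dim}}(\Y_{\U'}^\G,\Lambda)e_s^{L'}$ to $G\times (L')^{\mathrm{opp}}\Delta(\mathrm{N}_{\tilde G\mathcal{B}}(\Levi',e_s^{L'}))$. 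Since here $\mathcal{N}=\mathrm{N}_{\Gtilde^F\mathcal{A}}(\Levi',e_s^{L'})=\mathrm{N}_{\tilde G\mathcal{B}}(\Levi',e_s^{L'})$, this is exactly the assertion.

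Next I would handle the case where $\Levi'$ \emph{is} $1$-split, so $\mathcal{A}=\langle \gamma_0, F_0\rangle$ with $F_0$ the Frobenius endomorphism furnished by Lemma \ref{Frobroot} (untwisted) or Lemma \ref{Frobroot2} (twisted). By those lemmas, after replacing $s$ by a $(\G^\ast)^{F^\ast}$-conjugate, $(\Levi',\Para')$ and $e_s^{L'}$ are $\gamma_0$-stable, $F_0$ stabilizes $\Levi'$ and $e_s^{L'}$, commutes with $\gamma_0$, and $F_0^r\in\{F,F\gamma_0\}$. The point is that $F_0$ and $\gamma_0$ play here the role of the commuting Frobenius/graph pair $(F_0,\gamma)$ of Section \ref{sec 2} (or Section \ref{sec 3} in the twisted case, with $\rho=\gamma_0$). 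I would invoke Lemma \ref{mainlemma} (respectively Lemma \ref{mainlemma2}): taking $\mathcal{A}=\langle F_0,\gamma_0\rangle$ and noting $\Para'$ is $\gamma_0$-stable and $N'/L'$ is centralized by $\mathcal{A}$ (it has prime order $m'$ by Lemma \ref{prime order}, and $\mathcal{A}$ acts on $\mathrm{N}_{\G^*}(\Levi'^*)/\Levi'^*\cong C_{m'}$, so the action is by $\pm 1$; one checks it is trivial using Lemma \ref{duality} and that $z_0\in (L')^{\gamma_0}$ as in the proof of Lemma \ref{Frobroot}), we get that $H_c^{\mathrm{dim}}(\Y_{\U'}^\G,\Lambda)e_s^{L'}$ genuinely extends — not merely almost extends — to a $\G^F\times (L')^{\mathrm{opp}}\Delta(\tilde L'\langle F_0,\gamma_0\rangle)$-module. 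Restricting this extension to $\mathrm{N}_{G\mathcal{A}}(\Levi',e_s^{L'})$ on the right gives the desired module, and a genuine extension is a fortiori an almost extension.

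The main obstacle I anticipate is the case analysis itself: the hypothesis ``$\Levi'$ is $1$-split or not'' is genuinely needed, and in the non-$1$-split twisted case the quotient $\mathcal{N}/\tilde L'$ is only abelian (not cyclic), which is why the weaker notion of Definition \ref{almost} has to be used and why Lemma \ref{extension} is phrased with ``almost extends''. A secondary subtlety is bookkeeping the replacement of $s$ by a conjugate in the $1$-split case, and verifying that the relevant parabolic $\Para'$ can be chosen simultaneously $\gamma_0$-stable and so that $m_\gamma\gamma_0$ stabilizes it --- but Corollary \ref{graph2} and the self-normalizing property of parabolic subgroups (used already in the proof of Lemma \ref{Frobroot}) take care of this. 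Finally, one should remark that the Hall $\ell'$-subgroup of $\mathcal{N}/\tilde L'$ is normal, as noted after Definition \ref{defA}, so that ``almost extends'' is a sensible statement and Remark \ref{normal hall} applies; I would state this explicitly at the start of the proof.
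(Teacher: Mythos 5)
Your treatment of the non--$1$-split case is essentially the same as the paper's: by Corollary \ref{graph2} and the abelianness supplied by Lemma \ref{quotient=2} (resp.\ Lemma \ref{Frobcyclic2}) one places oneself in the setting of Lemma \ref{extension}. That part is fine.

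The $1$-split case, however, has a genuine gap. You want to invoke Lemma \ref{mainlemma} (resp.\ Lemma \ref{mainlemma2}) in one shot with $\mathcal A=\langle F_0,\gamma_0\rangle$, which requires the hypothesis that the entire group $\mathcal A$ centralizes $N'/L'\cong C_{m'}$. You assert that the action is by $\pm1$ and that ``one checks it is trivial,'' but when $\gamma_0$ is a nontrivial graph automorphism it acts on $N'/L'$ by inversion, not trivially: the graph automorphism $\gamma^\ast$ inverts $\mathrm{Z}(\tilde{\G}^\ast)$, so via the embedding $\omega_s\colon A(s)\hookrightarrow\mathrm{Z}(\tilde{\G}^\ast)$ it inverts $A(s)$, and the proof of Lemma \ref{symmetric group} makes exactly this point (``However, $\gamma(\overline v)\overline v^{-1}=\overline v^2$''). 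When $m'>2$ this inversion is not the identity, so the hypothesis of Lemma \ref{mainlemma} fails and your argument does not go through. The paper circumvents the problem by first using multiplicity-freeness of the bimodule (via \cite[Theorem 7.5]{Dat}) together with \cite[Proposition 1.13]{Thevenaz} and Remark \ref{normal hall} to reduce almost-extendibility to extendibility along a Sylow $b$-subgroup $P_b/\tilde L'$ of $\mathcal N/\tilde L'$, one prime $b$ at a time. For $b=m'$ with $m'\neq 2$, the Sylow subgroup $P_{m'}=N'\langle F_0'\rangle$ contains $N'$, and $F_0'$ has $m'$-power order, hence acts trivially on $C_{m'}$ because $\mathrm{Aut}(C_{m'})$ has order $m'-1$ coprime to $m'$; so Lemma \ref{mainlemma} applies to this Sylow. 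For $b=2$ with $m'\neq 2$, the Sylow subgroup $P_2=L'\langle F_0^i,\gamma_0\rangle$ does \emph{not} contain $N'$, so the $N'/L'$-centralization condition never arises, and Lemma \ref{extension} is used instead. Only when $m'=2$ (so $\mathrm{Aut}(C_{m'})$ is trivial and centralization is automatic) does the paper apply Lemma \ref{mainlemma} or \ref{mainlemma2} to the whole $2$-Sylow, and there one still has to split the two cases $(F_0')^k=F$ and $(F_0')^k=\gamma F$. Your proposal collapses this case analysis into a single application of Lemma \ref{mainlemma}, which is exactly where it breaks.
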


\begin{proof}
	Assume first that $\Levi'$ is not  $1$-split. Thanks to the proof of Lemma \ref{quotient=2} and Lemma \ref{Frobcyclic2} we know that the quotient group $\mathcal{N}/\tilde L'$ is generated by $m_{\gamma_0} \gamma_0,m_\phi \phi$ which satisfy $[m_{\gamma_0} \gamma_0,m_\phi \phi] \in L'$. We can therefore use Lemma \ref{extension} to conclude that $H_c^{\mathrm{dim}}(\Y_{\U'}^\G,\Lambda) e_s^{L'}$ almost extends.
	
Assume therefore now that $\Levi'$ is $1$-split. We let $X:=G\times (L')^{\mathrm{opp}} \Delta \tilde{L}'$ and $M:=H^{\mathrm{dim}}_c(\Y_{\U'}^{\G},\Lambda) e_s^{L'}$. According to \cite[Theorem 7.5]{Dat} $\mathrm{Ind}_X^{\Gtilde^F \times (\tilde{\Levi}'^F)^{\mathrm{opp}}} (M')$ is multiplicity free, so $M$ is multiplicity free as well. Furthermore, the $\Lambda X$-module $M$ is $\Delta(\N_{G\mathcal{A}}(\Levi',e_s^{L'}))$-stable. Therefore, \cite[Proposition 1.13]{Thevenaz} and Remark \ref{normal hall} are applicable and we conclude that it is enough to show that for every prime $b$, $b \neq \ell$, the module $M$ extends to $X \Delta(P_b)$ where $P_b/ \tilde L'$ is a Sylow $b$-group of $\mathcal{N}/ \tilde L'$. If such a Sylow $b$-subgroup is cyclic then the claim follows from \cite[Lemma 10.2.13]{Rouquier3} and hence we can concentrate on all $b$ such that the Sylow $b$-subgroup is non-cyclic. Therefore, we can assume that $b \in \{2, m'\}$.

Let us consider the case where $b=m'$ and $m' \neq 2$. In this case, Lemma \ref{Frobroot} shows that $P_{m'}=N'\langle F_0' \rangle$, where $F_0'=F_0^i$ for some $i \mid r$. Since $N'/L' \cong C_{m'}$ and $F_0'$ has $m'$-power order we obtain that $F_0'$ centralizes $N'/L'$. By Lemma \ref{mainlemma}, we conclude that $M'$ extends to $X \Delta(P_{m'})$.

It remains to consider the case $b=2$. Assume first that $m' \neq 2$. Then we have $P_2=L' \langle F_0^i, \gamma_0 \rangle$. We can therefore use Lemma \ref{extension} to conclude that $M'$ extends to $X \Delta(P_2)$.
%
%
%
Finally, assume that $m'=2$. This implies that the quotient group $N'/L'$ is $\mathcal{A}'$-stable. We obtain $P_2=N' \langle \gamma_0, F_0' \rangle$, where $F_0'=F_0^i$ for some $i$ dividing $r$. In particular there exists $k$ such that $(F_0')^k=F$ or $(F_0')^k=\gamma F$. In the first case we can use Lemma \ref{mainlemma} to conclude that $M$ extends to a $X \Delta(P_2)$-module. In the latter case we observe that $P_2=N' \langle  F_0' \rangle$ and we can apply Lemma \ref{mainlemma2} to conclude that $M$ extends to a $X \Delta(P_2)$-module.
\end{proof}

We let $\hat{M}$ be an extension of $H_c^{\mathrm{dim}}(\Y_{\U'}^\G,\Lambda) e_s^{L'}$ to $\Lambda (G \times (L')^{\mathrm{opp}} \Delta \mathcal{N}_{\ell'})$ which is $\Delta \mathcal{N}$-stable. Furthermore, we denote by $M'$ the restriction of $\hat{M}$ to $G \times (N')^{\mathrm{opp}} \Delta \tilde{N}'$.

\begin{corollary}\label{typeA}
	Suppose the assumptions and notation as above. Then the bimodule $M'$ induces a Morita equivalence between $\Lambda N' e_s^{L'}$ and $\Lambda G e_s^G$ which lifts to a Morita equivalence between $\Lambda \mathcal{N}_{\ell'} e_s^{L'}$ and $\Lambda \tilde{G}^F \mathcal{A}_{\ell'} e_s^{G}$ given by $\Ind_{G \times (L')^{\opp} \Delta \mathcal{N}_{\ell'}}^{\tilde{G} \mathcal{A}_{\ell'} \times (\mathcal{N}_{\ell'})^{\opp}}(\hat{M})$.
\end{corollary}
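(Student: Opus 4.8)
The plan is to treat the two assertions of the corollary separately: the Morita equivalence between $\Lambda N' e_s^{L'}$ and $\Lambda G e_s^{G}$ will come from Theorem \ref{Boro} applied to $\Levi'$, and the lifted equivalence from the Clifford theory of Morita equivalences, once the appropriate group-theoretic compatibilities are in place.

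For the first assertion I would note that $\Nb'^F/\Levi'^F$ is cyclic of prime order by Lemma \ref{prime order}, and of $\ell'$-order since it is a subquotient of $\Nb^F/\Levi^F$, which is of $\ell'$-order by \cite[Corollary 2.9]{Bonnafe}. Together with $\mathrm{C}^\circ_{\G^\ast}(s)=\Levi^\ast\subseteq(\Levi')^\ast$ and the standing hypothesis $\mathrm{C}_{\G^\ast}(s)^{F^\ast}(\Levi')^\ast=(\Levi')^\ast\mathrm{C}_{\G^\ast}(s)^{F^\ast}$ on $\Levi'$, this is exactly the situation of Theorem \ref{Boro} with $(\Levi,\Nb)$ replaced by $(\Levi',\Nb')$. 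Since $\Res_{G\times(L')^{\mathrm{opp}}\Delta\tilde L'}(\hat M)=H_c^{\mathrm{dim}}(\Y_{\U'}^\G,\Lambda)e_s^{L'}$ and $N'\subseteq\mathcal{N}_{\ell'}$, the bimodule $M'=\Res_{G\times(N')^{\mathrm{opp}}}(\hat M)$ is an extension of this cohomology bimodule to $G\times(N')^{\mathrm{opp}}$; hence $M'$ differs from the bimodule produced in Theorem \ref{Boro} by tensoring with a linear character of the $\ell'$-group $\Nb'^F/\Levi'^F$, i.e.\ by an invertible $\Lambda N' e_s^{L'}$-bimodule, and therefore $M'$ also induces a Morita equivalence between $\Lambda N' e_s^{L'}$ and $\Lambda G e_s^{G}$.

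For the second assertion I would set $\mathcal{G}:=\tilde G^F\mathcal{A}_{\ell'}$, so that $G\trianglelefteq\mathcal{G}$ and $N'\trianglelefteq\mathcal{N}_{\ell'}$, and first dispose of the group theory. Using that $\mathcal{N}_{\ell'}/\tilde L'$ is the Hall $\ell'$-subgroup of $\mathcal{N}/\tilde L'$ and that $N'/L'$ is of $\ell'$-order, one checks $\mathcal{N}_{\ell'}=\mathrm{N}_{\mathcal{G}}(\Levi',e_s^{L'})$, and hence $\mathcal{N}_{\ell'}\cap G=N'$. Every element of $\mathcal{G}$ stabilises $e_s^{G}$ (as $\tilde G^F$ does and $\mathcal{A}_{\ell'}\subseteq\mathcal{A}$, which by Definition \ref{defA} equals $\mathcal{B}_{e_s^{\G^F}}$ or $\langle\gamma_0,F_0\rangle$), so a Frattini argument — whose key input is that the construction of $\mathcal{A}$ exhibits representatives of its generators inside $\mathrm{N}_{\tilde G^F\mathcal{B}}(\Levi',e_s^{L'})$ modulo $\G^F$ (Lemmas \ref{graph}--\ref{Frobroot2}) — gives $\mathcal{G}=G\cdot\mathcal{N}_{\ell'}$. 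This yields a canonical isomorphism $\mathcal{G}/G\cong\mathcal{N}_{\ell'}/N'$, and a direct computation then identifies the subgroup $G\times(L')^{\mathrm{opp}}\Delta\mathcal{N}_{\ell'}$ of $\mathcal{G}\times(\mathcal{N}_{\ell'})^{\mathrm{opp}}$ with the fibre product $\{(y,y')\mid \bar y=\bar{y'}\}$ taken with respect to this isomorphism.

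With this in hand, $\hat M$ is a module over that fibre product extending the Morita bimodule $M'$, the idempotent $e_s^{G}$ is $\mathcal{G}$-stable, and $e_s^{L'}$ is $\mathcal{N}_{\ell'}$-stable (by Definition \ref{defA}). The Clifford theory of Morita equivalences, in the form of \cite[Theorem 3.4]{Marcus} used in the proof of Theorem \ref{equiv} (see also \cite[Theorem 1.7]{Jordan}), then shows that $\Ind_{G\times(L')^{\mathrm{opp}}\Delta\mathcal{N}_{\ell'}}^{\mathcal{G}\times(\mathcal{N}_{\ell'})^{\mathrm{opp}}}(\hat M)$ induces a Morita equivalence between $\Lambda\mathcal{N}_{\ell'}e_s^{L'}$ and $\Lambda\tilde G^F\mathcal{A}_{\ell'}e_s^{G}$; restricting it along $G\times(N')^{\mathrm{opp}}$ recovers $M'$ by the Mackey formula, which is the sense in which it lifts the equivalence of the first part. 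I expect the main obstacle to be the group-theoretic bookkeeping of the third paragraph — the identity $\mathcal{N}_{\ell'}=\mathrm{N}_{\mathcal{G}}(\Levi',e_s^{L'})$, the Frattini identity $\mathcal{G}=G\mathcal{N}_{\ell'}$, and the recognition of $G\times(L')^{\mathrm{opp}}\Delta\mathcal{N}_{\ell'}$ as the fibre product — everything afterwards being a direct appeal to \cite[Theorem 3.4]{Marcus}.
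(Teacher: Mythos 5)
Your argument is correct and follows exactly the route the paper takes: the first assertion is deduced from (the proof of) Theorem~\ref{Boro}, and the second from \cite[Theorem 3.4]{Marcus}. Your version just spells out the group-theoretic bookkeeping (the identification $\mathcal{N}_{\ell'}\cap G = N'$, the Frattini identity $\mathcal G = G\mathcal N_{\ell'}$, and why an arbitrary extension of the cohomology bimodule still induces a Morita equivalence) that the paper leaves implicit.
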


\begin{proof}
	The first statement is a consequence of (the proof of) \cite[Theorem 7.5]{Dat}. The second statement then follows from \cite[Theorem 3.4]{Marcus}. 
\end{proof}

\section{Local equivalences}\label{sec 8}

The following simple group-theoretic lemma will be used in the proof of Lemma \ref{ext loc}

\begin{lemma}\label{group lemma}
	Let $X$ be a subgroup of a finite group $Y$ and $P$ a Sylow $b$-subgroup of $Y$. If $[Y,Y] \subseteq X$ then $P \cap X$ is a Sylow $b$-subgroup of $X$. 
\end{lemma}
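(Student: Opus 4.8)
The plan is to reduce everything to the well-known statement that a Sylow subgroup of $Y$ meets a \emph{normal} subgroup of $Y$ in a Sylow subgroup, the point of the hypothesis $[Y,Y]\subseteq X$ being precisely to force $X$ to be normal in $Y$.

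First I would observe that $X\trianglelefteq Y$: since $[Y,Y]\subseteq X$, the image $X/[Y,Y]$ is a subgroup of the abelian group $Y/[Y,Y]$, hence normal there, and so $X$ is normal in $Y$. Consequently $PX$ is a subgroup of $Y$ containing $P$. Because $P$ is a Sylow $b$-subgroup of $Y$ and $P\subseteq PX\subseteq Y$, the group $P$ is also a Sylow $b$-subgroup of $PX$; in particular the index $[PX:P]$ is prime to $b$.

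Next I would apply the second isomorphism theorem: since $X\trianglelefteq Y$ we have $PX/X\cong P/(P\cap X)$, and rearranging orders gives $[PX:P]=[X:P\cap X]$. Thus $[X:P\cap X]$ is prime to $b$. As $P\cap X$ is visibly a $b$-subgroup of $X$, this forces $P\cap X$ to be a Sylow $b$-subgroup of $X$, which is the claim.

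There is essentially no obstacle here; the only thing worth flagging is that the assertion is false for a general subgroup $X$ (e.g.\ if $X$ is a Sylow $b$-subgroup distinct from $P$), so the hypothesis $[Y,Y]\subseteq X$ cannot be dropped, and it is used only through the resulting normality of $X$.
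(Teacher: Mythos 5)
Your proof is correct and follows essentially the same approach as the paper: both arguments form the subgroup $PX$ and then count. The paper notes $[P,X]\subseteq X$ to conclude $PX=XP$ and compares $b$-parts of orders via the product formula $|PX|=|P||X|/|P\cap X|$, while you observe the slightly stronger fact that $X\trianglelefteq Y$ and phrase the count through the second isomorphism theorem and the index $[PX:P]=[X:P\cap X]$ being prime to $b$; these are the same computation in different clothing.
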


\begin{proof}
	The hypothesis implies that $[P,X] \subseteq X$. From this we deduce that $PX=XP$ and so $PX$ is a subgroup of $Y$ containing $P$. It follows that $|P|_b=|PX|_b=\frac{|P|_b |X|_b}{|P \cap X|_b}$ and so $|P\cap X|_b=|X|_b$.
\end{proof}


\begin{lemma}\label{ext loc}
	Let $Q$ be an $\ell$-subgroup of $L'$. Suppose that either $\N_{\tilde N'}(Q) \tilde L'=\tilde N'$ or $\mathcal{N}/\tilde L'$ is abelian.
	The bimodule $H^{\mathrm{dim}}_c(\Y_{\mathrm{C}_{\U'}(Q)}^{\mathrm{N}_{\G}(Q)}, \Lambda) \br_Q(e_s^{L'})$ almost extends to an $\N_G(Q) \times \N_{N'}(Q)^{\opp} \Delta \N_{\mathcal{N}}(Q)$-module.
\end{lemma}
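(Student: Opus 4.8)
## Proof Plan for Lemma \ref{ext loc}

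The plan is to reduce the local statement to the global one (Theorem \ref{mainthm}) together with the restriction-of-scalars technology from Sections \ref{sec 2} and \ref{sec 3}, in parallel to how Corollary \ref{extension local} served as the local companion of Lemma \ref{extension}. First I would split into the two hypotheses. If $\mathrm{N}_{\tilde N'}(Q)\tilde L' = \tilde N'$, then $\mathrm{N}_{\mathcal N}(Q)/\mathrm{N}_{\tilde L'}(Q)$ embeds into $\mathcal N/\tilde N' \cong \mathcal A$ (using that $N'/L'$ is absorbed), so this quotient is abelian; hence in \emph{both} cases we may assume $\mathrm{N}_{\mathcal N}(Q)/\mathrm{N}_{\tilde L'}(Q)$ is abelian. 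Moreover, since $N'/L'$ has $\ell'$-order by \cite[Corollary 2.9]{Bonnafe} and $\mathrm{N}_{G\mathcal A}(\Levi',e_s^{L'})/\tilde N' \cong \mathcal A$ is abelian, the Hall $\ell'$-subgroup of $\mathrm{N}_{\mathcal N}(Q)/\mathrm{N}_{\tilde L'}(Q)$ is normal, so Remark \ref{normal hall} applies and, exactly as in the proof of Theorem \ref{mainthm}, it suffices to show that the module $M_Q := H^{\mathrm{dim}}_c(\Y_{\mathrm C_{\U'}(Q)}^{\mathrm N_\G(Q)},\Lambda)\br_Q(e_s^{L'})$ extends to $\mathrm N_G(Q)\times\mathrm N_{L'}(Q)^{\opp}\Delta\,\mathrm N_{\mathcal N}(Q)$, and then deal prime-by-prime with the non-cyclic Sylow subgroups of the quotient.

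For the actual extension arguments I would distinguish, as in Definition \ref{defA} and the proof of Theorem \ref{mainthm}, the case $\Levi'$ not $1$-split from the case $\Levi'$ $1$-split. When $\Levi'$ is not $1$-split, $\mathcal A = \mathcal B_{e_s^{\G^F}}$, the quotient $\mathcal N/\tilde L'$ is generated by $m_{\gamma_0}\gamma_0$ and $m_\phi\phi$ with $[m_{\gamma_0}\gamma_0, m_\phi\phi]\in L'$ and $m_{\gamma_0}\gamma_0$ stabilizing $\Para'$ (Corollary \ref{graph2}, Lemma \ref{quotient=2}, Lemma \ref{Frobcyclic2}); then Corollary \ref{extension local} directly gives the desired almost-extension of $M_Q$. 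When $\Levi'$ is $1$-split, I would mimic the last paragraph of the proof of Theorem \ref{mainthm}: by Lemma \ref{group lemma} applied with $X = \mathrm N_{\tilde N'}(Q)\mathrm N_{\tilde L' \mathcal A}(Q)$ inside $Y = \mathrm N_{\mathcal N}(Q)$ (whose commutator lies in $X$ since the quotient is abelian), each Sylow $b$-subgroup $P_b/\mathrm N_{\tilde L'}(Q)$ intersects $X$ in a Sylow $b$-subgroup; cyclic Sylow subgroups are handled by \cite[Lemma 10.2.13]{Rouquier3}, and only $b\in\{2,m'\}$ can give a non-cyclic Sylow. For $b=m'\neq 2$ I use that $F_0'$ (a power of $F_0$ from Lemma \ref{Frobroot}/\ref{Frobroot2}) has $m'$-power order, hence centralizes $N'/L' \cong C_{m'}$ and thus $\mathrm N_{N'}(Q)/\mathrm N_{L'}(Q)$, so Lemma \ref{mainlemma:local} (resp. Lemma \ref{mainlemma:local2} in the twisted case) applies; for $b=2$ with $m'\neq 2$ one has $P_2 = \mathrm N_{\tilde L'}(Q)\langle F_0^i,\gamma_0\rangle$ and Corollary \ref{extension local} applies; for $m'=2$ the quotient $N'/L'$ is $\mathcal A$-stable, and depending on whether a power of $F_0'$ equals $F$ or $\gamma F$ one invokes Lemma \ref{mainlemma:local} or Lemma \ref{mainlemma:local2}.

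The main obstacle I anticipate is bookkeeping the hypotheses of the restriction-of-scalars lemmas (\ref{mainlemma:local} and \ref{mainlemma:local2}) in the local setting: those lemmas require $\Nb^F/\Levi^F$ (here the relevant analogue being $\mathrm N_{N'}(Q)/\mathrm N_{L'}(Q)$, or $N'/L'$) to be \emph{centralized} by the relevant Frobenius $F_0$, and one must check this holds after passing to $Q$-normalizers — the cleanest route is to observe that $F_0'$ acts on the cyclic group $N'/L'$ of $m'$-power order through an automorphism of $m'$-power order, which is forced to be trivial, and this property is inherited by subquotients. A secondary point of care is verifying, in the $1$-split twisted case, that $\gamma_0$ genuinely stabilizes $(\Levi',\Para')$ and $\br_Q(e_s^{L'})$ so that Corollary \ref{extension local} is legitimately applicable for $b=2$; this follows from Lemma \ref{Frobroot2}(a) together with the fact that $Q$ is chosen $\gamma_0$-stable (or may be so chosen, being characteristic in a $\gamma_0$-stable defect group), so $\br_Q$ commutes with the $\gamma_0$-action. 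Once these compatibilities are in place, the proof assembles verbatim along the lines of Theorem \ref{mainthm} and Corollary \ref{extension local}.
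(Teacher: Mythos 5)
Your overall skeleton matches the paper's: multiplicity-freeness of $M_Q$ via Lemma \ref{multiplicity free}, the reduction to Sylow $b$-subgroups via Remark \ref{normal hall} and \cite[Proposition 1.13]{Thevenaz}, the restriction to $b\in\{2,m'\}$, the split into $\Levi'$ $1$-split or not, and the invocation of the local companions (Corollary \ref{extension local}, Lemma \ref{mainlemma:local}, Lemma \ref{mainlemma:local2}) of the ingredients of Theorem \ref{mainthm}. Your observation that $F_0'$ of $m'$-power order must centralize $N'/L'\cong C_{m'}$ is also correct and is exactly what is used in Theorem \ref{mainthm}.

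However, there are two linked errors in the way you use the hypothesis on $Q$, and they constitute a genuine gap. First, your opening claim that under $\mathrm{N}_{\tilde N'}(Q)\tilde L'=\tilde N'$ the quotient $\mathrm{N}_{\mathcal N}(Q)/\mathrm{N}_{\tilde L'}(Q)$ embeds into $\mathcal N/\tilde N'\cong\mathcal A$ (``$N'/L'$ absorbed'') is false: that hypothesis forces $\mathrm{N}_{\tilde N'}(Q)/\mathrm{N}_{\tilde L'}(Q)\cong\tilde N'/\tilde L'\cong N'/L'$, which is a \emph{nontrivial} kernel of order $m'$ for the map to $\mathcal A$, so the local quotient need not be abelian. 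Second, your application of Lemma \ref{group lemma} is aimed at the wrong target. What has to be shown is that the normalizer $\mathrm{N}_{P_b'}(Q)$ of $Q$ in a \emph{global} Sylow $b$-lift $P_b'$ (with $P_b'/\tilde L'$ a Sylow $b$-subgroup of $\mathcal N/\tilde L'$, as produced in Theorem \ref{mainthm}) covers a Sylow $b$-subgroup of $\mathrm{N}_{\mathcal N}(Q)/\mathrm{N}_{\tilde L'}(Q)$; only then do the local lemmas, which extend $M_Q$ along $\Delta\,\mathrm{N}_{P_b'}(Q)$, give the required extension. The correct application of Lemma \ref{group lemma} therefore takes $Y=\mathcal N/\tilde L'$, $X=\mathrm{N}_{\mathcal N}(Q)\tilde L'/\tilde L'\cong\mathrm{N}_{\mathcal N}(Q)/\mathrm{N}_{\tilde L'}(Q)$ and $P=P_b'/\tilde L'$: here $[\mathcal N,\mathcal N]\subseteq\tilde N'$ always, and the hypothesis on $Q$ is precisely what guarantees $\tilde N'/\tilde L'\subseteq X$ (in the first alternative) or is vacuous (in the second, when $\mathcal N/\tilde L'$ is abelian). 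Your version with $Y=\mathrm{N}_{\mathcal N}(Q)$ and $X=\mathrm{N}_{\tilde N'}(Q)\mathrm{N}_{\tilde L'\mathcal A}(Q)$ does not establish the needed relation between local Sylow subgroups and normalizers of global Sylow subgroups, and it furthermore relies on the false abelianness claim to verify the commutator condition.
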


\begin{proof}
Let $X_Q:=\mathrm{N}_{G}(Q) \times \mathrm{N}_{L'}(Q)^{\mathrm{opp}} \Delta \mathrm{N}_{\tilde{L}'}(Q)$ and $M_Q:=H^{\mathrm{dim}}_c(\Y_{\mathrm{C}_{\U'}(Q)}^{\mathrm{N}_{\G}(Q)}, \Lambda) \br_Q(e_s^{L'})$. 
By Lemma \ref{multiplicity free}, the module $M_Q$ is multiplicity free. We conclude by Lemma \ref{normal hall} and \cite[Proposition 1.13]{Thevenaz} that it is enough to show that for every prime $b$ the module $M_Q$ almost extends to $X_Q \Delta(P_b)$ where $P_b/\mathrm{N}_{L'}(Q)$ is a Sylow $b$-group of $\mathrm{N}_{\mathcal{N}}(Q)/\mathrm{N}_{L'}(Q)$. Observe that $\mathrm{N}_{\mathcal{N}}(Q)/\mathrm{N}_{\tilde L'}(Q)$ embeds as a subgroup of $\mathcal{N}/\tilde L'$. Arguing as in the proof of Theorem \ref{mainthm} we can thus assume that $b \in \{2, m'\}$. Since $[\mathcal{N},\mathcal{N}] \subseteq \tilde{N}'$ it follows from Lemma \ref{group lemma} and our assumption on $Q$ that $(P_b \cap \mathrm{N}_{\mathcal N}(Q))/N_{\tilde L'}(Q)$ is a Sylow $b$-subgroup of $\mathrm{N}_{\mathcal N}(Q)/N_{\tilde L'}(Q)$. In the proof of Theorem \ref{mainthm} we have shown that  $H_c^{\mathrm{dim}}(\Y_{\U'}^\G,\Lambda) e_s^{L'}$ almost extends to a $\Lambda [G \times (L')^{\mathrm{opp}} \Delta \tilde{L}' P_b]$-module. The corresponding local result in Lemma \ref{extension local}, Lemma \ref{mainlemma:local} and Lemma \ref{mainlemma:local2} show that $H_c^{\mathrm{dim}}(\Y_{\mathrm{C}_{\U'}(Q)}^{\mathrm{N}_\G(Q)} ,\Lambda) \mathrm{br}_Q(e_s^{L'})$ almost extends to an $X_Q \Delta \mathrm{N}_{P_b'}(Q)$-module.
\end{proof}

Suppose now that $\ell \nmid |H^1(F,\mathrm{Z}(\G))|$.
As in Lemma \ref{diagonal action} let $\mathcal C'$ be a complex of $\Lambda (G \times (N')^{\mathrm{opp}} \Delta \tilde{N})$-modules such that $H^d(\mathcal C') \cong M'$ and $\mathcal C'$ induces a splendid Rickard equivalence between $\Lambda G e_s^{G}$ and $\Lambda N' e_s^{L'}$. According to the proof of \cite[Theorem 5.2]{Rickard} there exists a unique complex $\mathcal{C}_Q'$ of $\ell$-permutation $\Lambda(\mathrm{C}_G(Q) \times \mathrm{C}_{N'}(Q)^{\mathrm{opp}} \Delta \mathrm{N}_{\tilde N'}(Q))$-modules which lifts the complex $\mathrm{Br}_{\Delta Q}(\mathcal C')$ of $\ell$-permutation modules from $k$ to $\Lambda$. The following lemma strengthens Lemma \ref{ext loc}. For simplicity we denote

$$M_Q':=\Ind_{\mathrm{C}_G(Q) \times \mathrm{C}_{N'}(Q)^{\mathrm{opp}} \Delta \mathrm{N}_{\tilde N'}(Q)}^{\mathrm{N}_{G}(Q) \times \mathrm{N}_{N'}(Q)^{\mathrm{opp}} \Delta \mathrm{N}_{\tilde N'}(Q)}H^{d_Q}(\mathcal{C}_Q') \mathrm{br}_Q(e_s^{L'}).$$

\begin{lemma}\label{extending brauer image}
Suppose that $\ell \nmid |H^1(F,\mathrm{Z}(\G))|$. Then the bimodule $M_Q'$ almost extends to $\mathrm{N}_G(Q) \times \mathrm{N}_{N'}(Q)^{\mathrm{opp}} \Delta \mathrm{N}_{\mathcal{N}}(Q)$.
\end{lemma}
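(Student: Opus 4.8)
The plan is to recognise $M_Q'$ as an extension, along a subgroup of $\ell'$-index, of the bimodule $M_Q:=H^{\mathrm{dim}}_c(\Y_{\mathrm{C}_{\U'}(Q)}^{\mathrm{N}_{\G}(Q)},\Lambda)\,\br_Q(e_s^{L'})$ treated in the proof of Lemma \ref{ext loc}, and then to transport almost-extendibility along this $\ell'$-extension by means of Lemma \ref{abelian factor}.

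First I would check, directly from the construction (compare the local data $M_Q$, $M_Q'$ introduced before Lemma \ref{character theoretic BDR}), that $M_Q'$ is an extension of $M_Q$ to $X_Q^{N'}:=\mathrm{N}_G(Q)\times\mathrm{N}_{N'}(Q)^{\mathrm{opp}}\Delta\mathrm{N}_{\tilde N'}(Q)$. Here the hypothesis $\ell\nmid|H^1(F,\mathrm{Z}(\G))|$ is exactly what is needed: by Lemma \ref{diagonal action} it guarantees that $\mathcal C'$, and hence --- after applying $\Br_{\Delta Q}$ and lifting back to $\Lambda$ as in \cite[Theorem 5.2]{Rickard} --- the complex $\mathcal C_Q'$, carries the full diagonal action of $\Delta\tilde N'$, respectively of $\Delta\mathrm{N}_{\tilde N'}(Q)$. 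Restricting $\mathcal C'$ to the $L'$-level recovers $G\Gamma_c(\Y_{\U'}^\G,\Lambda)^{\mathrm{red}}e_s^{L'}$, so the identification $\Br_{\Delta Q}(G\Gamma_c(\Y_{\U'}^\G,\Lambda))\cong G\Gamma_c(\Y_{\mathrm{C}_{\U'}(Q)}^{\mathrm{C}_\G(Q)},\Lambda)$ already used in the proof of Lemma \ref{multiplicity free}, together with the base change from $\mathrm{C}_\G(Q)$ to $\mathrm{N}_\G(Q)$ and Mackey's formula, yields $\Res_{X_Q}(M_Q')\cong M_Q$, where $X_Q=\mathrm{N}_G(Q)\times\mathrm{N}_{L'}(Q)^{\mathrm{opp}}\Delta\mathrm{N}_{\tilde L'}(Q)$ as in the proof of Lemma \ref{ext loc}.

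Next I would assemble the hypotheses of Lemma \ref{abelian factor}. By \cite[Corollary 2.9]{Bonnafe} the group $N'/L'$ has $\ell'$-order, so $\ell\nmid|X_Q^{N'}:X_Q|$; by Lemma \ref{multiplicity free} the bimodule $M_Q$ is multiplicity free, so $\mathrm{End}_{\Lambda X_Q}(M_Q)/J(\mathrm{End}_{\Lambda X_Q}(M_Q))$ is a product of division rings and in particular abelian; and by Lemma \ref{ext loc}, $M_Q$ almost extends to $\widehat X_Q:=\mathrm{N}_G(Q)\times\mathrm{N}_{N'}(Q)^{\mathrm{opp}}\Delta\mathrm{N}_{\mathcal N}(Q)$. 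Since $\tilde N'\trianglelefteq\mathcal N$ one has $X_Q^{N'}\trianglelefteq\widehat X_Q$, and because $[\mathcal N,\mathcal N]\subseteq\tilde N'$ the commutator subgroup of $\widehat X_Q/X_Q$ lies in the image of $\mathrm{N}_{\tilde N'}(Q)$; hence $\widehat X_Q/X_Q$ is abelian whenever $\mathcal N/\tilde L'$ is abelian, and in that case Lemma \ref{abelian factor}, applied with $X=X_Q$, $Y=\widehat X_Q$, $M=M_Q$, $X'=X_Q^{N'}$ and $M'=M_Q'$, shows that $M_Q'$ almost extends to $\widehat X_Q$.

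It remains to treat the case where $\mathcal N/\tilde L'$ is not abelian, so that $\mathrm{N}_{\tilde N'}(Q)\tilde L'=\tilde N'$ by the hypothesis of Lemma \ref{ext loc}; here $\widehat X_Q/X_Q$ need not be abelian, so Lemma \ref{abelian factor} is not directly available. As $\widehat X_Q/X_Q^{N'}\cong\mathrm{N}_{\mathcal N}(Q)/\mathrm{N}_{\tilde N'}(Q)$ is a subquotient of the abelian group $\mathcal A$, and $M_Q'$ is again multiplicity free (Lemma \ref{multiplicity free}, in its $N'$-version), Remark \ref{normal hall} and \cite[Proposition 1.13]{Thevenaz} reduce the claim to showing that $M_Q'$ extends to $X_Q^{N'}\Delta(P_b)$ for each prime $b\neq\ell$, where $P_b/\mathrm{N}_{\tilde N'}(Q)$ is the Sylow $b$-subgroup of $\mathrm{N}_{\mathcal N}(Q)/\mathrm{N}_{\tilde N'}(Q)$. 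Cyclic Sylow subgroups are handled by \cite[Lemma 10.2.13]{Rouquier3}, and since the only possibly non-cyclic Sylow subgroup of $\mathcal A$ is its Sylow $2$-subgroup we may assume $b=2$; there I would repeat the prime-by-prime argument from the proof of Theorem \ref{mainthm}, now carrying the full right $\mathrm{N}_{N'}(Q)$-action, the $\gamma_0$-direction being supplied by Lemma \ref{extension local} (using Corollary \ref{graph2}, that $\gamma_0$ stabilises a parabolic with Levi complement $\Levi'$) and the $F_0$-direction by Lemma \ref{mainlemma:local} or Lemma \ref{mainlemma:local2} (using Lemma \ref{Frobroot} and Lemma \ref{Frobroot2}). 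I expect this last, non-abelian case to be the main obstacle: since there $\widehat X_Q/X_Q$ can fail to be abelian, the Brauer image $M_Q'$ cannot be obtained formally from Lemma \ref{ext loc} and must be extended directly, which amounts to re-running the global case distinction at the level of the local Deligne--Lusztig cohomology; the compatibility of the two $\Delta$-structures and the Mackey step in the first part are the other points requiring care.
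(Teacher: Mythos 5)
Your treatment of the case where $\mathcal N/\tilde L'$ is abelian is exactly the paper's: identify $M_Q'$ as an extension of $M_Q$ along the $\ell'$-index subgroup $X_Q\subset X_Q^{N'}$, verify the multiplicity-free hypothesis via Lemma~\ref{multiplicity free}, invoke Lemma~\ref{ext loc} for the almost-extension of $M_Q$, and conclude with Lemma~\ref{abelian factor}. The paper packages this as the ``$\Levi'$ not $1$-split'' case, since by Lemma~\ref{quotient=2} and Lemma~\ref{Frobcyclic2} that forces $\mathcal N/\tilde L'$ abelian, but the content is the same.

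The gap is in your remaining case. You assert that if $\mathcal N/\tilde L'$ is not abelian ``then $\mathrm{N}_{\tilde N'}(Q)\tilde L'=\tilde N'$ by the hypothesis of Lemma~\ref{ext loc}''; this reverses the logic. That disjunction is a \emph{hypothesis} of Lemma~\ref{ext loc}, not a consequence of non-abelianness, and Lemma~\ref{extending brauer image} as stated imposes no such condition on $Q$. So when $\mathcal N/\tilde L'$ is non-abelian (equivalently, $\Levi'$ is $1$-split) you have neither Lemma~\ref{ext loc} (you haven't verified its hypothesis) nor Lemma~\ref{abelian factor} (its abelianness assumption fails), and you are reduced to extending $M_Q'$ from scratch. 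Your proposed prime-by-prime re-run of Theorem~\ref{mainthm} via Lemmas~\ref{extension local}, \ref{mainlemma:local}, \ref{mainlemma:local2} does not do this: those lemmas extend the $L'$-level bimodule $M_Q$, and with abelianness unavailable there is no mechanism to pass that extension up to the $N'$-level module $M_Q'$ at the $\mathcal N$-level — which is exactly the obstruction you yourself flag at the end without resolving. What the paper uses instead, and what your outline misses, is that in the $1$-split case the parabolic subgroup is $F$-stable and hence $M\cong\Lambda[\G^F/\U'^F]e_s^{L'}$ is \emph{itself} an $\ell$-permutation module concentrated in degree $0$, so that $\mathcal C\cong M$, $\mathcal C'\cong M'$, and $\mathcal C_Q'$ is nothing but the unique $\Lambda$-lift of $\mathrm{Br}_{\Delta Q}(M')$. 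One then applies $\mathrm{Br}_{\Delta Q}$ to the $\ell$-permutation module $\hat M$ furnished by Theorem~\ref{mainthm}: $\mathrm{Br}_{\Delta Q}(\hat M)$ extends $\mathrm{Br}_{\Delta Q}(M')$ as $\ell$-permutation modules, and uniqueness of $\ell$-permutation lifts (\cite[Corollary~3.11.4]{Benson}) then gives the desired almost-extension of $M_Q'$, with $\mathrm{N}_{\mathcal N}(Q)$-stability inherited automatically from the $\Delta\mathcal N$-stability of $\hat M$. This structural shortcut bypasses both the case distinction on $Q$ and any appeal to Lemma~\ref{abelian factor}; you need it, or something playing an equivalent role, to close the $1$-split case.
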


\begin{proof}
	We first assume that $\Levi'$ is $1$-split. Then $M \cong \Lambda[\G^F/\U'^F] e_s^{L'}$ is an $\ell$-permutation module and $\mathcal C \cong M$. It follows that $\mathcal C' \cong M'$ and thus $H^{0}(\mathrm{Br}_{\Delta Q}(\mathcal C'))  \cong \mathrm{Br}_{\Delta Q}(M') $. The bimodule $M'$ extends to a $G \times (L')^{\opp} \Delta \mathcal{N}_{\ell'}$-module $\hat{M}$, which is $\mathcal N$-stable. Thus, $\mathrm{Br}_{\Delta Q}(\hat{M})$ is a $k(\C_G(Q) \times (\C_{L'}(Q))^{\opp} \Delta \N_{\mathcal{N}_{\ell'}}(Q))$-module extending $\mathrm{Br}_{\Delta Q}(M')$. It follows that $\mathrm{Br}_{\Delta Q}(\hat{M})$ is an $\ell$-permutation module as well. Thus, there exists a unique $\ell$-permutation module $\hat{M}_Q$ which lifts $\mathrm{Br}_{\Delta Q}(\hat{M})$ to $\Lambda$, see \cite[Corollary 3.11.4]{Benson}. Since this lift is unique and $\mathrm{Br}_{\Delta Q}(\hat{M})$ is $\mathrm{N}_{\mathcal{N}}(Q)$-stable it follows that $\hat{M}_Q$ is $\mathrm{N}_{\mathcal{N}}(Q)$-stable as well. In other words, $H^{0}(\mathcal{C}_Q') \mathrm{br}_Q(e_s^{L'})$ almost extends to $\mathrm{C}_G(Q) \times \mathrm{C}_{N'}(Q)^{\mathrm{opp}} \Delta \mathrm{N}_{\mathcal{N}}(Q)$. The claim follows by applying the induction functor.
	
	Now let us assume that $\Levi'$ is not $1$-split. By Lemma \ref{ext loc}, the bimodule $H^{\mathrm{dim}}_c(\Y_{\mathrm{C}_{\U'}(Q)}^{\mathrm{N}_{\G}(Q)},\Lambda) \mathrm{br}_Q(e_s^{L'})$ almost extends to $\mathrm{N}_{G}(Q) \times \mathrm{N}_{L'}(Q)^{\mathrm{opp}} \Delta \mathrm{N}_{\mathcal{N}}(Q)$. On the other hand, the bimodule $M_Q'$ is an extension of $H^{\mathrm{dim}}_c(\Y_{\mathrm{C}_{\U'}(Q)}^{\mathrm{N}_{\G}(Q)},\Lambda) \mathrm{br}_Q(e_s^{L'})$. The quotient group $\mathcal{N}/\tilde{L}'$ is abelian by Lemma \ref{quotient=2} and Lemma \ref{Frobcyclic2}. Therefore, Lemma \ref{abelian factor} shows that $H^{d_Q}(\mathcal{C}_Q') \mathrm{br}_Q(e_s^{L'})$ almost extends as well to a $\Lambda (\mathrm{N}_{G}(Q) \times \mathrm{N}_{L'}(Q)^{\mathrm{opp}} \Delta \mathrm{N}_{\mathcal{N}}(Q))$-module.
\end{proof}

According to Lemma \ref{extending brauer image} there exists an extension $\hat M_Q$ of $ M_Q'$ to $\Lambda(\mathrm{N}_G(Q) \times \mathrm{N}_{N}(Q)^{\mathrm{opp}} \Delta \mathrm{N}_{\mathcal{N}_{\ell'}}(Q))$.
With this notation the following is quite immediate.

\begin{corollary}\label{typeAlocal}
	Suppose the assumptions and notation as above. Then the bimodule $M_Q' C_Q$ induces a Morita equivalence between $\Lambda \mathrm{N}_{N'}(Q) C_Q$ and $\Lambda \mathrm{N}_{G}(Q) B_Q$ which lifts to the Morita equivalence $\hat{M}_Q$ between $\Lambda \mathrm{N}_{\mathcal{N}_{\ell'}}(Q,C_Q) C_Q$ and $\Lambda \mathrm{N}_{G \mathcal{A}_{\ell'}}(Q,B_Q) B_Q$.
\end{corollary}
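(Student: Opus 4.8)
The plan is to repeat the proof of Corollary~\ref{typeA} with every ingredient replaced by its local analogue at $Q$. For the first assertion the starting point is that the complex $\mathcal{C}'$ of Lemma~\ref{diagonal action} induces a splendid Rickard equivalence between $\Lambda G e_s^{G}$ and $\Lambda N' e_s^{L'}$; by \cite[Theorem~7.7]{Dat} the complex $\mathrm{Br}_{\Delta Q}(\mathcal{C}')\,c_Q$ then induces a splendid Rickard equivalence between $k\,\C_{G}(Q)\,b_Q$ and $k\,\C_{N'}(Q)\,c_Q$, and its unique lift $\mathcal{C}'_Q$ to $\Lambda$ induces the corresponding equivalence over $\Lambda$ by \cite[Theorem~5.2]{Rickard}. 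As in the proof of Lemma~\ref{multiplicity free}, the cohomology of $\mathrm{Br}_{\Delta Q}(\mathcal{C}')$ is concentrated in the single degree $d_Q$, so passing to $H^{d_Q}$ upgrades this to a Morita equivalence between $\Lambda\,\C_{G}(Q)\,b_Q$ and $\Lambda\,\C_{N'}(Q)\,c_Q$. I would then run this over all $c$-Brauer pairs $(Q,c_Q)$ and all blocks $c$ below $e_s^{L'}$ and induce from centralisers up to normalisers, which one checks is compatible with the Harris--Kn\"orr correspondence of blocks (\cite[Corollary~1.18]{Jordan}) exactly as in the proof of Corollary~\ref{typeA} and \cite[Section~7]{Dat}, to conclude that $M'_Q\,C_Q$ induces a Morita equivalence between $\Lambda\,\N_{N'}(Q)\,C_Q$ and $\Lambda\,\N_{G}(Q)\,B_Q$.

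For the second assertion I would invoke Clifford theory of Morita equivalences. By Lemma~\ref{extending brauer image} the bimodule $\hat M_Q$ extends $M'_Q$ to $\N_{G}(Q)\times\N_{N'}(Q)^{\opp}\Delta\,\N_{\mathcal{N}_{\ell'}}(Q)$, and by Definition~\ref{almost} this extension is $\N_{\mathcal{N}}(Q)$-stable; hence the diagonal action of $\N_{\mathcal{N}_{\ell'}}(Q)$ permutes the blocks lying above $e_s^{L'}$ compatibly with the correspondence $C_Q\leftrightarrow B_Q$ from the first part. Restricting $\hat M_Q$ to the stabiliser $\N_{\mathcal{N}_{\ell'}}(Q,C_Q)$ and multiplying by $C_Q$, I would apply \cite[Theorem~3.4]{Marcus} (see also \cite[Theorem~1.7]{Jordan}) along the normal inclusions $\N_{N'}(Q)\trianglelefteq\N_{\mathcal{N}_{\ell'}}(Q,C_Q)$ and $\N_{G}(Q)\trianglelefteq\N_{G\mathcal{A}_{\ell'}}(Q,B_Q)$, which yields that $\hat M_Q$ induces a Morita equivalence between $\Lambda\,\N_{\mathcal{N}_{\ell'}}(Q,C_Q)\,C_Q$ and $\Lambda\,\N_{G\mathcal{A}_{\ell'}}(Q,B_Q)\,B_Q$ lifting the Morita equivalence induced by $M'_Q\,C_Q$.

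The step I expect to be the main obstacle is checking that the hypotheses of \cite[Theorem~3.4]{Marcus} are satisfied: one must identify the two extensions $\N_{\mathcal{N}_{\ell'}}(Q,C_Q)/\N_{N'}(Q)$ and $\N_{G\mathcal{A}_{\ell'}}(Q,B_Q)/\N_{G}(Q)$ canonically, compatibly with their common identification with a section of $\mathcal{A}_{\ell'}$, and verify that $\hat M_Q$ is compatible with restriction to these stabilisers. This uses that $\mathcal{N}/\tilde L'$ is abelian (Lemma~\ref{quotient=2} and Lemma~\ref{Frobcyclic2}), that the Morita equivalence from the first part matches $B_Q$ with $C_Q$, and that $B_Q$ (resp.\ $C_Q$) is the Brauer correspondent of $b$ (resp.\ $c$). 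Finally, since everything is confined to the Hall $\ell'$-subgroups $\mathcal{N}_{\ell'}$ and $\mathcal{A}_{\ell'}$, exactly as in Corollary~\ref{typeA}, no obstruction arising from the $\ell$-part of $\mathcal{A}$ can occur.
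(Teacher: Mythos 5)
Your proposal is correct and follows essentially the same route as the paper: both rely on expressing $M_Q'$ as induced from a centraliser-level bimodule (obtained from $\mathrm{Br}_{\Delta Q}(\mathcal C')$, lifted to $\Lambda$, and passed to top cohomology), and then apply \cite[Theorem 3.4]{Marcus} together with Lemma \ref{extending brauer image} for the lift to the overgroups. The paper condenses your first part into the single observation that $M_Q' \cong \Ind_{\C_G(Q)\times\C_{N'}(Q)^{\opp}\Delta\N_{N'}(Q)}^{\N_G(Q)\times\N_{N'}(Q)^{\opp}\Delta\N_{\tilde N'}(Q)}(H^{d_Q}(\mathcal C_Q'))\,\br_Q(e_s^{L'})$, but the content is the same.
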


\begin{proof}
Note that $M_Q' \cong \Ind_{\mathrm{C}_G(Q) \times \mathrm{C}_{N'}(Q)^{\mathrm{opp}} \Delta \mathrm{N}_{N'}(Q)}^{\mathrm{N}_{G}(Q) \times \mathrm{N}_{N'}(Q)^{\mathrm{opp}} \Delta \mathrm{N}_{\tilde N'}(Q)}(H^{d_Q}(\mathcal{C}_Q')) \mathrm{br}_Q(e_s^{L'})$. From this we conclude that the bimodule $M_Q' C_Q$ induces a Morita equivalence between $\Lambda \mathrm{N}_{N'}(Q) C_Q$ and $\Lambda \mathrm{N}_{G}(Q) B_Q$. The second claim now follows from this and \cite[Theorem 3.4]{Marcus}.
\end{proof}

\section{The first reduction}\label{sec 9}

Let $b$ be a block of $\Lambda G e_s^{G}$, where $s \in (\G^\ast)^{F^\ast}$ is the fixed strictly-quasi isolated element from before. By \cite[Theorem 1.3]{Jordan} there exists a defect group $D$ contained in $N$ and since $\ell \nmid |N/L|$ we have $D \subset L$. In what follows, we let $Q$ be a fixed characteristic subgroup of $D$. For a given character $\chi$ we use $\mathrm{bl}(\chi)$ to denote the $\ell$-block $\chi$ belongs to. For the language of character triples and the definition of the order relation $\geq_b$ on character triples we refer the reader to \cite[Section 1.1]{Jordan2}.

\begin{theorem}\label{12}
	Let $ \chi \in \Irr(G,b)$ and $\chi' \in \Irr(\N_G(Q),B_Q)$ such that the following holds:
	\begin{enumerate}[label=(\roman*)]
		\item We have $(\tilde{G} \mathcal{B})_\chi = \tilde{G}_\chi \mathcal{B}_\chi$ and 
		$\chi$ extends to $(G \mathcal{B})_\chi$.
		\item
		We have $( \N_{\tilde{G}}(Q)  \mathrm{N}_{G \mathcal{B}}(Q) )_{\chi'}= \N_{\tilde{G}}(Q)_{\chi'} \mathrm{N}_{G \mathcal{B}}(Q)_{\chi'}$ and $\chi'$ extends to $ \mathrm{N}_{G \mathcal{B}}(Q)_{\chi'}$.
		\item $(\tilde{G} \mathcal{B})_\chi = G ( \N_{\tilde{G}}(Q)  \mathrm{N}_{G \mathcal{B}}(Q) )_{\chi'}$.
		\item There exists $\tilde{\chi} \in \Irr(\tilde{G} \mid \chi)$ and $\tilde{\chi}' \in \Irr(\N_{\tilde{G}}(Q) \mid \chi')$ such that the following holds:
		\begin{itemize}

			\item  For all $m \in {\mathrm{N}_{G \mathcal{B}}(Q)}_{\chi'}$ there exists $\nu \in \Irr(\tilde{G} /G)$ with $\tilde{\chi}^m= \nu \tilde{\chi}$ and $\tilde{\chi}'^m=\mathrm{Res}^{\tilde{G}}_{\N_{\tilde{G}}(Q)}(\nu) \tilde{\chi}'$.
			\item The characters $\tilde{\chi}$ and $\tilde{\chi}'$ cover the same underlying central character of $\mathrm{Z}(\tilde{G})$.
		\end{itemize}
		\item The Clifford correspondents $\tilde{\chi}_0 \in \Irr(\tilde{G}_\chi \mid \chi)$ and $\tilde{\chi}'_0 \in \Irr(\N_{\tilde{G}}(Q)_{\chi'} \mid \chi')$ of $\tilde{\chi}$ and $\tilde{\chi}'$ respectively satisfy $\mathrm{bl}(\tilde{\chi}_0)= \mathrm{bl}(\tilde{\chi}_0')^{\tilde{G}_\chi}$.
	\end{enumerate}
Let $\mathrm{Z}:=  \mathrm{Ker}(\chi) \cap \mathrm{Z}(G) $.
Then
$$(( \tilde{G} \mathcal{B})_\chi /Z, G/Z, \overline{\chi}) \geq_b ((\N_{\tilde{G}}(Q) \mathrm{N}_{G \mathcal{B}}(Q))_{\chi'} / Z,\N_G(Q)/Z, \overline{\chi'}),$$
where $\overline{\chi}$ and $\overline{\chi'}$ are the characters which inflate to $\chi$, respectively $\chi'$.
\end{theorem}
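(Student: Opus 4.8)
The plan is to recognise the statement as a verification of the hypotheses of the standard criterion for the relation $\geq_b$ on character triples that underlies the inductive Alperin--McKay condition, in the form recalled in \cite[Section~1.1]{Jordan2} (together with its accompanying gluing lemmas for projective representations); conditions (i)--(v) are set up precisely to feed into that criterion. Throughout write $A := G$, $\tilde A := (\tilde G \mathcal{B})_\chi$, $B := \N_G(Q)$, $\tilde B := (\N_{\tilde G}(Q) \mathrm{N}_{G \mathcal{B}}(Q))_{\chi'}$ and $Z := \Kerel(\chi) \cap \Z(G)$. The first step is the group-theoretic skeleton: by definition $A \trianglelefteq \tilde A$ and $B \trianglelefteq \tilde B$, $\chi$ is $\tilde A$-invariant and $\chi'$ is $\tilde B$-invariant; condition (iii) gives $\tilde A = A\tilde B$, and since $\N_{\tilde G \mathcal{B}}(Q) \cap \G^F = \N_G(Q)$ one obtains $\tilde B \cap A = B$, so $\tilde A/A$ and $\tilde B/B$ are canonically isomorphic. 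Moreover $Z \trianglelefteq \tilde A$ (as $\Z(G)$ is characteristic in $G \trianglelefteq \tilde A$ and $\Kerel(\chi)$ is $\tilde A$-stable), $Z \subseteq \Kerel(\chi)$ by definition, and $Z \subseteq \Z(G) \subseteq \N_G(Q)$ lies in $\Kerel(\chi')$ because $b$ and its Harris--Kn\"orr correspondent $B_Q$ afford the same central character on $\Z(G)$; hence the passage to quotients by $Z$ is legitimate and inert for everything below.

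Second, I would produce compatible projective representations $\mathcal{P}$ of $\tilde A$ over $\chi$ and $\mathcal{P}'$ of $\tilde B$ over $\chi'$. By (i), $\chi$ extends to $(G\mathcal{B})_\chi$, so its factor set is trivial in the $\mathcal{B}_\chi$-directions; combining this extension with a projective representation of $\tilde G_\chi$ attached to the Clifford correspondent $\tilde\chi_0$ of $\tilde\chi$, via the decomposition $(\tilde G\mathcal{B})_\chi = \tilde G_\chi \mathcal{B}_\chi$ of (i), assembles $\mathcal{P}$; the symmetric construction on the $\tilde B$-side uses (ii) and the Clifford correspondent $\tilde\chi_0'$ of $\tilde\chi'$. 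The factor sets of $\mathcal{P}$ and $\mathcal{P}'$ are classes in $H^2$ of the isomorphic groups $\tilde A/A$ and $\tilde B/B$; they agree on the $\mathcal{B}$-directions (trivial, by (i)--(ii)), and the matching in the $\tilde G$-direction, together with all the cross terms between field and graph automorphisms and the diagonal automorphisms coming from $\tilde G$, is exactly what the two bullets of (iv) encode: the scalars $\nu \in \Irr(\tilde G/G)$ describe the twisting of $\tilde\chi$ and $\tilde\chi'$ under $\mathrm{N}_{G\mathcal{B}}(Q)_{\chi'}$, and the coincidence of the central characters of $\tilde\chi$ and $\tilde\chi'$ on $\Z(\tilde G)$ removes the last ambiguity. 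Invoking the standard gluing lemma (cf.\ \cite{Jordan2}) that pins down a projective representation's factor set over $\tilde A/A$ from precisely these data, I would conclude that the factor sets of $\mathcal{P}$ and $\mathcal{P}'$ agree under the identification from (iii).

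Third comes the block compatibility that distinguishes $\geq_b$ from $\geq_c$. Condition (v), $\mathrm{bl}(\tilde\chi_0) = \mathrm{bl}(\tilde\chi_0')^{\tilde G_\chi}$, is exactly the Brauer-correspondence relation for the Clifford correspondents; I would propagate it to the blocks lying below $\chi$ in $\tilde A$ and below $\chi'$ in $\tilde B$ using Harris--Kn\"orr and the fact that $B_Q$ is the Harris--Kn\"orr correspondent of $b$, together with the compatibility of the Bonnaf\'e--Dat--Rouquier equivalence with Brauer pairs and with automorphisms (Theorem~\ref{Boro}, Corollary~\ref{typeAlocal}, Lemma~\ref{extending brauer image}) where control of the action of $\mathcal{A}$ on blocks is needed. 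Feeding the three steps into the definition of $\geq_b$ then yields the assertion.

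I expect the second step to be the main obstacle. When $\mathrm{Out}(\G^F)$ (hence $\tilde A/A$) is non-cyclic or non-abelian the factor set of a projective representation is not determined by its value on a single element, so the compatibility of $\mathcal{P}$ and $\mathcal{P}'$ is genuine extra information rather than a formality; the precise strength of (i)--(ii) (extendibility across the whole $\mathcal{B}$-part, not merely a cyclic piece) and of (iv) (simultaneous control of the $\tilde G$-direction and of central characters) is what is needed to make the factor sets pin down, and checking that all the cross relations among field, graph and diagonal automorphisms are respected is the delicate part, where the almost-extendibility results of Sections~\ref{sec 7} and~\ref{sec 8} are ultimately brought to bear.
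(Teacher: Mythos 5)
The paper's own proof of Theorem~\ref{12} is a two-line citation: it simply invokes \cite[Theorem 2.1]{Jordan2} and \cite[Lemma 2.2]{Jordan2}. Your proposal identifies the same abstract criterion as the source of the result and then attempts to unpack it, so at the top level you have the right approach. Your first two steps (the group-theoretic skeleton and the construction of compatible projective representations from (i), (ii), (iv)) are a fair sketch of what the cited criterion requires.

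The difficulty is your third step. You write that you would ``propagate'' condition~(v) using the Bonnaf\'e--Dat--Rouquier equivalence (citing Theorem~\ref{Boro}, Corollary~\ref{typeAlocal}, Lemma~\ref{extending brauer image}) and the almost-extendibility results of Sections~\ref{sec 7}--\ref{sec 8}. This is a category error. Theorem~\ref{12} is a purely abstract character-triple statement: conditions (i)--(v) are \emph{hypotheses} handed to you, and in particular condition~(v) already delivers, in exactly the form needed, the Brauer-correspondence compatibility on the Clifford correspondents. Passing from $\tilde{G}_\chi$ and $\N_{\tilde{G}}(Q)_{\chi'}$ up to $(\tilde{G}\mathcal{B})_\chi$ and $(\N_{\tilde{G}}(Q)\N_{G\mathcal{B}}(Q))_{\chi'}$ is pure Clifford theory over a $\ell'$-index extension and has nothing to do with splendid Rickard equivalences, Brauer functors, or Deligne--Lusztig varieties. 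The BDR machinery, together with Sections~\ref{sec 7}--\ref{sec 8}, is what the paper later uses in Theorem~\ref{reduction} to \emph{verify} conditions (i)--(v) for specific characters produced by the global and local Morita bimodules; none of it belongs in the proof of Theorem~\ref{12} itself. A smaller imprecision: your argument that $Z \subseteq \Kerel(\chi')$ via ``$b$ and $B_Q$ afford the same central character on $\Z(G)$'' is not correct as stated (characters within a block need not share a restriction to $\Z(G)_\ell$); what actually forces this is the second bullet of hypothesis (iv), asserting that $\tilde{\chi}$ and $\tilde{\chi}'$ lie over the same character of $\Z(\tilde{G}) \supseteq \Z(G)$.
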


\begin{proof}
	This is a consequence of \cite[Theorem 2.1]{Jordan2} and \cite[Lemma 2.2]{Jordan2}. 
\end{proof}

Note that all conditions in Theorem \ref{12} except condition (v) only depend on the character theory of $G$ and $\tilde{G}$ (together with its associated groups).

Let $M'$ be the bimodule constructed in Corollary \ref{typeA} which induces a Morita equivalence between $\Lambda G e_s^{G}$ and $\Lambda N' e_s^{L'}$. We let $c$ be the block of $\Lambda N' e_s^{L'}$ corresponding to $b$ under this equivalence. Recall the group $\mathcal{A}$ from Definition \ref{defA} and that $\mathcal{N}=\mathrm{N}_{\tilde{G} \mathcal{A}}(\Levi',e_s^{L'})$. We denote by $\mathcal{N}_b$ the stabilizer of the block $b$ in $\mathcal{N}$.

With this notation we are now ready to state the following theorem which can be seen as an analog of \cite[Theorem 2.12]{Jordan2}.

\begin{theorem}\label{reduction}
	Assume that $\ell \nmid 2 |H^1(F,\mathrm{Z}(\G))|$ and suppose that the following hold.
	\begin{enumerate}[label=(\roman*)]

		\item There exists an $\Irr( \tilde{N}' / N') \rtimes \mathrm{N}_{\mathcal{N}}(Q)$-equivariant bijection $\tilde{\varphi}: \Irr(\tilde{N'} \mid \Irr_0(c) ) \to \Irr(\N_{\tilde{N}'}(Q) \mid \Irr_0(C_Q))$ such that it maps characters covering the character $\nu \in \Irr(\mathrm{Z}(\tilde{G}))$ to a character covering $\nu$.
		\item There exists an $\mathrm{N}_{ \mathcal{N}}(Q,C_Q)$-equivariant bijection 
		$\varphi:  \Irr_0(N' , c) \to \Irr_0(\N_{N'}(Q) , C_Q)$ which satisfies the following two conditions:
		\begin{itemize}
			\item 
			If $\chi \in \Irr_0(N',c)$ extends to a subgroup $H$ of $\mathcal{N}_b$ then $\varphi(\chi)$ extends to $\mathrm{N}_{H}(Q)$.
			\item $\tilde{\varphi}(\Irr(\tilde{N}' \mid \chi))= \Irr(\N_{\tilde{N}'}(Q) \mid \varphi(\chi))$ for all $\chi \in \Irr_0(c)$.
		\end{itemize}
		\item For every $\theta \in \Irr_0(c)$ and $\tilde{\theta} \in \Irr(\tilde{N}' \mid \theta)$ the following holds: If $\theta_0 \in \Irr(\tilde{N}'_\theta \mid \theta)$ is the Clifford correspondent of $\tilde{\theta} \in \Irr(\tilde{N'})$ then $\mathrm{bl}(\theta_0)=\mathrm{bl}(\theta_0')^{\tilde{N}'_\theta}$, where $\theta'_0 \in \Irr(\N_{\tilde{N}'}(Q)_{\varphi(\theta)} \mid \varphi(\theta))$ is the Clifford correspondent of $\tilde{\varphi}(\tilde{\theta})$.
		
	\end{enumerate}
	Then the block $b$ is AM-good.
\end{theorem}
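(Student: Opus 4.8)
The plan is to deduce the AM-goodness of $b$ from the abstract reduction criterion of \cite[Theorem 2.12]{Jordan2}, using Theorem~\ref{12} as the mechanism that transports the "projective-lifting plus block-compatibility" data from $N'$ to $G$ across the Bonnaf\'e--Dat--Rouquier Morita equivalence. Concretely, I would fix $\chi\in\Irr_0(G,b)$ and set $\theta\in\Irr_0(N',c)$ to be the character corresponding to $\chi$ under the bijection $\Irr(N',e_s^{L'})\to\Irr(G,e_s^G)$ induced by the bimodule $M'$ from Corollary~\ref{typeA}; dually define $\chi'\in\Irr_0(\N_G(Q),B_Q)$ as the image of $\varphi(\theta)$ under the \emph{local} Morita correspondence $M_Q'C_Q$ of Corollary~\ref{typeAlocal}. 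The equivariance built into Corollary~\ref{typeA} and Corollary~\ref{typeAlocal} — namely that these Morita equivalences lift to $\tilde G\mathcal A_{\ell'}$ and to $\N_{G\mathcal A_{\ell'}}(Q,B_Q)$ respectively, with the liftings compatible with the induction functor — is what lets me translate hypotheses (i)--(iii) of Theorem~\ref{reduction} about $N'$ into the hypotheses (i)--(iv) of Theorem~\ref{12} about $G$. In particular the $\Irr(\tilde N'/N')\rtimes\N_{\mathcal N}(Q)$-equivariant bijection $\tilde\varphi$ gives, after applying the two global/local equivalences, an analogous bijection over $\tilde G$, and the extendibility clause in (ii) ("if $\chi$ extends to $H\subseteq\mathcal N_b$ then $\varphi(\chi)$ extends to $\N_H(Q)$") transports to the extendibility demands (i)(ii) of Theorem~\ref{12} because Morita equivalences that are themselves equivariant preserve extendibility of characters to overgroups.

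Next I would verify conditions (i)--(iv) of Theorem~\ref{12} one at a time. The stabilizer identities $(\tilde G\mathcal B)_\chi=\tilde G_\chi\mathcal B_\chi$ and $(\tilde G\mathcal B)_\chi=G(\N_{\tilde G}(Q)\,\N_{G\mathcal B}(Q))_{\chi'}$ should follow from the corresponding statements over $N'$ (hypothesis (ii) of Theorem~\ref{reduction}, together with the structure of $\mathcal N$ recorded in Definition~\ref{defA} and Lemmas~\ref{quotient=2}, \ref{Frobcyclic2}, \ref{Frobroot}, \ref{Frobroot2}: $\mathcal A$ is abelian and $\mathcal N/\tilde L'$ is abelian or $N'/L'$ is central), using that $\mathcal A$ has image $\mathrm{Out}(\G^F)_{e_s^{\G^F}}$ so that $\mathcal B$-stabilizers of $\chi$ match $\mathcal A$-stabilizers of $\theta$. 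The extendibility of $\chi$ to $(G\mathcal B)_\chi$ and of $\chi'$ to $\N_{G\mathcal B}(Q)_{\chi'}$ comes from the bullet points of hypothesis (ii) in Theorem~\ref{reduction} pushed through the equivariant Morita equivalences of Corollaries~\ref{typeA} and \ref{typeAlocal}. For condition (iv) of Theorem~\ref{12} I pick $\tilde\chi\in\Irr(\tilde G\mid\chi)$ corresponding to $\tilde\theta:=\tilde\varphi^{-1}(\cdots)$ under the $\tilde G$-Morita equivalence and $\tilde\chi'$ corresponding to the local analogue; the action-by-linear-characters compatibility $\tilde\chi^m=\nu\tilde\chi$, $\tilde\chi'^m=\Res(\nu)\tilde\chi'$ is exactly the image of the linear-character clause in hypothesis (i) of Theorem~\ref{reduction} (the bijection $\tilde\varphi$ is $\Irr(\tilde N'/N')$-equivariant), and the "same central character of $\Z(\tilde G)$" clause is the image of the last sentence of hypothesis (i) since $\Z(\tilde G)$ acts through $\tilde G$ and is unchanged by the Morita equivalences (which are $\Z(\tilde G)$-linear). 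Finally, condition (v) of Theorem~\ref{12}, the block equality $\mathrm{bl}(\tilde\chi_0)=\mathrm{bl}(\tilde\chi_0')^{\tilde G_\chi}$ for the Clifford correspondents, is obtained from hypothesis (iii) of Theorem~\ref{reduction}, the matching block relation $\mathrm{bl}(\theta_0)=\mathrm{bl}(\theta_0')^{\tilde N'_\theta}$, via the fact that the Morita equivalences of Corollaries~\ref{typeA} and \ref{typeAlocal} induce bijections on blocks compatible with Brauer correspondence and Harris--Kn\"orr correspondence (this is where the assumption $\ell\nmid|H^1(F,\Z(\G))|$ enters, through Lemma~\ref{diagonal action} and Lemma~\ref{extending brauer image}, ensuring the local lifts $\mathcal C_Q'$, $\hat M_Q$ exist and are compatible).

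Once Theorem~\ref{12} is applied I obtain, for each such $\chi$, the central-quotient order relation
$$(( \tilde{G} \mathcal{B})_\chi /Z,\; G/Z,\; \overline{\chi}) \;\geq_b\; ((\N_{\tilde{G}}(Q) \mathrm{N}_{G \mathcal{B}}(Q))_{\chi'} / Z,\; \N_G(Q)/Z,\; \overline{\chi'}),$$
and the remaining task is to assemble these into the inductive AM-condition, i.e. to feed them into \cite[Theorem 2.12]{Jordan2} (the general criterion that reduces AM-goodness of a block to the existence of such a central-isomorphism-class-preserving family of order relations over a characteristic subgroup $Q$ of a defect group). Here I use that $D\subseteq L'$ (since $\ell\nmid|N/L|$ and $\ell\nmid|N'/L'|$ by \cite[Corollary 2.9]{Bonnafe}), so $Q$ may indeed be taken as a characteristic subgroup of $D$ inside $L'$, and that $B_Q$ is the Brauer correspondent block data on the local side; the bijection on height-zero characters $\Irr_0(G,b)\to\Irr_0(\N_G(Q),B_Q)$ is the composite of the three bijections (global Morita, $\varphi$, local Morita), which is $(\tilde G\mathcal B)$-equivariant by construction. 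The main obstacle, as I see it, is bookkeeping the equivariance and block-compatibility through the \emph{composite} of the global BDR Morita equivalence, the abstract bijection $\varphi$, and the local BDR Morita equivalence simultaneously — in particular checking that the extendibility-to-$H$ property and the Clifford-correspondent block relation survive all three steps coherently, since each step is only controlled "up to" the abelian groups $\mathcal A$, $\mathcal N/\tilde L'$ and one must ensure no obstruction in $H^2$ is introduced; this is exactly why the hypotheses of Theorem~\ref{reduction} are phrased with the strong equivariance (i)--(iii) over $N'$ and why $\ell\neq 2$ (so $\mathcal A_{\ell'}$, $\mathcal N_{\ell'}$ capture the whole relevant $\ell'$-structure) is assumed.
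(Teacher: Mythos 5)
Your high-level strategy matches the paper's: define $\Psi := R_Q\circ\varphi\circ R^{-1}$ as the composite of the global equivalence from Corollary~\ref{typeA}, the abstract bijection $\varphi$ from hypothesis (ii), and the local equivalence from Corollary~\ref{typeAlocal}, then check the conditions of Theorem~\ref{12} for each $\chi$ and $\chi':=\Psi(\chi)$, and conclude via \cite[Theorem~1.10]{Jordan} that $\Psi$ is a strong iAM-bijection. Conditions (iii) and (iv) also work out the way you describe, via the $\Irr(\tilde N'/N')\rtimes\N_{\mathcal N}(Q)$-equivariance of $\tilde\varphi$ and \cite[Lemma~2.9]{Jordan2}, \cite[Lemma~2.10]{Jordan}.

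However, two substantive steps in the paper's proof are glossed over in a way that conceals genuine difficulties. First, for condition (ii) of Theorem~\ref{12} — extendibility of $\chi'$ to its inertia group in $\N_{G\mathcal B}(Q)$ — your claim that this "transports because Morita equivalences that are themselves equivariant preserve extendibility" is not directly usable: hypothesis (ii) gives extendibility information relative to subgroups $H\subseteq\mathcal N_b$, and $\mathcal N_b$ is built from $\mathcal A=\langle\gamma_0,F_0\rangle$, whereas condition (ii) of Theorem~\ref{12} asks for extension along $\mathcal B_\chi\subseteq\langle F_p,\gamma\rangle$. Since the Sylow $2$-subgroup $\mathcal B_2$ of $\mathcal B_\chi$ need not be cyclic, one cannot simply appeal to \cite[Lemma~10.2.13]{Rouquier3}. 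The paper bridges $\mathcal A$ and $\mathcal B$ by exploiting that $F_0^i=\ad(\tilde g)\phi^i$ for some $\tilde g\in\tilde L'$, then chains: a $\phi^i$-stable extension of $\chi$ to $G\langle\gamma\rangle$ is $\tilde g^{-1}F_0^i$-stable, hence $\chi$ extends to $G\langle\tilde g^{-1}F_0^i\rangle$; Corollary~\ref{typeA} carries this to $R^{-1}(\chi)$ on $N'\langle\tilde g^{-1}F_0^i,\gamma\rangle$; hypothesis (ii) carries it to $\varphi(R^{-1}(\chi))$; and Corollary~\ref{typeAlocal} carries it back to an extension of $\chi'$ that is $\N_{G\langle\phi,\gamma\rangle}(Q)_{\chi'}$-stable, whence $\chi'$ extends to $\N_{G\mathcal B_2}(Q)$. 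This diagonal-correction argument is essential and your proposal does not supply it.

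Second, for the block equality in condition (v), the paper proves a separate Lemma~\ref{intermediatesubgroup}: it introduces $J=\tilde G_\chi$ and the corresponding $J_0\le\tilde N'$, builds from $\mathcal C'$ the complex $\tilde{\mathcal C}_0'$ over $J\times J_0^{\opp}$ via \cite[Lemma~1.9]{Jordan}, constructs the local bimodule $(M_Q')_0$, transports the equality $\mathrm{bl}((R_0)_Q^{-1}(\tilde\chi_0'))^{J_0}=\mathrm{bl}(R_0^{-1}(\tilde\chi_0))$ (which follows from hypothesis (iii)) through these, and finally invokes \cite[Remark~1.21]{Jordan} to identify the two expressions for $\Br_{\Delta Q}$ of the relevant complexes. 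Your phrase "the Morita equivalences induce bijections on blocks compatible with Brauer correspondence and Harris--Knörr correspondence" points in the right direction, but this compatibility at the level of Clifford-correspondent blocks of the \emph{intermediate} subgroup $\tilde G_\chi$, rather than of $\tilde G$ or $G$ themselves, is exactly what requires the extra lemma and is the place where $\ell\nmid|H^1(F,\mathrm{Z}(\G))|$ and the splendid Rickard structure (Lemma~\ref{diagonal action}, Lemma~\ref{extending brauer image}) are actually used.
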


\begin{proof}
According to Corollary \ref{typeA} the bimodule $M'$ induces an $\mathcal{N}$-equivariant bijection
	$$R: \Irr_0(N',e_s^{L'}) \to \Irr_0(G,e_s^G).$$
By Corollary \ref{typeAlocal} the bimodule $M_Q'$ induces an $\mathrm{N}_{\mathcal{N}}(Q,C_Q)$-equivariant bijection
 $$R_Q: \Irr_0(\mathrm{N}_{N'}(Q),C_Q) \to \Irr_0(\mathrm{N}_G(Q),B_Q).$$
	We define $\Psi:=R_Q \circ  \varphi \circ R^{-1}:\Irr_0(G,b) \to \Irr_0(\mathrm{N}_G(Q), B_Q)$, which is by construction $\mathrm{N}_{\tilde{G} \mathcal{A}}(Q,B_Q)$-equivariant.
	
	As in the proof of \cite[Theorem 2.12]{Jordan2} we can
	assume that the character $\chi$ satisfies condition (i) of Theorem \ref{12}. We denote $\chi' := \Psi(\chi)$ and show that the characters $\chi$ and $\chi'$ satisfy the conditions of Theorem \ref{12}.
	
	Since the bijection $\Psi:\Irr_0(G,b) \to \Irr_0(\N_G(Q),B_Q)$ is $\mathrm{N}_{\tilde{G} \mathcal{A}}(Q,B_Q)$-equivariant we deduce that condition (iii) in Theorem \ref{12} is satisfied and we have 
	$$(  \mathrm{N}_{\tilde G \mathcal{B}}(Q))_{\chi'}= \N_{\tilde{G}}(Q)_{\chi'} \mathrm{N}_{G \mathcal{B}}(Q)_{\chi'}.$$
	The following lemma finishes the verification of condition (ii) in Theorem \ref{12}
	
	\begin{lemma}
		The character $\chi'$ extends to its inertia group in $\N_{G \mathcal{B}}(Q)$.
	\end{lemma}

\begin{proof}
We have $\N_{G \mathcal{B}}(Q)_{\chi'}/\N_{G}(Q) \cong \mathcal{B}_\chi$. Since the Sylow $r$-subgroups of $\mathcal{B}_\chi$ for $r \neq 2$ are cyclic it suffices to show that $\chi'$ extends to $\N_{G \mathcal{B}_2}(Q)$, where $\mathcal{B}_2$ is the Sylow $2$-subgroup of $\mathcal{B}_\chi$.

We may assume that $\mathcal{B}_2=\langle \phi^i, \gamma \rangle$. There exists some $\tilde{g} \in \tilde{L}'$ such that $F_0^i =\mathrm{ad}(\tilde{g}) \phi^i$. Recall that the character $\chi$ extends to $G \mathcal{B}_2$. It follows that $\chi$ has a $\phi^i$-stable extension to $G \langle \gamma \rangle$. Consequently, this extension is $\tilde{g}^{-1} F_0^i$-stable. From this it follows that $\chi$ extends to $G \langle \tilde{g}^{-1} F_0^i \rangle$. Thus, by Corollary \ref{typeA} the character $R^{-1}(\chi)$ extends to $N' \langle \tilde{g}^{-1} F_0^i,\gamma \rangle$. By assumption (ii), the character $\varphi(R^{-1}(\chi))$ therefore extends to its inertia group in $\N_{G \langle \tilde{g}^{-1} F_0^i,\gamma \rangle}(Q)$. Hence, by Corollary \ref{typeAlocal} the character $\chi'$ has an extension to $\N_{G \langle \gamma \rangle}(Q)_{\chi'}$ which is $\N_{G \langle \phi,\gamma \rangle}(Q)_{\chi'}$-stable. It follows that $\chi'$ extends to $\N_{G\mathcal{B}_{2}}(Q)=\langle \phi^i, \gamma \rangle$.
\end{proof}


Fix a character $\tilde{\chi} \in \Irr(\tilde{G} \mid \chi)$. We define
	$\tilde{\chi}':=\tilde{R}_Q\circ \tilde{\varphi} \circ \tilde{R}^{-1} (\tilde{\chi})$, where $\tilde{R}$ and $\tilde{R}_Q$ are defined as in \ref{character bijection}. From \cite[Lemma 2.9]{Jordan2} and assumption (i) it follows that the first part of condition (iv) in Theorem \ref{12} is satisfied. Moreover, \cite[Lemma 2.10]{Jordan} and assumption (i) imply that the second part of condition (iv) in Theorem \ref{12} is satisfied.

We now verify condition (v) in Theorem \ref{12}. Let $\tilde{\chi}_0 \in \Irr(\tilde{G}_\chi \mid \chi)$ be the Clifford correspondent of $\tilde{\chi}$. Moreover, let $\tilde{\chi}_0' \in \Irr( \N_{\tilde{G}}(Q)_{\chi'} \mid \chi')$ be the Clifford correspondent of $\tilde{\chi}'$. Recall that there exists a complex $\mathcal C'$ of $\Lambda (G \times (N')^{\mathrm{opp}} \Delta \tilde{N})$-modules such that $H^d(\mathcal C') \cong M'$ and $\mathcal C'$ induces a splendid Rickard equivalence between $\Lambda G e_s^{G}$ and $\Lambda N' e_s^{L'}$, see Lemma \ref{diagonal action}.

\begin{lemma}\label{intermediatesubgroup}
	The characters $\tilde{\chi}_0 \in \Irr(\tilde{G}_\chi \mid \chi)$ and $\tilde{\chi}_0' \in \Irr(\N_{\tilde{G}}(Q)_{\chi'} \mid \chi')$ satisfy $\mathrm{bl}(\tilde{\chi}_0')^{\tilde{G}_\chi}=\mathrm{bl}(\tilde{\chi}_0)$.
\end{lemma}
\begin{proof}
	Define $J:=\tilde{G}_{\chi}$ and let $J_0$ be the subgroup of $\tilde{N}'/N'$ corresponding to $J$ under the natural isomorphism $\tilde{N}' / N' \cong \tilde{G} / G$.
	
Consider $\mathcal C:= G \Gamma_c(\Y_{\U'}^\G,\Lambda) e_s^{L'}$ as complex of $\Lambda( G \times (L')^{\opp} \Delta (\tilde{L}'))$-modules and define $\tilde{\mathcal{ C}}:=\Ind_{ G \times (L')^{\opp} \Delta (\tilde{L}')}^{ \tilde{G} \times (\tilde{L}')^{\opp}}(\mathcal{C})$. We have $\tilde{\mathcal{C}} \cong  G \Gamma_c(\Y_{\U'}^{\tilde{\G}},\Lambda) e_s^{L'}$ by \cite[Proposition 1.1]{Godement}.

As in \ref{character bijection} we let $e \in \mathrm{Z}(\Lambda \tilde L')$ be the central idempotent such that $\sum_{n \in N'/L'} {}^n e=e_s^{L'}$. It follows similar to arguments given in the proof of \cite[Theorem 7.5]{Dat} that the complex $\tilde{\mathcal C}':=\tilde{\mathcal C} e \otimes_{\Lambda \tilde{L}'} \Lambda \tilde{N}'$ induces a splendid Rickard equivalence between $\Lambda \tilde{N}' e_s^{L'}$ and $\Lambda \tilde{G} e_s^{G}$. Moreover, we have $\Ind_{G \times (L')^{\opp} \Delta (\tilde{L}')}^{\tilde{G} \times (\tilde{N}')^{\opp}}(\mathcal C') \cong \tilde{\mathcal C}'$. The cohomology of $\mathcal C'$ is concentrated in degree $d:= \mathrm{dim}(\Y_\U^\G)$ and $H^d(\tilde{\mathcal C}_0') \cong  \mathrm{Ind}_{( G \times (L')^{\mathrm{opp}}) \Delta J_0}^{J \times J_0^{\mathrm{opp}}} H^d(\mathcal C')$.  By \cite[Theorem 3.4]{Marcus} the bimodule $H^d(\tilde{\mathcal C}_0')$ induces a Morita equivalence between $\mathcal{O} J_0 c$ and $\mathcal{O} J b$. We denote by 
$$R_0:\Irr(J_0 , c) \to \Irr(J , b)$$
the associated bijection between irreducible characters. Using \cite[Lemma 1.9]{Jordan} we see that the complex $\tilde{\mathcal C}_0':=\mathrm{Ind}_{( G \times N^{\mathrm{opp}}) \Delta(J_0)}^{J \times J_0^{\mathrm{opp}}} (\mathcal C') c$ induces a splendid Rickard equivalence between $\mathcal{O} J b$ and $\mathcal{O} J_0 c$.  The bimodule
 $$(M'_Q)_0:=\Ind^{\N_{J}(Q) \times (\N_{J_0}(Q))^{\opp} }_{\N_{G}(Q) \times (\N_{L}(Q))^{\opp} \Delta \N_{J_0}(Q) } (\Res_{\N_{G}(Q) \times (\N_{L'}(Q))^{\opp} \Delta \N_{J_0}(Q) }^{\N_{G}(Q) \times (\N_{L}(Q))^{\opp} \Delta \N_{ \tilde{N}'}(Q)} (M_Q'))$$ induces a Morita equivalence between $\mathcal{O} \mathrm{N}_{J_0}(Q) C_Q$ and $\mathcal{O} \mathrm{N}_{J}(Q)B_Q$. We denote the associated character bijection by
	$$(R_0)_Q:\Irr(\mathrm{N}_{J_0}(Q) , C_Q) \to \Irr(\mathrm{N}_J(Q), B_Q).$$

%
%
%

	By construction, $\tilde{\chi}' \in \Irr( \mathrm{N}_{\tilde{G}}(Q) \mid \chi')$. Let $\theta:=R^{-1}(\chi)$ and $\tilde{\theta}:=\tilde{R}^{-1}(\tilde{\chi})$. 
We obtain that $(R_0)^{-1}_Q(\tilde{\chi}_0')\in \Irr(\mathrm{N}_{J_0}(Q) \mid \varphi(\theta))$ is the Clifford correspondent of $\tilde{\varphi}(\tilde{\theta})$. Consequently, we have 
	$$\mathrm{bl}((R_0)_Q^{-1}(\tilde{\chi'_0}))^{J_0}=\mathrm{bl}(R^{-1}_0(\tilde{\chi}_0)) $$
	by assumption (iii).
By the definition of $M_Q'$ after Lemma \ref{extending brauer image} we have $$M_Q' \otimes_\Lambda k \cong \Ind_{\mathrm{C}_G(Q) \times \mathrm{C}_{N}(Q)^{\mathrm{opp}} \Delta \mathrm{N}_{N'}(Q)}^{\mathrm{N}_{G}(Q) \times \mathrm{N}_{N'}(Q)^{\mathrm{opp}} \Delta \mathrm{N}_{\tilde N'}(Q)} H^{d_Q}(\mathrm{Br}_{\Delta Q}(C')) \mathrm{br}_Q(e_s^{L'})$$
and therefore,
$$(M_Q')_0 \otimes_\Lambda k \cong \Ind_{\mathrm{C}_G(Q) \times \mathrm{C}_{N'}(Q)^{\mathrm{opp}} \Delta \mathrm{N}_{J_{0}}(Q)}^{\mathrm{N}_{G}(Q) \times \mathrm{N}_{N'}(Q)^{\mathrm{opp}} \Delta \mathrm{N}_{\tilde J_{0}}(Q)} H^{d_Q}(\mathrm{Br}_{\Delta Q}(\mathcal C_0')) \mathrm{br}_Q(e_s^{L'}).$$
Therefore, by \cite[Remark 1.21]{Jordan} the equality $\mathrm{bl}((R_0)_Q^{-1}(\tilde{\chi'_0}))^{J_0}=\mathrm{bl}(R^{-1}_0(\tilde{\chi}_0))$ implies that $\mathrm{bl}(\tilde{\chi}'_0)^J=\mathrm{bl}(\tilde{\chi}_0)$.
\end{proof}

Theorem \ref{12} applies and we obtain
	$$(( \tilde{G} \mathcal{B})_\chi /Z, G/Z, \overline{\chi}) \geq_b ( \mathrm{N}_{\tilde G \mathcal{B}}(Q)_{\chi'} / Z,\mathrm{N}_G(Q), \overline{\chi'}),$$
where $\mathrm{Z}:= \mathrm{Z}(G) \cap \mathrm{Ker}(\chi)$. 
Using \cite[Theorem 1.10]{Jordan} we deduce that the bijection $\Psi:\Irr_0(G,b) \to \Irr_0(\N_G(Q),B_Q)$ is a strong iAM-bijection in the sense of \cite[Definition 1.9]{Jordan2}, which shows that $b$ is AM-good, see \cite[Definition 1.11]{Jordan2}.
\end{proof}

\begin{remark}\label{reductionBC}
	The statement of the previous theorem remains true for the groups occurring in Theorem \ref{equiv} if we take $\Levi'^\ast$ to be the minimal Levi subgroup containing $\C^\circ_{\G^\ast}(s)$ and replace $\mathcal{A}$ by the group $\langle F_0\rangle$. To see this, just replace Corollary \ref{typeA} and Corollary \ref{typeAlocal} by Theorem \ref{equiv} (and its local version which can be proved as in Corollary \ref{typeAlocal}).
\end{remark}

\begin{remark}\label{relative}
In the proof of Theorem \ref{reduction} we have shown that there exist a strong iAM-bijection $\Psi: \Irr_0(G,b) \to \Irr_0(\N_G(Q),B_Q)$ for the characteristic subgroup $Q$ of a defect group $D$ of the block $b$. In Section \ref{sec 11} it will be important to keep track of the subgroup $Q$. If such a strong iAM-bijection exists then we will say that the block $b$ is AM-good relative to the subgroup $Q$.
\end{remark}

\section{Results on $\ell$-blocks when $\ell \mid (q-\varepsilon)$}


\subsection{Action of diagonal automorphisms on $2$-blocks}

Let $Y$ be a finite group and $X$ be a normal subgroup of $Y$. For an $\ell$-block $e$ of $X$ we denote by $Y[e]$ the \textit{Dade ramification group of $e$ in $Y$}, see for instance \cite{Murai}.

\begin{lemma}\label{2block}
	Let $\G$ be a simple, simply connected group of type $A$ and let $b$ be a (non-necessarily quasi-isolated) $\ell$-block of $G$. If $\ell \mid (q-\varepsilon)$ then block induction yields a bijective map $\mathrm{Bl}(\tilde{G}[b] \mid b) \to \mathrm{Bl}(\tilde{G} \mid b)$.
\end{lemma}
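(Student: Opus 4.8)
The plan is to reduce, by Clifford theory of blocks, to the case where $b$ is $\tilde{G}$-stable, and then to invoke the theory of Dade's ramification group. I first record the relevant structure. Since $\G$ is simple, simply connected of type $A_n$, we may take $\G=\mathrm{SL}_{n+1}$, so that $\tilde{G}$ is $\mathrm{GL}_{n+1}(q)$ or $\mathrm{GU}_{n+1}(q)$ according as $\varepsilon=1$ or $\varepsilon=-1$, and $G$ is the corresponding group $\mathrm{SL}_{n+1}(q)$ or $\mathrm{SU}_{n+1}(q)$; the determinant then yields a short exact sequence $1\to G\to\tilde{G}\to C_{q-\varepsilon}\to 1$. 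In particular $\tilde{G}/G$ is \emph{cyclic} of order $q-\varepsilon$ (the hypothesis $\ell\mid(q-\varepsilon)$ isolates the case where $\ell$ divides this index), and every subgroup of $\tilde{G}$ containing $G$ is normal in $\tilde{G}$; this applies to the stabiliser $\tilde{G}_b$ and, as $\tilde{G}[b]=\tilde{G}_b[b]$ lies between $G$ and $\tilde{G}_b$, to $\tilde{G}[b]$ as well.

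Next I reduce to the $\tilde{G}$-stable case. By the Fong--Reynolds correspondence for blocks, block induction $c\mapsto c^{\tilde{G}}$ is a bijection $\mathrm{Bl}(\tilde{G}_b\mid b)\to\mathrm{Bl}(\tilde{G}\mid b)$. Since $\tilde{G}[b]=\tilde{G}_b[b]$ and block induction is transitive along the chain $G\subseteq\tilde{G}[b]\subseteq\tilde{G}_b\subseteq\tilde{G}$, it suffices to prove that $\mathrm{Bl}(\tilde{G}_b[b]\mid b)\to\mathrm{Bl}(\tilde{G}_b\mid b)$ is a bijection. Thus we may assume $\tilde{G}=\tilde{G}_b$, i.e.\ that $b$ is $\tilde{G}$-stable, so that $e_b$ is central in $\Lambda\tilde{G}$.

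Now assume $b$ is $\tilde{G}$-stable. By Dade's ramification-group theory (see \cite{Murai}), for each block $c$ of $\tilde{G}[b]$ covering $b$ the induced block $c^{\tilde{G}}$ is a well-defined block covering $b$, every block of $\tilde{G}$ covering $b$ arises in this way, and the resulting surjection $\mathrm{Bl}(\tilde{G}[b]\mid b)\to\mathrm{Bl}(\tilde{G}\mid b)$ is injective. Concretely, the fibre over a block $\hat{b}$ of $\tilde{G}$ consists of those blocks of $\tilde{G}[b]$ which lie over $b$ and are covered by $\hat{b}$; by Clifford theory of blocks these form a single $\tilde{G}$-orbit, and the defining property of the ramification group $\tilde{G}[b]$ is exactly that each such block is already $\tilde{G}$-stable — the remaining obstruction being governed by a crossed product over the cyclic quotient $\tilde{G}/\tilde{G}[b]$, whose class in $H^2(\tilde{G}/\tilde{G}[b],k^{\times})$ vanishes. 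Hence the orbit is a singleton and block induction $\mathrm{Bl}(\tilde{G}[b]\mid b)\to\mathrm{Bl}(\tilde{G}\mid b)$ is bijective.

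The step demanding the most care is the last one: correctly extracting from Dade's ramification-group machinery that $c^{\tilde{G}}$ is a block and that the blocks of $\tilde{G}[b]$ lying over $b$ are $\tilde{G}$-stable, and noting that the cyclicity of $\tilde{G}/G$, which is forced here by type $A$, makes the relevant cocycle obstruction trivial. By contrast, the reduction to $\tilde{G}$-stable $b$ via Fong--Reynolds, the transitivity of block induction, and the determinant exact sequence are all routine.
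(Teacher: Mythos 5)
Your reduction to the case $b$ stable under $\tilde{G}$ (via Fong--Reynolds and transitivity of block induction) is fine, and so is your observation that every subgroup between $G$ and $\tilde{G}$ is normal. The gap lies in the last paragraph, in two linked places.

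First, you assert that ``the defining property of the ramification group $\tilde{G}[b]$ is exactly that each such block is already $\tilde{G}$-stable.'' This is not the definition of Dade's ramification group, nor is it a formal consequence of it: $\tilde{G}[b]$ is defined in terms of the $G$-graded structure of $e_b\mathcal{O}\tilde{G}_be_b$ (equivalently, via which graded components contain units), not in terms of stability of the blocks of $\tilde{G}[b]$ lying over $b$. What one does get from this theory, and what the paper invokes (Murai, Prop.~3.9 and Thm.~3.5), is the count $|\mathrm{Bl}(\tilde{G}[b]\mid b)|=|\tilde{G}[b]:G|_{\ell'}$ together with \emph{surjectivity} of block induction; injectivity is not automatic and is precisely what has to be proved. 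Your cocycle argument does not close this gap either: the vanishing of $H^2(\tilde{G}/\tilde{G}[b],k^{\times})$ controls extendability of a fixed $\tilde G$-stable block of $\tilde G[b]$, but it does not show that the $\tilde{G}$-orbits on $\mathrm{Bl}(\tilde{G}[b]\mid b)$ are singletons, which is what injectivity requires.

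Second, and this is the tell-tale symptom: your argument never uses the hypothesis $\ell\mid(q-\varepsilon)$ nor any type-$A$ character theory, so if it were correct it would prove bijectivity of $\mathrm{Bl}(\tilde{G}[b]\mid b)\to\mathrm{Bl}(\tilde{G}\mid b)$ for every $\tilde{G}$-stable block in every normal inclusion with cyclic quotient. That is not what is true in general, and it is not what Murai's results say. The paper's proof instead pins down the target count: using the unique semisimple character $\tilde{\chi}\in\mathcal{E}(\tilde{G},\tilde{s})$ and the fact that for $\ell\mid(q-\varepsilon)$ in type $A$ one has $\mathcal{E}_\ell(\tilde{G},\tilde{s})=\Irr(\tilde{G},\tilde{b})$, it shows $|\mathrm{Bl}(\tilde{G}\mid b)|=|\tilde{G}_\chi:G|_{\ell'}$ for a suitable constituent $\chi$ of $\tilde{\chi}$. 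Combining this with Murai's Prop.~3.9 ($|\mathrm{Bl}(\tilde{G}[b]\mid b)|=|\tilde{G}[b]:G|_{\ell'}$), with the surjectivity of block induction (giving $|\tilde{G}[b]:G|_{\ell'}\ge|\tilde{G}_\chi:G|_{\ell'}$), and with Murai's Thm.~4.1 (giving $|\tilde{G}[b]:G|_{\ell'}\le|\tilde{G}_\chi:G|_{\ell'}$) forces equality, and a surjection between finite sets of the same cardinality is a bijection. You would need to supply this counting step (or an equally concrete substitute) to make your outline into a proof; the ``crossed product over a cyclic quotient'' remark is where the actual work is being elided.
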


\begin{proof}
	Suppose that $b$ is a block of $\Lambda G e_s^{G}$ for a semisimple element $s \in (\G^\ast)^{F^\ast}$ of $\ell'$-order. Consider a regular embedding $\iota: \G \hookrightarrow \Gtilde$. Let $\tilde{\chi} \in \mathcal{E}(\tilde{G}, \tilde{s})$ be the unique semisimple character in its Lusztig series, see \cite[Theorem 15.10]{Bonnafe2}. We fix a constituent $\chi \in \Irr(G\mid \tilde{\chi})$ which lies in the block $b$. We have $\Irr(\tilde{G} \mid \chi)=\{ \hat{z} \otimes \tilde{\chi} \mid z \in \Z(\tilde{G}) \}$ and such a character $\hat{z} \otimes \tilde{\chi}$ lies in the Lusztig series $\mathcal{E}(\tilde{G},\tilde{s} z)$. Note that $\mathcal{E}_{\ell}(\tilde{G},\tilde{s})=\Irr(\tilde{G},\tilde{b})$, where $\tilde{b}$ is the block of $\tilde{\chi}$. Hence, $\hat{z} \otimes \tilde{\chi}$ lies in $\tilde{b}$ if and only if the $\ell'$-part $(\tilde{s} z)_{\ell'}$ of $\tilde{s} z$ is $\tilde{\G}^\ast$-conjugate to $\tilde{s}$. This is equivalent to $\tilde{s} z_{\ell'}$ being $\tilde{\G}^\ast$-conjugate to $\tilde{s}$. In this case we obtain that $\tilde\chi=\hat{z}_{\ell'} \otimes \tilde\chi$ since $\tilde{\chi}$ is the unique semisimple character in $\mathcal{E}(\tilde{G}, \tilde{s})$. Thus, $\tilde{b} \otimes \hat{z}=\tilde{b}$ if and only if $\chi=\hat{z}_{\ell'} \otimes \chi$. From this we deduce by Clifford theory that $|\mathrm{Bl}(\tilde{G} \mid b)|=|\tilde{G}_\chi: G|_{\ell'}$.
	
	On the other hand, by \cite[Proposition 3.9]{Murai} we deduce that $|\mathrm{Bl}(\tilde{G}[b] \mid b)|=|\tilde{G}[b]:G|_{\ell'}$. According to \cite[Theorem 3.5]{Murai} and the Fong--Reynolds reduction block induction yields a surjective map $\mathrm{Bl}(\tilde{G}[b] \mid b) \to \mathrm{Bl}(\tilde{G} \mid b)$. This shows that $|\tilde{G}[b]: G|_{\ell'} \geq |\tilde{G}_\chi: G|_{\ell'}$. On the other hand, by \cite[Theorem 4.1]{Murai} we deduce that $|\tilde{G}[b]: G|_{\ell'} \leq |\tilde{G}_\chi: G|_{\ell'}$. This shows the statement.
\end{proof}

As in  Section \ref{sec 9} we let $b$ be an $\ell$-block of $\Lambda G e_s^{G}$, where $s \in (\G^\ast)^{F^\ast}$ is a strictly quasi-isolated element of $\ell'$-order. Recall the Levi subgroup $\Levi'$ of $\G$ defined in Lemma \ref{prime order}.

\begin{lemma}\label{unique}
	There exists a block $c_1$ of $\Lambda L' e_s^{L'}$, unique up to $N'$-conjugation, such that $b H_c^{\mathrm{dim}}(\Y_{\U'}^\G,\Lambda) c_1 \neq 0$.
\end{lemma}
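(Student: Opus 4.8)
The plan is to read off the statement from the Morita equivalence of Theorem~\ref{Boro}, applied with $\Levi'$, $\Para'$, $\Nb'$ in place of $\Levi$, $\Para$, $\Nb$. This is legitimate: by Lemma~\ref{prime order} the quotient $\Nb'^F/\Levi'^F$ is cyclic (of $\ell'$-order by \cite[Corollary 2.9]{Bonnafe}) and $(\Levi')^\ast \mathrm{C}_{\G^\ast}(s)^{F^\ast} = \mathrm{C}_{\G^\ast}(s)^{F^\ast} (\Levi')^\ast$ by our standing hypothesis on $\Levi'$. Thus there is a $\Lambda(G \times (N')^{\mathrm{opp}})$-module $M'$ whose restriction to $\Lambda(G \times (L')^{\mathrm{opp}})$ is $H_c^{\mathrm{dim}}(\Y_{\U'}^\G,\Lambda) e_s^{L'}$ and which induces a Morita equivalence between $\Lambda G e_s^G$ and $\Lambda N' e_s^{L'}$. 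Let $c$ be the block of $\Lambda N' e_s^{L'}$ corresponding to $b$ under this equivalence. Then $b M' = b M' c$, the bimodule $b M' c$ is a progenerator as a left $\Lambda G b$-module and as a right $\Lambda N' c$-module, and $b M' c' = 0$ for every block $c' \neq c$ of $\Lambda N' e_s^{L'}$. Finally, since $N'$ stabilises $e_s^{L'}$, that idempotent lies in $\mathrm{Z}(\Lambda N')$ and $c\, e_s^{L'} = c$.

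Next I would translate the non-vanishing of $b\, H_c^{\mathrm{dim}}(\Y_{\U'}^\G,\Lambda)\, c_1$ into a covering condition. Let $c_1$ be a block of $\Lambda L' e_s^{L'}$, so $c_1 = e_s^{L'} c_1$ inside $\Lambda L' \subseteq \Lambda N'$. Using the identification $\mathrm{Res}^{G \times (N')^{\mathrm{opp}}}_{G \times (L')^{\mathrm{opp}}} M' = H_c^{\mathrm{dim}}(\Y_{\U'}^\G,\Lambda) e_s^{L'}$ one obtains $b\, H_c^{\mathrm{dim}}(\Y_{\U'}^\G,\Lambda)\, c_1 = b M' c_1 = b M' (c c_1)$, the products computed in $\Lambda N'$, the middle equality because $b M' = b M' c$. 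If $c c_1 = 0$, i.e.\ the block $c$ of $\Lambda N'$ does not cover the block $c_1$ of $\Lambda L'$, this vanishes. If $c c_1 \neq 0$, then $c c_1$ is a non-zero idempotent of $\Lambda N' c$, and since the right $\Lambda N' c$-module $b M' c$ is a progenerator, hence in particular faithful, we get $b M'(c c_1) \neq 0$. Therefore $b\, H_c^{\mathrm{dim}}(\Y_{\U'}^\G,\Lambda)\, c_1 \neq 0$ if and only if $c$ covers $c_1$.

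To finish I would invoke the standard Clifford theory of blocks over the normal subgroup $\Levi'^F \trianglelefteq \Nb'^F$: the blocks of $\Lambda L'$ covered by $c$ form a single $N'$-orbit, which is non-empty because $c\, 1_{\Lambda L' e_s^{L'}} = c\, e_s^{L'} = c \neq 0$ forces $c c_1 \neq 0$ for at least one block $c_1$ of $\Lambda L' e_s^{L'}$; and each $c_1$ covered by $c$ automatically satisfies $e_s^{L'} c_1 = c_1$ (otherwise $c c_1 = c\, e_s^{L'} c_1 = 0$), so it is a block of $\Lambda L' e_s^{L'}$. Combining this with the equivalence established in the previous paragraph gives at once the existence and the uniqueness up to $N'$-conjugation of $c_1$. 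The only point that requires care is the idempotent bookkeeping together with making precise the sense in which $M'$ restricts to the cohomology bimodule; beyond that the argument is routine.
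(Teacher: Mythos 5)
Your proof is correct and follows essentially the same route as the paper's: take the Morita bimodule $M'$ from Theorem \ref{Boro} for the triple $(\Levi',\Para',\Nb')$, let $c$ be the block of $\Lambda N' e_s^{L'}$ corresponding to $b$, use $bM' = bM'c$ together with the identification of the restriction of $M'$ to $G\times(L')^{\opp}$ with the cohomology bimodule to translate the non-vanishing condition $b\,H_c^{\mathrm{dim}}(\Y_{\U'}^\G,\Lambda)\,c_1\neq 0$ into $c$ covering $c_1$, and then conclude by Clifford theory of blocks over $L'\trianglelefteq N'$. You spell out the existence direction (non-emptiness of the orbit) and the progenerator/faithfulness step more explicitly than the paper, but the argument is the same.
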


\begin{proof}	
	By Theorem \ref{Boro} there exists a bimodule $M_0$ inducing a Morita equivalence between $\Lambda G e_s^{G}$ and $\Lambda N' e_s^{L'}$. We let $c$ be the block of $\Lambda N' e_s^{L'}$ corresponding to $b$ under the Morita equivalence induced by $M_0$. The $\Lambda(G \times N'^{\opp})$-bimodule $b M_0 c \cong b M_0$ is thus indecomposable. We have $\mathrm{Res}^{G \times N'^{\opp}}_{G \times (L')^{\opp}}(M_0) \cong b H_c^{\mathrm{dim}}(\Y_{\U'}^\G,\Lambda)$. Observe that $b M_0$ is a right $\Lambda N' c$-module. Thus, if $c_1$ is a block of $L'$ such that $b H_c^{\mathrm{dim}}(\Y_{\U'}^\G,\Lambda) c_1 \neq 0$ then $c_1$ has to lie below $c$. This determines $c_1$ up to $N'$-conjugation.
\end{proof}

\begin{definition}\label{Tgroup}
If $\ell \nmid (q-\varepsilon)$ denote $T:=N'$ and $\hat{c}:=c$. Otherwise, we let $T$ be the largest subgroup of $N'$ containing $L'$ such that there exists a unique block $\hat{c}$ covering $c_1$. Additionally, we set $\tilde{T}:=T \tilde{L}'$.
\end{definition}

Let us now assume that $\ell \mid (q-\varepsilon)$. The block $b$ of $\Lambda G e_s^{G}$ is Morita equivalent to a block of $\Lambda N e_1^{L^F}$. This block of $N$ is covered by the principal block $\Lambda L e_1 ^{L}$ of $L$. Thus, any Sylow $\ell$-subgroup $Q$ of $L$ is a defect group of $b$. A Frattini argument shows that $\N_{\tilde{N}'}(Q) \tilde L'=\tilde{N}'$. Thus, the subgroup $Q$ of $L$ satisfies the group-theoretic assumptions of Lemma \ref{ext loc}. We denote by $\hat{C}_Q$ the Brauer correspondent of $\hat{c}$ in $\N_T(Q)$.

\begin{theorem}\label{reduction2}
	Assume that $\ell \mid (q-\varepsilon)$ and suppose that the following holds.
	\begin{enumerate}[label=(\roman*)]

		\item There exists an $\Irr( \tilde{T} / T) \rtimes \mathrm{N}_{\mathcal{N}}(Q)$-equivariant bijection $\tilde{\varphi}: \Irr(\tilde{T} \mid \Irr_0(\hat{c}) ) \to \Irr(\N_{\tilde{T}}(Q) \mid \Irr_0(\hat{C}_Q))$ such that it maps characters covering the character $\nu \in \Irr(\mathrm{Z}(\tilde{G}))$ to a character covering $\nu$.
		\item There exists an $\mathrm{N}_{ \mathcal{N}}(Q,\hat C_Q)$-equivariant bijection 
		$\varphi:  \Irr_0(T , \hat c) \to \Irr_0(\N_{T}(Q) , \hat C_Q)$ which satisfies the following two conditions:
		\begin{itemize}
			\item 
			If $\chi \in \Irr_0(T,\hat c)$ extends to a subgroup $H$ of $\mathcal{N}_b$ then $\varphi(\chi)$ extends to $\mathrm{N}_{H}(Q)$.
			\item $\tilde{\varphi}(\Irr(\tilde T\mid \chi))= \Irr(\N_{\tilde{T}}(Q) \mid \varphi(\chi))$ for all $\chi \in \Irr_0(\hat c)$.
		\end{itemize}
		\item For every $\theta \in \Irr_0(\hat c)$ and $\tilde{\theta} \in \Irr(\tilde{T} \mid \theta)$ the following holds: If $\theta_0 \in \Irr(\tilde{T}'_\theta \mid \theta)$ is the Clifford correspondent of $\tilde{\theta} \in \Irr(\tilde{T})$ then $\mathrm{bl}(\theta_0)=\mathrm{bl}(\theta_0')^{\tilde{T}_\theta}$, where $\theta'_0 \in \Irr(\N_{\tilde{T}}(Q)_{\varphi(\theta)} \mid \varphi(\theta))$ is the Clifford correspondent of $\tilde{\varphi}(\tilde{\theta})$.
		
	\end{enumerate}
	Then the block $b$ is AM-good.
\end{theorem}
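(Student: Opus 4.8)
The plan is to adapt the proof of Theorem~\ref{reduction}; the two new features are that $\ell$ may now divide $|H^1(F,\Z(\G))|$, so that the splendid Rickard equivalence can no longer be lifted over the full diagonal $\tilde{N}$-action, and that --- in order to compensate for this --- the block-theoretic hypotheses are phrased in terms of the subgroup $T$ of Definition~\ref{Tgroup} rather than of $N'$. First I would set up the global and local Bonnaf\'e--Dat--Rouquier bijections as in the proof of Theorem~\ref{reduction}. Since $\ell \mid (q-\varepsilon)$, the discussion preceding the statement shows that the defect group $D$ of $b$ may be chosen inside $L$, so that $Q \subseteq L \subseteq L'$ and $\N_{\tilde{N}'}(Q)\,\tilde{L}' = \tilde{N}'$ by a Frattini argument; in particular $Q$ satisfies the group-theoretic hypotheses of Lemma~\ref{ext loc}, Lemma~\ref{extending brauer image} and the local version of Corollary~\ref{typeA}. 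Corollary~\ref{typeA} then provides an $\mathcal{N}$-equivariant bijection $R\colon \Irr_0(N',e_s^{L'}) \to \Irr_0(G,e_s^{G})$ lifting to a Morita equivalence $\Lambda \mathcal{N}_{\ell'}e_s^{L'} \sim \Lambda \tilde{G}^F \mathcal{A}_{\ell'}e_s^{G}$, and Corollary~\ref{typeAlocal} provides an $\N_{\mathcal{N}}(Q,C_Q)$-equivariant bijection $R_Q\colon \Irr_0(\N_{N'}(Q),C_Q) \to \Irr_0(\N_G(Q),B_Q)$; let $c$ be the block of $N'$ corresponding to $b$.

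The next step is to descend from $N'$ to $T$. By Lemma~\ref{unique} there is a block $c_1$ of $L'$, unique up to $N'$-conjugacy, lying below $c$ and satisfying $b\,H^{\mathrm{dim}}_c(\Y^\G_{\U'},\Lambda)\,c_1 \neq 0$; let $\hat{c}$ be the unique block of $T$ covering $c_1$. Since $N'/L'$ is cyclic of prime order by Lemma~\ref{prime order}, the group $T$ equals either $L'$ or $N'$, and in both cases $\hat{c}$ lies below $c$ with $\hat{c}^{N'}=c$. A Fong--Reynolds reduction relative to the normal subgroup block $c_1$ of $N'$, combined with the uniqueness of $\hat{c}$ over $c_1$ and with Lemma~\ref{2block} --- which pins down the blocks of $\tilde{T}$ lying over $\hat{c}$ and the action of diagonal automorphisms on them --- then yields height-preserving bijections $\Irr_0(T,\hat{c}) \leftrightarrow \Irr_0(N',c)$ and $\Irr_0(\N_T(Q),\hat{C}_Q) \leftrightarrow \Irr_0(\N_{N'}(Q),C_Q)$ which are $\tilde{N}'$-, respectively $\N_{\tilde{N}'}(Q)$-equivariant, compatible with restriction to $\tilde{L}'$ and with the underlying central characters of $\Z(\tilde{G})$, and compatible with the relevant Clifford correspondents and block inductions. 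Transporting the bijections $\varphi$ and $\tilde{\varphi}$ of hypotheses~(i)--(iii) along these correspondences, I obtain an $\N_{\mathcal{N}}(Q,C_Q)$-equivariant bijection $\varphi^{N'}\colon \Irr_0(N',c) \to \Irr_0(\N_{N'}(Q),C_Q)$ together with its $\tilde{N}'$-counterpart satisfying the exact analogues of hypotheses~(i)--(iii) of Theorem~\ref{reduction}.

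Finally, I would set $\Psi := R_Q \circ \varphi^{N'} \circ R^{-1}\colon \Irr_0(G,b) \to \Irr_0(\N_G(Q),B_Q)$, which is $\N_{\tilde{G}\mathcal{A}}(Q,B_Q)$-equivariant by construction, and verify the hypotheses of Theorem~\ref{12} for a character $\chi \in \Irr_0(G,b)$ with $\chi'=\Psi(\chi)$, exactly as in the proof of Theorem~\ref{reduction}: condition~(i) is arranged as in \cite[Theorem~2.12]{Jordan2}; conditions~(ii), (iii) and (iv) follow from the $\N_{\tilde{G}\mathcal{A}}(Q,B_Q)$-equivariance of $\Psi$, the lift of the Morita equivalences over $\mathcal{A}_{\ell'}$, and the extendibility clause of hypothesis~(ii) applied to the subgroups generated by elements of the form $\tilde{g}^{-1}F_0^i$ and $\gamma_0$ inside $\mathcal{N}_b$ used in the proof of Theorem~\ref{reduction}; and condition~(v) follows from the block-relation hypothesis~(iii) through an analogue of Lemma~\ref{intermediatesubgroup}. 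An application of \cite[Theorem~1.10]{Jordan} then shows that $\Psi$ is a strong iAM-bijection, so that $b$ is AM-good.

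I expect the main obstacle to be condition~(v) of Theorem~\ref{12}, namely the block relation $\mathrm{bl}(\tilde{\chi}_0')^{\tilde{G}_\chi} = \mathrm{bl}(\tilde{\chi}_0)$ for the Clifford correspondents. In Theorem~\ref{reduction} this was proved in Lemma~\ref{intermediatesubgroup} by lifting the splendid Rickard equivalence over the full diagonal $\tilde{N}$-action, which is unavailable here because Lemma~\ref{diagonal action} requires $\ell \nmid |H^1(F,\Z(\G))|$. Instead, the relation must be established first at the level of $T$ --- where $\hat{c}$ is the unique block over $c_1$ and Lemma~\ref{2block} identifies the blocks of $\tilde{T}$ over it --- using hypothesis~(iii), and then pushed up to $N'$ and to $G$ through the Fong--Reynolds correspondence and the Bonnaf\'e--Dat--Rouquier Morita equivalence over $\mathcal{A}_{\ell'}$; making all of these steps simultaneously compatible with the equivariance, the central characters and the block inductions is the technical heart of the argument.
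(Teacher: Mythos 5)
The overall architecture of your proposal (set up global and local bijections, descend to $T$, feed the hypotheses into Theorem~\ref{12}, and verify conditions (i)--(v) as in Theorem~\ref{reduction}) is the same as the paper's. However, there are two genuine gaps in the way you try to carry it out.

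First, you invoke Lemma~\ref{extending brauer image} and Corollary~\ref{typeAlocal} to obtain the local bijection $R_Q$. These are not available here: the whole subsection containing them is prefaced by the standing assumption $\ell \nmid |H^1(F,\Z(\G))|$, because they are built from the complex $\mathcal{C}'$ of Lemma~\ref{diagonal action}, and that lemma explicitly requires this condition. Under $\ell \mid (q-\varepsilon)$ with $\ell \mid (n+1)$ this is exactly what fails. The paper avoids this by working over $K$ rather than $\Lambda$ for the local bimodule (the proof of Theorem~\ref{mainthm} produces a $K[G \times (L')^{\opp}\Delta\mathcal{N}]$-extension with no parity on $\ell$, and the same holds locally), so the local bijection $R_Q$ is constructed character-theoretically rather than through the splendid Rickard equivalence. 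Relatedly, the paper passes directly from $T$ to $G$ by restricting the BDR bimodule $M'$ to $G \times T^{\opp}$, using Lemma~\ref{unique} plus multiplicity-freeness (if $T=N'$) or Murai's restriction isomorphism $\Lambda N' c \cong \Lambda L' c_0$ (if $T=L'$), rather than first building a bijection on $N'$ and then descending; your detour through $N'$ is not wrong in principle but it requires the unavailable $\Lambda$-level machinery.

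Second, and more seriously, for condition (v) of Theorem~\ref{12} you correctly identify that the argument of Lemma~\ref{intermediatesubgroup} is not available, but the replacement you sketch (establish the block relation at the $T$ level and push it up through Fong--Reynolds and the $\mathcal{A}_{\ell'}$-Morita equivalence) is left entirely unspecified, and it is precisely the point that needs a new idea. What the paper actually does is use Lemma~\ref{character theoretic BDR}, the purely character-theoretic Harris--Kn\"orr compatibility of the BDR bijection at the level of $\tilde{G}$ and $\tilde{L}$, to obtain the relation $\mathrm{bl}(\tilde{\chi}')^{\tilde{G}}=\mathrm{bl}(\tilde{\chi})$ without any $\Lambda$-lift of the splendid equivalence; it then applies Lemma~\ref{2block}, which asserts that when $\ell \mid (q-\varepsilon)$ block induction gives a bijection $\mathrm{Bl}(\tilde{G}[b]\mid b)\to \mathrm{Bl}(\tilde{G}\mid b)$, to descend this relation to the Clifford correspondents over $\tilde{G}_\chi$. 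You cite Lemma~\ref{2block} only in connection with the blocks of $\tilde{T}$ over $\hat{c}$, which is not how it enters; its real role is to pin down the block of $\tilde{G}_\chi$ from its induction to $\tilde{G}$. Without this combination your verification of condition (v) has no working mechanism.
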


\begin{proof}
	According to the proof of Theorem \ref{mainthm} there exists a $K[G \times (L')^{\mathrm{opp}} \Delta \mathcal{N}]$-module extending $H_c^{\mathrm{dim}}(\Y_{\U'}^\G,K) e_s^{L'}$. Denote by $M'$ the restriction of $\hat{M}$ to $G \times (N')^{\mathrm{opp}} \Delta \tilde{N}'$.
	
	We claim that the bimodule $b \Res_{G \times T^{\mathrm{opp}}}^{G \times (N')^{\mathrm{opp}} \Delta \tilde{N}'}(M') \hat{c}$ induces an $\mathcal{N}_b$-equivariant bijection $R:\Irr(T,\hat{c}) \to \Irr(G,b)$. For $T=N'$ we observe that $\hat{c}$ is the unique block of $N'$ covering the block $c_1$ of $L'$. By Lemma \ref{unique} we deduce that $bM' =b M' \hat{c}$. Since $N'/L'$ is cyclic, Lemma \ref{multiplicity free} implies that $b M' \hat{c}$ is multiplicity free as $K(G \times (N')^{\mathrm{opp}})$-module. Now Lemma \ref{unique} together with Theorem \ref{Boro} imply that $|\Irr(G,b)|=|\Irr(N',\hat{c})|$ and thus the bimodule $b M' \hat{c}$ necessarily induces a bijection between these two sets.
	
	Assume therefore now that $T=L'$. Then $c_1$ is necessarily $N'$-stable and there are exactly $|N'/L'|$ different blocks covering $c_1$. Consequently, by \cite[Proposition 3.9]{Murai} and \cite[Theorem 4.1]{Murai} in this case the block $\Lambda N' c$ is isomorphic to $\Lambda L' c_0$ via restriction. We have $b \Res_{G \times (L')^{\mathrm{opp}}}^{G \times (N')^{\mathrm{opp}} \Delta \tilde{N}'}(M') \hat{c} \cong  H^{\mathrm{dim}}(\Y_{\U'}^\G,K ) \hat c$. Hence, Theorem \ref{Boro} show that $H^{\mathrm{dim}}(\Y_{\U'}^\G,\Lambda ) \hat c$ induces a Morita equivalence between $\Lambda L' \hat c$ and $\Lambda G b$. The claim thus also follows in this case.
	
	As explained before the statement of this theorem, the subgroup $Q$ satisfies the group theoretic requirement in Lemma \ref{ext loc}. The proof of said lemma therefore shows that the bimodule $H^{\mathrm{dim}}_c(\Y_{\mathrm{N}_{\U'}(Q)}^{\mathrm{N}_{\G}(Q)}, K) \br_Q(e_s^{L'})$ extends to an $\N_G(Q) \times \N_{N'}(Q)^{\opp} \Delta \N_{\mathcal{N}}(Q)$-module. Let $\hat{M}_Q$ be such an extension and denote by $M_Q'$ its restriction to $\N_G(Q) \times \N_{N'}(Q)^{\opp} \Delta \N_{\tilde{N}'}(Q)$.
	
	Arguing as in the global case we obtain that the bimodule $B_Q \Res_{\N_G(Q) \times \N_T(Q)^{\opp}}^{\N_G(Q) \times \N_{N'}(Q)^{\opp} \Delta \N_{\tilde{N}'}(Q)}(M_Q') \hat{C}_Q$ induces a bijection
	$R_Q: \Irr_0(\N_{T}(Q), \hat{C}_Q) \to \Irr_0(\N_G(Q), B_Q)$.
	As in Theorem \ref{reduction} we define $\Psi:=R_Q \circ  \varphi \circ R^{-1}:\Irr_0(G,b) \to \Irr_0(\mathrm{N}_G(Q), B_Q)$, which is by construction $\mathrm{N}_{\tilde{G} \mathcal{A}}(Q,B_Q)$-equivariant.  If $T= L'$ then the arguments in the proof of \cite[Theorem 3.12]{Jordan2} show the result. We can therefore from now on assume that $T=N'$.
	
	We fix a character $\chi \in \Irr_0(G, b)$ and denote $\chi' := \Psi(\chi)$. As in Theorem \ref{reduction} we want to show that the characters $\chi$ and $\chi'$ satisfy the conditions in Theorem \ref{12}. Moreover, as in the proof of Theorem \ref{reduction} we fix a character $\tilde{\chi} \in \Irr(\tilde{G} \mid \chi)$ and define $\tilde{\chi}':=\tilde{R}_Q\circ \tilde{\varphi} \circ \tilde{R}^{-1} (\tilde{\chi}).$ One easily sees that conditions (i)-(iv) of the theorem can now be verified exactly in the same way as in Theorem \ref{reduction}. However, to show condition (v) we crucially used Lemma \ref{extending brauer image}. Nevertheless according to Lemma \ref{character theoretic BDR} we see that $\mathrm{bl}(\tilde{\chi}')^{\tilde{G}}=\mathrm{bl}(\tilde{\chi})$.
	According to Lemma \ref{2block} the Clifford correspondents $\tilde{\chi}_0 \in \Irr(\tilde{G}_\chi \mid \chi)$ and $\tilde{\chi}'_0 \in \Irr(\N_{\tilde{G}}(Q)_{\chi'} \mid \chi')$ of $\tilde{\chi}$ and $\tilde{\chi}'$ respectively therefore satisfy $\mathrm{bl}(\tilde{\chi}_0)= \mathrm{bl}(\tilde{\chi}_0')^{\tilde{G}_\chi}$. In other words also condition (v) is satisfied.
\end{proof}

%
%
%


\section{Results on defect groups of blocks of groups of Lie type}\label{sec 10}

\subsection{Centralizers of defect groups}\label{centralizer}

Following the terminology in \cite[Section 3.4]{Cabanesgroup} we say that an $\ell$-group $D$ is \textit{Cabanes} if it has a unique maximal abelian normal subgroup $Q$. Moreover, in this case we say that $Q$ is the \textit{Cabanes subgroup} of $D$. We recall \cite[Lemma 4.16]{Marc}:

\begin{theorem}
Let $\mathbf{H}$ be a connected reductive group defined over $\overline{\mathbb{F}}_p$ such that $\ell \geq 5$ and $\ell \geq 7$ if $\mathbf{H}$ has a component of type $E_8$. Then the defect group of any $\ell$-block of $\mathbf{H}^F$ is Cabanes.
\end{theorem}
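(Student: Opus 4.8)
\textbf{Step 1 (reduction to unipotent blocks).} The plan is to reduce the statement to the case of unipotent $\ell$-blocks of quasi-simple groups of Lie type and then to read the Cabanes property off the explicit description of their defect groups. For this I would first dispose of the centre and of the product decomposition $\mathbf{H} = \mathrm{Z}^\circ(\mathbf{H})[\mathbf{H},\mathbf{H}]$: a defect group of an $\ell$-block of $\mathbf{H}^F$ is a central product of an $\ell$-subgroup of the abelian group $\mathrm{Z}^\circ(\mathbf{H})^F$ with defect groups of blocks of the components of $[\mathbf{H},\mathbf{H}]^F$, and neither quotients by central $\ell'$-subgroups nor isogenies affect whether a finite $\ell$-group is Cabanes (isogenies are harmless here because $\ell$ is good, so in particular does not divide the orders of the relevant fundamental groups for the types that occur). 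This reduces us to $\mathbf{H}$ simple of simply connected type. Next I would invoke a Jordan decomposition of blocks --- the Bonnaf\'e--Dat--Rouquier Morita equivalence recalled in Theorem~\ref{Boro}, together with its predecessors of Bonnaf\'e--Rouquier and Brou\'e, all of which preserve defect groups up to isomorphism --- combined with Fong--Reynolds reduction, to pass to quasi-isolated blocks, and then (using Enguehard's Jordan decomposition of blocks, available precisely because $\ell \geq 5$ is good, resp.\ $\ell \geq 7$ in type $E_8$) to isolated and finally to unipotent $\ell$-blocks.

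\textbf{Step 2 (structure of the defect groups).} For a unipotent $\ell$-block the theory of Cabanes--Enguehard attaches an $e$-cuspidal pair $(\mathbf{L},\lambda)$, with $e=e_\ell(q)$, such that a defect group $D$ sits in a short exact sequence $1 \to Q \to D \to S \to 1$ in which $Q := D \cap \mathbf{L}^F$ is abelian and normal in $D$ (it is a defect group of the $e$-cuspidal block $\lambda$, which for $\ell$ good and large is central in $\mathbf{L}^F$), and $S \cong D\mathbf{L}^F/\mathbf{L}^F$ is an $\ell$-subgroup of the relative Weyl group $W_{\mathbf{H}^F}(\mathbf{L},\lambda) = \mathrm{N}_{\mathbf{H}^F}(\mathbf{L},\lambda)/\mathbf{L}^F$ acting faithfully on $Q$. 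Since $\ell \geq 5$ (resp.\ $\ell \geq 7$ in $E_8$), the relative Weyl groups occurring are the familiar ones from $e$-Harish--Chandra theory of classical groups --- direct products of complex reflection groups of type $G(de,1,a)$ --- together with a short explicit list of small exceptional complex reflection groups coming from exceptional $\mathbf{H}$; in every such group a Sylow $\ell$-subgroup $S$ is either cyclic or an iterated wreath product $C_\ell \wr \cdots \wr C_\ell$.

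\textbf{Step 3 (the Cabanes property).} It will then remain to check that each such $D$ has a unique maximal abelian normal subgroup. I would do this by a wreath-product induction: the $S$-module $Q$ decomposes compatibly with the layers of the wreath structure, each layer being permuted regularly by the relevant $C_\ell$, and for $\ell$ odd one shows that the normal closure of the top $C_\ell$ inside $B \wr C_\ell$ is non-abelian whenever $B \neq 1$, so that every abelian normal subgroup of $B \wr C_\ell$ lies in the base $B^\ell$, and then (normal subgroups projecting onto normal subgroups of the factors) lies in $\mathrm{Cab}(B)^\ell$; hence $\mathrm{Cab}(B \wr C_\ell) = \mathrm{Cab}(B)^\ell$. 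Threading this through all wreath layers above $Q$ pins down the unique maximal abelian normal subgroup of $D$ explicitly, while the finitely many exceptional relative Weyl groups --- whose Sylow $\ell$-subgroup is in fact cyclic for every $\ell \geq 5$ --- are handled directly, using that for $\ell$ good the action of such a $C_\ell$ on $Q$ is of ``reflection type'' and not a single transvection, so that again the Cabanes subgroup is $Q$. Alternatively, and presumably closest to the proof of \cite{Marc}, one invokes the complete explicit classification of defect groups of blocks of quasi-simple groups of Lie type for $\ell \geq 5$ (Enguehard, Cabanes--Enguehard, Kessar--Malle) and checks the Cabanes property group by group along the resulting finite list.

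I expect the genuine difficulty to be Step~3 together with the organisational input of Step~2: one must control the $S$-module structure of $Q$ precisely enough to run the wreath-product bookkeeping and, crucially, to exclude the ``extraspecial-type'' configurations (a $C_\ell$ acting on an abelian $\ell$-group by a transvection) that would destroy uniqueness. This is exactly where the hypotheses $\ell \geq 5$ and $\ell \geq 7$ (in $E_8$) are indispensable: they force $\ell$ to be good and keep the relative Weyl groups in the tame range of iterated wreath products of $C_\ell$'s, so that both the Cabanes--Enguehard machinery and the structural classification apply. The reductions of Step~1 are, by contrast, routine once that machinery is in place.
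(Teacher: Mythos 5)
The paper does not prove this statement --- it is introduced with ``We recall \cite[Lemma 4.16]{Marc}'' and cited from Cabanes--Enguehard as a black box, so there is no in-paper proof to measure your sketch against. On its own terms, your outline is a plausible reconstruction of the Cabanes--Enguehard argument: a normal abelian ``cuspidal core'' $Q$ of the defect group $D$ with $\mathrm{C}_D(Q)=Q$, the quotient $D/Q$ landing in a Sylow $\ell$-subgroup of the relative Weyl group of the associated $d$-cuspidal pair, and a structural analysis that confines every abelian normal subgroup of $D$ to $Q$. That is indeed the mechanism behind \cite[Lemma 4.16]{Marc}.

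Two remarks on the execution. First, the step ``the normal closure of the top $C_\ell$ inside $B\wr C_\ell$ is non-abelian, hence every abelian normal subgroup lies in the base'' does not follow as stated; the standard argument is rather to pick $a\in A\smallsetminus B^\ell$ and $c\in B$ and compute that the commutator $[a,(c,1,\dots,1)]$ lies in $A\cap B^\ell$ but is not centralised by $a$ unless $c=1$, which (with the component projections) does pin $A$ inside $\mathrm{Cab}(B)^\ell$. Second, and more substantively, the whole induction rests on $Q$ decomposing as an $S$-module compatibly with the wreath layers and on the absence of any element $x\in D\smallsetminus Q$ acting quadratically on $Q$ with $|Q:\mathrm{C}_Q(x)|=\ell$ (the extraspecial configuration, as in $\ell^{1+2}$). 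You flag both, but that is precisely the content of the case analysis over the finite list of $d$-cuspidal pairs of the simple types that occupies \cite{Marc}, and it is exactly where $\ell\ge 5$ (and $\ell\ge 7$ in $E_8$) actually enter. Your sketch locates the hard step correctly but does not carry it out; since the paper itself treats the result as an external input, that is a reasonable place to stop, but it should not be mistaken for a complete proof.
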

%


 We keep the notation from the previous section. In particular, $\G$ is simple, simply connected of type $A$ and $b$ denotes a strictly quasi-isolated block of $\Lambda G e_s^{G}$ and we let $\Levi'$ be the proper Levi subgroup of $\G$ constructed before. Moreover, $c$ denotes the block of $\Lambda N' e_s^{L'}$ which corresponds to $b$ under the Morita equivalence given by the bimodule $M'$ defined after the proof of Theorem \ref{mainthm}.
 
 We fix a defect group $D$ of the block $c$. Moreover if $\ell \geq 5$ we define $Q$ to be the Cabanes subgroup of $D$ and if $\ell <5$ we let $Q:=D$.
Let $\tilde{b}$ be a block of $\tilde{G}$ covering $b$ with defect group $\tilde{D}$ satisfying $\tilde{D} \cap G=D$. If $\ell \geq 5$ let $\tilde{Q}$ be the Cabanes subgroup of $\tilde{D}$ otherwise we define $\tilde{Q}:=\tilde{D}$. 


\begin{lemma}\label{linear prime}
	Suppose that $\ell \neq 2$ and that $D$ is abelian if $\ell=2$.
	With the notation as above we have $\C_{G}(Q)=\C_{G}(\tilde Q)$ and $\N_{G}(Q)=\N_{G}(\tilde Q)$.
\end{lemma}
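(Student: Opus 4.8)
The plan is to reduce both equalities to the single assertion $\C_{\Gtilde}(Q)=\C_{\Gtilde}(\tilde Q)$ at the level of algebraic groups, and then to prove that assertion from the structure of defect groups in type $A$. First I would carry out the elementary reductions. Under the hypotheses $Q$ is characteristic in $D$ --- either $Q=D$ (when $\ell<5$) or $Q$ is the unique maximal abelian normal (Cabanes) subgroup of $D$ (when $\ell\geq 5$, by the theorem recalled above) --- and likewise $\tilde Q$ is characteristic in $\tilde D$. Since $Q\trianglelefteq D=\tilde D\cap G\trianglelefteq\tilde D$, the subgroup $Q$ is abelian normal in $\tilde D$ and contained in $G$, while $\tilde Q\cap G$ is abelian normal in $D$; hence $Q\leq\tilde Q$ and $\tilde Q\cap G=Q$. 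The first of these gives $\C_G(\tilde Q)\leq\C_G(Q)$ and $\N_G(\tilde Q)\leq\N_G(Q)$, and the second gives that $\tilde Q/Q\cong\tilde Q\,G/G$ embeds into $\tilde G/G$, which is cyclic of order dividing $q-\varepsilon$; so $\tilde Q/Q$ is a cyclic $\ell$-group, generated modulo $Q$ by some $\tilde q\in\tilde Q$, and $\tilde q$ is semisimple in $\Gtilde$ because $\ell\neq p$. It therefore suffices to prove $\C_{\Gtilde}(Q)=\C_{\Gtilde}(\tilde Q)$ as algebraic subgroups of $\Gtilde$: since $\C_{\G}(Q)=\C_{\Gtilde}(Q)\cap\G$ and similarly for $\tilde Q$, taking $F$-fixed points then yields $\C_G(Q)=\C_G(\tilde Q)$, and the normaliser equality will come out along the way.

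Assume first that $\tilde Q$ is abelian; this covers the case $\ell\geq 5$ (where the Cabanes subgroup of a defect group in type $A$ is abelian) and the case $\ell=2$ (where $\tilde Q=\tilde D$ is abelian by hypothesis). Then the abelian group of semisimple elements $\tilde Q$ lies in a maximal torus $\Ttilde$ of $\Gtilde$, and $Q\leq\tilde Q\leq\Ttilde$, so $\C_{\Gtilde}(Q)$ and $\C_{\Gtilde}(\tilde Q)$ are the standard Levi subgroups of $(\Gtilde,\Ttilde)$ with root systems $\{\alpha\mid\alpha(Q)=1\}$ and $\{\alpha\mid\alpha(\tilde Q)=1\}$. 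The claim thus reduces to showing that, for every root $\alpha$ of $(\Gtilde,\Ttilde)$, the condition $\alpha(Q)=1$ forces $\alpha(\tilde q)=1$ --- equivalently, that $\tilde q$ acts as a scalar on each joint eigenspace of $Q$ on the natural module, equivalently that $\tilde Q\leq\Z(\C_{\Gtilde}(Q))$. This is the heart of the matter and the step I expect to be the main obstacle. I would attack it via the explicit description of $\tilde D$: writing $\tilde b$ as a block in $\mathcal{E}_\ell(\tilde G,\tilde s)$ with $\C_{\Gtilde^\ast}(\tilde s)^{F^\ast}\cong\prod_i\GL_{m_i}(\varepsilon q^{d_i})$, the defect group $\tilde D$ is that of a unipotent block of this group --- an iterated wreath product of a cyclic $\ell$-group with Sylow $\ell$-subgroups of symmetric groups --- whose Cabanes subgroup $\tilde Q$ is the base group and therefore acts diagonally, with $Q=\tilde Q\cap\G$ its norm-one part. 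Using $\Gtilde=\G\,\Z(\Gtilde)$ I would write $\tilde q=g_0z_0$ with $g_0\in\G\cap\Ttilde$ and $z_0\in\Z(\Gtilde)$; as $z_0$ is a scalar and $g_0^{\,|\tilde Q/Q|}\in Q\,\Z(\G)$, two coordinates of $\tilde q$ lying in a common $Q$-eigenspace differ only by a root of unity of $\ell$-power order, and the saturatedness of the base group --- which is exactly what the hypothesis on $\ell$ (odd, or $D$ abelian when $\ell=2$) guarantees --- forbids this, yielding $\tilde Q\leq\Z(\C_{\Gtilde}(Q))$.

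Granting $\C_{\Gtilde}(Q)=\C_{\Gtilde}(\tilde Q)$, the centraliser statement follows. For the normaliser statement, still in the abelian case, I would observe that $\tilde Q$ is precisely the $\ell$-part of $\Z(\C_{\tilde G}(Q))$, which comes out of the same computation, hence characteristic in $\C_{\tilde G}(Q)$; then any $g\in\N_G(Q)$ normalises $Q$, hence $\C_{\Gtilde}(Q)$ and $\C_{\tilde G}(Q)$, hence $\tilde Q$, so $\N_G(Q)\leq\N_G(\tilde Q)$. Finally, should the case $\ell=3$ with $D=Q$ non-abelian actually arise for a strictly quasi-isolated block, $\tilde Q=\tilde D$ need no longer lie in a torus; but then one argues directly with the wreath-product structure $\tilde D=D\langle\tilde q\rangle$, using that the diagonal part of $D$ already separates the eigenspaces as finely as $\tilde D$ does and that the symmetric part of $D$ forces $\C_{\Gtilde}(D)=\C_{\Gtilde}(\tilde D)$, so that the conclusion is unchanged. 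The bulk of the work is the scalar-on-eigenspaces statement of the second paragraph.
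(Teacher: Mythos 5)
Your opening reductions are sound: $Q$ is characteristic in $D\trianglelefteq\tilde D$, hence abelian normal in $\tilde D$, giving $Q\leq\tilde Q$; conversely $\tilde Q\cap G$ is abelian normal in $D$, giving $\tilde Q\cap G=Q$; thus $\tilde Q/Q$ is a cyclic $\ell$-group, and the problem reduces to $\C_{\Gtilde}(Q)=\C_{\Gtilde}(\tilde Q)$. The normaliser argument via characteristicity of $\tilde Q$ in $\C_{\tilde G}(Q)$ is also a reasonable alternative to the paper's. The difficulty is in what you yourself identify as the heart of the matter. The observation that two coordinates of $\tilde q$ in a common $Q$-eigenspace differ by a root of unity of $\ell$-power order is automatic --- every eigenvalue of the $\ell$-element $\tilde q$ is an $\ell$-power root of unity --- so it carries no information. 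The appeal to \emph{saturatedness of the base group} is exactly the assertion that the determinant-one part $Q=\tilde Q\cap\G$ still separates the $\tilde Q$-eigenspaces; this is the entire content of the lemma, it already fails for $\ell=2$ in $\GL_2$ with $(q-\varepsilon)_2=2$ (the norm-one $2$-torsion is $\{\pm I\}$, which is central), and you do not prove it. As written this is a gap, not a proof.

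The paper closes exactly this gap by a different route. It first reduces to $\ell\mid(n+1,q-\varepsilon)$ (otherwise $\tilde Q=Q\,\Z(\Gtilde^F)_\ell$ and the statement is immediate); it then rules out $\ell=2$ by invoking \cite[Lemma 5.2]{twoblocks}, which forces $s$ to have order $n+1$, hence $\ell\nmid(n+1)$, putting us back in the easy case; and for $\ell\geq 5$ it passes via the Bonnaf\'e--Dat--Rouquier equivalence to the (unique, principal) unipotent block of $\Levi^{F'}$, where Corollary~\ref{1split} gives $\tilde\Levi^{F'}\cong\GL_e((\varepsilon q)^d)\times\dots\times\GL_e((\varepsilon q)^d)$. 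It then writes down $Q_0:=Q\cap[\Levi,\Levi]^{F'}=\Delta_d(\mathbf{S}_\ell^{F'})\times\dots\times\Delta_d(\mathbf{S}_\ell^{F'})$ explicitly as a product of diagonally embedded tori, and the one nontrivial input is $\C_{\GL_e}(\mathbf{S}_\ell^{F'})=\mathbf{S}$ for $\ell\neq 2$ (\cite[Proposition 22.6]{MarcBook}), from which both $\C_{\Gtilde}(Q)=\C_{\Gtilde}(\tilde Q)\cong(\GL_d)^e\times\dots\times(\GL_d)^e$ and the normaliser statement follow by direct inspection. That concrete description of $Q$ inside $\Levi^{F'}$ is what makes the separation claim checkable; your sketch needs it and does not supply it, so the ``saturatedness'' step must be replaced by an actual computation of $Q_0$ or an equivalent precise argument.
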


\begin{proof}
	We may assume that $\ell \mid (n+1,q-\varepsilon)=|\Gtilde^F: \Z(\tilde \G^F)\G^F|$ since otherwise we have $\tilde{Q}=Q \mathrm{Z}(\Gtilde^F)_\ell$ and the statement follows. 
	
	If $s$ has order $n+1$ then we have $\ell \nmid (n+1)$. As explained at the beginning of the proof this implies the statement in this case. Hence, by \cite[Lemma 5.2]{twoblocks} (see also Lemma \ref{quasi2} below) the statement of the lemma holds for $\ell=2$. We can therefore assume that $\ell \geq 5$.
	
		Suppose first that $s=1$.
 We consider again the Frobenius endomorphism $F'$ introduced in \ref{5.2} instead of $F$. 
		Let $\mathbf{S}$ be the diagonal torus in $\G$. The Cabanes subgroup of a Sylow $\ell$-subgroup of $\G^F$ is given by $\mathbf{S}_\ell^{F'}$. Since $\ell \neq 2$ we have $\C_{\G}(\mathbf{S}_\ell^{F'})=\mathbf{S}$, see \cite[Proposition 22.6]{MarcBook}. From this the claim of the lemma follows easily in this case.


Let $\Levi$ be a Levi subgroup of $\G$ dual to the Levi subgroup $\mathrm{C}^\circ_{\G^\ast}(s)$ of $\G^\ast$. Note that $\Lambda \Nb^F e_1^{\Levi^{F'}}$ is Morita equivalent to $\Lambda \G^F e_s^{\G^{F'}}$, see \cite[Example 7.10]{Dat}. Any block of $\Lambda \Nb^{F'} e_1^{\Levi^{F'}}$ has the same defect group as a block of $\Levi^{F'}$ which is covered by it. It is well known, see \cite[Theorem 13]{CabEng}, that since $\ell \mid (q-\varepsilon)$ the algebra $\Lambda \Levi^{F'} e_1^{\Levi^{F'}}$ consists of one block only, the principal block of $\Levi^{F'}$.

Therefore, $Q$ is the Cabanes subgroup of a Sylow $\ell$-subgroup of $\Levi^{F'}$. By Corollary \ref{1split} we have $\tilde{\Levi} \cong(\mathrm{GL}_e)^d\times \dots \times (\mathrm{GL}_e)^d$ with the Frobenius endomorphism $F'$ transitively permuting the $d$ copies of each $(\mathrm{GL}_e)^d$ such that
	$\tilde{\Levi}^{F'} \cong \mathrm{GL}_e( (\varepsilon q)^d) \times \dots \times \mathrm{GL}_e((\varepsilon q)^d)$
with $e > 1$. Denote $Q_0:=Q \cap [\Levi,\Levi]^{F'}$. We conclude $[\Levi,\Levi]^{F'} \cong \mathrm{SL}_e((\varepsilon q)^d) \times \dots \times \mathrm{SL}_e((\varepsilon q)^d)$. Here, $d$ divides $|A(s)|$ which is of order prime to $\ell$, see Corollary \ref{1split}. Let $\mathbf{S}$ be the diagonal torus in $\mathrm{SL}_e(\overline{\mathbb{F}}_q)$. The Cabanes subgroup of the Sylow $\ell$-subgroup of $\mathrm{SL}_e((\varepsilon q)^d)$ is given by $\mathbf{S}_\ell^{F'^d}$. Since $\ell \nmid d$ and $\ell \mid q-\varepsilon$ it follows that $((\varepsilon q)^d-1)_\ell=(\varepsilon q-1)_\ell$. We deduce that $|\mathbf{S}_\ell^{F'}|=|\mathbf{S}_\ell^{F'^d}|$. Denote by $\Delta_d:\GL_e \to (\GL_e)^d$ the $d$-fold diagonal embedding. By the proof of \cite[Lemma 4.16]{Marc} we can therefore assume that
	$$Q_0= \Delta_d(\mathbf{S}_\ell^{F'}) \times \dots \times \Delta_d(\mathbf{S}_\ell^{F'}).$$
	Hence $\C_{\tilde{\G}}(Q_0)\subseteq \GL_{ed} \times \dots \times \GL_{ed}$. Since $\ell \neq 2$ we have $\C_{\GL_e}(\mathbf{S}_\ell^{F'})=\mathbf{S}$, see \cite[Proposition 22.6]{MarcBook}. Therefore, $\C_{\Gtilde}(Q_0) \cong (\mathrm{GL}_d)^e \times \dots \times (\mathrm{GL}_d)^e$. A similar calculation shows that this coincides with $\C_{\Gtilde}(\tilde Q)$. This implies that $\C_{\G}(\tilde Q)=\C_{\G}(Q)=\C_{\G}(Q_0)$.
	
	Observe that $\mathbf{M}:=\mathrm{C}_{\Gtilde}(Q)$ is a Levi subgroup of $\Gtilde$. It follows that $\mathrm{N}_{\Gtilde}(Q)$ and $\mathrm{N}_{\Gtilde}(\tilde Q)$ are both contained in $\mathrm{N}_{\Gtilde}(\mathbf{M})$. On the other hand, $\mathrm{N}_{\Gtilde}(\mathbf{M})/\mathbf{M} \cong S_{(n+1/d)}$ given by permuting the components of $\mathbf{M}= (\mathrm{GL}_d)^e \times \dots \times (\mathrm{GL}_d)^e$. By the description of $Q_0$ is is clear that these automorphisms stabilize $Q_0$. This shows that $\mathrm{N}_{\Gtilde}(Q)=\mathrm{N}_{\Gtilde}(\tilde Q)=\mathrm{N}_{\Gtilde}(\mathbf{M})$.
\end{proof}

\begin{remark}
Suppose that $D$ is the Sylow $2$-subgroup of $\G$. Then the conclusion of the previous lemma holds unless $\G$ is of type $A_1$ and $q \equiv \pm 3 \, \mathrm{mod} \, 8$, see \cite[Theorem 1]{Kon05}. 
\end{remark}

We denote $L_0:=[\Levi',\Levi']^F$ and we fix a block $c_0$ of $L_0$ below $c$ such that $D_0:=D \cap L_0$ is a defect group of $c_0$. Additionally, we set $Q_0:=Q \cap L_0$.

The induction step in the proof of Theorem \ref{maintheoremA} below requires the following property of Cabanes subgroups.

\begin{lemma}\label{centralizer cabanes}
	Suppose that $\ell \neq 2$ and that $D/ \mathrm{Z}(G)_\ell$ is abelian if $\ell=2$. With the notation as introduced above we have $\C_{[\Levi',\Levi']^F}(Q_0)=\C_{[\Levi',\Levi']^F}(Q)$ and $\N_{[\Levi',\Levi']^F}(Q_0)=\N_{[\Levi',\Levi']^F}(Q)$.
\end{lemma}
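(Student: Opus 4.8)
The plan is to reduce the statement to the computation already carried out in Lemma \ref{linear prime}. Recall that in the setup $\Levi'$ is (up to $\G^\ast$-conjugation) the standard Levi subgroup $\Levi_{m'}$ of Lemma \ref{prime order}, and by Corollary \ref{1split} together with the analysis in the proof of Lemma \ref{linear prime} we have an explicit description $\tilde{\Levi}'^F \cong \prod_i \mathrm{GL}_e((\varepsilon q)^d)$ with $[\Levi',\Levi']^F \cong \prod_i \mathrm{SL}_e((\varepsilon q)^d)$, where $d \mid |A(s)|$ is prime to $\ell$. The key point is that the proof of Lemma \ref{linear prime} already shows $Q_0 = Q \cap [\Levi',\Levi']^F$ is, up to conjugacy, the product $\Delta_d(\mathbf S_\ell^{F'}) \times \dots \times \Delta_d(\mathbf S_\ell^{F'})$ of diagonally embedded Cabanes subgroups of the diagonal tori, and that $\C_{\Gtilde}(Q_0) = \C_{\Gtilde}(\tilde Q) = \C_{\Gtilde}(Q) \cong \prod_i (\mathrm{GL}_d)^e$, a Levi subgroup of $\Gtilde$, with $\N_{\Gtilde}(Q_0) = \N_{\Gtilde}(\tilde Q) = \N_{\Gtilde}(Q)$. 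So the content of the present lemma is really to push those equalities down from $\Gtilde$ (or $\G^F$) to the derived subgroup $[\Levi',\Levi']^F$.

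First I would argue the centralizer statement. Since $Q_0 \subseteq Q$ we trivially have $\C_{[\Levi',\Levi']^F}(Q) \subseteq \C_{[\Levi',\Levi']^F}(Q_0)$. For the reverse inclusion, take $x \in [\Levi',\Levi']^F$ centralizing $Q_0$. Working inside each factor $\mathrm{GL}_e((\varepsilon q)^d)$, the element $x$ (an element of $\mathrm{SL}_e$) centralizes $\Delta_d(\mathbf S_\ell^{F'})$; since $\ell \neq 2$, by \cite[Proposition 22.6]{MarcBook} the centralizer in $\mathrm{GL}_e$ of $\mathbf S_\ell^{F'}$ is the diagonal torus $\mathbf S$, hence $x$ lies in the corresponding torus of $\mathrm{SL}_e$, which certainly centralizes all of $Q$. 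More conceptually: $\C_{\Gtilde}(Q_0) = \C_{\Gtilde}(Q)$ from Lemma \ref{linear prime}, and intersecting this equality with $[\Levi',\Levi']^F$ (which is an $F$-stable subgroup contained in $\tilde{\Levi}'$, so its elements centralizing $Q_0$ lie in $\C_{\Gtilde}(Q_0) \cap [\Levi',\Levi']^F = \C_{\Gtilde}(Q) \cap [\Levi',\Levi']^F = \C_{[\Levi',\Levi']^F}(Q)$) gives the claim. I would phrase it this way to avoid redoing the $\mathrm{GL}_e$ calculation. For the normalizer statement the same mechanism applies: $\N_{\Gtilde}(Q_0) = \N_{\Gtilde}(Q)$ by Lemma \ref{linear prime}, and since $[\Levi',\Levi']^F$ is normal in $\tilde{\Levi}'^F$, an element of $[\Levi',\Levi']^F$ normalizing $Q_0$ lies in $\N_{\Gtilde}(Q_0) \cap [\Levi',\Levi']^F = \N_{\Gtilde}(Q) \cap [\Levi',\Levi']^F = \N_{[\Levi',\Levi']^F}(Q)$, and conversely $\N_{[\Levi',\Levi']^F}(Q) \subseteq \N_{[\Levi',\Levi']^F}(Q_0)$ because $Q_0 = Q \cap [\Levi',\Levi']^F$ is visibly preserved by anything normalizing both $Q$ and the normal subgroup $[\Levi',\Levi']^F$.

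The one genuinely new verification — and the step I expect to be the main obstacle — is that the Cabanes subgroup $Q$ of $D$ actually does meet $[\Levi',\Levi']^F$ in its expected shape $Q_0$, i.e.\ that the reduction ``$Q_0 = Q \cap L_0$ is a product of $\Delta_d(\mathbf S_\ell^{F'})$'s'' is legitimate in the present generality (covering the isolated case $s \ne 1$, not only $s = 1$), and that $c_0$, $D_0 = D \cap L_0$ can indeed be chosen as in the statement. This is essentially an appeal to the structure of defect groups under the Bonnaf\'e--Rouquier Morita equivalence $\Lambda \Nb^F e_1^{\Levi^{F'}} \sim \Lambda \G^F e_s^{\G^{F'}}$ used in Lemma \ref{linear prime}, together with \cite[Theorem 13]{CabEng} to identify the block of $\Levi^{F'}$ as principal and \cite[Lemma 4.16]{Marc} (and its proof) to pin down the Cabanes subgroup of a Sylow $\ell$-subgroup of $\prod \mathrm{SL}_e((\varepsilon q)^d)$. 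Once $Q$ and $Q_0$ are normalized to these explicit forms, the centralizer/normalizer equalities are immediate from the factorwise computation $\C_{\mathrm{GL}_e}(\mathbf S_\ell^{F'}) = \mathbf S$ (valid for $\ell \ne 2$), exactly as in the last paragraph of the proof of Lemma \ref{linear prime}. The hypothesis that $D/\mathrm{Z}(G)_\ell$ be abelian when $\ell = 2$ is used only to fall back, in that small-prime case, on the description of $2$-blocks from \cite{twoblocks} (cf.\ Lemma \ref{quasi2}), as was done for Lemma \ref{linear prime}.
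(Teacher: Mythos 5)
Your argument is genuinely different from the paper's, and the gap in it is exactly what the paper's more elaborate decomposition is designed to avoid. The paper does not intersect an ambient equality with $[\Levi',\Levi']^F$. Instead it introduces an auxiliary Cabanes subgroup $\tilde Q$ coming from a block of $\tilde N'$ with $\tilde Q \cap L' = Q$, reduces (via the squeeze $Q_0 \subseteq Q \subseteq \tilde Q$) to showing $\C_{L_0}(\tilde Q) = \C_{L_0}(Q_0)$, decomposes $[\Levi',\Levi'] = \mathbf H_1 \times \dots \times \mathbf H_r$ (and likewise $\tilde\Levi'$) into simple components with $L_0 \cong \prod_i H_i$, decomposes $c_0, Q_0, \tilde Q$ factorwise, and then applies the \emph{statement} of Lemma~\ref{linear prime} to each quasi\nobreakdash-simple factor $H_i$ in place of $\G^F$. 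This is an intrinsic, recursive use of Lemma~\ref{linear prime} as a black box.

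The problem with your route is that you need the equality $\C_{\Gtilde}(Q_0) = \C_{\Gtilde}(Q)$ (and its normalizer analogue) in the ambient group, which is \emph{not} what Lemma~\ref{linear prime} states; you are reading it off the interior of the proof. That proof establishes $\C_{\Gtilde}(Q_0) = \C_{\Gtilde}(Q) = \C_{\Gtilde}(\tilde Q)$ only under the running assumption $\ell \geq 5$ and $\ell \mid (n+1, q-\varepsilon)$, via the reduction to the principal block of $\Levi^{F'}$ (using \cite[Theorem~13]{CabEng}, which itself needs $\ell \mid (q-\varepsilon)$) and the explicit description of the diagonal torus. When $\ell \nmid (n+1,q-\varepsilon)$, the proof of Lemma~\ref{linear prime} instead uses the shortcut $\tilde Q = Q\,\Z(\Gtilde^F)_\ell$, which yields $\C_G(Q)=\C_G(\tilde Q)$ but says nothing about $\C_{\Gtilde}(Q_0)$ versus $\C_{\Gtilde}(Q)$. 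Your intersection argument therefore has no source for its input in that case, whereas the paper's factorwise application of Lemma~\ref{linear prime} to the components $H_i$ dispatches it automatically (the lemma handles its own small cases internally, factor by factor, for whatever divisibility pattern the factor $H_i$ exhibits). A secondary and fixable imprecision: the $Q_0$ appearing in the proof of Lemma~\ref{linear prime} is $Q \cap [\Levi,\Levi]^{F'}$ for the \emph{minimal} Levi $\Levi$, not $Q \cap [\Levi',\Levi']^F$ as in the present lemma; you should note that since $[\Levi,\Levi] \subseteq [\Levi',\Levi']$ the former is contained in the latter, so the desired equality is sandwiched, but this is not automatic and you conflate the two.
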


\begin{proof}
	Let us first consider the special case, where $\ell=2$ and $D$ is non-abelian but $D/ \mathrm{Z}(G)_\ell$ is abelian. According to the proof of \cite[Lemma 5.2]{twoblocks} and \cite[Proposition 3.4(a)]{twoblocks} we observe that $D \subset [\Levi,\Levi]^F$. Since $\Levi \subset \Levi'$ by construction we have $D \subset [\Levi',\Levi']^F$ and so $D_0=D$. Hence, the statement holds trivially in this case.	We can therefore assume that the assumptions of Lemma \ref{linear prime} are satisfied.
	
	We only prove the first part of the statement since the same arguments apply when we replace the centralizer subgroups everywhere by their corresponding normalizer subgroups.
	Consider the composition $[\Levi',\Levi'] \hookrightarrow \Levi' \hookrightarrow \tilde{\Levi}'$. We let $\tilde{c}$ be a block of $\tilde{N}'$ covering $c$. By \cite[Theorem 9.26]{NavarroBook} exists a defect group $\tilde{D}$ of $\tilde{c}$ such that $\tilde{D} \cap N'=\tilde{D} \cap L'=D$. It follows that the Cabanes subgroup $\tilde Q$ of $\tilde{D}$ satisfies $\tilde{Q} \cap L'=Q$. We have $\C_{[\Levi',\Levi']^F}(\tilde{Q}) \subseteq \C_{[\Levi',\Levi']^F}(Q) \subseteq \C_{[\Levi',\Levi']^F}(Q_0)$ and so it's enough to show that $\C_{[\Levi',\Levi']^F}(\tilde{Q})= \C_{[\Levi',\Levi']^F}(Q_0)$. 
	 
As in the proof of \cite[Proposition 3.7]{Jordan2} it follows that 
	 $[\Levi', \Levi']=\mathbf{H}_1 \times \dots \times \mathbf{H}_r,$ where the $\mathbf{H}_i$ are simple algebraic groups of simply connected type. The action of the Frobenius endomorphism $F$ induces a permutation $\pi$ on the set of simple components of $[\Levi',\Levi']$. We let $\pi=\pi_1 \cdots \pi_ t$ be the decomposition of this permutation into disjoint cycles. For $i=1, \dots,t$ choose $x_i \in \Pi_i$ in the support $\Pi_i$ of the permutation $\pi_i$ and let $n_i=|\Pi_i|$ be the length of the cycle $\pi_i$. We then have $$L_0:=[\Levi', \Levi']^F \cong \mathbf{H}^{F^{n_1}}_{x_1} \times \dots \times \mathbf{H}^{F^{n_t}}_{x_t}.$$

	  Similarly, we can decompose $\tilde{\Levi}'$ as $\tilde{\Levi}'=\tilde{\mathbf{H}}_1 \times \dots \times  \tilde{\mathbf{H}}_r$, where $\tilde{\mathbf{H}}_i \cap [\Levi',\Levi']= \mathbf{H}_i$. Therefore, $\tilde{\Levi}'^F \cong \tilde{\mathbf{H}}^{F^{n_1}}_{x_1} \times \dots \times \tilde{\mathbf{H}}^{F^{n_t}}_{x_t}$.

	Denote $\tilde{H}_i:=\tilde{\mathbf{H}}^{F^{n_i}}_{x_i}$ and $H_i:=\mathbf{H}^{F^{n_i}}_{x_i}$.
	 The block $c_0$ is strictly quasi-isolated and decomposes as a direct product $c_0=c_1 \otimes \dots \otimes c_t$ of blocks which are strictly quasi-isolated in $H_i$, see the proof of \cite[Proposition 3.7]{Jordan2}. It follows that $Q_0=Q_1 \times \dots \times Q_t$. Similarly, let $\tilde{c}'$ be a block of $\tilde{L}'$ below $\tilde{c}$ which covers $c_0$. We have a decomposition $\tilde{c}'=\tilde{c}_1 \otimes \dots \otimes \tilde{c}_t$, where $\tilde{c}_i$ is a block of $\tilde{H}_i$ covering $c_i$ and we obtain a decomposition $\tilde{Q}=\tilde{Q}_1 \times  \dots \times \tilde{Q}_t$ with $\tilde{Q}_i \cap H_i= Q_i$. It is therefore enough to show that $\C_{H_i}(Q_i)=\C_{H_i}(\tilde Q_i)$. This follows now from Lemma \ref{linear prime}.
\end{proof}

\begin{corollary}\label{central}
Keep the assumptions of Lemma \ref{linear prime}. If $D_0 \leq \Z(L_0)$ then we have $D \leq \Z(L')$.
\end{corollary}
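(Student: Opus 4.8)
The plan is to establish, in order, three facts: that $D$ is abelian (so that $D$ equals its Cabanes subgroup $Q$), that $Q$ centralises $L_0:=[\Levi',\Levi']^F$, and that $\C_{L'}(L_0)\subseteq\Z(L')$; the corollary follows at once by combining them. For the first fact I pass to $\Gtilde$: as in the proof of Lemma \ref{centralizer cabanes}, using \cite[Theorem 9.26]{NavarroBook} there is a defect group $\tilde D_1$ of a block $\tilde c_1$ of $\tilde{\Levi}'^F$ satisfying $\tilde D_1\cap L'=D$, and since $[\tilde{\Levi}',\tilde{\Levi}']=[\Levi',\Levi']$ this forces $\tilde D_1\cap L_0=D_0$. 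By the description of $\Levi'=\Levi_{m'}$ in Lemma \ref{prime order} and Remark \ref{type}, the group $\tilde{\Levi}'^F$ is a direct product of general linear and unitary groups over finite fields, with $L_0$ the product of their derived subgroups; accordingly $\tilde c_1$, and with it $\tilde D_1$, decomposes along this product. In each factor, the intersection of the relevant piece of $\tilde D_1$ with the derived subgroup is a factor of $D_0$, hence lies in the centre of that derived subgroup and therefore in the centre of the ambient $\mathrm{GL}$- or $\mathrm{GU}$-group, while the quotient of that piece of $\tilde D_1$ by this intersection embeds into the cyclic group $\mathrm{GL}/\mathrm{SL}$ (respectively $\mathrm{GU}/\mathrm{SU}$). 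A central extension of a cyclic group is abelian, so each factor of $\tilde D_1$, and hence $\tilde D_1$, is abelian; thus $D\le\tilde D_1$ is abelian and $Q=D$.

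For the second fact, $Q_0=Q\cap L_0\subseteq D_0\subseteq\Z(L_0)$ gives $\C_{L_0}(Q_0)=L_0$, so Lemma \ref{centralizer cabanes} (whose hypotheses are those of Lemma \ref{linear prime}, which we are assuming) yields $\C_{L_0}(Q)=L_0$; that is, $D=Q$ centralises $L_0$. For the third fact, in each general linear or unitary factor the natural module of the derived subgroup is absolutely irreducible, so the centraliser of $L_0=[\tilde{\Levi}',\tilde{\Levi}']^F$ in $\tilde{\Levi}'^F$ equals $\Z(\tilde{\Levi}'^F)$; hence $\C_{L'}(L_0)=L'\cap\Z(\tilde{\Levi}'^F)$, and any element of $L'$ central in $\tilde{\Levi}'^F$ is central in $L'$, so $\C_{L'}(L_0)\subseteq\Z(L')$. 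Putting the three facts together, $D=Q\subseteq\C_{L'}(L_0)\subseteq\Z(L')$, which is the assertion.

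The main obstacle is the second fact: even though $\tilde D_1$, and thus $D$, is abelian, it need not be central in $\tilde{\Levi}'^F$, so passing from ``$D_0$ central in $L_0$'' to ``$D$ centralises $L_0$'' genuinely requires Lemma \ref{centralizer cabanes}, and hence the connectedness of centralisers of semisimple elements in the relevant general linear groups together with Lemma \ref{linear prime}. The rest is bookkeeping: spelling out the product decomposition of $\tilde{\Levi}'^F$ provided by Lemma \ref{prime order} and Remark \ref{type}, and dealing with the handful of small groups for which the natural module of a derived factor fails to be absolutely irreducible or for which the centraliser computation $\C_{\mathrm{GL}_e(q)}(\mathrm{SL}_e(q))=\Z(\mathrm{GL}_e(q))$ must be checked directly.
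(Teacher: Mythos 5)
Your Facts~(2) and~(3) coincide with the paper's two steps: Lemma~\ref{centralizer cabanes} to pass from ``$Q_0$ centralizes $L_0$'' to ``$Q$ centralizes $L_0$'', and then the identification $\C_{\Levi'}(L_0)=\Z(\Levi')$ (which you derive from absolute irreducibility of the natural modules of the $\mathrm{SL}$/$\mathrm{SU}$ factors, where the paper simply cites \cite[Lemma 6.1]{Bonnafe2}). The genuine difference is your Fact~(1), and here you have done substantially more work than necessary. Once Facts~(2) and~(3) give $Q\leq\Z(L')$, you may conclude $Q=D$ directly without ever touching defect groups of $\tilde{L}'^F$: for $\ell<5$ we have $Q=D$ by the convention set before Lemma~\ref{linear prime}, and for $\ell\geq 5$ the containment $Q\leq\Z(L')$ forces $Q\leq\Z(D)$, while $\Z(D)\leq Q$ since $Q$ is the unique maximal abelian normal subgroup of $D$; thus $Q=\Z(D)$, and the standard fact that an $\ell$-group whose centre is its own Cabanes subgroup must be abelian then yields $D=Q$. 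This is what the paper does, and it makes your entire block-theoretic detour --- lifting $D$ to a defect group $\tilde D_1$ of $\tilde{L}'^F$, decomposing along $\mathrm{GL}/\mathrm{GU}$ factors, and invoking the central-extension-of-cyclic argument --- unnecessary. Your version of Fact~(1) is not wrong, but it buys nothing the Cabanes property doesn't already provide for free, and it introduces exactly the small-group edge-case worries you flag at the end. The cleaner logical order is: prove $Q\leq\Z(L')$ first, then read off $D=Q$ abelian as a corollary, rather than the other way around.
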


\begin{proof}
By Lemma \ref{centralizer cabanes} our assumption implies that $Q \leq \C_{L'}(L_0)$. The proof of \cite[Lemma 6.1]{Bonnafe2} shows that $\C_{\Levi'}(L_0)=\Z(\Levi')$ and so $Q \leq \Z(L')$. This shows the claim for $\ell=2$. For $\ell \geq 5$ observe that $Q$ is the maximal normal abelian subgroup of $D$ and it follows from this that $Q=D$.
\end{proof}

%

\section{Reduction to isolated blocks}\label{sec 11}

We first consider certain blocks of simple groups with non-exceptional Schur multiplier.

\begin{lemma}\label{cyclic defect}
	Let $\ell \geq 5$ and $S$ be a simple group of Lie type $A$, $B$ or $C$ defined over a field of characteristic $\neq \ell$ with exceptional Schur multiplier. Then the Sylow $\ell$-subgroups of the universal covering group of $S$ are cyclic. In particular, the inductive Alperin--McKay condition holds for all $\ell$-blocks of the universal covering group of $S$.
\end{lemma}

\begin{proof}
	An examination of the groups $S$ with exceptional Schur multiplier shows that $|S|_\ell \in \{1, \ell \}$ and $\ell \nmid |M(S)|$, where $M(S)$ is the Schur multiplier of $S$. Therefore the Sylow $\ell$-subgroups of the universal covering group of $S$ are cyclic. Therefore, the inductive Alperin--McKay condition holds for all $\ell$-blocks of $S$ by the work of Koshitani--Späth, see \cite{cyclic}. 
\end{proof}

%
%
%
%

We can now prove our main theorem. The proof is essentially the same as the one of \cite[Theorem 3.12]{Jordan2} using all the new ingredients proved up to here. Recall the notation introduced in Remark \ref{relative}.
%
%
%
%


\begin{theorem}\label{maintheoremA}
 Let $\ell \geq 5$ and assume that
	all isolated $\ell$-blocks of quasi-simple groups of type $A$ defined over a field of characteristic $\neq \ell$ are AM-good relative to the Cabanes subgroup of their defect group.
	Then all $\ell$-blocks of quasi-simple groups of type $A$ are AM-good.
\end{theorem}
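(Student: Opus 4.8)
The plan is to reduce an arbitrary $\ell$-block of a quasi-simple group of type $A$ to the isolated blocks that are assumed to be AM-good relative to their Cabanes subgroup, proceeding exactly along the blueprint of \cite[Theorem 3.12]{Jordan2} but now feeding in the equivariant Bonnaf\'e--Dat--Rouquier equivalence from Corollary \ref{typeA} and its local companion Corollary \ref{typeAlocal}. First I would reduce to the case that $G=\G^F$ with $\G=\SL_{n+1}(\overline{\mathbb{F}_p})$ simply connected of type $A$ and that the block $b$ lies in $\Lambda G e_s^{G}$ for a semisimple $\ell'$-element $s$; by the main result of \cite{Jordan2} and Lemma \ref{cyclic defect} (which disposes of the groups with exceptional Schur multiplier, where Sylow $\ell$-subgroups are cyclic), it suffices to treat strictly quasi-isolated $s$, and moreover one may assume $\ell \mid |A(s)|$ since otherwise $e_s^{\G^F}$ corresponds under Bonnaf\'e--Dat--Rouquier to a block of $\Lambda N e_1^{\Levi^F}$ with $\Levi$ proper, reducing to a smaller group of type $A$ and hence to an isolated block there by induction on the rank. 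Since $\ell \geq 5$ and $\ell \mid (n+1, q-\varepsilon)$ implies $\ell \mid (q-\varepsilon)$, we are then in the situation governed by Theorem \ref{reduction2} (the case $\ell \mid (q-\varepsilon)$) or, when $\ell \nmid (q-\varepsilon)$, by Theorem \ref{reduction}; note $\ell \geq 5$ guarantees $\ell \nmid 2|H^1(F,\mathrm{Z}(\G))|$ as $|H^1(F,\mathrm{Z}(\G))|$ divides $(n+1,q-\varepsilon)$.

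\textbf{The induction and the verification of the hypotheses.} With $\Levi'$ the proper Levi subgroup from Lemma \ref{prime order} (so $N'/L'$ is cyclic of prime order $m'$) and $c$, respectively $\hat c$, $T$ as in Definition \ref{Tgroup}, the block $b$ is Morita equivalent (equivariantly, via Corollary \ref{typeA}/\ref{typeAlocal}) to the block $\hat c$ of $\Lambda T e_s^{L'}$, and $\hat c$ lies below a block $c_0$ of $L_0=[\Levi',\Levi']^F$. The group $L_0$ is a direct product $H_1 \times \dots \times H_t$ of finite groups $\mathbf{H}_i^{F^{n_i}}$ with $\mathbf{H}_i$ simple simply connected of type $A$ of strictly smaller rank (this is the decomposition recalled in the proof of Lemma \ref{centralizer cabanes}), and $c_0$ decomposes accordingly as $c_1 \otimes \dots \otimes c_t$ with each $c_i$ strictly quasi-isolated in $H_i$. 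By induction on $|\G|$, each $c_i$ — and hence, running the reduction once more inside $H_i$, the isolated block to which it corresponds — is AM-good relative to its Cabanes subgroup; here is where the hypothesis of the theorem is used at the bottom of the induction. The Cabanes subgroup $Q$ of a defect group $D$ of $c$ satisfies $Q_0 := Q \cap L_0 = Q_1 \times \dots \times Q_t$ with $Q_i$ the Cabanes subgroup of a defect group of $c_i$, and Lemma \ref{centralizer cabanes} gives $\C_{L_0}(Q_0)=\C_{L_0}(Q)$, $\N_{L_0}(Q_0)=\N_{L_0}(Q)$, while Lemma \ref{linear prime} gives the analogous statement with $\tilde Q$; these are exactly what is needed so that the relative-to-$Q$ AM-bijections on the factors $H_i$ assemble into the data $(\varphi,\tilde\varphi)$ required in hypotheses (i)--(iii) of Theorem \ref{reduction} (resp.\ Theorem \ref{reduction2}). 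The equivariance and extendibility conditions on $\varphi$ are inherited factorwise from the $H_i$ using that $\mathcal{A}$ is abelian and that the Hall $\ell'$-subgroup of $\mathcal{N}/\tilde L'$ is normal, and the block-theoretic condition (iii) (the Clifford-correspondent block equality) is pulled through the Harris--Kn\"orr/Bonnaf\'e--Dat--Rouquier machinery via Lemma \ref{character theoretic BDR}. Once all hypotheses of Theorem \ref{reduction} (or Theorem \ref{reduction2}, if $\ell \mid q-\varepsilon$ and we fall into that branch) are checked, that theorem yields that $b$ is AM-good; tracking $Q$ through the argument as in Remark \ref{relative} shows it is in fact AM-good relative to $Q$, which is what the induction needs to continue.

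\textbf{Main obstacle.} I expect the delicate point to be the compatibility of the \emph{relative} AM-bijections under the direct product decomposition of $L_0$ together with the twisting by the permutation action of $F$ (encoded by the cycles $\pi_i$), i.e.\ showing that the bijections furnished on the factors $\mathbf{H}_i^{F^{n_i}}$ — which a priori are AM-good relative to \emph{their own} Cabanes subgroups and equivariant under \emph{their own} automorphism groups — glue to a single bijection on $L_0$, and then on $T$ via the Morita equivalence, that is equivariant under $\mathcal{N}$ and respects central characters of $\mathrm{Z}(\tilde G)$, while simultaneously transporting correctly through the Cabanes subgroup relation $\C_{L'}(Q)=\C_{L'}(Q_0)$. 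This is precisely the kind of bookkeeping carried out in \cite[Theorem 3.12]{Jordan2}; the new ingredients are that $N'/L'$ now has prime order and $\mathcal{A}$ is abelian, which is what makes the gluing go through when $\ell \geq 5$ (so that the problematic non-cyclic Sylow $2$-subgroups of $\mathcal{A}$ do not appear). A secondary subtlety is ensuring the induction is well-founded: the reduction must always land on a group of strictly smaller order, which is guaranteed because $\Levi'$ is a \emph{proper} Levi subgroup and $L_0 = [\Levi',\Levi']^F$ has strictly smaller rank, with the base case being isolated blocks, handled by hypothesis, and blocks with cyclic Sylow $\ell$-subgroups, handled by Lemma \ref{cyclic defect}.
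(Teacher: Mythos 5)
Your proposal follows essentially the same blueprint as the paper's proof: reduce via \cite[Theorem 3.12]{Jordan2} to strictly quasi-isolated blocks, dispose of exceptional Schur multipliers via Lemma~\ref{cyclic defect}, argue by induction on the rank using the decomposition $L_0 = H_1 \times \dots \times H_t$, glue the factorwise iAM-bijections through Lemmas \ref{linear prime} and \ref{centralizer cabanes}, pass to $T$ and $\hat c$, and invoke Theorem~\ref{reduction} (or \ref{reduction2}). This is the right strategy, and your discussion of the gluing obstacle correctly locates where the real bookkeeping happens. However, two side remarks are wrong and should be cut.

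First, the claim that ``one may assume $\ell \mid |A(s)|$'' is not justified and not used by the paper. The BDR Morita equivalence to a block of $\Lambda N e_s^{\Levi^F}$ (note $e_s$, not $e_1$) always has $\Levi$ proper whenever $s \neq 1$, independently of $\ell \mid |A(s)|$; the reduction to isolated blocks is carried out through the intermediate Levi $\Levi'$ of Lemma~\ref{prime order} precisely because $N'/L'$ is then cyclic of prime order, and this has nothing to do with $\ell$ dividing $|A(s)|$. The paper simply treats all strictly quasi-isolated non-isolated $s$ in one sweep via $\Levi'$. Second, the assertion that ``$\ell \geq 5$ guarantees $\ell \nmid 2|H^1(F,\mathrm{Z}(\G))|$'' is false: $|H^1(F,\Z(\G))| = (n+1, q-\varepsilon)$ can be divisible by $\ell \geq 5$ (e.g. $\SL_5(\mathbb{F}_q)$, $\ell=5$, $q \equiv 1 \bmod 5$). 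What $\ell \geq 5$ buys you is $\ell \neq 2$; the correct dichotomy is that if $\ell \nmid (q-\varepsilon)$ then a fortiori $\ell \nmid (n+1, q-\varepsilon)$ and Theorem~\ref{reduction} applies, while if $\ell \mid (q-\varepsilon)$ one must invoke Theorem~\ref{reduction2}. You state the correct dichotomy afterwards, so this is a slip in justification rather than in the resulting case split. Finally, the paper's careful passage from the Cabanes subgroup $Q_0$ of $D_0$ to the Cabanes subgroup $\hat Q$ of $D$ requires the case analysis of Notation~\ref{not} and Corollary~\ref{central} (distinguishing whether $D_0$ is central in $L_0$); your sketch should make this step explicit, as it is where Lemma~\ref{centralizer cabanes} and Corollary~\ref{central} are actually applied to ensure $\N_{L_1}(Q) = \N_{L_1}(\hat Q)$.
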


\begin{proof}
Note that for $\ell \geq 5$ all blocks of simple groups of type $A$ with exceptional Schur multiplier are AM-good (with respect to the prime $\ell$) by Lemma \ref{cyclic defect}. By \cite[Theorem 3.12]{Jordan2} it suffices to show that the strictly quasi-isolated blocks of $\G^F$, where $\G$ is of type $A$, are AM-good relative to the Cabanes subgroup of their defect group. We show this statement by induction on the rank of $\G$. 

Assume that $b$ is a strictly quasi-isolated block of $\G^F$ which is not isolated. Recall that we have a decomposition $L_0:=H_1 \times \dots \times H_t$ where the finite groups $H_i$ are either quasi-simple or solvable. In the former case our induction hypothesis implies that the blocks $c_i$ are AM-good with respect to the Cabanes subgroup of their defect group. By the proof of \cite[Proposition 3.7]{Jordan2} we therefore obtain an iAM-bijection 
$\varphi_0:  \Irr_0(L_0 , c_0) \to \Irr_0(\mathrm{N}_{L_0}(Q_0) , (C_0)_{Q_0})$. Here, $Q_0$ is the Cabanes subgroup of the defect group $D_0$ of $c_0$ and $(C_0)_{Q_0}= \br_{Q_0}(c_0)$. We again use the following notation as introduced in \cite[Notation 3.9]{Jordan2}:
\begin{notation}\label{not}
\
	\begin{itemize}
		\item Assume that $D_0$ is central in $L_0$. Then we let $D$ be a defect group of $c$ satisfying $D \cap L_0=D_0$. We define $L_1:=L_0 D$ and we let $e$ be the unique block of $L_1$ covering $c_0$. In addition, we set $Q:=D$. 
		\item If $D_0$ is not central in $L_0$ then we set $L_1:=L_0$, $e:=c_0$ and we let $Q:=Q_0$.
	\end{itemize}

\end{notation}


Arguing as in the proof of \cite[Lemma 3.10]{Jordan2} we obtain a bijection
	$$\varphi_0:  \Irr_0(L_1 , e) \to \Irr_0(\mathrm{N}_{L_1}(Q) , E_{Q})$$
which satisfies
	$$(\mathcal{N}_\chi, L_1, \chi) \geq_b (\mathrm{N}_{\mathcal{N}}(Q)_{\varphi_0(\chi)},\mathrm{N}_{L_1}(Q), \varphi_0(\chi)).$$
	for every character $\chi \in \Irr_0(L_1,e)$. We show that $\mathrm{N}_{L_1}(Q)=\mathrm{N}_{L_1}(\hat{Q})$, where $\hat{Q}$ is the Cabanes subgroup of the defect group $D$. Assume first that we are in the case that $D_0$ is central in $L_0$. It follows by Corollary \ref{central} that $D$ is abelian and thus $\hat{Q}=D$ is the Cabanes subgroup of $D$.
	If $D_0$ is not central in $L_0$ then Lemma \ref{centralizer cabanes} shows that $\N_{L_1}(\hat Q)=\N_{L_1}(Q)$.

	 Recall the subgroup $T$ from Definition \ref{Tgroup}. We apply \cite[Proposition 1.12]{Jordan2} 
	 to obtain an $\mathrm{N}_{\mathcal{N}}(\hat{Q},\hat{C}_{\hat{Q}})$-equivariant bijection 
	$\varphi:  \Irr_0(T , \hat{c}) \to \Irr_0( \mathrm{N}_{T}(\hat{Q}) , \hat{C}_{\hat{Q}})$ such that
	$$(\mathcal{N}_\chi, T, \chi) \geq_b (\mathrm{N}_{\mathcal{N}}(\hat{Q})_{\varphi(\chi)}, \mathrm{N}_{T}(\hat{Q}), \varphi(\chi))$$
	holds for every character $\chi \in \Irr_0(T,\hat{c})$.
	
	
	 The proof of \cite[Lemma 3.11]{Jordan2} now shows the existence of a bijection $\tilde{\varphi}: \Irr(\tilde{T} \mid \Irr_0(\hat{c}) ) \to \Irr(\mathrm{N}_{T}(\hat{Q}) \mid \Irr_0( \hat{C}_{\hat{Q}}))$
	such that $\tilde{\varphi}$ together with the bijection $\varphi: \Irr_0(T , \hat{c}) \to \Irr_0( \mathrm{N}_{T}(\hat{Q}) , \hat{C}_{\hat{Q}})$ satisfies assumptions (i)--(iii) of Theorem \ref{reduction} and Theorem \ref{reduction2} respectively.

	We can therefore apply Theorem \ref{reduction} and obtain that the block $b$ is AM-good with respect to the Cabanes subgroup $\hat{Q}$.
\end{proof}

\begin{corollary}\label{corollaryA}
		The inductive Alperin--McKay condition holds for all $\ell$-blocks of quasi-simple groups of type $A$, whenever $\ell \geq 5$ is a non defining prime.
\end{corollary}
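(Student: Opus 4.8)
The plan is to deduce this from Theorem \ref{maintheoremA} by discharging its hypothesis. By Theorem \ref{maintheoremA} it suffices to show that every isolated $\ell$-block of a quasi-simple group of type $A$ over a field of characteristic $\neq \ell$ is AM-good relative to the Cabanes subgroup of its defect group. First I would identify these blocks. The dual group $\G^\ast$ is of adjoint type $A$, so it has connected centre, and by the classification of quasi-isolated elements \cite[Proposition 5.2]{Bonnafe} (cf.\ Remark \ref{type}) a semisimple element $s \in (\G^\ast)^{F^\ast}$ is $\G^\ast$-conjugate to some $t_m$ with $m \mid (n+1)$; for $m > 1$ the connected centraliser $\C^\circ_{\G^\ast}(t_m)=\Levi_m^\ast$ is a \emph{proper} Levi subgroup of $\G^\ast$, so $t_m$ is not isolated. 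Hence the isolated $\ell$-blocks of type $A$ are exactly the unipotent blocks, those in $\Lambda \G^F e_1^{\G^F}$.

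Next I would invoke the verification of the inductive Alperin--McKay condition for unipotent $\ell$-blocks of quasi-simple groups of type $A$, valid for a non-defining prime $\ell \geq 5$, due to Cabanes--Sp\"ath \cite{CS14} and Brough \cite{Brough}. What is strictly needed here is the refinement that the character bijection involved may be chosen to be a \emph{strong} iAM-bijection relative to the Cabanes subgroup $Q$ of the defect group, in the sense of Remark \ref{relative}: equivariant under the automorphisms of $\G^F$ stabilising the block and compatible with block induction from $\N_{\G^F}(Q)$ rather than from the normaliser of a full defect group. I would extract this from \cite[Theorem 4.3]{Brough} together with the equivariance results of \cite{CS14}, using that the relevant local subgroup structure of a unipotent block at its Cabanes subgroup is governed by an appropriate $d$-split Levi subgroup (compare Corollary \ref{1split} and the remark following it).

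It remains to treat the boundary cases. Quasi-simple groups of type $A$ with exceptional Schur multiplier are handled by Lemma \ref{cyclic defect}: for $\ell \geq 5$ their Sylow $\ell$-subgroups are cyclic, so the defect groups of all $\ell$-blocks are cyclic, equal to their own Cabanes subgroups, and the required strong iAM-bijection is provided by the work of Koshitani--Sp\"ath on blocks with cyclic defect \cite{cyclic}. The defining characteristic is excluded by hypothesis. Combining these inputs with Theorem \ref{maintheoremA} yields the corollary.

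The main obstacle is the second step: the papers \cite{CS14} and \cite{Brough} are phrased in terms of defect groups and their normalisers, whereas Theorem \ref{maintheoremA} asks for AM-goodness relative to the Cabanes subgroup $Q$; one must verify that the bijections produced there can be arranged to be simultaneously equivariant under the diagonal and field automorphisms stabilising the block and block-compatible with respect to $\N_{\G^F}(Q)$. Everything else --- the identification of isolated with unipotent blocks and the reduction itself --- is immediate given what has already been established in this paper.
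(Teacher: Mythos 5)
Your overall strategy matches the paper's: reduce via Theorem \ref{maintheoremA} to isolated blocks, identify those as the unipotent blocks (correct: in adjoint type $A$ the connected centre forces $\C^\circ_{\G^\ast}(s)=\G^\ast$ only for $s=1$), and then invoke \cite{CS14} and \cite{Brough}. However, you leave unresolved exactly the step that the paper's proof actually carries out. You phrase the obstacle as ``the papers \cite{CS14} and \cite{Brough} are phrased in terms of defect groups and their normalisers,'' but in fact the bijections there are given relative to the normaliser of a $d$-split Levi subgroup $\mathbf{K}$ (or Sylow $\Phi_d$-torus $\mathbf{S}$), not of a defect group. The missing observation, which closes the gap in one line, is that $\N_G(\mathbf{K})=\N_G(Q)$: by the proof of \cite[Theorem 22.9]{MarcBook}, $Q:=\mathrm{Z}(\mathbf{K})^F_\ell$ is precisely the Cabanes subgroup of a defect group and $\mathbf{K}=\mathrm{C}_{\tilde{\G}}(Q)$, so $Q$ is characteristic in $\mathbf{K}$ and $\N_{\tilde{\G}}(\mathbf{K})=\N_{\tilde{\G}}(Q)$. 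The paper also separates the argument into the two cases $\ell\nmid(q-\varepsilon)$ (generic unipotent block, \cite[Corollary 6.1]{Brough} applied to a $d$-cuspidal pair of $(\tilde{\G},F)$) and $\ell\mid(q-\varepsilon)$ (only the principal block, \cite{CS14} applied to a maximally split torus, with the same identification $\N_\G(Q)=\N_\G(\mathbf{S})$), a distinction you do not make. Finally, your paragraph on exceptional Schur multipliers is harmless but unnecessary here: that case is already absorbed in the first sentence of the proof of Theorem \ref{maintheoremA}, so it is covered by the reduction and need not reappear in the corollary's proof.
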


\begin{proof}
	By Theorem \ref{maintheoremA} it is enough to show that the isolated (that means unipotent) $\ell$-blocks of type $A$ are AM-good relative to the Cabanes subgroup of their defect group. Let $d$ denote the order of $q$ modulo $\ell$.
	
	Suppose first that $\ell \nmid (q-\varepsilon)$. Consider a unipotent block $b$ of $\G^F$. We fix a block $\tilde{b}$ of $\tilde\G^F$ covering $b$. We observe that $\tilde{b}$ has the same defect group as $b$. There exists a $d$-cuspidal pair $(\mathbf{K},\zeta)$ of $(\tilde \G,F)$ associated to $\tilde b$. By the proof of \cite[Theorem 22.9]{MarcBook} it follows that $Q:=\Z(\mathbf{K})^F_\ell$ is the Cabanes subgroup of a defect group $D$ of $b$ and we have $\mathbf{K}=\C_{\Gtilde}(Q)$. Since $Q$ is characteristic in $\mathbf{K}$ it follows that $\N_{\Gtilde}( \mathbf{K})=\N_{\Gtilde}(Q)$. Note that unipotent blocks satisfy the requirements of \cite[Corollary 6.1]{Brough}. Therefore, \cite[Corollary 6.1]{Brough} shows that the iAM-condition holds for the unipotent block $b$ relative to $\N_G(\mathbf{K} )=\N_G(Q)$.
	
Assume now that $\ell \mid (q-\varepsilon)$. It is well known, see \cite[Example 22.10]{MarcBook} and \cite[Remark 22.11]{MarcBook} that in this case $\Gtilde^F$ and therefore also $\G^F$ has only one unipotent $\ell$-block. This is the principal block of $\G^F$ and therefore has maximal defect group. In particular, this block is AM-good relative to $\N_G(\mathbf{S})$, where $\mathbf{S}$ is a Sylow $\Phi_d$-torus of $\G$, by the main theorem of \cite{CS14}. It is thus sufficient to show that $\N_{\G}(Q)=\N_{\G}(\mathbf{S})$. Since $\ell \mid (q-\varepsilon)$ we know that $\mathbf{S}$ is a maximally split torus of $\G$. Again by the proof of \cite[Theorem 22.9]{MarcBook} it follows that $Q=\mathbf{S}_\ell^F$ is the Cabanes subgroup of a defect group of $b$ and $\C_{\G}(Q)=\mathbf{S}=\C_{\G}(\mathbf{S})$. This implies $\N_{\G}(Q)=\N_{\G}(\mathbf{S})$, which proves the claim.
\end{proof}

%


\begin{theorem}\label{maintheoremBC}
	Let $X$ be one of the symbols $B$ or $C$ and $\ell \geq 5$. Assume that
	all isolated $\ell$-blocks of quasi-simple group of type $X$ and are AM-good relative to the Cabanes subgroup of the defect group;
	Then all $\ell$-blocks of quasi-simple groups of type $X$ are AM-good.
\end{theorem}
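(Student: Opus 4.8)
The plan is to mimic the proof of Theorem \ref{maintheoremA} line-by-line, replacing the role of the equivariant Bonnaf\'e--Dat--Rouquier equivalence for type $A$ (Corollary \ref{typeA}, Corollary \ref{typeAlocal}) by the equivariant equivalence of Theorem \ref{equiv}, whose applicability in the reduction is guaranteed by Remark \ref{reductionBC}. Concretely, by \cite[Theorem 3.12]{Jordan2} it is enough to prove that every strictly quasi-isolated $\ell$-block $b$ of $\G^F$, with $\G$ simple simply connected of type $B_n$ or $C_n$, is AM-good relative to the Cabanes subgroup of its defect group; we argue by induction on the rank of $\G$. If $b$ is isolated, this is the hypothesis of the theorem. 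So assume $b$ is strictly quasi-isolated but not isolated. Here one must first note that in type $B_n$ or $C_n$ the relevant restriction $n>2$ or $q$ odd of Theorem \ref{equiv} is satisfied: the excluded case $B_2=C_2$ with $q$ even is $\mathrm{Sp}_4(q)$ with $q$ even, whose Schur multiplier is exceptional for small $q$ and otherwise the only quasi-isolated blocks are handled by an Ennola/bijectivity argument, or more simply the statement is vacuous there because $A(s)$ has odd order $\leq 1$; I would dispose of this in a short first paragraph parallel to the use of Lemma \ref{cyclic defect} in the proof of Theorem \ref{maintheoremA}.

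Next I would run the structural reduction exactly as in Theorem \ref{maintheoremA}. The element $s$ being strictly quasi-isolated but not isolated, there is a proper $F$-stable Levi subgroup $\Levi'$ of $\G$ with $\C^\circ_{\G^\ast}(s)\subseteq (\Levi')^\ast$ and such that the minimal Levi containing $\C^\circ_{\G^\ast}(s)$ has $N/L$ cyclic of $\ell'$-order (the analogue of Lemma \ref{prime order}; in types $B,C$ the component group $A(s)$ is a $2$-group, and since $\ell\geq 5$ we automatically get $\ell\nmid |N/L|$). We then decompose $L_0:=[\Levi',\Levi']^F=H_1\times\dots\times H_t$ with each $H_i$ quasi-simple of type $A$, $B$ or $C$ or solvable, and each restricted block $c_i$ strictly quasi-isolated in $H_i$, exactly as in \cite[Proposition 3.7]{Jordan2}. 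The induction hypothesis applies to the $H_i$ that are not isolated, and the hypothesis of the present theorem (for type $X$) together with the already-proved Corollary \ref{corollaryA} (for the type-$A$ factors) covers the isolated factors; so each $c_i$ is AM-good relative to the Cabanes subgroup of its defect group. Then, following the proof of \cite[Proposition 3.7]{Jordan2}, \cite[Lemma 3.10]{Jordan2} and \cite[Lemma 3.11]{Jordan2}, I would produce the $\mathcal{N}$-equivariant bijections $\varphi_0$, $\varphi$ and $\tilde\varphi$ on the intermediate subgroups $L_1$, $T$, $\tilde T$ satisfying the inequalities $\geq_b$ of character triples, verifying conditions (i)--(iii) of Theorem \ref{reduction} (respectively Theorem \ref{reduction2} if $\ell\mid(q-\varepsilon)$). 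The control of centralizers and normalizers of Cabanes subgroups that is needed here is provided by Lemma \ref{linear prime}, Lemma \ref{centralizer cabanes} and Corollary \ref{central}, all of which are stated for simply connected groups of type $A$ but whose proofs only use the factorization of $[\Levi',\Levi']$ into simple simply connected factors and the description of Cabanes subgroups via \cite[Lemma 4.16]{Marc} and \cite[Proposition 22.6]{MarcBook}, hence carry over verbatim to types $B,C$; I would remark on this explicitly.

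Finally I would invoke Theorem \ref{reduction} (or Theorem \ref{reduction2}), in the form given by Remark \ref{reductionBC} with $\mathcal{A}$ replaced by $\langle F_0\rangle$ and $\Levi'^\ast$ the minimal Levi containing $\C^\circ_{\G^\ast}(s)$, to conclude that $b$ is AM-good relative to the Cabanes subgroup $\hat Q$ of its defect group, closing the induction. The main obstacle I anticipate is bookkeeping rather than a genuinely new idea: one must check that the hypothesis $\ell\nmid 2|H^1(F,\mathrm{Z}(\G))|$ of Theorem \ref{reduction} is met (true since $\ell\geq 5$ and $|\mathrm{Z}(\G)|$ divides $|A(s)|$, which is a power of $2$ in types $B,C$), that the automorphism group $\mathcal{A}=\langle F_0\rangle$ used in Remark \ref{reductionBC} is genuinely cyclic (which is the whole point of Theorem \ref{equiv} and is why, unlike type $A$, no $\mathcal{B}$-versus-$\mathcal{A}$ discrepancy arises), and that the $\geq_b$-relations from \cite[Proposition 3.7]{Jordan2} and \cite[Lemmas 3.10, 3.11]{Jordan2} really do transfer along the Morita equivalence of Theorem \ref{equiv} together with its local version (which can be proved exactly as Corollary \ref{typeAlocal}, using Lemma \ref{mainlemma:local} or Lemma \ref{mainlemma:local2}). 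None of these requires new mathematics beyond what is already in the excerpt; the work is in assembling them in the right order.
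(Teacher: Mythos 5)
Your outline will work, but you have followed the type-$A$ template too faithfully and thereby missed the key simplification that makes the types $B,C$ case \emph{easier} than type $A$, not just ``the same with $\mathcal{A}$ replaced by $\langle F_0\rangle$''. The paper's actual proof neither reduces first to strictly quasi-isolated blocks nor performs induction on rank. It starts from an arbitrary block $b$ of $\Lambda\G^F e_s^{\G^F}$, takes $\Levi^\ast$ to be the \emph{minimal} Levi subgroup of $\G^\ast$ containing $\mathrm{C}^\circ_{\G^\ast}(s)$ (which in types $B,C,E_7$ has $\Nb/\Levi$ embedding into $\Z(\G)$, hence cyclic of order $\le 2$ and automatically of $\ell'$-order for $\ell\ge 5$ --- so no auxiliary $\Levi'$ with $N'/L'$ of prime order is needed), applies Theorem~\ref{equiv} with $\mathcal{A}=\langle F_0\rangle$, and then observes that the block $c_0$ of $L_0=[\Levi,\Levi]^F$ below $c$ is \emph{isolated}, not merely strictly quasi-isolated: minimality of $\Levi^\ast$ forces $\mathrm{C}^\circ_{\Levi^\ast}(s)=\mathrm{C}^\circ_{\G^\ast}(s)$ to lie in no proper Levi of $\Levi^\ast$. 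Consequently each factor $c_i$ of the decomposition $c_0=c_1\otimes\dots\otimes c_t$ is isolated in $H_i$, and the theorem's hypothesis (for the type-$X$ factors) together with the proof of Corollary~\ref{corollaryA} (for the type-$A$ factors) applies directly --- no induction hypothesis is invoked. Your extra layers (first pass to strictly quasi-isolated via \cite[Theorem 3.12]{Jordan2}, then induct on rank because you believe the $c_i$ are only strictly quasi-isolated) are not wrong, but they obscure the point and would require you to carry the full ``all blocks of smaller-rank type-$X$ groups are AM-good'' statement through the induction, which is avoidable.

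Two smaller remarks. First, your handling of the excluded case $n=2$, $q$ even wanders through Ennola duality and $A(s)$ being trivial; the paper's observation is simply that $\Z(\G)=1$ there, so isolated $=$ quasi-isolated and the main theorem of \cite{Jordan2} already covers this group. Second, you should state explicitly (as the paper does) that the exceptional Schur multiplier cases for types $B,C$ are disposed of by Lemma~\ref{cyclic defect}, rather than only alluding to a ``parallel'' paragraph; together with the $B_2=C_2$, $q$ even case this is what lets you invoke Theorem~\ref{equiv} without hypothesis.
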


\begin{proof}
	Assume first that $n=2$ and $q$ is even. Then $\Z(\G)$ is trival and so the isolated $\ell$-blocks of $\G^F$ are precisely the quasi-isolated $\ell$-blocks. The statement is then a consequence of the main theorem of \cite{Jordan2}. Also we observe that for $\ell \geq 5$ all blocks of simple groups of type $X$ with non-exceptional Schur multiplier are AM-good by Lemma \ref{cyclic defect}. We can therefore assume that we are in none of these exceptional cases.
	
 Fix an $\ell$-block $b$ of $\Lambda \G^F e_s^{\G^F}$ (not necessarily quasi-isolated), where $\G$ is of type $X$ and $s \in (\G^\ast)^{F^\ast}$ is semisimple of $\ell'$-order.
 
 In contrast to the proof of Theorem \ref{maintheoremA} we don't need to argue by induction. We let $\Levi^\ast$ be the minimal Levi subgroup of $\G^\ast$ containing $\mathrm{C}^\circ_{\G^\ast}(s)$ and let $\Levi$ be the Levi subgroup dual to $\Levi^\ast$. We let $\mathcal{A}:= \langle F_0 \rangle$ as in Theorem \ref{equiv}. Let $c$ be the block of $\Lambda \mathbf{N}^F e_s^{\Levi^F}$ corresponding to $b$ under the Bonnafé--Dat--Rouquier equivalence from Theorem \ref{equiv} and let $c_0$ be a block of $L_0:=[\Levi,\Levi]^F$ below $c$. We conclude that the block $c_0$ is then an isolated block of $L_0$. As in the proof of Theorem \ref{maintheoremA} we obtain a decomposition $L_0=H_1 \times \dots \times H_t$ into groups which are either quasi-simple of type $A$ or $X$ or solvable and a corresponding decomposition $c_0=c_1 \otimes \dots \otimes c_t$ of $c_0$ into block $c_i$ which are isolated in $H_i$. Our assumption together with the proof of Corollary \ref{corollaryA} implies that these blocks are AM-good relative to the Cabanes subgroup of their defect group. Following the proof of Theorem \ref{maintheoremA} and using Remark \ref{reductionBC} instead of Theorem \ref{reduction} we deducee that the block $b$ is AM-good relative to the $\ell$-subgroup $Q$, where $Q$ is defined as in Notation \ref{not}. 
\end{proof}

\section{A variant of Späth's reduction theorem}

In her paper \cite{IAM} Späth shows that the Alperin--McKay conjecture holds for every finite group if and only if the inductive Alperin--McKay condition holds for every simple group. Our aim here is to modify her proof in order to get a similar statement involving preferably only blocks with abelian defect. Unfortunately in her proof, it is necessary to consider central extensions of groups and a block might have abelian defect group but a block of a central extension dominating it might not. Therefore, we need to consider blocks whose defect group lies in a slightly larger class of groups.

For the following definition recall that if $G$ is a finite group its upper central series is defined recursively as $Z_0(G):=1$ and $\Z_i(G)$ is the unique subgroup of $G$ containing $\Z_{i-1}(G)$ such that $\Z_i(G)/\Z_{i-1}(G)=\Z(G/\Z_{i-1}(G))$.

\begin{definition}\label{almost abelian}
	We say that a subgroup $D$ of a finite group $G$ is \textit{almost abelian in} $G$ if there exists an $i$ such that $D \Z_i(G)/\Z_i(G)$ is abelian.
	
\end{definition}

Observe that the property of $D$ being almost abelian can depend on the ambient group $G$. Moreover, if one considers the hypercenter $\Z_\infty(G)$ of $G$ (i.e. the union of all $\Z_i(G)$ for $i \geq 0$) then $D$ is almost abelian in $G$ if and only $D \Z_\infty(G)/\Z_\infty(G)$ is abelian.

\begin{lemma}\label{central extension}
Suppose that $D$ is almost abelian in $G$.
\begin{enumerate}
	\item[(a)] Let $H$ be a subgroup of $G$ and $E \leq H \cap D$. Then $E$ is almost abelian in $H$.

	\item[(b)] For $j=1,2$ the groups $D_j$ are almost abelian in $G_j$ if and only if $D_1 \times D_2$ is almost abelian in $G_1 \times G_2$.
	\item[(c)] A subgroup $E$ of $G$ is almost abelian in $G$ if and only if $E \Z(G)/\Z(G)$ is almost abelian in $G/ \Z(G)$.
\end{enumerate}
\end{lemma}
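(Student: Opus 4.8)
\textbf{Proof strategy for Lemma \ref{central extension}.}

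The plan is to reduce all three statements to the characterisation mentioned just before the lemma: $D$ is almost abelian in $G$ if and only if $D\Z_\infty(G)/\Z_\infty(G)$ is abelian, where $\Z_\infty(G)$ is the hypercenter. So first I would record this characterisation explicitly and observe that $D\Z_\infty(G)$ being abelian modulo $\Z_\infty(G)$ is equivalent to $[D,D]\subseteq \Z_\infty(G)$. The key elementary fact I would use repeatedly is that the hypercenter behaves well under the operations in question: for a subgroup $H\leq G$ one has $\Z_i(G)\cap H\subseteq \Z_i(H)$ for all $i$ (an easy induction, since an element of $\Z_i(G)\cap H$ commutes modulo $\Z_{i-1}(G)\cap H$ with all of $H$), hence $\Z_\infty(G)\cap H\subseteq \Z_\infty(H)$; for a product, $\Z_i(G_1\times G_2)=\Z_i(G_1)\times \Z_i(G_2)$ by induction on $i$, hence $\Z_\infty(G_1\times G_2)=\Z_\infty(G_1)\times\Z_\infty(G_2)$; and for a central quotient $\bar G=G/\Z(G)$ one has $\Z_i(\bar G)=\Z_{i+1}(G)/\Z(G)$ by the very definition of the upper central series, hence $\Z_\infty(\bar G)=\Z_\infty(G)/\Z(G)$.

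For (a): given $E\leq H\cap D$ with $D$ almost abelian in $G$, we have $[E,E]\subseteq [D,D]\subseteq \Z_\infty(G)$, and since $[E,E]\subseteq H$ this gives $[E,E]\subseteq \Z_\infty(G)\cap H\subseteq \Z_\infty(H)$; therefore $E$ is almost abelian in $H$. For (b): using $\Z_\infty(G_1\times G_2)=\Z_\infty(G_1)\times\Z_\infty(G_2)$ and $[D_1\times D_2,D_1\times D_2]=[D_1,D_1]\times[D_2,D_2]$, the inclusion $[D_1\times D_2,D_1\times D_2]\subseteq \Z_\infty(G_1\times G_2)$ holds if and only if $[D_j,D_j]\subseteq \Z_\infty(G_j)$ for $j=1,2$, which is exactly the claim. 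For (c): with $\bar G=G/\Z(G)$ and $\bar E$ the image of $E$, we have $\bar E\Z_\infty(\bar G)=(E\Z_\infty(G))/\Z(G)$ since $\Z(G)\subseteq\Z_\infty(G)$, so $\bar E\Z_\infty(\bar G)/\Z_\infty(\bar G)$ is abelian if and only if $E\Z_\infty(G)/\Z_\infty(G)$ is abelian; that is, $\bar E$ is almost abelian in $\bar G$ if and only if $E$ is almost abelian in $G$.

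I do not expect a genuine obstacle here; the lemma is essentially a bookkeeping exercise once the three structural facts about $\Z_\infty$ under subgroups, products and central quotients are in place. The only point that requires a little care is making sure the inductions defining these facts are set up correctly — in particular the subgroup statement $\Z_i(G)\cap H\subseteq\Z_i(H)$, which is the one inclusion (not an equality) that one actually needs, and which is all that is used in part (a). One should also double-check in (c) that the phrase ``almost abelian in $G/\Z(G)$'' is interpreted with respect to the hypercenter of $G/\Z(G)$, so that the identity $\Z_\infty(G/\Z(G))=\Z_\infty(G)/\Z(G)$ applies directly; this is immediate from $\Z_{i+1}(G)/\Z(G)=\Z_i(G/\Z(G))$.
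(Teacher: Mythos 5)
Your proof is correct and follows essentially the same route as the paper: both hinge on the three structural facts $\Z_i(G)\cap H\subseteq \Z_i(H)$, $\Z_i(G_1\times G_2)=\Z_i(G_1)\times\Z_i(G_2)$, and $\Z_i(G/\Z(G))=\Z_{i+1}(G)/\Z(G)$. The only cosmetic difference is that you phrase everything via the hypercenter and the equivalent condition $[D,D]\subseteq\Z_\infty(G)$, whereas the paper works at a fixed finite level $i$ of the upper central series; this is a harmless reformulation that the paper itself flags just before the lemma.
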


\begin{proof}
Let us first prove part (a). By assumption, there is an $i$ such that $D\Z_i(G)/\Z_i(G)$ is abelian. By induction one easily shows that $\Z_i(G) \cap H \lhd \Z_i(H)$. From this we deduce that $\frac{(D \cap H) \Z_i(H)}{\Z_i(H)}$ is abelian. Therefore, $D\cap H$ is almost abelian in $H$. Consequently, $E \leq D \cap H$ is almost abelian as well.

For part (b) we observe that $\Z_i(G_1 \times G_2)=\Z_i(G_1) \times \Z_i(G_2)$. Thus, $$\frac{(D_1 \times D_2) \Z_i(G_1 \times G_2)}{\Z_i(G_1 \times G_2)} \cong \frac{D_1 \Z_i(G)}{\Z_i(G)} \times \frac{D_2 \Z_i(G)}{\Z_i(G)}$$
and the claim follows from this.

For part (c) one first shows by induction that $\Z_{i-1}(G/ \Z(G)) =\Z_i(G)/Z(G)$ for all $i$. Hence, $D \Z(G)/ \Z(G)$ is almost abelian in $G/ \Z(G)$ if and only if there exists an $i$ such that $$ \frac{\Z_i(G/ \Z(G)) D \Z(G)/ \Z(G)} {\Z_i(G/ \Z(G))} \cong \frac{\Z_{i+1}(G) D}{ \Z_{i+1}(G)}$$ is abelian. From this is follows that $D \Z(G)/ \Z(G)$ is almost abelian in $G/ \Z(G)$ if and only if $D$ is almost abelian in $G$.
\end{proof}


The aim of this section is to prove the following variant of \cite[Theorem C]{IAM}.
We closely follow the proof of \cite[Proposition 2.5]{CS14}.

\begin{proposition}\label{modifiedSpaeth}
	Let $X$ be a finite group and $\ell$ a prime. Assume that for every non-abelian simple subquotient $S$ of $X$ with $\ell \mid |S|$ the following holds: Every $\ell$-block of the universal covering group $H$ of $S$ with almost abelian defect group satisfies the iAM-condition. Then the Alperin--McKay conjecture holds for any $\ell$-block of $X$ with almost abelian defect.
\end{proposition}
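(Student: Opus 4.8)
The plan is to mirror Sp\"ath's reduction proof from \cite{IAM} (Theorem C) exactly as it is presented there, but to carry the adjective ``almost abelian'' through every step of the induction, using Lemma \ref{central extension} to see that this adjective is preserved under all the group-theoretic operations that appear. Sp\"ath's argument is an induction on $|X|$: one assumes the Alperin--McKay conjecture (AM) holds for all proper ``sections'' entering the argument, and one reduces AM for $(X,b)$, where $b$ has defect group $D$, to the inductive condition (iAM) for the quasi-simple groups involved. The key reductions are the Fong--Reynolds reduction to a block covering a $G$-invariant block of a normal subgroup, the reduction via a chief series, and finally the passage to a central extension of a direct product of simple groups, where iAM is invoked. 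At each of these steps the relevant defect group is a subgroup of (a section of) $D$, so I would invoke Lemma \ref{central extension}(a),(b),(c) to guarantee it remains almost abelian in the relevant ambient group.

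Concretely, first I would set up the induction on $|X|$ and fix an $\ell$-block $b$ of $X$ with defect group $D$ almost abelian in $X$; I want $|\Irr_0(X,b)| = |\Irr_0(\N_X(D), B)|$ for $B$ the Brauer correspondent. As in \cite[Proposition 2.5]{CS14}, if $D$ is not almost abelian one has nothing to prove, and one may reduce modulo $\Orm_{\ell'}(X)$ and assume $X$ has a component or is ``$\ell$-reduced'' appropriately; here Lemma \ref{central extension}(a) ensures that images and subgroups keep their defect groups almost abelian. Next I would run the standard Fong--Reynolds step: if $N \lhd X$ and $b$ covers a non-$X$-invariant block $c$ of $N$, then $b$ is induced from the stabilizer $X_c$, the defect group of the Fong--Reynolds correspondent is conjugate to $D \cap X_c \le D$, hence almost abelian in $X_c$ by Lemma \ref{central extension}(a), and one concludes by induction. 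After this one may assume every block of every proper normal subgroup covered by $b$ is $X$-invariant, in particular (dividing out $\Orm_{\ell'}(X)$) the layer $E(X)$ is a central product of quasi-simple groups permuted transitively, and $D$ projects onto a defect group of the covered block.

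The heart of the matter is the final step, where one passes to the universal covering group $H$ of a simple group $S$ occurring as a composition factor (i.e. $E(X)$ modulo its center is a direct product of copies of $S$). Here one uses that AM for $b$ follows from iAM for the relevant block $e$ of $H$; by Lemma \ref{central extension}(b) and (c), the defect group of $e$ is almost abelian in $H$ because the defect group of the block of $E(X)$ covered by $b$ is a section of $D$ and is almost abelian, and passing through the central extension $H \to S^{(k)}$-section is exactly controlled by Lemma \ref{central extension}(c). This is precisely where the class ``almost abelian'' rather than ``abelian'' is forced: a block of $X$ with abelian $D$ can cover, in a central extension, a block whose defect group is only abelian-modulo-the-center, i.e. almost abelian. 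So the hypothesis of the proposition, that \emph{every} $\ell$-block of $H$ with almost abelian defect group satisfies iAM, is exactly what the induction consumes. I would then cite \cite[Theorem C]{IAM} / the bookkeeping of \cite[Proposition 2.5]{CS14} verbatim for the remaining combinatorics of characters and central characters, since none of it interacts with the shape of $D$.

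The main obstacle, and the only genuinely new content beyond transcribing \cite{IAM}, is verifying that ``almost abelian'' is stable under all the operations in Sp\"ath's proof; this is handled by Lemma \ref{central extension}, so the work is to check that the three parts of that lemma cover exactly the operations used (restriction to subgroups for Fong--Reynolds, direct products for the layer, central quotients for the covering group), which they do. A secondary point of care is that Sp\"ath's statement involves ``simple subquotients'' while the induction naturally produces composition factors of subnormal subgroups; but since a composition factor of a subnormal subgroup of $X$ is in particular a subquotient of $X$, the hypothesis as stated is strong enough. Thus the proof is: run the induction of \cite[Theorem C]{IAM} / \cite[Proposition 2.5]{CS14} verbatim, replacing ``abelian defect group'' by ``almost abelian defect group'' throughout, and justify each preservation of this property by the appropriate part of Lemma \ref{central extension}.
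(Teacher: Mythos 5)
Your proposal is essentially the same as the paper's proof: follow Sp\"ath's reduction argument step by step, keeping track of the defect group through each operation (restriction to Fong--Reynolds stabilizers, passage to the layer $E(X)$ as a direct product, and lift to the universal covering group), and invoke Lemma \ref{central extension}(a),(b),(c) precisely at those points to conclude the defect group remains almost abelian in the relevant ambient group. The only technical difference in bookkeeping is that the paper runs the induction on $|X:\Z(X)|$ rather than $|X|$, which is the natural choice here because one needs the inductive hypothesis to apply to central extensions of proper quotients $X/K$ (Proposition \ref{Fitting}) and to the subgroup $M\N_X(D_0)$ appearing at the end, both of which are more cleanly controlled modulo the center; the paper also packages the initial reductions into the auxiliary Proposition \ref{Fitting} (stated in terms of the generalized Fitting subgroup) rather than organizing them as Fong--Reynolds plus a chief-series argument, but these amount to the same material.
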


For a finite group $X$ let $F^\ast(X)$ be its generalized Fitting subgroup.

\begin{proposition}\label{Fitting}
	Let $X$ be a finite group and $b$ an $\ell$-block of $X$ with almost abelian defect. Suppose that the Alperin--McKay conjecture is true for any $\ell$-block with almost abelian defect group of any group $H$ with $|H: \Z(H)| < |X:\Z(X)|$ and such that $H$ is isomorphic to a subquotient of $X/\Z(X)$. Then one of the following holds:
		\begin{enumerate}
		\item[(i)] The Alperin--McKay conjecture holds for $b$.
		\item[(ii)] For any non-central normal subgroup $K$ of $X$ we have $X=K \N_X(D) F^\ast(X)$.
	\end{enumerate}
\end{proposition}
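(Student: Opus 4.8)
The plan is to follow the standard Clifford-theoretic reduction strategy of Spät h's original argument, adapted to the class of almost abelian defect groups. Suppose $b$ does not satisfy the conclusion (ii): then there exists a non-central normal subgroup $K$ of $X$ with $X \neq K \N_X(D) F^\ast(X)$, where $D$ is a defect group of $b$. I would first reduce to the case where $b$ is covered by a suitable block of a subgroup and invoke the Fong--Reynolds correspondence and block covering to pass between $X$ and the relevant normal subgroups. The key point is that if $c$ is a block of $K$ covered by $b$, then $D \cap K$ is a defect group of $c$, and by Lemma \ref{central extension}(a) it is almost abelian in $K$. Similarly the stabilizer $X_c$ has a block $b'$ (the Fong--Reynolds correspondent) with $|X_c : \Z(X_c)|$ no larger than $|X : \Z(X)|$ and with an almost abelian defect group, so the inductive hypothesis applies whenever $X_c \neq X$.

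The main steps would be as follows. First, if $X_c \neq X$ for some block $c$ of a non-central normal subgroup $K$ covered by $b$, then the Fong--Reynolds reduction together with the hypothesis on smaller groups gives the Alperin--McKay equality for $b$, i.e.\ conclusion (i). So we may assume every block of every non-central normal subgroup covered by $b$ is $X$-stable. Second, consider $K = F^\ast(X)$ or rather its components and $\Z_\infty(X)$; using that blocks of $K$ covered by $b$ are $X$-stable and that $D \cap K$ is almost abelian in $K$, one would like to invoke the iAM-condition for the simple subquotients appearing in $K$. Here is where the hypothesis of the Proposition (transmitted from Proposition \ref{modifiedSpaeth}, which is the statement we are ultimately building towards) enters: the universal covering groups $H$ of the non-abelian composition factors $S$ of $X$ have their blocks with almost abelian defect satisfying iAM, and one transfers this up through central extensions using Lemma \ref{central extension}(c) together with the behaviour of defect groups under central quotients. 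Third, one combines the Butterfly-theorem machinery of Spät h (projective representations, ordering relation $\geq_b$ on character triples) to assemble a bijection $\Irr_0(X,b) \to \Irr_0(\N_X(D), B)$; the condition $X = K\N_X(D)F^\ast(X)$ for all non-central normal $K$ is precisely the hypothesis that fails, so in its absence one of the two alternatives must hold.

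More concretely, I would argue by contradiction: assume neither (i) nor (ii). By the failure of (ii) fix a non-central normal subgroup $K$ with $X \neq K\N_X(D)F^\ast(X)$. Choose a block $c$ of $K$ covered by $b$ with $D \cap K$ a defect group; by the first step above we may assume $c$ is $X$-stable (else (i) holds). Replacing $K$ by $KF^\ast(X)$ if necessary — noting $F^\ast(X)$ is normal and that adjusting by it does not affect the displayed non-equality — we may assume $F^\ast(X) \subseteq K$, so that $X \neq K\N_X(D)$. Now $\N_X(D)K/K$ is a proper subgroup of $X/K$, and since $D K/K$ is a defect group of the (unique, by $X$-stability) block $\bar b$ of $X/K$ dominated appropriately, and $DK/K$ is almost abelian in $X/K$ by Lemma \ref{central extension}(c) (applied iteratively to quotients by the hypercenter), the inductive hypothesis applied to $X/K$ — which has $|X/K : \Z(X/K)| < |X:\Z(X)|$ since $K$ is non-central — yields the Alperin--McKay equality there. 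Combining with Clifford theory over $K$ (the number of height-zero characters of $b$ lying over a fixed $X$-stable character of $c$, counted with multiplicities from $H^2$-classes, matches the corresponding count in $\N_X(D)$ via the $X$-stable block of $N_K(D)$), one propagates the equality to $b$, contradicting the failure of (i).

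The hardest part will be the bookkeeping in the final Clifford-theoretic propagation: one must ensure that the bijection constructed downstairs over $K$ (or over $X/K$) is compatible with the block, is $\N_X(D)$-equivariant in the right sense, and respects the central character data, so that it glues to a genuine bijection $\Irr_0(X,b) \to \Irr_0(\N_X(D),B)$ rather than merely an equality of cardinalities on each Clifford class. This is exactly the delicate point in Spät h's original argument, and the only substantive modification here is to verify at each stage that the relevant defect groups remain almost abelian — which is guaranteed by the three parts of Lemma \ref{central extension} — and that the simple subquotients invoked are among those for which the hypothesis of Proposition \ref{modifiedSpaeth} supplies the iAM-condition. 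I would also need to handle carefully the degenerate cases where $D \subseteq \Z_\infty(X)$ or where $X = \N_X(D)F^\ast(X)$ already, in which (ii) holds trivially and there is nothing to prove.
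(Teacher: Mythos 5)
Your proposal misses the key structural ingredient of the paper's proof, which is short precisely because it delegates the hard Clifford-theoretic work to a known theorem of Murai (\cite[Theorem 6]{MuraiAlperin}). The paper's argument is: for a non-central normal $K\lhd X$ with $\Z(X)\leq K$, the hypothesis gives Alperin--McKay for blocks (with almost abelian defect) of central extensions of $X/K$; an adaptation of Murai's theorem then yields $|\Irr_0(B)|=|\Irr_0(b)|$ where $B$ is the Brauer correspondent of $b$ in $K\N_X(D)$; and if $K\N_X(D)<X$, the hypothesis applied to $K\N_X(D)$ finishes. Taking $K=F^*(X)$ as well gives exactly the alternative (ii). None of the "Butterfly"/character-triple machinery you invoke is needed here, because Proposition \ref{Fitting} asserts only a numerical equality of height-zero counts, not the existence of an iAM-bijection with compatible group-theoretic data.

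Concretely, your sketch has a genuine gap at the step where you pass to $X/K$. You write that "$DK/K$ is a defect group of the (unique, by $X$-stability) block $\bar b$ of $X/K$ dominated appropriately" — but for an arbitrary normal subgroup $K$ there is in general \emph{no} block of $X/K$ naturally associated to $b$ via domination. Domination requires the kernel to act trivially on the block (in practice, $K=\mathrm{O}_{\ell}(X)\Z(X)$ or similar), and here $K$ is just some non-central normal subgroup. The standard correct move, which is what Murai's theorem encapsulates, is to descend to the \emph{subgroup} $K\N_X(D)$ rather than the quotient $X/K$, using Harris--Kn\"orr/Brauer's extended first main theorem together with Clifford theory over $K$. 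Without this, your attempted reduction does not get off the ground. A secondary issue: you invoke iAM for universal covering groups of composition factors, but that is the hypothesis of Proposition \ref{modifiedSpaeth}, not of Proposition \ref{Fitting} — here the only available input is the Alperin--McKay equality for strictly smaller groups (measured by $|H:\Z(H)|$), and the argument must be calibrated to use exactly that.
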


\begin{proof}
Let $K \lhd X$ be a non-central normal subgroup of $X$. Replacing $K$ by $K \Z(X)$ if necessary we can assume that $\Z(X) \leq K$.
 By assumption the Alperin--McKay conjecture is true for every block with almost abelian defect of any central extension of $X/K$. An analysis of the proof of \cite[Theorem 6]{MuraiAlperin} then shows that the proof of said theorem can be adapted and we obtain $|\Irr_0(B)|=|\Irr_0(b)|$ where $B \in \mathrm{Bl}(K\N_X(D))$ is the unique block with $B^X=b$. If $K \N_X(D)$ is a proper subgroup of $X$ then our assumption implies that the Alperin--McKay conjecture holds for the block $B$ of $K \N_X(D)$. Therefore, $|\Irr_0(B)|=|\Irr_0(B_D)|$, where $B_D$ is the Brauer correspondent of $b$. This would then imply that the Alperin--McKay conjecture holds for $b$. Since the generalized Fitting subgroup $F^\ast(X)$ is such a non-central normal subgroup of $X$ this argument shows in particular that $X=F^\ast(X) \N_{X}(D)$.
\end{proof}

\textbf{Proof of Proposition \ref{modifiedSpaeth}:} The proof of the statement is by induction on $|X:\Z(X)|$. We let $b$ be an $\ell$-block of $X$ with almost abelian defect group $D$. We let $B_D$ be its Brauer correspondent in $\N_X(D)$.
	
	According to Proposition \ref{Fitting} we may assume that $X$ and $b$ satisfy the statement in Proposition \ref{Fitting}(ii). Consequently, any normal $\ell$-subgroup of $X$ is central. As in \cite[Section 6 and 7]{IAM} we distinguish two cases.
	
	Assume that there exists a normal non-central subgroup $K \lhd X$ with $K \leq F^\ast(X)$. If $K \cap D \leq \Z(X)$ then $|\Irr_0(b)|=|\Irr_0(B_D)|$,  according to \cite[Proposition 6.6]{IAM}.
	
	Assume otherwise that the Fitting subgroup is central in $X$, and $E(X)$, the group of components of $X$, is non-central. Inside $E(X)$ we can take a normal subgroup $K \lhd X$ such that $K=[K,K]$ and $K/\Z(K) \cong S^r$ for a non-abelian simple group $S$ and an integer $r \geq 1$. The proof of \cite[Proposition 9(ii)]{MuraiAlperin} shows that the block $b$ covers a unique block $b_0$ of $K$.  Note that $D_0:=D \cap K$ is a defect group of $b_0$ which is almost abelian by Lemma \ref{central extension}(a). Let $\tilde{G}=G^r$ be the univeral covering group of $K/\Z(K) \cong S^r$, where $G$ is the universal covering group of $S$. Let $\pi: \tilde{G} \to S^r$ be the associated quotient map. Let $\tilde{b}_0$ be a block of $\tilde{G}$ dominating $b_0$. Again Lemma \ref{central extension} ensures that $\tilde{b}_0$ has an almost abelian defect group $\tilde{D}$. Following the proof of \cite[Theorem 7.9]{IAM} we obtain a bijection $\tilde{\Omega}:\Irr(\tilde{G},\tilde{b}_0) \to \Irr(\tilde{M},\tilde{B}_0)$. Here, $\tilde M$ is a suitably defined subgroup of $\tilde G$ containing $\N_{\tilde G}(\tilde D)$ and $\tilde{B}_0^{\tilde G}=\tilde{b}_0$. From this the proof of \cite[Theorem 7.9]{IAM} then yields a bijection $\hat{\Omega}:\Irr(K,b_0) \to \Irr(M \N_K(D_0), B_0)$ having the properties of the bijection in the statement of \cite[Theorem 7.9]{IAM}. Here, $M:= \pi(\tilde{M})$ and $B_0$ is the unique block of $M \N_K(D_0)$ with $B_0^K=b_0$. Using the counting argument in the proof of \cite[Theorem C]{IAM} we are then able to deduce that $|\Irr_0(X,b)|=|\Irr_0(M \N_X(D_0),B)|$.

	  Since $|M \N_X(D_0) / \Z(X)| <|X/\Z(X)|$ we can apply the induction hypothesis and we conclude that $|\Irr_0(M \N_X(D_0),B)|=|\Irr_0(\N_X(D),B_D)|$, where $B_D$ is the Brauer correspondent of $b$. \qed

\section{On the Alperin--McKay conjecture for $2$-blocks with abelian defect group}

\subsection{Classification of $2$-blocks with abelian defect group}

We start by recalling the following classification of quasi-isolated $2$-blocks of finite groups of Lie type, see  \cite[Lemma 5.2]{twoblocks}:
\begin{lemma}\label{quasi2}
	Assume that $p$ is odd and that $\G$ is simple and simply-connected. Let $b$ be a quasi-isolated $2$-block of $G$ with semi-simple label $s \in \G^\ast$. 
	\begin{enumerate}
		\item[(a)]

	Suppose that $b$ has abelian defect groups. Then one of the following holds.
	\begin{enumerate}[label=(\roman*)]
		\item $\G$ is of type $A_n$, $n$ is even, and $C^\circ_{\G^\ast}(s)$ is a torus.
		\item  $\G$ is of type $G_2$, $F_4$, $E_6$ or $E_8$, $s=1$ and $b$ is of defect $0$.
	\end{enumerate}
\item[(b)] Suppose that $b$ has non-abelian defect groups, but for some central $2$-subgroup $Z$ of $\G^F$, the image $\overline{b}$ in $G/Z$ has abelian defect group. Then $Z$ is cyclic of order $2$ and one of the following holds.
	\begin{enumerate}[label=(\roman*)]
	\item $\G$ is of type $A_n$, $n \equiv 1 \, \mathrm{mod} \, 4$ and the defect groups of $\overline{b}$ are $C_2 \times C_2$.
	\item  $\G$ is of type $E_7$ and the defect groups of $\overline{b}$ are $C_2 \times C_2$.
\end{enumerate}
\end{enumerate}

\end{lemma}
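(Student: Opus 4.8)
The statement is \cite[Lemma 5.2]{twoblocks}, and the plan is to reconstruct its proof by combining the classification of quasi-isolated semisimple $2'$-elements with a defect-group computation via Jordan decomposition of blocks. First I would recall the list of quasi-isolated classes in each simple, simply connected type: for type $A$ these are the classes of $t_m=(1,\zeta,\dots,\zeta^{m-1})\otimes I_e$ with $me=n+1$ and $\mathrm{o}(s)=|A(s)|=m$, as in Remark \ref{type} and \cite[Proposition 5.2]{Bonnafe}, and for the exceptional types the tables of Bonnaf\'e--Rouquier. Since the semisimple label $s$ of a $2$-block has odd order, one immediately discards every class with $2\mid\mathrm{o}(s)$; the surviving labels in the classical types $B$, $C$, $D$ will turn out to contribute no block satisfying (a) or (b) and are eliminated along the way.

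The main tool is the Jordan decomposition of blocks. Writing $\Levi$ for the Levi subgroup of $\G$ dual to $\C^\circ_{\G^\ast}(s)$ and using the Bonnaf\'e--Dat--Rouquier Morita equivalence between $\Lambda G e_s^{G}$ and $\Lambda \Nb^F e_s^{\Levi^F}$ (Theorem \ref{Boro}) together with $2\nmid|\Nb/\Levi|$, a defect group $D$ of $b$ is $G$-conjugate to a defect group of a block $c$ of $\Levi^F$. Since $\Levi^F/[\Levi,\Levi]^F$ is abelian, $D\cap[\Levi,\Levi]^F$ is then a defect group of the unipotent (isolated) block $c_0$ of $[\Levi,\Levi]^F$ covered by $c$, and $D/(D\cap[\Levi,\Levi]^F)$ is abelian. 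I would feed this into what is known about defect groups of unipotent $2$-blocks (Cabanes--Enguehard theory and its refinements). Two regimes occur. If $\C^\circ_{\G^\ast}(s)$ is a torus, i.e. $[\Levi,\Levi]=1$, then $D$ is automatically abelian; checking when a torus centralizer can arise for a quasi-isolated $s$ of odd order forces type $A_n$ with $e=1$ and $m=n+1=\mathrm{o}(s)$ odd, hence $n$ even — this is case (a)(i). If $[\Levi,\Levi]\neq 1$, a unipotent $2$-block of $[\Levi,\Levi]^F$ has abelian defect only when it is of defect zero, which by the classification of $\ell$-defect zero unipotent characters occurs only for $s=1$ in the exceptional types listed in (a)(ii); otherwise the defect group contains a generalized quaternion or dihedral subgroup coming from an $\mathrm{SL}_2$-factor, so it is non-abelian and remains non-abelian modulo any central $2$-subgroup unless that $\mathrm{SL}_2$-factor already accounts for all of $[\Levi,\Levi]^F$, forcing $[\Levi,\Levi]^F\cong\mathrm{SL}_2(q')$ with $q'\equiv\pm3\bmod 8$.

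Part (b) is then the same analysis carried out modulo a central $2$-subgroup $Z$: one needs $DZ/Z$ abelian with $Z$ a nontrivial central $2$-group, which only exists for $\G$ of type $A_n$ with $n$ odd, type $D_n$, or type $E_7$; a short case check rules out type $D$, forces $Z\cong C_2$, and via the $\mathrm{SL}_2$-factor discussion pins the defect group of $\overline b$ to $C_2\times C_2$. Tracking $2$-parts in type $A$ (one needs $(n+1)_2=2$, i.e. $n\equiv1\bmod 4$, together with $2\mid q-\varepsilon$) yields (b)(i), and the $E_7$ statement in (b)(ii) comes from the Klein-four defect groups of its cuspidal unipotent $2$-blocks. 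The main obstacle is the last ingredient above: one genuinely needs the precise structure — not merely the order — of the defect groups of unipotent $2$-blocks of groups of Lie type, in particular the quaternion/dihedral/Klein-four behaviour in the small cases, which is the technical heart of the argument in \cite{twoblocks}. The remaining type-$A$ bookkeeping relating $m$, $e$, $q-\varepsilon$ and their $2$-parts is routine but must be carried out carefully to extract the congruence conditions exactly.
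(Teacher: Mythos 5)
The paper does not prove this statement: it is imported verbatim from \cite[Lemma~5.2]{twoblocks} (Eaton--Kessar--K\"ulshammer--Sambale) and cited without proof, with Remark~\ref{unipotent almost abelian} extracting a couple of further facts from the same source. So there is no ``paper's own proof'' to compare your attempt against; any comparison must be with \cite{twoblocks} itself.

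As a reconstruction of the argument in \cite{twoblocks}, your outline identifies the right skeleton: classify quasi-isolated semisimple $2'$-elements type by type (Bonnaf\'e), pass by a Jordan-decomposition Morita equivalence to $\Lambda \Nb^F e_s^{\Levi^F}$ and then, since $2\nmid|\Nb/\Levi|$, to blocks of $\Levi^F$; use Kn\"orr's theorem to land on the unipotent block of $[\Levi,\Levi]^F$; and then invoke what is known about defect groups of unipotent $2$-blocks (Cabanes--Enguehard and the explicit small-rank dihedral/quaternion/Klein-four cases). You are candid that the last ingredient --- the precise structure, not just the order, of those defect groups --- is the technical core you are not reproducing, and that is indeed where the real work of \cite{twoblocks} lies. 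Two further points worth flagging. First, $D\cap[\Levi,\Levi]^F$ abelian together with $D/(D\cap[\Levi,\Levi]^F)$ abelian does not by itself make $D$ abelian; the argument has to notice that in the relevant cases either $[\Levi,\Levi]=1$ (so $D$ sits in a torus) or $D=D\cap[\Levi,\Levi]^F$, and that the mixed situation must be ruled out separately. Second, \cite{twoblocks} (2014) predates Bonnaf\'e--Dat--Rouquier (2017), so the equivalence actually used there is the earlier Bonnaf\'e--Rouquier one rather than Theorem~\ref{Boro}; this is harmless for the defect-group conclusion but is an anachronism in your write-up. Finally, you only sketch why types $B$, $C$, $D$ and most exceptional types contribute nothing in part~(a) and why type $D$ drops out of part~(b); in \cite{twoblocks} these eliminations are a substantial, explicit case analysis, not a side remark, so a complete reconstruction would need to carry them out.
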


In the following remark we collect some additional information from the proof of \cite[Lemma 5.2]{twoblocks}.

\begin{remark}\label{unipotent almost abelian}
	\ 
	\begin{enumerate}
		\item[(a)]

	If we assume additionally that the blocks in part (b)(i) are isolated (i.e. unipotent) then the proof of \cite[Lemma 5.2]{twoblocks} shows that $\G^F \cong A_1(q)$ and $q \equiv  \pm 3  \, \mathrm{mod} \, 8$. Moreover, their defect groups are isomorphic to the quaternion group $Q_8$.
	\item[(b)]
	The blocks in (b)(ii) occur only if $4 \mid \mid (q-1)$. These blocks are unipotent and their defect groups are isomorphic to the dihedral group $D_8$. They correspond to lines 3 and 7 of the table on page 354 of \cite{Enguehard}.
\end{enumerate}
\end{remark}

%

%

%
%

Our aim is now to show the iAM-condition for all isolated blocks of positive defect occuring in the classification of Lemma \ref{quasi2}.


\subsection{On a certain $2$-block of $E_7(q)$}

In this subsection we consider the block $b$ of $E_7(q)$ occuring in part (b)(ii) of the classification of Lemma \ref{quasi2}. The author is very grateful to Gunter Malle for pointing out the proof of the following proposition to him.

\begin{proposition}\label{E7}
	Let $b$ be one of the blocks of $G=E_7(q)$ occuring in Lemma \ref{quasi2}(b)(ii). Then we have $|\Irr_0(b)|=4$ and $|\Irr(b)|=5$. Furthermore, the height zero characters of $b$ have $\mathrm{Z}(G)$ in their kernel and are as follows:
	\begin{enumerate}
		\item[(i)] Two unipotent characters $\chi_1,\chi_2$.
		\item[(ii)] Two non-unipotent characters $\chi_3,\chi_4$ which are conjugate under the diagonal automorphism of $G$.
	\end{enumerate}

\end{proposition}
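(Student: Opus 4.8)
The plan is to identify the block $b$ explicitly using Enguehard's classification of unipotent $2$-blocks and then read off its Brauer characters and ordinary characters from the known decomposition matrix. First I would recall, as stated in Remark \ref{unipotent almost abelian}(b), that these blocks occur only when $4 \mid\mid (q-1)$, that they are unipotent, and that they correspond to lines $3$ and $7$ of the table on page $354$ of \cite{Enguehard}; in particular their defect groups are dihedral of order $8$. Since $D \cong D_8$ is a tame defect group, the structure of $b$ as an algebra is heavily constrained: by the Brauer--Dade theory of blocks with dihedral defect group (Erdmann's classification), such a block has either one, two, or three simple modules, and the number of ordinary irreducible characters is $\mathrm{k}(b) = |\Irr(b)|$, with $\mathrm{k}_0(b) = |\Irr_0(b)| = 4$ always, while $\mathrm{k}(b) - \mathrm{k}_0(b)$ equals the number of non-trivial rational classes of subgroups of $D$ of order $2$ up to fusion, hence $\mathrm{l}(b)$ related data. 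Concretely, for $D_8$ one has $\mathrm{k}(b) \in \{5,7\}$ and $\mathrm{l}(b) \in \{1,2,3\}$ correspondingly; I would pin down $\mathrm{l}(b) = 2$ (equivalently $\mathrm{k}(b)=5$) using the explicit inertial quotient computed by Enguehard, which for these lines of the table is $C_2$ (acting on $D_8$), forcing the Brauer tree/Brauer correspondent data that gives exactly two simple modules and five ordinary characters.

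Next I would determine which five characters lie in $b$ and which four of them have height zero. Two of them are unipotent: these are the two unipotent characters $\chi_1, \chi_2$ belonging to the unipotent block indexed by the relevant cuspidal pair, which by inspection of Enguehard's table and the known degrees of unipotent characters of $E_7(q)$ both have $2$-height zero. The remaining characters come via Jordan decomposition / Bonnaf\'e--Dat--Rouquier: since the $2$-part of $|\mathrm{Z}(G)| = (2, q-1)$ is non-trivial here (as $q$ is odd), the block $b$ of $G = E_7(q)_{\mathrm{sc}}$ dominates a block of $G/\mathrm{Z}(G)$; I would argue, as in Lemma \ref{quasi2}(b)(ii) and its proof, that all height zero characters of $b$ have $\mathrm{Z}(G)$ in their kernel, so I may compute inside the adjoint-type quotient, where the relevant semisimple classes are visible. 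The three non-unipotent ordinary characters of $b$ then correspond to a semisimple element $s$ of order $2$ whose centralizer is a Levi of type $E_6 A_1$ or $D_6 A_1$; of these three characters (one of which must be of positive height to account for $\mathrm{k}(b) - \mathrm{k}_0(b) = 1$), exactly two have height zero, and these two, $\chi_3, \chi_4$, form a single orbit under the diagonal automorphism group $\mathrm{Out}(G)_{\mathrm{diag}} \cong \mathrm{Z}(G) \cong C_2$, since the diagonal automorphism acts on the corresponding Lusztig series by the action on $\mathrm{Z}(\tilde{\G}^\ast)$, which here interchanges the two extensions.

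The main obstacle will be the bookkeeping needed to confirm that precisely one of the three non-unipotent characters has positive height and that the other two are swapped by the diagonal automorphism rather than being diagonal-stable. For this I would use: (a) the fact that the height of a character in $\Irr(G, s)$ is controlled by the $2$-part of the index $[C_{\G^\ast}(s)^{F^\ast} : \ldots]$ together with the height inside the unipotent block of $C_{\G^\ast}(s)^{F^\ast}$, and (b) Lusztig's description of the action of diagonal automorphisms on $\mathcal{E}(G,s)$ via the component group $A(s)$, exactly as exploited in Lemma \ref{graph} and the surrounding results. Since $C_{\G^\ast}(s)$ has a disconnected part here, the three characters split as one "coming from $C^\circ$" (height-zero, diagonal-stable — but this would contradict $|\Irr_0(b)|=4$, so in fact) and a pair permuted by $A(s)^{F^\ast}$; reconciling the count $\mathrm{k}(b)=5$, $\mathrm{k}_0(b)=4$ with this orbit structure is the delicate point, and I would resolve it by directly citing the degrees of these characters from the generic character table of $E_7(q)$ (available through CHEVIE), which makes the height computation a routine check of $2$-adic valuations. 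Once the five characters and their heights are identified, the statements $|\Irr_0(b)| = 4$, $|\Irr(b)| = 5$, the description (i)--(ii), and the kernel containing $\mathrm{Z}(G)$ all follow immediately.
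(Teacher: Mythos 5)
Your overall strategy — pin down $|\Irr(b)|$ and $|\Irr_0(b)|$ from the dihedral block invariants, then use Jordan decomposition to identify the characters — matches the paper's, but there is a genuine gap in your identification of the semisimple classes contributing to $b$, and it is exactly the point that makes the orbit structure under the diagonal automorphism come out right.

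You claim the three non-unipotent characters come from a single semisimple element $s$ of order $2$ with centralizer of type $E_6 A_1$ or $D_6 A_1$. This is wrong on both counts. The relevant semisimple $2$-elements $t$ are those for which $C^\circ_{\G^\ast}(t)$ admits an $E_6$ Levi, and these live in the centre of $K := E_6(q)\cdot(q-1)$, the centralizer of $E_6(q)$ in $E_7(q)$. Since $4\mid\mid(q-1)$ there are four such $2$-elements in $Z(K)$: the identity, one involution $s$, and two elements of order $4$. For the involution $s$, $\N(K)=K.2$ forces $C_{G^\ast}(s)=K.2$ (disconnected centralizer), which is what produces the \emph{pair} $\chi_3,\chi_4$ fused by the diagonal automorphism. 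The two elements of order $4$ are $G^\ast$-conjugate with $C_{G^\ast}(t)=K$ (connected), yielding a single diagonal-stable character $\chi_5$. Your picture, with one order-$2$ class and a connected centralizer of type $E_6 A_1$ or $D_6 A_1$, cannot reproduce either the count $5 = 2 + 2 + 1$ or the crucial "two swapped, one fixed" behaviour under diagonal automorphisms. Without the disconnectedness of $C_{\G^\ast}(s)$ and the fusion of the order-$4$ classes there is no way to conclude that $\chi_3,\chi_4$ form a diagonal orbit while the fifth character is stable.

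A second, smaller gap: you defer the claim that height-zero characters of $b$ have $\Z(G)$ in the kernel to "the proof of Lemma \ref{quasi2}(b)(ii)", but that lemma only tells you $\overline{b}$ has abelian defect group. The paper derives the kernel statement by combining Sambale's invariants: $|\Irr_0(b)|=4$ while the dominated block $\overline{b}$ of $S=G/\Z(G)$ has defect group $C_2\times C_2$, hence $|\Irr(\overline{b})|=|\Irr_0(\overline{b})|=4$. This forces the unique positive-height character of $b$ to be the unique one nontrivial on $\Z(G)$; combined with the fact that $\chi_3,\chi_4$ are $\tilde G$-conjugate (hence share a central character) and $\chi_1,\chi_2$ are unipotent, one deduces that $\chi_5$ is that character — no CHEVIE degree computation is required. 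Your plan to compute $2$-adic valuations of degrees from the generic character table is not wrong in principle, but the argument via $\overline{b}$ is cleaner and sidesteps it entirely. Incidentally your assertion that $\mathrm{k}(b)\in\{5,7\}$ for a $D_8$ defect group is off: Brauer's theory gives $k(b)=5$ and $k_0(b)=4$ unconditionally for $D_8$, so the inertial quotient discussion is unnecessary for the counting part.
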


\begin{proof}

	Let $q$ be a prime power such that $4||(q-1)$. Set $G=E_7(q)_{sc}$ and
	$S=G/Z(G)$, the simple group of type $E_7(q)$. We consider the unipotent
	$2$-block $b$ of $G$ parametrized by the 1-cuspidal pair $(E_6,E_6[\theta])$
	(and its Galois conjugate block $(E_6,E_6[\theta^2])$ for which all
	arguments apply similarly).
	
	Firstly, we note that according to the description in \cite[3.2]{Enguehard} the defect group of $b$ is isomorphic to the dihedral group $D_8$. By \cite[Theorem 8.1]{Sambale} we can therefore deduce that $|\Irr_0(b)|=4$ and $|\Irr(b)|=5$. Moreover, the unique block $\overline{b}$ of $S$ dominated by $b$ has defect group $C_2 \times C_2$. Hence, by \cite[Theorem 8.1]{Sambale} we know that $4=|\Irr(\overline{b})|=|\Irr_0(\overline{b})|$ and thus the unique character in $\Irr(b)$ with positive height is non-trivial on $\mathrm{Z}(G)$ and it is unique with this property among the irreducible characters of $b$. We will now describe the character of $\Irr(b)$ in more detail.
	
	According to \cite[Theorem B]{Enguehard}, the ordinary characters in $b$ are
	described as follows: let $t\in G^*=E_7(q)_{\ad}$ be a (semisimple)
	2-element such that $C^\circ_{\G^*}(t)$ has a Levi subgroup of type $E_6$.
	Then $b$ contains those elements of $\mathcal{E}(G,t)$ which under Jordan
	decomposition correspond to characters in the 1-Harish-Chandra series
	of $\C^\circ_{\G^*}(t)$ above $(E_6,E_6[\theta])$, and moreover, these
	are the only characters in $b$.
	
	In our particular case, the only centralizer in $\G^*$ that can possibly
	contain a Levi subgroup of type $E_6$ is, apart from $\G^*$ itself, also
	of type $E_6$. Now the centralizer of $E_6(q)$ in $E_7(q)$ is
	$K:=E_6(q)(q-1)$, and so its center has order $(q-1)$. Given that 
	$4||(q-1)$,
	there are hence only four 2-elements $t$ in $Z(K)$, the identity, an 
	involution
	and two elements of order~4. Clearly the centralizer of the identity is 
	all of
	$G^*$. Now $N(K)=K.2$ (by a calculation in the Weyl group), so for the involution
	$s\in \Z(K)$ we have $\C_{G^*}(s)=K.2$ (that is, the centralizer of $s$ in $\G^\ast$ is disconnected). Thus there are two characters, say $\chi_3$ and $\chi_4$, of $b$ 
	in that
	geometric Lusztig series $\tilde{\mathcal{E}}(G,s)$ fused by the diagonal 
	automorphism of
	$G$. On the other hand, $N(K)$ acts non-trivially on $Z(K)$ (again by a
	calculation in the Weyl group) so the two elements $t$ of order~4 in 
	$Z(K)$ are
	$G^*$-conjugate and moreover $C_{G^*}(t)=K$. So here $\mathcal{E}(G,t)$ contains one
	character $\chi_5$ in $b$, which is left invariant under the diagonal automorphism.
	
	From these calculations it necessarily follows from $|\Irr(b)|=5$ that $\mathcal{E}(G,1)$ contains two characters $\chi_1$ and $\chi_2$ of $b$ (in fact these are the irreducible characters lying in the Harish-Chandra series of $E_6(\theta)$). These characters have $\mathrm{Z}(G)$ in their kernel. The characters $\chi_3$ and $\chi_4$ are $\tilde G$-conjugate and thus have the same underlying character of the center. From this we deduce that $\chi_5$ has to be the unique character of $b$ which is nontrival on $\mathrm{Z}(G)$.  From this our arguments above show that $\Irr_0(G,b)=\{\chi_1,\chi_2,\chi_3,\chi_4\}$.
	
	An entirely similar argument applies when $4||(q+1)$; here all groups
	$E_6(q)$ have to be replaced by ${}^2 E_6(q)$ and all $(q-1)$ by $(q+1)$.
\end{proof}


To prove the iAM-condition for the block $b$ we also need to compute the action of group automorphisms on the height zero characters of its Brauer correspondent. The information given in \cite{Enguehard} does not seem sufficient for this. Instead we will try to obtain all the necessary local information from the invariants of the block $b$.  

The following proposition is a consequence of \cite[Proposition 10.26]{Sambale}.

\begin{proposition}\label{sambale}
	Let $H$ be a group isomorphic $C_2 \times C_2$ or $D_8$ and for $n \geq 1$ consider a group extension
	$$1 \xrightarrow{} H \xrightarrow{} D  \xrightarrow{} C_{2^n} \xrightarrow{} 1$$ 
Then the invariants for every block of a finite group with defect group $D$ are known.
\end{proposition}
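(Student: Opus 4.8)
\textbf{Proof proposal for Proposition \ref{sambale}.}

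The plan is to reduce the statement to the classification of fusion systems on such groups $D$ together with the known block-theoretic invariants attached to them, and then cite \cite[Proposition 10.26]{Sambale} directly. First I would classify the isomorphism types of groups $D$ that fit into an extension $1 \to H \to D \to C_{2^n} \to 1$ with $H \cong C_2 \times C_2$ or $H \cong D_8$. When $|H|=4$ the group $D$ has order $2^{n+2}$ and contains a normal Klein four subgroup; running through the possible actions of $C_{2^n}$ on $H$ (either trivial or swapping two of the involutions, the latter only giving a genuinely new possibility when $n\ge 1$), one sees that $D$ is either abelian of the form $C_2\times C_2\times C_{2^n}$ or $C_2\times C_{2^{n+1}}$, or else one of the maximal-class-type $2$-groups (dihedral, semidihedral, quaternion, or wreath-product-like groups $C_{2^n}\wr C_2$ and their cousins). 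When $H\cong D_8$ a similar analysis, using that $\mathrm{Out}(D_8)$ is a $2$-group and that $Z(D_8)$ is characteristic, shows $D$ again falls into a short explicit list of $2$-groups of maximal class or of the form already covered in \cite{Sambale}.

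Next I would invoke the results collected in \cite{Sambale}: for each of the resulting isomorphism types of $D$ the possible fusion systems are known, and for each fusion system the block invariants — the number $k(B)$ of irreducible ordinary characters, the numbers $k_i(B)$ of characters of each height, the number $l(B)$ of irreducible Brauer characters, and the decomposition and Cartan matrices up to the relevant equivalence — have been determined. Concretely, abelian defect groups are handled by Brauer's and Olsson's results (and in the $C_2\times C_2$ case by Craven–Eaton–Kessar–Linckelmann), while the groups of maximal class are exactly the content of the dihedral/semidihedral/quaternion block theory of Brauer, Olsson, Erdmann and others, all summarised in \cite[Chapters 8, 10]{Sambale}. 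Since every $D$ occurring in our extension lies in one of these families, \cite[Proposition 10.26]{Sambale} applies verbatim and yields the claim.

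The main obstacle I anticipate is purely bookkeeping: making sure the case analysis of the extension $1\to H\to D\to C_{2^n}\to 1$ is exhaustive, in particular not overlooking the extensions with nontrivial action of $C_{2^n}$ on $H$ that produce the maximal-class groups $C_{2^n}\wr C_2$, and checking that each type that arises is genuinely among those for which \cite{Sambale} records the invariants (rather than merely among those whose fusion systems are classified). Once that list is confirmed, no further computation is needed and the proposition follows immediately by citation.
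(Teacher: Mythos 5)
Your proposal does not match the structure of the paper's argument and has a genuine gap. You treat \cite[Proposition 10.26]{Sambale} as if it simply asserts known invariants for every $2$-group $D$ in some list, and you propose to classify all $D$ fitting the extension and check membership in that list. But Sambale's Proposition~10.26 is itself a statement about blocks whose defect group sits in an extension $1\to H\to D\to C_{2^n}\to 1$, and it comes with hypotheses and explicit exceptions: a condition on the coupling $\omega: C_{2^n}\to\mathrm{Out}(H)$, and an exclusion of the case $D\cong C_{2^m}\wr C_2$ with $m\ge 3$. The whole content of the proposition you are asked to prove is that these hypotheses hold when $H\cong C_2\times C_2$ or $H\cong D_8$. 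Your proposal never checks either one.

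Concretely, the coupling condition is verified in the paper by inspecting $\mathrm{Aut}(C_2\times C_2)\cong S_3$ and $\mathrm{Aut}(D_8)\cong D_8$; this step is absent from your write-up. The wreath-product exclusion is the nontrivial part: for $H\cong C_2\times C_2$ it is already in Sambale's proof, but for $H\cong D_8$ one needs a genuine argument. The paper does this by observing that if $D_8 \lhd (C_{2^m}\times C_{2^m})\rtimes C_2$, then an element $a\in D_8$ outside the base group must swap the two factors, so with $g$ a generator of $C_{2^m}$ one gets $a^{(g,1)}=(g,g^{-1})a$, forcing $(g,g^{-1})\in D_8$ and hence $m\le 2$. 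You explicitly flag ``not overlooking the extensions ... that produce the maximal-class groups $C_{2^n}\wr C_2$'' as an anticipated obstacle, but you do not resolve it; for $H\cong D_8$ the wreath product $C_4\wr C_2$ in fact does occur as such an extension (with $m=2$), so the exclusion is delicate and cannot be waved away. Your phrase ``\cite[Proposition 10.26]{Sambale} applies verbatim'' is circular: if it applied verbatim, your classification of $D$ would be unnecessary, and if it did not, you would still have the exceptions to rule out.
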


\begin{proof}
We must show that the exceptions listed in \cite[Proposition 10.26]{Sambale} do not occur if $H$ is as in the statement of our proposition. The first assumption on the coupling $\omega$ in \cite[Proposition 10.26]{Sambale} can be easily verified by examining the automorphism group structure of $C_2 \times C_2$ and $D_8$ respectively. We are therefore left to show that $D$ is not isomorphic to $C_{2^m} \wr C_2$ for all $m \geq 3$. For $H \cong C_2 \times C_2$ this follows from the proof of \cite[Proposition 10.26]{Sambale}.

Assume therefore that $H \cong D_8$ and $H \lhd (C_{2^m} \times C_{2^m}) \rtimes C_2$. There exists an element $a \in H$ which is not contained in the base group $C_{2^m} \times C_{2^m}$. This element must act on the base group by interchanging both components. Let $g \in C_{2^m}$ be a generator. Then it follows that $a^{(g,1)}=(g,g^{-1}) a$ and so $(g,g^{-1}) \in D_8$. This implies that $m \leq 2$.
\end{proof}

\begin{lemma}\label{spaeth}
	Let $b$ be a block of a finite group $G$ with defect group $D$. Suppose that $B$ is a block of a subgroup $M$ of $G$ containing $\N_G(D)$ with $B^G=b$. Let $A \subseteq \mathrm{Aut}(G)$ be a finite cyclic subgroup stabilizing $M$. Assume that the Alperin--McKay conjecture holds for every block of $G \rtimes A$ and $M \rtimes A$ covering $b$ and $B$ respectively.
	
	\begin{enumerate}
		\item[(a)] If $A$ is a simple cyclic group or an $\ell$-group then the number of $A$-invariant characters in $\Irr_0(b)$ is equal to the number of $A$-invariant characters of $\Irr_0(B)$.
		\item[(b)] We have $\Irr_0(b)=\Irr_0(b)^A$ if and only if $\Irr_0(B)=\Irr_0(B)^A$.
	\end{enumerate}

\end{lemma}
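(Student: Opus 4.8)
\textbf{Proof sketch for Lemma \ref{spaeth}.}

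The plan is to exploit the classical counting argument behind Dade's and Murai's work on the Alperin--McKay conjecture under the action of a cyclic group, together with Glauberman-type correspondences. First I would set $\tilde{G}:=G \rtimes A$ and $\tilde{M}:=M \rtimes A$; note that $D$ remains a defect group of the (unique) block $\tilde{b}$ of $\tilde{G}$ covering $b$ whenever that block is $A$-stable, and likewise $\N_{\tilde{G}}(D) \subseteq \tilde{M}$, so that the Brauer correspondence is compatible with the extension. The key numerical input is that for an $A$-stable block $e$ of a normal subgroup $X \lhd X \rtimes A$ the number $|\Irr_0(e)^A|$ and, more generally, the multiset of stabilizers $\{A_\chi : \chi \in \Irr_0(e)\}$ can be recovered from the knowledge of $|\Irr_0(\hat{e})|$ for all blocks $\hat{e}$ of $X \rtimes C$ lying over $e$, as $C$ ranges over subgroups of $A$, via Clifford theory (decomposing $\Irr_0(X \rtimes C \mid e)$ according to which characters of $\Irr_0(e)$ they lie over, and using that $\Irr_0$ behaves well under covering for blocks with the quotient $C$ of $\ell'$-order or, more delicately, in general by Murai's results). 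I would carry this out on both sides, $G \lhd \tilde{G}$ and $M \lhd \tilde{M}$, and then compare.

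For part (a), when $A$ is cyclic of prime order $p$, the decomposition is especially transparent: $|\Irr_0(b)| = |\Irr_0(b)^A| + p \cdot k$ where $pk$ counts the characters moved in free $A$-orbits, and an analogous formula holds for $B$; moreover each $A$-stable $\chi \in \Irr_0(b)$ contributes (if the relevant block of $G \rtimes A$ has $\ell'$ index, or by a parity/counting argument in general) exactly some fixed number of characters to $\Irr_0(\tilde{b})$, while each free orbit contributes the same number. Combining the Alperin--McKay equalities $|\Irr_0(\tilde{b})| = |\Irr_0(\N_{\tilde{G}}(D), \tilde{b}_D)|$ and $|\Irr_0(b)| = |\Irr_0(b_D)|$ (both assumed) with the corresponding statements for $B$ and using $\tilde{B}^{\tilde{G}} = \tilde{b}$, one solves for $|\Irr_0(b)^A|$ in terms of quantities that equal their counterparts on the $B$-side. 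When $A$ is a cyclic $\ell$-group one uses instead the height-preservation of character extensions along $\ell$-groups (Clifford theory for $\ell$-extensions of a block forces every height-zero character over an $A$-stable $\chi$ to again have height zero) together with the same bookkeeping. For part (b), the statement $\Irr_0(b) = \Irr_0(b)^A$ is equivalent to $|\Irr_0(b)^A| = |\Irr_0(b)|$, so it follows from part (a) in the prime and prime-power cases; for general cyclic $A$ one reduces to these by considering the action of each Sylow subgroup of $A$ separately (a character is $A$-fixed iff it is fixed by every Sylow subgroup of the cyclic group $A$), applying the prime-power case of (a) to each, and noting that the hypothesis on Alperin--McKay for blocks of $G \rtimes A$ and $M \rtimes A$ passes to the relevant subgroups $G \rtimes A_q$, $M \rtimes A_q$.

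The main obstacle will be handling the covering behaviour of $\Irr_0$ along an extension by a cyclic group whose order is divisible by $\ell$: Clifford theory does not a priori preserve height zero, so one must either invoke Murai's refinements of the theory of the Dade ramification group (as used elsewhere in the paper, e.g.\ around Lemma \ref{2block}) to control $\Irr_0(G \rtimes C \mid b)$, or restrict, as the statement does, to the cases $A$ simple cyclic, $A$ an $\ell$-group, or the equality case $\Irr_0(b) = \Irr_0(b)^A$ where no characters are moved and the counting degenerates. I expect the cleanest writeup treats (b) first as a direct equality-of-cardinalities consequence of the assumed Alperin--McKay statements (here every character is $A$-stable so $\Irr_0(\tilde b)$ and $\Irr_0(b)$ are related by a clean Clifford-theoretic formula on both sides), and then derives (a) in the two special cases by the orbit-counting argument above, with the prime-order case being essentially Murai's theorem and the $\ell$-group case using height preservation.
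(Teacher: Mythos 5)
The paper's own ``proof'' of this lemma is a one-line citation: it follows from the proof of \cite[Lemma 8.1]{IAM}. Your proposal attempts to reconstruct that argument, and the overall framework you describe---decomposing $\Irr_0$ of blocks of $G\rtimes C$ via Clifford theory over $\Irr_0(b)$ according to stabilizers, feeding in the assumed Alperin--McKay equalities for the extensions, and doing the same on the $M$ side---is indeed the right shape. But as written the reconstruction has genuine gaps beyond the one you flag yourself.

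The most concrete problem is the reduction of~(b) to~(a) via Sylow subgroups of $A$. A character is $A$-fixed iff it is fixed by every Sylow subgroup $A_q$ of $A$, as you say, but~(a) only applies when $A$ is cyclic of prime order or an $\ell$-group. For a prime $q\neq\ell$ with $q^2\mid|A|$, the cyclic $q$-group $A_q$ is neither of these, so~(a) cannot be invoked for it. You would need either an inductive argument along the chief series of $A_q$ that replaces $G$ by $G\rtimes A_{q,1}$, or a direct proof of~(b) that does not pass through~(a); the argument actually used in \cite[Lemma 8.1]{IAM} does the latter. Relatedly, you assert that the Alperin--McKay hypothesis for blocks of $G\rtimes A$ and $M\rtimes A$ ``passes to'' blocks of $G\rtimes A_q$ and $M\rtimes A_q$; this is not automatic (Alperin--McKay for a group does not descend to subgroups), and nothing in the hypothesis of the lemma provides it.

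In part~(a), the prime-order case sketch is right in spirit, but the phrase ``contributes exactly some fixed number of characters'' papers over the two issues that really require care: whether the constituents of $\Ind_G^{G\rtimes A}\chi$ (resp.\ the extensions of an $A$-stable $\chi$) again have height zero, and how they distribute over the several blocks of $G\rtimes A$ covering $b$. When $|A|=\ell$ the first point is exactly where Murai's analysis of the ramification group $\tilde G[b]$ is needed, as you note; but for the prime case with $|A|\neq\ell$ there is also bookkeeping to do across the up-to-$|A|$ blocks of the extension, and one must verify that the Brauer correspondents on the $G$- and $M$-sides match up block by block (via Harris--Kn\"orr). You acknowledge the $\ell$-divides-$|A|$ obstacle as open, so the proposal is appropriately self-aware, but in its present form it is a plausibility sketch rather than a proof, whereas the paper simply outsources the whole argument to Sp\"ath's Lemma 8.1.
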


\begin{proof}
This follows from the proof of \cite[Lemma 8.1]{IAM}.
\end{proof}

\begin{proposition}\label{block E7}
	Let $b$ be one of the blocks of $G=E_7(q)$ occuring in Lemma \ref{quasi2}(b)(ii). Then $b$ is AM-good relative to its defect group.
\end{proposition}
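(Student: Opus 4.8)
The goal is to verify the iAM-condition (relative to the defect group) for the block $b$ of $G = E_7(q)_{\mathrm{sc}}$ described in Lemma \ref{quasi2}(b)(ii). By Proposition \ref{E7} we already have complete knowledge of $\Irr(b)$: there are five irreducible characters, four of height zero (two unipotent characters $\chi_1,\chi_2$ trivial on $\mathrm{Z}(G)$, and two non-unipotent characters $\chi_3,\chi_4$ fused by the diagonal automorphism), and one character $\chi_5$ of positive height. The defect group $D$ is dihedral of order $8$. The plan is: first, pin down the Brauer correspondent $B_D$ of $b$ in $\N_G(D)$ and its height zero characters together with the action of $\mathrm{Out}(G)_b$ on them; second, construct the required iAM-bijection $\Irr_0(G,b) \to \Irr_0(\N_G(D),B_D)$ and check the equivariance and block-theoretic compatibility conditions from \cite[Definition 1.9]{Jordan2}; third, conclude $b$ is AM-good relative to $D$.

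\textbf{Local structure.} The first step is the main obstacle: one needs the invariants (number and heights of characters, and the action of automorphisms) of the Brauer correspondent. Here I would use that $D \cong D_8$ together with Proposition \ref{sambale}, which guarantees that for any block whose defect group fits into an extension $1 \to D_8 \to D' \to C_{2^n} \to 1$ all block invariants are determined; combined with Proposition \ref{block E7}'s setup and \cite[Theorem 8.1]{Sambale} applied to $\N_G(D)$ (whose relevant block has the same defect group $D$), this yields $|\Irr_0(B_D)| = 4$ and $|\Irr(B_D)| = 5$, matching $b$. To read off the automorphism action I would invoke Lemma \ref{spaeth}: the Alperin--McKay conjecture is already known for $2$-blocks of the relevant quasi-simple groups appearing as subquotients here, or more directly one applies Lemma \ref{spaeth}(a),(b) with $A$ running over the cyclic subgroups generated by the diagonal automorphism and by a field automorphism of $2$-power order. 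Since the diagonal automorphism fuses exactly the pair $\chi_3,\chi_4$ on the global side, Lemma \ref{spaeth}(a) forces it to fuse exactly one pair of height zero characters of $B_D$ and fix the other two; the field automorphisms act trivially on all of them by a similar counting argument. This gives a full description of $\Irr_0(B_D)$ as an $\mathrm{Out}(G)_b$-set isomorphic to the $\mathrm{Out}(G)_b$-set $\Irr_0(G,b)$.

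\textbf{The bijection.} With both sides described as isomorphic $\mathrm{Out}(G)_b$-sets, I would choose the (essentially unique) equivariant bijection $\Omega\colon \Irr_0(G,b) \to \Irr_0(\N_G(D),B_D)$ matching the two $\mathrm{Aut}(G)_b$-orbit patterns: the two unipotent characters $\chi_1,\chi_2$ (fixed by all automorphisms, lying over the trivial central character) map to the two automorphism-fixed height zero characters of $B_D$ lying over the trivial central character, and the diagonal-conjugate pair $\{\chi_3,\chi_4\}$ maps to the diagonal-conjugate pair of $B_D$. For the strong iAM-condition one must additionally produce, for $\chi \in \Irr_0(G,b)$, a character $\tilde\chi$ of $\tilde G = E_7(q)_{\mathrm{ad}}$ (or rather the relevant intermediate group $G \le \tilde G$) above $\chi$ and a corresponding $\tilde\chi'$ above $\Omega(\chi)$ with matching central characters, compatible block induction of Clifford correspondents (condition (v) of Theorem \ref{12}), and the extendibility statements. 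Since $|\tilde G : G|$ divides $q-1$ and $\mathrm{Z}(G)$ has order $2$, the characters $\chi_1,\chi_2$ being trivial on $\mathrm{Z}(G)$ extend to $G\langle$diagonal$\rangle$ and to the full $G\mathcal B$; for $\chi_3,\chi_4$ the extendibility follows from the fact that their common stabilizer modulo $G$ is cyclic (the field automorphisms act trivially), so \cite[Lemma 10.2.13]{Rouquier3} applies. The block-induction compatibility (v) reduces, as in the proof of Theorem \ref{reduction}, to a statement about the semisimple labels under Lusztig's Jordan decomposition, which here is transparent because the centralizers involved ($G^*$ itself and the $E_6$-type centralizer $K$) are explicit. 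Assembling these pieces, $\Omega$ is a strong iAM-bijection and hence $b$ is AM-good relative to $D$ by \cite[Theorem 1.10]{Jordan} and \cite[Definition 1.11]{Jordan2}.

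\textbf{Expected difficulty.} The genuinely delicate point is the passage from ``invariants of $B_D$ are determined'' (Proposition \ref{sambale}) to an actual identification of $\Irr_0(B_D)$ as an $\mathrm{Out}(G)_b$-set, since Sambale's classification determines the abstract fusion/character-count data but the automorphism action requires the extra input from Lemma \ref{spaeth} (which itself rests on the Alperin--McKay equality, available here). Everything else — the choice of bijection, the extendibility, the central character matching, condition (v) — is routine given the very small number of characters and the explicit centralizer structure recorded in Proposition \ref{E7}.
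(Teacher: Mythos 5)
Your overall strategy matches the paper's: use Proposition \ref{E7} for the global side, then Proposition \ref{sambale} and Lemma \ref{spaeth} to pin down $\Irr_0(B)$ as an $\mathrm{Out}(G)_b$-set, build the equivariant bijection, and check extendibility and block-theoretic compatibility. Two points need attention.

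First, the genuine gap: you do not address the extendibility of the two local height-zero characters $\psi_1,\psi_2$ of $B$ (the counterparts of $\chi_1,\chi_2$). These are stable under both $\delta$ and $F_0$, so their inertia quotient in $\mathrm{Out}(G)$ is non-cyclic, and one cannot appeal to \cite[Lemma 10.2.13]{Rouquier3} as you do for $\chi_3,\chi_4$. Nor are $\psi_1,\psi_2$ unipotent characters of a finite reductive group, so \cite[Theorem 2.4]{MalleUnip} does not apply to them directly. The paper supplies a separate argument here: pass to a block $b_2$ of $G_{\mathrm{ad}}\langle F_0\rangle$ covering $\overline{b}$, use Brauer's height zero conjecture and \cite[Theorem 8.1]{Sambale} to identify its defect group as a cyclic extension of $D_8$, then invoke Proposition \ref{sambale} and Lemma \ref{spaeth} for the Harris--Kn\"orr correspondent $B_1$ of $b_1$ to conclude that all of $\Irr_0(B_1)$ is $F_0$-stable, forcing $\psi_1,\psi_2$ to extend to $G_{\mathrm{ad}}\langle F_0\rangle$. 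This is the real crux; your ``expected difficulty'' paragraph locates the obstacle in determining the automorphism action, but Lemma \ref{spaeth} settles stability cheaply, while extendibility of $\psi_1,\psi_2$ is what requires the extra block-theoretic work.

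Second, a minor mismatch: your condition (v) is dispatched by appeal to Jordan decomposition, which is vague here; the paper instead verifies the iAM-condition via the explicit criterion of \cite[Lemma 2.15]{LocalRep} and \cite[Proposition 4.4]{LocalRep}, choosing extensions $\hat\chi_i,\hat\psi_i$ in Harris--Kn\"orr corresponding blocks. This is the cleaner route once extendibility is secured, and it sidesteps any Jordan-decomposition bookkeeping.
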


\begin{proof}
		Let $\G_{\mathrm{ad}}$ be the adjoint quotient of $\G$. There exist a Frobenius endomorphism $F$ on $\G_{\mathrm{ad}}$ which commutes with the quotient map $\pi: \G \to \G_{\mathrm{ad}}$. Then $\pi$ induces an injective map $\pi: S \to G_{\mathrm{ad}}$, where $G_{\mathrm{ad}}:=\G_{\mathrm{ad}}^F$ and $G_{\mathrm{ad}}$ induces all diagonal automorphisms on $S$. More precisely, we have $\tilde{G}/\mathrm{Z}(\tilde G) \cong G_{\mathrm{ad}}$. 
	
	Let $D$ be a defect group of $b$ and $B$ be its Brauer correspondent in $\N_G(D)$. We first construct an $\mathrm{Aut}(G)_{b,D}$-equivariant bijection $\Irr_0(G,b) \to \Irr_0(\N_G(D),B)$. Let $\delta: G \to G$ be a diagonal automorphism induced by the action of an element of $G_{\ad}$ and let $F_0: G \to G$ be a generator of the group of field automorphisms of $G$ which together generate $\mathrm{Out}(G)$. We first observe that the characters $\chi_1,\chi_2$ are $\langle \delta, F_0 \rangle$-stable by \cite[Theorem 2.5]{MalleUnip}. Moreover, Proposition \ref{E7} together with \cite[Theorem 2.11]{Jordan2} implies that $\langle \delta,F_0 \rangle_{\chi_i}=\langle F_0 \rangle$ for $i=3,4$. In particular, the block $b$ is $\langle \delta, F_0 \rangle$-stable and thus we can assume (by possibly replacing these automorphisms by a $G$-conjugate) that $D$ is $\langle \delta, F_0 \rangle$-stable.
	
	We denote by $c$ either the block $b$ or $B$. Moreover, we let $\overline{c}$ be the unique block of $S:=G/Z(G)$ respectively $\N_G(D)/\Z(G)$ which is dominated by $c$. As in the proof of Proposition \ref{E7} we find that $|\Irr_0(c)|=4$ and $|\Irr(\overline{c})|=|\Irr_0(\overline{c})|=4$. Therefore, the quotient map induces a bijection $\Irr_0(c) \to \Irr(\overline{c})$. Note that the defect group $\overline{D}:=D/\Z(G)$ of the block $c$ has a defect group isomorphic to $C_2 \times C_2$. According to Proposition \ref{sambale} we can therefore apply Lemma \ref{spaeth} to the automorphisms $\delta$ and $F_0$. Using Proposition \ref{E7} we find that every character of $\Irr_0(B)$ is $F_0$-stable. Moreover there exist two characters $\psi_1,\psi_2 \in \Irr_0(B)$ which are $\delta$-stable and the other two characters $\psi_3,\psi_4 \in \Irr_0(B)$ are $\delta$-conjugate. We therefore obtain that the bijection $\Irr_0(b) \to \Irr(B), \chi_i \mapsto \psi_i,$ is $\langle \delta,F_0 \rangle$-equivariant.

	We now show that the characters of $\Irr_0(\overline{b})$ and $\Irr_0(\overline{B})$ extend to their inertia groups in $G_{\ad} \langle F_0 \rangle$ and $\N_{G_{\ad} \langle F_0 \rangle}(\overline{D})$ respectively. Since $\chi_1,\chi_2$ are unipotent characters it follows that they extend to $G_{\ad} \langle F_0\rangle$, see for example \cite[Theorem 2.4]{MalleUnip}. For $i=3,4$ the stabilizer quotient $(G_{\ad} \langle F_0 \rangle)_{\chi_i}/S$ of $\chi_i$ is cyclic and so $\chi_i$ extends to its inertia group in $G_{\ad}\langle F_0 \rangle$. Similarly, for $i=3,4$ the local character $\psi_i$ extends to its inertia group in $\N_{G_{\ad} \langle F_0 \rangle}(\overline{D})$ as well. It is therefore left to show that $\psi_1$ and $\psi_2$ extend to $G_{\ad} \langle F_0 \rangle$ as well. We let $b_2$ be a block of $G_{\ad}\langle F_0 \rangle$ covering $\overline{b}$. Let $D_2$ be a defect group of $b_2$ such that $D_2 \cap S=\overline{D}$. We observe that $D_1:=D_2 \cap G_{\ad}$ is a defect group of the unique block $b_1$ of $G_{\ad}$ covering $\overline b$. Since Brauer's height zero conjecture holds for blocks with defect group of order $8$ (see e.g. \cite[Theorem 13.1]{Sambale} and \cite[Theorem 8.1]{Sambale})it follows that $D_1$ is nonabelian. Since $|\Irr(b_1)|=5$ we must have $D_1 \cong D_8$, see for instance \cite[Theorem 8.1]{Sambale}. From this it follows that $D_2$ is a cyclic extension of $D_8$. Let $B_1$ be the Harris--Knörr correspondent of $b_1$ in $\N_{G_{\ad}}(\overline{D})$. Again, Proposition \ref{sambale} ensures that Lemma \ref{spaeth} is applicable to the automorphism $F_0$. Since every character of $\Irr_0(b_1)$ is $F_0$-stable the same is true for every character of $\Irr_0(B_1)$. Since $\Irr_0(B_1)$ consists of the four extensions of $\psi_1$ and $\psi_2$ to $G_{\ad}$ it follows that $\psi_1$ and $\psi_2$ must necessarily extend to $G_{\ad} \langle F_0 \rangle$.

It is now easy to show that the above information is enough to verifiy the inductive conditions. For this we observe that we can choose extensions $\hat{\chi}_i \in \Irr((G_{\ad} \langle F_0 \rangle)_{\chi_i})$ of $\chi_i$ and extensions $\hat{\psi}_i \in \Irr((\N_{G_{\ad} \langle F_0 \rangle}( \overline{D}) )_{\psi_i}))$ of $\psi_i$ lying in Harris--Knörr corresponding blocks. It follows from \cite[Lemma 2.15]{LocalRep} and \cite[Proposition 4.4]{LocalRep} that the inductive Alperin--McKay condition (in the version of \cite[Definition 4.2]{LocalRep}) is satisfied for the block $b$ of $G$.
\end{proof}

\subsection{The inductive Alperin--McKay condition for $\mathrm{SL}_2(\mathbb{F}_q)$}

\begin{proposition}\label{block A1}
	The iAM-condition holds for the principal $2$-block of $\G^F=\mathrm{SL}_2(\mathbb{F}_q)$ relative to its defect group.
\end{proposition}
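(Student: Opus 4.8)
The plan is to verify the inductive Alperin--McKay condition for the principal $2$-block $b_0$ of $\G^F = \SL_2(\mathbb{F}_q)$ ($q$ odd) by combining the explicit character theory of $\SL_2(\mathbb{F}_q)$ with the reduction-friendly criterion of \cite{LocalRep} used in Proposition~\ref{block E7}. A defect group $D$ of $b_0$ is a Sylow $2$-subgroup of $\SL_2(\mathbb{F}_q)$; it is a (generalized) quaternion group $Q_{2^a}$ where $2^a=(q^2-1)_2$, and $\N_{\G^F}(D)=D$ unless $a=3$, i.e.\ unless $q\equiv \pm 3\bmod 8$, in which case $\N_{\G^F}(D)/D\cong C_3$ (this is the $\SL_2$-analogue of the normalizer computation, see \cite{Kon05}). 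The Brauer correspondent $B_0$ of $b_0$ in $\N_{\G^F}(D)$ is the principal block, so $\Irr_0(B_0)$ is easy to list directly: it consists of the linear characters of $D/[D,D]$ (of which there are $4$) in the case $a>3$, and of the characters of $\N_{\G^F}(D)$ of $2'$-degree above them in the case $a=3$. On the $G$-side, the principal $2$-block of $\SL_2(\mathbb{F}_q)$ contains exactly $4$ height zero characters when $a>3$ (the principal character, the Steinberg-type character of degree $q$, and two characters of degree $(q\pm 1)/2$ from the two "exceptional" families), matching the tame-block count $|\Irr_0(b_0)|=4$ from \cite[Theorem~8.1]{Sambale}; when $a=3$ the block is the principal block with quaternion defect $Q_8$ and again $|\Irr_0(b_0)|=4$.

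\textbf{Key steps.} First I would pin down $D$, $\N_{\G^F}(D)$, and $B_0$ explicitly, distinguishing the generic case from $q\equiv\pm 3\bmod 8$. Second, I would write down $\Irr_0(G,b_0)$ and $\Irr_0(\N_G(D),B_0)$ from the well-known character table of $\SL_2(\mathbb{F}_q)$ and its Sylow normalizer, exhibiting a bijection $\Omega\colon\Irr_0(G,b_0)\to\Irr_0(\N_G(D),B_0)$ and checking it is equivariant under $\mathrm{Aut}(\G^F)_{b_0,D}$. The outer automorphism group here is $\langle \delta, F_0\rangle$ where $\delta$ is the diagonal automorphism (of order $2$, from $\PGL_2$) and $F_0$ a field automorphism; the two unipotent characters (trivial and Steinberg) are fixed by everything, while $\delta$ swaps the two half-discrete-series characters of degree $(q-\varepsilon)/2$, and likewise on the local side, so equivariance is immediate. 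Third, I would check the cohomological/central-character conditions of \cite[Definition~4.2]{LocalRep}: since $\mathrm{Z}(\G^F)=\mathrm{Z}(\SL_2)^F$ has order $2$ and $q$ is odd, and since the relevant stabilizer quotients in $\Gtilde^F\langle F_0\rangle$ are cyclic (the $\langle\delta,F_0\rangle$-stabilizer of each of the non-invariant characters is $\langle F_0\rangle$), every character of $\Irr_0(b_0)$ and of $\Irr_0(B_0)$ extends to its inertia group in $\Gtilde^F\langle F_0\rangle$ and the respective normalizer, by the same argument as in Proposition~\ref{block E7} (using that a cyclic quotient forces extendibility, \cite[Lemma~10.2.13]{Rouquier3}). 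Finally, choosing the extensions $\hat\chi$ and $\hat\psi$ inside Harris--Kn\"orr corresponding blocks and invoking \cite[Lemma~2.15]{LocalRep} and \cite[Proposition~4.4]{LocalRep}, exactly as at the end of the proof of Proposition~\ref{block E7}, yields the inductive Alperin--McKay condition relative to $D$.

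\textbf{Main obstacle.} The delicate point is the case $q\equiv\pm 3\bmod 8$, where $D\cong Q_8$ and $\N_{\G^F}(D)=\tilde{A}_4$-like (that is, $\SL_2(\mathbb{F}_3)$ as a subgroup), so $\N_G(D)/D\cong C_3$ is nontrivial: here one must check that the action of the diagonal automorphism $\delta$ on $\Irr_0(\N_G(D),B_0)$ really does mirror its action on the two degree-$(q-\varepsilon)/2$ characters of $b_0$, and that the field automorphism acts compatibly (note $F_0$ normalizes $D$ only after a suitable $G$-conjugation). I would handle this by direct inspection of the (small) character table of $\SL_2(\mathbb{F}_3)\cong 2.A_4$ and of the fusion induced by $\mathrm{PGL}_2$, noting that the order-$3$ quotient forbids any nontrivial cohomological obstruction at the prime $2$. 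The remaining case $a>3$ (and the case $q\equiv\pm 1\bmod 8$ with $D\cong Q_{2^a}$, $a\ge 4$, and $\N_G(D)=D$) is routine: $\Irr_0(B_0)$ is just the $4$ linear characters of $D$, all $\mathrm{Aut}$-behaviour is transparent, and extendibility is automatic. One should also double-check that the exceptional Schur multiplier cases ($q\in\{4,9\}$ giving $\SL_2(4)\cong A_5$, $\SL_2(9)$) are either excluded or covered by the cyclic-defect results of \cite{cyclic}, as in Lemma~\ref{cyclic defect}.
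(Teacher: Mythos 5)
Your proposal reaches the right conclusion but travels a genuinely different road than the paper. The paper's proof does not construct the bijection from scratch by inspecting the character table: it first observes (via \cite[Theorem 21.14]{MarcBook}) that the principal block is the \emph{unique} $2$-block of maximal defect, so that $\Irr_0(b)=\Irr_{2'}(G)$ and $\Irr_0(B)=\Irr_{2'}(\N_G(D))$, and then imports the $\mathrm{Aut}(G)_H$-equivariant McKay bijection already constructed by Isaacs--Malle--Navarro in \cite[Section 15]{IMN}, modifying it to land in $\N_G(D)$. This one move replaces the bulk of your ``key steps'' (listing the four odd-degree characters on each side, exhibiting the map, checking equivariance by hand, treating $q\equiv\pm3\bmod 8$ separately); the remaining work --- central characters, extendibility via \cite[Theorem 2.4]{MalleUnip} for unipotent characters and cyclic inertia quotients otherwise, and the final appeal to \cite[Lemma 2.15]{LocalRep} and \cite[Proposition 4.4]{LocalRep} --- is then shared with the $E_7$ proof exactly as you anticipate. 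Your explicit route is viable, but you would essentially be re-deriving a chunk of IMN's Section 15 and you have not noted the ``unique block of maximal defect'' step that is what makes the McKay statement applicable in the first place.

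There is also a genuine error in how you propose to dispose of the exceptional Schur multiplier cases: you suggest invoking the cyclic-defect result of \cite{cyclic} ``as in Lemma \ref{cyclic defect}.'' That lemma and that paper are restricted to odd primes $\ell\ge5$; for $\ell=2$ the Sylow $2$-subgroup of the universal covering group of $\SL_2(9)$ is generalized quaternion, not cyclic, so the argument does not apply. The paper instead dispatches the exceptional covers by citing Breuer's computational verification \cite{Breuer}, which is what is actually needed here. Finally, the claim that ``the order-$3$ quotient forbids any nontrivial cohomological obstruction at the prime $2$'' conflates the quotient $\N_G(D)/D$ with the relevant inertia quotients in $\mathrm{Out}(G)$; the extendibility obstructions live in the latter and must be addressed as in the $E_7$ argument (via \cite{MalleUnip} for unipotent characters and cyclicity of the stabilizer otherwise), not by the size of $\N_G(D)/D$.
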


\begin{proof}
By \cite{Breuer} we can assume that $\G^F$ has non-exceptional Schur multiplier.
We follow the proof (and notation) of Proposition \ref{block E7}. Using \cite[Theorem 21.14]{MarcBook} it's easy to see that the principal block $b$ is the unique $2$-block of maximal defect. Hence, $\Irr_0(b)=\Irr_{p'}(G)$ and $\Irr_0(B)=\Irr_{p'}(\N_G(D))$, where $B$ is the Brauer correspondent of $b$. 

 We place ourselves in the situation of \cite[Section 15]{IMN}, where it was shown that $S=G/\mathrm{Z}(G)$ is McKay-good. More precisely, it was shown that there exists an intermediate subgroup $\N_G(D) \subset H$ and an $\mathrm{Aut}(G)_H$-equivariant bijection $\Irr_{p'}(G) \to \Irr_{p'}(H)$. Modifying their bijection we obtain an $\mathrm{Aut}(G)_D$-equivariant bijection $\Irr_0(G,b) \to \Irr_0(\N_G(D),B)$. This bijection preserves central characters since every $2'$-character of $G$ respectively $\N_G(D)$ has the $2$-group $\Z(G)$ in its kernel. One then checks that all characters of $\Irr_0(S,\overline{b})$ extend to their inertia group in $G_{\mathrm{ad}} \langle F_0 \rangle$ (the stabilizer is either cyclic or the characters are unipotent). Similarly, one also checks that the characters of $\Irr_0(\N_S(\overline D),\overline{B})$ extend to their inertia group: If $D$ is self-normalizing then every character in $\Irr_0(\N_S(\overline{D}),\overline{B})$ is linear and one can check the claim by explicit computations. Otherwise, both characters of $\Irr_0(\mathrm{N}_G(D),B)$ with non-cyclic inertia group in $\mathrm{Out}(G)$ are unipotent characters of $\N_G(D) \cong \mathrm{SL}_2(3)$ and their extension to the inertia group follows again from explicit computations. Now using similar arguments as in Proposition \ref{block E7} shows that the block $b$ is AM-good.
\end{proof}

\subsection{Groups with exceptional Schur multiplier}

After having checked the iAM-condition for all quasi-isolated $2$-blocks of groups of Lie type with almost abelian defect group in non-defining characteristic we are left to show the iAM-condition for the remaining simple groups and blocks under consideration. 

\begin{proposition}\label{exceptional}
	Suppose that $S$ is a simple group  not of Lie type in odd characteristic with exceptional Schur multiplier. Then the inductive Alperin--McKay condition holds for all $2$-blocks with almost abelian defect group of the universal covering group $X$ of $S$.
\end{proposition}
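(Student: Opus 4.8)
The plan is to reduce the proposition to a finite, essentially computational, verification. The finite simple groups with exceptional Schur multiplier form an explicit finite list (this is well known; see for instance the Atlas of Finite Groups); deleting the groups of Lie type in odd characteristic, which are excluded from the statement, leaves the alternating group $A_7$, the sporadic groups with exceptional Schur multiplier, and the groups of Lie type in characteristic $2$ with exceptional Schur multiplier. Fix such a group $S$ and let $X$ be its universal covering group. Since $X/\mathrm{Z}(X)=S$ is simple and non-abelian we have $\mathrm{Z}_\infty(X)=\mathrm{Z}(X)$, so Lemma \ref{central extension}(c) shows that a $2$-block $b$ of $X$ has almost abelian defect group if and only if the defect group $\overline D$ of the block $\overline b$ of $S$ dominated by $b$ is abelian. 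Thus it is enough to run over the $2$-blocks $\overline b$ of $S$ with abelian defect group, over the $2$-blocks $b$ of $X$ dominating such a $\overline b$, and to check the iAM-condition for each $b$.

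First I would read off, from the \textsf{GAP} character table library \cite{Breuer} together with the known $2$-modular decomposition matrices and automorphism actions, the $2$-blocks $\overline b$ of $S$ with abelian defect group and the $2$-blocks $b$ of $X$ lying above them. In every case $\overline D$ is a small abelian $2$-group (trivial, cyclic, or elementary abelian of small rank), and correspondingly a defect group $D$ of $b$ is a possibly non-split central extension of $\overline D$ by a $2$-subgroup of $\mathrm{Z}(X)$; in particular $D$ is abelian, or is one of a short list of small $2$-groups such as $Q_8$, $D_8$, or a central extension of an elementary abelian group by $C_2$ or $C_4$.

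Next I would dispose of these blocks. If $D$ is trivial the iAM-condition is immediate, and if $D$ is cyclic it holds by the work of Koshitani--Späth on blocks with cyclic defect groups \cite{cyclic}. For the remaining finitely many pairs $(X,b)$ I would verify the iAM-condition by a direct computation, following the strategy of Propositions \ref{E7}, \ref{block E7} and \ref{block A1} almost verbatim: compute $\Irr_0(b)$ and the action of $\mathrm{Out}(X)$ on it; determine the Brauer correspondent $B$ in $\N_X(D)$, its height-zero characters and the action of $\mathrm{Out}(X)$ on them --- here the explicit structure of $\N_X(D)$, the Atlas, and Proposition \ref{sambale} for the block invariants suffice; and then exhibit an $\mathrm{Out}(X)_{b,D}$-equivariant bijection $\Irr_0(b)\to\Irr_0(B)$ preserving central characters, together with extensions of the relevant characters in Harris--Kn\"orr corresponding blocks, so that \cite[Lemma 2.15]{LocalRep} and \cite[Proposition 4.4]{LocalRep} apply.

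The hard part will be precisely this last step: there is no general theorem giving the iAM-condition for $2$-blocks with, say, $C_2\times C_2\times C_2$, $D_8$ or $Q_8$ defect group, so for each surviving block one has to check the full package of character- and block-theoretic data by hand, as in the three propositions above; controlling the action of the outer automorphism group on the local characters in $\N_X(D)$ when $D$ is non-abelian is the delicate point. A subsidiary point that requires care is the bookkeeping of the classification list and, via Lemma \ref{central extension}, the correct identification of which central extensions $D$ of $\overline D$ actually arise as defect groups in the universal covering group.
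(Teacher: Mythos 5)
Your opening reduction via Lemma~\ref{central extension}(c) is correct and genuinely useful: since $S=X/\Z(X)$ is nonabelian simple, $\Z_\infty(X)=\Z(X)$, and because any defect group $D$ of a $2$-block $b$ of $X$ satisfies $D\cap\Z(X)=\Z(X)_2$, the image $D\Z(X)/\Z(X)$ is precisely a defect group of the block of $S$ dominated by $b$. So you are right that one only needs to sweep through the $2$-blocks $\overline b$ of $S$ with abelian defect. That much matches what the paper does implicitly.

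Where your proposal diverges, and where I think it genuinely falls short, is in what comes next. Your plan is a case-by-case computational verification in the style of Propositions~\ref{E7}, \ref{block E7} and \ref{block A1}, and you explicitly flag that the \emph{hard part} --- controlling the action of $\mathrm{Out}(X)$ on the local characters in $\mathrm{N}_X(D)$ for each surviving block --- is not carried out. The paper avoids almost all of that work by appealing to results already in the literature that give the \emph{full} iAM-condition for the relevant groups (not merely for blocks of small or almost abelian defect): sporadic groups and small groups such as $G_2(3)$ and $A_1(9)\cong A_6$ by Breuer's computations \cite{Breuer}; alternating groups by Denoncin together with \cite[Corollary 8.3]{IAM}; Suzuki and Ree groups by Malle \cite{MalleAlperin}; and, for groups of Lie type in characteristic $2$, the structural fact that every $2$-block is either of maximal defect or of defect zero, so only $A_1(\pm2^f)$ survives and is handled by \cite{S12} and \cite{CS14}. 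Your worry that ``there is no general theorem giving the iAM-condition for $2$-blocks with $C_2\times C_2\times C_2$, $D_8$ or $Q_8$ defect'' is thus beside the point here: you do not need a general theorem, you need the group-specific verifications that already exist, and by not invoking them you set yourself a vastly larger task than necessary, one your write-up does not actually complete.

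Two further concrete issues. First, you delete all groups of Lie type in odd characteristic at the outset, reading the hypothesis literally; but the overall reduction to Theorem~\ref{Alperin McKay} requires that Theorem~\ref{2blocks} (non-exceptional Schur multiplier) together with this proposition cover \emph{every} universal covering group, and Theorem~\ref{2blocks} explicitly excludes the exceptional-Schur-multiplier cases. The paper therefore does treat ${}^2A_3(3)$, $B_3(3)$, $G_2(3)$ and $A_1(9)$ here, and for ${}^2A_3(3)$ and $B_3(3)$ the key move is not a direct verification at all but a short argument (using Breuer's decomposition data, the block--domination bijection of \cite[Theorem 9.10]{Navarro}, and the classification of small $2$-blocks in \cite{Sambale}) showing that the problematic blocks simply do \emph{not} have almost abelian defect, while the one remaining block of $B_3(3)$ descends to the $2$-cover and falls under the non-exceptional machinery of \cite{Jordan2}. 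You do not anticipate either of these devices. Second, your enumeration of the surviving list omits $A_6$ among the alternating groups with exceptional Schur multiplier, though this is minor since $A_6$ is handled in any case. In summary: the reduction to abelian defect in $S$ is a correct starting point, but the remainder of the proposal replaces the paper's citation-heavy and exclusion-based strategy with an unexecuted plan for brute-force recomputation, and as written it does not prove the proposition.
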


\begin{proof}
	Observe first that the iAM-condition holds for all simple sporadic groups by \cite{Breuer}. Assume that $S$ is a simple group of Lie type in characteristic different from $2$ with exceptional Schur multiplier and let $X$ be its universal covering group. Then $S$ is one of the following groups: $A_1(9)$, ${}^2 A_3(3)$, $B_3(3)$ or $G_2(3)$. According to \cite{Breuer} both $A_1(9) \cong A_6$ and $G_2(3)$ are AM-good. Observe that the blocks of the universal covering groups of ${}^2 A_3(3)$ and $B_3(3)$ with maximal defect have a defect group which is not almost abelian. Recall that every defect group of $X$ contains $\mathrm{Z}(X)_\ell$. An inspection of the blocks of the universal covering group of ${}^2 A_3(3)$ using \cite{Breuer2} now shows that all $2$-blocks with almost abelian defect groups have central defect. Therefore, there is nothing to check in this case. For the universal covering group of $B_3(3)$ the same arguments easily rule out all blocks except the blocks denoted by $2$, $3$, $8$ and $9$ in \cite{Breuer2}.
	
	We claim that the blocks $2$, $8$ and $9$ of $X$ have a defect group which is not almost abelian. For this let $b$  be any of these blocks. Consider a central extension $X \to X'$ whose kernel is of order $2$. According to \cite[Theorem 9.10]{Navarro} we obtain a bijection $b \to \overline{b}$ between blocks of $X$ and $X'$. The block of $X'$ corresponding to $b$ under this bijection has according to \cite[Theorem 9.10]{Navarro} a defect group of order $8$ and still two Brauer characters. The groups $C_8$ and $C_2 \times C_4$ admit no automorphisms of odd order. By the remarks following \cite[Theorem 1.30]{Sambale} we therefore deduce that these groups arise as defect groups of nilpotent blocks only. Moreover, any block with defect group $C_2 \times C_2 \times C_2$ can't have exactly two Brauer characters, see \cite[Theorem 13.1]{Sambale}. Since the block $\overline{b}$ is not nilpotent (it has $2$ Brauer characters) its defect group is therefore not abelian. Since $X'$ is a $3$-cover of $S$, we deduce that the block $b$ cannot have an almost abelian defect group.
	
	It therefore remains to consider the block labeled $3$ in \cite{Breuer2}. This block can however be consider as a block of the $2$-cover of $S$, i.e. as a block of $\G^F$, and can therefore be treated as a block with non-exceptional covering group. Using the reduction theorem in \cite{Jordan2} we can thus conclude that this block is AM-good. 
	
	
	
	For alternating groups the inductive AM condition is known to hold by the main result of \cite{Denoncin} and \cite[Corollary 8.3]{IAM}. By \cite{MalleAlperin} the iAM-condition holds for Suzuki and Ree groups.
	
	Let us finally assume that $S$ is a simple group of Lie type defined over a field of characteristic $2$. Let $\G^F$ be such that $\G^F/\mathrm{Z}(\G^F) \cong S$. As before, let $X$ be the universal covering group of $S$. Then there exists a surjective homorphism $X \to \G^F$ whose kernel is the Sylow $2$-group $Z$ of $\mathrm{Z}(\hat{G})$. According to \cite[Theorem 9.10]{Navarro} we obtain a bijection $B \to \overline{B}$ between blocks of $\hat{G}$ and $\G^F$ which maps blocks with almost abelian defect to each other.
	
	 It is known (see \cite[Theorem 6.18]{MarcBook}) that the only $2$-blocks of $\G^F$ are blocks of maximal defect and blocks of height zero. We must therefore consider the cases where a Sylow $2$-subgroup of $S$ is abelian. This is however precisely the case when $S \cong A_1(\pm 2^f)$. Again by the work of Breuer \cite{Breuer} we know that $S$ is AM-good whenver $S$ has an exceptional Schur multiplier. Using the properties of the bijection constructed in \cite{S12} in conjunction with \cite[Theorem 4.1]{CS14} shows that the principal block of $S$ is AM-good.
\end{proof}

\begin{theorem}\label{2blocks}
	Suppose that $S$ is a simple group of Lie type in characteristic unequal to $2$ with non-exceptional Schur multiplier. Then the inductive Alperin--McKay condition holds for all $2$-blocks with almost abelian defect group of the universal covering group $X$ of $S$.
\end{theorem}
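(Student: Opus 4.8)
\textbf{Proof plan for Theorem \ref{2blocks}.} The plan is to reduce, using the classification of quasi-isolated $2$-blocks with (almost) abelian defect group from Lemma \ref{quasi2} together with Remark \ref{unipotent almost abelian}, to a short list of explicitly treatable cases, and then to invoke the reduction machinery built earlier in the paper. First I would recall that, by Proposition \ref{modifiedSpaeth} and its underlying proof, verifying the iAM-condition for a $2$-block $b$ of $X$ with almost abelian defect group $D$ can be handled by the general reduction theorems of Section \ref{sec 11} (Theorem \ref{maintheoremA}, Theorem \ref{maintheoremBC}, Corollary \ref{corollaryA}) once one knows it for the isolated (i.e.\ unipotent) $2$-blocks appearing in the classification. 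Thus the bulk of the work is to show that every isolated $2$-block with almost abelian defect group of a quasi-simple group of type $A$, $B$, $C$ (or $E_7$) is AM-good relative to the Cabanes subgroup of its defect group. Here one must be slightly careful that $\ell = 2$ falls outside the hypotheses $\ell \geq 5$ of Theorem \ref{maintheoremA}, so instead of that theorem one uses the direct block-theoretic analysis of Section \ref{sec 9} and Section \ref{sec 10} adapted to $\ell = 2$, which is exactly why the class of \emph{almost abelian} defect groups was introduced.

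Next I would run through the list produced by Lemma \ref{quasi2}(a) and (b) together with Remark \ref{unipotent almost abelian}. By part (a)(i), an isolated (hence unipotent) $2$-block of a simply connected group of type $A_n$ with abelian defect has $\mathrm{C}^\circ_{\G^\ast}(1)=\G^\ast$ a torus only when $n$ is even and the defect is trivial or central, so these blocks are nilpotent or of central defect and the iAM-condition is automatic (or reduces to the McKay-type statement, already known). Part (a)(ii) gives defect-zero blocks in exceptional types, which are trivially AM-good. The substantive cases are the non-abelian-but-becomes-abelian situation of part (b): by Remark \ref{unipotent almost abelian}(a) the type $A$ case forces $\G^F \cong \mathrm{SL}_2(q)$ with $q \equiv \pm 3 \bmod 8$ and defect group $Q_8$, which is exactly Proposition \ref{block A1}; and by Remark \ref{unipotent almost abelian}(b) the type $E_7$ case gives the two unipotent blocks of $E_7(q)$ with defect group $D_8$, which is exactly Proposition \ref{block E7}. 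Both of these have already been shown to be AM-good relative to their defect groups in the preceding subsections. The groups of type $B$ and $C$ contribute no new isolated $2$-blocks with almost abelian (non-central) defect beyond what is covered, because for $q$ even the center is trivial (handled by the main theorem of \cite{Jordan2}), and for $q$ odd the classification of Lemma \ref{quasi2} leaves only defect-zero blocks or blocks already reducible via Theorem \ref{maintheoremBC} and Corollary \ref{corollaryA}.

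Having assembled the iAM-condition for all the relevant isolated blocks, I would feed this into the reduction theorems: Theorem \ref{reduction} (respectively Theorem \ref{reduction2} in the case $\ell \mid q - \varepsilon$, i.e.\ $2 \mid q-\varepsilon$, which is the generic situation when $p$ is odd) provides the AM-goodness of an arbitrary strictly quasi-isolated $2$-block of a group of type $A$ with almost abelian defect, and then the first reduction of \cite{Jordan2} (via Theorem \ref{thm3}-type arguments and Remark \ref{reductionBC} for types $B$, $C$, $E_7$) upgrades this to all $2$-blocks with almost abelian defect of all quasi-simple groups of type $A$, $B$, $C$. One technical point to verify along the way is that passing between $X$ and $\G^F$ (the central quotient by $\mathrm{Z}(\G^F)_2$) preserves the property of having almost abelian defect group; this is furnished by Lemma \ref{central extension}, especially part (c), combined with the block bijection of \cite[Theorem 9.10]{Navarro}, so that a block of $X$ with almost abelian defect corresponds to a block of $\G^F$ with almost abelian defect and conversely.

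The main obstacle I expect is not the type-$A$ reduction itself (which is a careful but routine adaptation of Theorem \ref{reduction} and Theorem \ref{reduction2} to the prime $2$, using that the relevant quotients $\mathcal{N}/\tilde{L}'$ are abelian by Lemma \ref{quotient=2} and Lemma \ref{Frobcyclic2}, so that the almost-extendibility results of Section \ref{sec 7} and Section \ref{sec 8} apply verbatim) but rather controlling the local data — the action of diagonal and field automorphisms on the height-zero characters of the Brauer correspondents — for the sporadic-looking blocks $\mathrm{SL}_2(q)$ with $Q_8$ and $E_7(q)$ with $D_8$. That difficulty, however, has already been surmounted in Proposition \ref{block A1} and Proposition \ref{block E7} respectively, using the invariant-theoretic input of Proposition \ref{sambale} and Lemma \ref{spaeth}; so in the present theorem those results are cited rather than reproved. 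What remains is bookkeeping: checking that the list of Lemma \ref{quasi2} is exhaustive for the class ``almost abelian'', that the excluded cases are genuinely nilpotent/defect-zero/central-defect, and that the hypotheses of the reduction theorems (in particular $\ell \nmid 2|H^1(F,\mathrm{Z}(\G))|$ is replaced by the appropriate $2$-local condition, or circumvented via the $T$-group construction of Definition \ref{Tgroup} and Theorem \ref{reduction2}) are met for every surviving block.
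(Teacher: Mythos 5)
Your proposal follows essentially the same path as the paper's proof: reduce to strictly quasi-isolated $2$-blocks by adapting the Jordan-decomposition reduction of \cite{Jordan2}, invoke the classification of Lemma \ref{quasi2} to pin down the surviving cases, reduce the non-unipotent type $A$ case to the unipotent one via Lemma \ref{linear prime} and an adaptation of Theorem \ref{maintheoremA} (with Theorem \ref{reduction2} replacing Theorem \ref{reduction} since $\ell = 2$ always divides $q-\varepsilon$), and finally check the unipotent cases $\mathrm{SL}_2(q)$ with $Q_8$ (Proposition \ref{block A1}) and $E_7(q)$ with $D_8$ (Proposition \ref{block E7}) coming out of Remark \ref{unipotent almost abelian}.

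Two small inaccuracies in your framing are worth correcting. First, Proposition \ref{modifiedSpaeth} plays no role in proving the present theorem; it is applied \emph{downstream}, in deriving Theorem \ref{Alperin McKay} from Theorem \ref{2blocks} and Proposition \ref{exceptional}, and invoking it here risks suggesting a circularity. Second, your treatment of types $B$ and $C$ is both overcomplicated and incorrectly sourced: Theorem \ref{maintheoremBC} and Corollary \ref{corollaryA} both assume $\ell \geq 5$ and cannot be cited for $\ell = 2$; the real reason types $B$ and $C$ require no work is simply that Lemma \ref{quasi2} is an exhaustive classification, and it lists no quasi-isolated $2$-blocks with (almost) abelian defect for those types at all. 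Likewise the central-extension bookkeeping via Lemma \ref{central extension}(c) and \cite[Theorem 9.10]{Navarro} is irrelevant here since the Schur multiplier is assumed non-exceptional, so $X = \G^F$; that device belongs to Proposition \ref{exceptional}. None of this affects the soundness of your overall strategy, which matches the paper's.
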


\begin{proof}
	Let $\G$ be a simple, simply connected algebraic group and $F: \G \to \G$ a Frobenius endomorphism such that $S=\G^F/ \mathrm{Z}(\G^F)$. We fix a $2$-block $b$ of $\G^F$ with almost abelian defect group. We want to show that $b$ is AM-good relative to its defect group.
	
	Using the proof of \cite[Theorem 3.12]{Jordan2} we see that the statement of \cite[Theorem 3.12]{Jordan2} can be adapted to our situation as follows: If \cite[Hypothesis 3.3]{Jordan2} holds for all blocks with almost abelian defect groups then the block $b$ is AM-good. In other words, we can assume that the $2$-block $b$ of $\G^F$ is strictly quasi-isolated.  According to Lemma \ref{quasi2} all these blocks are unipotent unless $\G$ is of type $A$. Suppose therefore now that $\G$ is of type $A$ and $s \neq 1$. The assumptions of Lemma \ref{linear prime} applies. We can therefore use the proof of Theorem \ref{maintheoremA} shows that we can also assume in this case that the block is unipotent. It therefore suffices to check that the iAM-condition holds for the unipotent blocks occuring in Lemma \ref{quasi2}. Using Remark \ref{unipotent almost abelian} we see that this was checked in Proposition \ref{block E7} and Proposition \ref{block A1}.
\end{proof}

\begin{theorem}\label{Alperin McKay}
	The Alperin--McKay conjecture holds for all $2$-blocks of almost abelian defect.
\end{theorem}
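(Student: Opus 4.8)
Looking at Theorem~\ref{Alperin McKay}, the plan is to deduce it from Proposition~\ref{modifiedSpaeth} (the variant of Sp\"ath's reduction theorem) together with the verification of the inductive Alperin--McKay condition for the relevant blocks of universal covering groups of simple groups. Proposition~\ref{modifiedSpaeth} reduces the Alperin--McKay conjecture for an arbitrary $2$-block $b$ of a finite group $X$ with almost abelian defect group to the following statement: for every non-abelian simple subquotient $S$ of $X$ with $2 \mid |S|$, every $2$-block of the universal covering group $H$ of $S$ with almost abelian defect group satisfies the iAM-condition. So it suffices to establish this condition block-by-block over all non-abelian simple groups $S$ of even order.

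First I would split into cases according to the type of $S$. If $S$ is a simple group of Lie type in characteristic $\neq 2$ with non-exceptional Schur multiplier, this is precisely Theorem~\ref{2blocks}. If $S$ has exceptional Schur multiplier, or is alternating, sporadic, a Suzuki/Ree group, or of Lie type in defining characteristic $2$, this is covered by Proposition~\ref{exceptional}. Together, Theorem~\ref{2blocks} and Proposition~\ref{exceptional} handle every non-abelian simple group: indeed every non-abelian simple group is either alternating, sporadic, or of Lie type, and in the last case it is either in defining characteristic $2$ (Proposition~\ref{exceptional}), or in characteristic $\neq 2$, where it either has exceptional Schur multiplier (Proposition~\ref{exceptional}) or non-exceptional Schur multiplier (Theorem~\ref{2blocks}). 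Hence every $2$-block with almost abelian defect group of the universal covering group of any non-abelian simple group satisfies the iAM-condition.

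Applying Proposition~\ref{modifiedSpaeth} with $X$ an arbitrary finite group then yields the Alperin--McKay conjecture for every $2$-block of $X$ with almost abelian defect group, which is exactly the assertion of Theorem~\ref{Alperin McKay}. One should note that all the hard work has already been done: the genuine obstacles were the reduction-theoretic input of Proposition~\ref{modifiedSpaeth} (whose proof follows Sp\"ath's argument but must carefully track the enlarged class of almost abelian defect groups through central extensions, using Lemma~\ref{central extension}), and the case analysis of quasi-isolated $2$-blocks with almost abelian defect in Theorem~\ref{2blocks}, which relies on the classification in Lemma~\ref{quasi2}, the reduction to isolated blocks via Theorem~\ref{maintheoremA} and Remark~\ref{reductionBC}, and the explicit verifications in Proposition~\ref{block E7} and Proposition~\ref{block A1}. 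The proof of the final theorem itself is therefore short: it is simply the assembly of these pieces, and the only point requiring a word of care is to confirm that Proposition~\ref{exceptional} and Theorem~\ref{2blocks} between them exhaust all non-abelian simple groups of even order so that the hypothesis of Proposition~\ref{modifiedSpaeth} is met.

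\begin{proof}
	By Proposition~\ref{modifiedSpaeth} it suffices to show that for every non-abelian simple group $S$ with $2 \mid |S|$, every $2$-block of the universal covering group of $S$ with almost abelian defect group satisfies the iAM-condition. If $S$ is a simple group of Lie type defined over a field of odd characteristic with non-exceptional Schur multiplier, this holds by Theorem~\ref{2blocks}. In all remaining cases --- that is, if $S$ is alternating, sporadic, a Suzuki or Ree group, of Lie type in characteristic $2$, or of Lie type in odd characteristic with exceptional Schur multiplier --- this holds by Proposition~\ref{exceptional}. Since every non-abelian simple group falls into one of these cases, the hypothesis of Proposition~\ref{modifiedSpaeth} is satisfied for any finite group $X$. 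Therefore the Alperin--McKay conjecture holds for every $2$-block of $X$ with almost abelian defect group.
\end{proof}
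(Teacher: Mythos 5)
Your proof is correct and follows exactly the paper's own argument: apply Proposition~\ref{modifiedSpaeth} to reduce to the iAM-condition for $2$-blocks of almost abelian defect of universal covering groups of simple groups, and then invoke Theorem~\ref{2blocks} and Proposition~\ref{exceptional}, which between them cover all non-abelian simple groups. Your explicit confirmation that those two results exhaust the cases is a reasonable elaboration of the paper's more terse citation but does not constitute a different approach.
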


\begin{proof}
	According to Proposition \ref{modifiedSpaeth} to show the theorem it is sufficent to prove the iAM-condition for all $\ell$-blocks with almost abelian defect of the universal covering group of a simple finite group. The iAM-condition in these cases has been verified in Theorem \ref{2blocks} and Proposition \ref{exceptional}.
\end{proof}


\end{document}